\newtheorem{thm}{Theorem}[section]
\newtheorem{prop}[thm]{Proposition}
\newtheorem{lem}[thm]{Lemma}
\newtheorem{cor}[thm]{Corollary}
\theoremstyle{definition}
\newtheorem{defn}{Definition}[section]
\newtheorem{exe}{Example}[section]
\theoremstyle{remark}
\newtheorem{rem}{Remark}[section]
\begin{document}
\title[Invariant probability measures from pseudoholomorphic curves II]{Invariant probability measures from pseudoholomorphic curves II: Pseudoholomorphic curve constructions}

\author{Rohil Prasad}
\address{Department of Mathematics\\Princeton
University\\Princeton, NJ 08544}
\thanks{This material is based upon work supported by the National Science Foundation Graduate Research Fellowship Program under Grant No. DGE-1656466.}
\email{rrprasad@math.princeton.edu}

\begin{abstract}
In the previous work, we introduced a method for constructing invariant probability measures of a large class of non-singular volume-preserving flows on closed, oriented odd-dimensional smooth manifolds with pseudoholomorphic curve techniques from symplectic geometry. The technique requires existence of certain pseudoholomorphic curves satisfying some weak assumptions. In this work, we appeal to Gromov-Witten theory and Seiberg-Witten theory to construct large classes of examples where these pseudoholomorphic curves exist. Our argument uses neck stretching along with new analytical tools from Fish-Hofer's work on feral pseudoholomorphic curves. 
\end{abstract}

\maketitle

\tableofcontents

\section{Introduction}

This paper is a companion paper to the work \cite{prequel} by the author. 

The work \cite{prequel} presents a method for constructing invariant probability measures of the autonomous flow associated to a \emph{framed Hamiltonian structure} $\eta = (\lambda, \omega)$ on a closed, oriented smooth manifold $M$ of dimension $2n+1$ for $n \geq 1$. 

\begin{defn} \label{defn:framedHamiltonianStructures}
A \textbf{framed Hamiltonian structure} on $M$ is the datum of a $1$-form $\lambda$ and a two-form $\omega$ such that:
\begin{enumerate}
    \item $\lambda \wedge \omega^n > 0$,
    \item $d\omega = 0$.
\end{enumerate}

We call a framed Hamiltonian structure \emph{exact} if $\omega$ is exact. The datum of $M$ along with a choice of framed Hamiltonian structure $\eta = (\lambda, \omega)$ is called a \textbf{framed Hamiltonian manifold}.
\end{defn}

A framed Hamiltonian manifold $M$ with framed Hamiltonian structure $\eta = (\lambda, \omega)$ is denoted by the tuple $(M, \eta)$. The autonomous flow in question is the flow of the \emph{Hamiltonian vector field} $X$ associated to $\eta$. 

\begin{defn} \label{defn:hamiltonianVectorField}
The \textbf{Hamiltonian vector field} associated to a framed Hamiltonian structure $(\lambda, \omega)$ is the unique vector field $X$ satisfying
$$\lambda(X) \equiv 1$$
and
$$\omega(X, -) \equiv 0.$$
\end{defn}

It is a consequence of Cartan's formula for the Lie derivative and the definitions above that the Hamiltonian vector field $X$ associated to a framed Hamiltonian manifold $(M^{2n+1}, \eta = (\lambda, \omega))$ preserves the volume form
$$\text{dvol}_\eta = \lambda \wedge \omega^n.$$

Hamiltonian vector fields associated to framed Hamiltonian structures model volume-preserving vector fields arising in a variety of dynamical situations. 

Any non-singular volume-preserving vector field on a Riemannian $3$-manifold arises as the Hamiltonian vector field of a certain framed Hamiltonian structure. 

The Reeb vector field of a contact form $\lambda$ on an oriented smooth manifold $M$ of dimension $2n+1$ is merely the Hamiltonian vector field associated to the framed Hamiltonian structure $(\lambda, d\lambda)$. 

Hamiltonian vector fields associated to framed Hamiltonian structures also model the Hamiltonian flow on closed, regular energy levels of autonomous Hamiltonians in symplectic manifolds. This is the example that we are most directly concerned with in this work, and accordingly we will write it down in detail. 

\begin{exe} \label{exe:hypersurfaces}
Let $(W,\Omega)$ be a symplectic manifold of dimension $2n$. Let $M$ be a closed hypersurface in $W$, equal to $H^{-1}(0)$ for some smooth Hamiltonian function
$$H: W \to \mathbb{R}.$$

We assume that $0$ is a regular value of $H$, so $M$ is a smooth manifold. The Hamiltonian vector field $X_H$ associated to $H$ is the unique vector field on $W$ such that
$$\Omega(X_H, -) = -dH(-).$$

It is a quick exercise to show, using the Cartan formula for the Lie derivative, that the function $H$ is invariant under the flow of $X_H$, and that $X_H$ is tangent to $M$ at every point of $M$. 

Pick an almost-complex structure $J$ on $W$ \emph{compatible} with the symplectic form $\Omega$, meaning that 
$$\Omega(V, JV) > 0$$
for any nonzero tangent vector $V$ and
$$\Omega(J-, J-) = \Omega(-, -).$$

Let $\lambda$ be the one-form on $M$ with kernel $\xi = TM \cap J(TM)$. Set $\omega = \Theta|_M$.

Then $\eta = (\lambda, \omega)$ is a framed Hamiltonian structure on $M$. The associated Hamiltonian vector field $X$ can be shown to coincide with the restriction of $X_H$ to $M$. 
\end{exe}

Fix a framed Hamiltonian manifold $(M, \eta = (\lambda, \omega))$ of dimension $2n+1$ and let $X$ denote its associated Hamiltonian vector field. The main theorem (Theorem $1.4$) of \cite{prequel} uses \emph{pseudoholomorphic curves} inside the cylinder $\mathbb{R} \times M$, equipped with a special kind of almost-complex structure, to construct $X$-invariant probability measures. 

We also prove in \cite{prequel} that, under some mild assumptions on the framed Hamiltonian structure, that these probability measures are ``interesting''. That is, they are not equal to the normalized volume density
$$(\int_M \lambda \wedge \omega^n)^{-1}\lambda \wedge \omega^n$$
and may satisfy additional properties depending on the situation. These results, contained in Theorems $1.5$, $1.6$ and $1.8$ of \cite{prequel}, make use of the geometric and topological properties of pseudoholomorphic curves. 

In order to apply these theorems, we must show that the required pseudoholomorphic curves in $\mathbb{R} \times M$ exist. The main objective of this paper is to address this in the case where $M = H^{-1}(0)$ arises as a closed, regular energy level of a smooth Hamiltonian $H$ on a closed symplectic manifold $(W, \Omega)$, as in Example \ref{exe:hypersurfaces}. 

In this setting, we are able to leverage the \emph{Gromov-Witten theory} (and in four dimensions, \emph{Seiberg-Witten theory}) of the ambient manifold $W$ to construct pseudoholomorphic curves in $\mathbb{R} \times M$. The Gromov-Witten invariants are a rich package of invariants of closed symplectic manifolds, given by counting pseudoholomorphic curves subject to various conditions on the topology of the curves and their incidence with various cycles in $W$. The Seiberg-Witten invariants are a set of invariants of smooth, $4$-dimensional manifolds, which are essentially equivalent to the Gromov-Witten invariants in the symplectic case due to pioneering work of Taubes \cite{TaubesSWGR}. See \cite{IonelParker97} for additional discussion on this equivalence. 

Non-vanishing of certain Gromov-Witten invariants will ensure that pseudoholomorphic curves exist in $W$ and cross the hypersurface $M$. In the case where $W$ is four-dimensional, and under some topological restrictions on $M$ and $W$ identical to those considered in \cite{ChenWeinstein}, Taubes' non-vanishing theorem \cite{Taubes94} for Seiberg-Witten invariants allows us to conclude the same statement, with minor differences since the curve may not be connected. 

A degeneration procedure known as ``neck stretching'' around the hypersurface $M$ the allows us to conclude existence of pseudoholomorphic curves in $\mathbb{R} \times M$ from the existence of pseudoholomorphic curves in $W$. There are numerous details to discuss and complications to overcome. In particular, the fact that $M$ is not ``contact type'' or ``stable Hamiltonian'' renders much of the standard pseudoholomorphic curve and neck stretching analysis in the symplectic geometry literature unusable, and we must rely on new analytical estimates developed in the pioneering work of Fish-Hofer \cite{FishHoferFeral} on ``feral pseudoholomorphic curves''. There are also additional complications in our setting that do not arise in \cite{FishHoferFeral}.

Before stating our main results and subsequently elaborating on our main strategy, we must first review some preliminary definitions and notations regarding pseudoholomorphic curves. Section \ref{subsec:jCurves} lists the essential definitions and notations for pseudoholomorphic curves in general almost-complex manifolds. Section \ref{subsec:adaptedCylinders} specializes the discussion to pseudoholomorphic curves in $\mathbb{R} \times M$, where $(M, \eta = (\lambda, \omega))$ is a framed Hamiltonian manifold. It defines \emph{$\omega$-finite pseudoholomorphic curves} in $\mathbb{R} \times M$, which are the pseudoholomorphic curves required by \cite{prequel} to construct interesting invariant measures of the Hamiltonian vector field $X$ associated to $(M, \eta)$. It also defines Fish-Hofer's \emph{feral pseudoholomorphic curves}, which are special cases of $\omega$-finite pseudoholomorphic curves that we construct in this paper. 

We also note that our exposition here has significant overlaps with \cite{prequel}. Our goal is to make this work self-contained without reference to \cite{prequel}, which makes this overlap necessary.

\subsection{Pseudoholomorphic curves in almost-complex manifolds} \label{subsec:jCurves}

\begin{defn}\label{defn:markedNodalSurface}
A \textbf{nodal} Riemann surface is the datum 
$$(C, j, D)$$
which consists of the following data:
\begin{itemize}
    \item A (possibly disconnected) smooth surface $C$.
    \item An almost-complex structure $j$ on $C$.
    \item A set of pairs of \emph{nodal points} $D = \{(\overline{z}_1, \underline{z}_1), (\overline{z}_2, \underline{z}_2), \ldots \}$ such that the full collection of points in $D$ is discrete, closed, and no point of $D$ lies in the boundary $\partial C$.
\end{itemize}

A \textbf{marked nodal} Riemann surface is the datum 
$$(C, j, D, \mu)$$
where $(C, j, D)$ is a nodal Riemann surface and $\mu$ is a closed, discrete subset of $C \setminus (D \cup \partial C)$, called the set of \emph{marked points}. 
\end{defn}

\begin{rem}
We will often abuse notation and refer to the set $D$ of \emph{pairs} of nodal points as merely a \emph{set} of nodal points.  
\end{rem}

\begin{defn}\label{defn:stableSurface}
A marked nodal Riemann surface $(C, j, D, \mu)$ is \textbf{stable} if every connected component of $C \setminus (D \cup \mu)$ has negative Euler characteristic. 
\end{defn}

Next, we define the \emph{arithmetic genus} of a nodal Riemann surface. Doing so first requires defining the \emph{genus} of a smooth, two-dimensional manifold. 

Recall for any connected \emph{compact} surface $C$, the genus of $C$, denoted by $\text{Genus}(C)$, is the genus of the closed surface obtained from $C$ by capping off all of the boundary components of $C$ with disks. For any disconnected surface $C$ with compact connected components, $\text{Genus}(C)$ takes values in the extended positive integers $\mathbb{Z}_{\geq 0} \cup \{\infty\}$. We do not restrict $C$ to have finitely many connected components. If infinitely many connected components of $C$ have positive genus, we set $\text{Genus}(C) = \infty$. If not, $\text{Genus}(C)$ is equal to the sum of the genera of its connected components. 

In the case where $C$ is not compact, the genus is defined using a compact exhaustion. 

\begin{defn} \label{defn:curveGenus}
The \textbf{genus} $\text{Genus}(C)$ of a connected, non-compact Riemann surface $C$ is defined as follows. Select a sequence of compact sub-surfaces $\{C_k\}_{k \geq 0}$ such that
$$C_k \subset \text{int}(C_{k+1})$$
for every $k$ and 
$$C = \cup_{k \geq 0} C_k.$$

Then set
$$\text{Genus}(C) = \lim_k \text{Genus}(C_k) \in \mathbb{Z}_{\geq 0} \cup \{\infty\}.$$

If $C$ is disconnected and non-compact, and any of its connected components $C'$ have $\text{Genus}(C') = \infty$, we set $\text{Genus}(C) = \infty$. Otherwise, we define $\text{Genus}(C)$ to be equal to $\infty$ if infinitely many connected components have positive genus, and equal to the sum of the genera of its connected components if only finitely many connected components have positive genus. 
\end{defn}

It is a straightforward exercise to show that the definition of $\text{Genus}(C)$ is independent of the choice of compact exhaustion $\{C_k\}$. Now, we define arithmetic genus. 

\begin{defn} \label{defn:curveArithmeticGenus}
Let $(C, j, D)$ be a nodal Riemann surface. Let $C'$ be the surface constructed from $C$ by performing a connect sum operation at each pair of nodal points. Then the \textbf{arithmetic genus} is defined by 
$$\text{Genus}_{\text{arith}}(C, D) = \text{Genus}(C')$$
where $\text{Genus}(C')$ is defined as in Definition \ref{defn:curveGenus}.
\end{defn}

\begin{rem}
Let $(C, j, D)$ be a nodal Riemann surface, and suppose it has finitely many connected components $\{C_i\}_{i=1}^k$. Then from Definition \ref{defn:curveArithmeticGenus} it is immediate that
$$\text{Genus}_{\text{arith}}(C, D) \geq \sum_{i=1}^k \text{Genus}(C_i),$$
which we will use later without comment. 
\end{rem}

Now we can define pseudoholomorphic curves.

\begin{defn} \label{defn:nodalCurve}
A \textbf{(marked nodal) pseudoholomorphic curve} or \textbf{$J$-holomorphic curve} is the datum
$$\mathbf{u} = (u, C, j, W, J, D, \mu)$$
which consists of the following data:
\begin{itemize}
    \item A marked nodal Riemann surface $(C, j, D, \mu)$.
    \item An almost-complex manifold $(W, J)$.
    \item A smooth map
    $$u: C \to W$$
    satisfying the non-linear Cauchy-Riemann equation
    $$du + J \circ du \circ j = 0$$
    and the property that $u(\overline{z}) = u(\underline{z})$ for every pair $(\overline{z}, \underline{z}) \in D$.
\end{itemize}
\end{defn}

In symplectic geometry, one usually studies pseudoholomorphic curves
$$\mathbf{u} = (u, C, j, W, J, D, \mu)$$
in symplectic manifolds $(W, \Omega)$, where $J$ is either \emph{tame} or \emph{compatible} with respect to the symplectic form $\Omega$.

\begin{defn}
\label{defn:tamedAndCompatible} An almost-complex structure $J$ on a symplectic manifold $(W, \Omega)$ is \textbf{tame} if 
$$\Omega(V, JV) > 0$$
for any tangent vector $V$. It is \textbf{compatible} if it is tame and additionally
$$\Omega(JV_1, JV_2) = \Omega(V_1, V_2)$$
for any pair of tangent vectors $V_1$ and $V_2$. 
\end{defn}

Gromov-Witten invariants count \textbf{stable} $J$-holomorphic curves in symplectic manifolds, where $J$ is tame. We introduce the notion of a stable pseudoholomorphic curve (which does not require the target to be symplectic or $J$ to be tame/compatible) in the following two definitions.

\begin{defn} \label{defn:connectedCurve}
A marked nodal pseudoholomorphic curve 
$$\mathbf{u} = (u, C, j, W, J, D, \mu)$$
is \textbf{connected} the surface constructed from $C$ by performing connect sum operations at each pair of nodal points is connected.
\end{defn}

\begin{defn} \label{defn:stableMap}
A \textbf{stable} $J$-holomorphic curve is a marked nodal boundary-immersed $J$-holomorphic curve
$$\mathbf{u} = (u, C, j, W, J, D, \mu)$$
such that for any connected component $C'$ of $C$ on which the map $u$ is constant, the Euler characteristic of $C' \setminus (\mu \cup D)$ is negative.
\end{defn}

\subsection{Pseudoholomorphic curves in adapted cylinders} \label{subsec:adaptedCylinders}

Fix $M$ to be a closed, smooth manifold of dimension $2n+1$. Equip $M$ with a framed Hamiltonian structure $\eta = (\lambda, \omega)$ and let $X$ be the associated Hamiltonian vector field. 

Before proceeding, we write down a couple of basic objects associated to $(M, \eta)$. 

There is a natural codimension-one tangent distribution on $M$ given by
$$\xi = \text{ker}(\lambda)$$
and the pair $(\xi, \omega)$ forms the datum of a symplectic bundle. 

Write $\langle X \rangle$ for the one-dimensional subbundle of $TM$ defined by the span of the Hamiltonian vector field $X$. Then there is a splitting
$$TM = \langle X \rangle \oplus \xi.$$

We will define the projection map
$$\pi_\xi: TM \to \xi$$
to be the projection with kernel $\langle X \rangle$.

Now recall the definition of an almost-Hermitian manifold.

\begin{defn} \label{defn:almostHermitian}
An \textbf{almost-Hermitian structure} on an even-dimensional manifold $W$ is the datum of a Riemannian metric $g$ and an almost-complex structure $J$ such that 
$$g(J-, J-) = g(-, -).$$

A \textbf{almost-Hermitian} manifold is an even-dimensional manifold $W$ equipped with an almost-Hermitian structure $(J, g)$.
\end{defn}

At one point, we will require the notion of an \emph{isomorphism} of two almost-Hermitian manifolds. 

\begin{defn}
\label{defn:almostHermitianIsomorphism}

Let $(W_1, J_1, g_1)$ and $(W_2, J_2, g_2)$ be two almost-Hermitian manifolds of the same dimension. We say that they are \textbf{isomorphic} if there is a diffeomorphism
$$\Phi: W_1 \simeq W_2$$
such that
$$\Phi^*g_2 = g_1$$
and
$$\Phi^*J_2 = J_1.$$
\end{defn}

There is a distinguished class of translation-invariant almost-complex structures $J$ on the cylinder $\mathbb{R} \times M$ called \emph{$\eta$-adapted} almost-complex structures which incorporate the Hamiltonian vector field $X$. 

Such an almost-complex structure (denoted for the moment by $J$) has a natural associated Riemannian metric $g$, and it is easy to show that the pair $(J, g)$ forms an almost-Hermitian structure on $\mathbb{R} \times M$. 

We now proceed with all of the relevant definitions. 

\begin{defn} \label{defn:adaptedJ}
An almost-complex structure on $\mathbb{R} \times M$ is \textbf{$\eta$-adapted} (or just \textbf{adapted} when the context is clear) if it satisfies the following properties:
\begin{enumerate}
    \item $J$ is invariant with respect to translation in the $\mathbb{R}$ factor. 
    \item Let $a$ denote the $\mathbb{R}$ coordinate. Then
    $$J(\partial_a) = X$$
    and
    $$J(X) = -\partial_a$$
    \item $J$ preserves $\xi = \text{ker}(\lambda)$ and the restriction $J|_\xi$ is compatible with $\omega$, i.e.
    $$\omega(JV_1, JV_2) = \omega(V_1,V_2)$$
    for any pair of tangent vectors $V_1$ and $V_2$ in $\xi$ and
    $$\omega(V, JV) > 0$$
    for any tangent vector $V$ in $\xi$. 
\end{enumerate}
\end{defn}

Now fix an adapted almost-complex structure $J$ on $\mathbb{R} \times M$. We will always use $a$ to denote the $\mathbb{R}$-coordinate on $\mathbb{R} \times M$. Then we may define a Riemannian metric $g$ by 
$$g(-, -) = (da \wedge \lambda + \omega)(-, J-).$$

Here $\lambda$ and $\omega$ denote the translation-invariant pullbacks to $\mathbb{R} \times M$ of the corresponding differential forms on $M$. 

\begin{rem} \label{rem:metric}
Note by definition that this metric is cylindrical, so it induces a natural metric $g$ on $M$ given explicitly by
$$g(-, -) = (\lambda \otimes \lambda)(-, -) + \omega(-, J-).$$

When we think of $g$ as a metric on $M$ in what follows, it will be this metric. 

It follows from this computation and the previous definitions that
$$\text{dvol}_g = \frac{1}{n!}\lambda \wedge \omega^n$$
and $X$ preserves this volume form. 
\end{rem}

The proof of the following lemma amounts to a quick algebraic computation using the above definitions.

\begin{lem}
The datum $(J, g)$ defines an almost-Hermitian structure on $\mathbb{R} \times M$. 
\end{lem}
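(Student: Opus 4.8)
The plan is to recognize the metric as $g = \Omega_0(\,\cdot\,,J\,\cdot\,)$ for the translation-invariant $2$-form $\Omega_0 := da\wedge\lambda+\omega$ on $\mathbb{R}\times M$, and to reduce the lemma to the pointwise linear-algebra statement that a complex structure $J$ (i.e. $J^2=-\mathrm{id}$) on a real vector space $E$ which preserves an antisymmetric bilinear form $\Omega_0$, in the sense that $\Omega_0(JV,JW)=\Omega_0(V,W)$, and which satisfies $\Omega_0(V,JV)>0$ for all $V\neq 0$, produces an inner product $g(V,W):=\Omega_0(V,JW)$ with $g(JV,JW)=g(V,W)$. The symmetry $g(V,W)=g(W,V)$ and the identity $g(JV,JW)=g(V,W)$ are formal consequences of $J^2=-\mathrm{id}$ and the $J$-invariance of $\Omega_0$ (e.g. $g(W,V)=\Omega_0(W,JV)=\Omega_0(JW,-V)=-\Omega_0(JW,V)=\Omega_0(V,JW)=g(V,W)$), while positive-definiteness is exactly the hypothesis $\Omega_0(V,JV)>0$; note this last condition also forces $\Omega_0$ to be non-degenerate. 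So it remains only to verify, for the specific $J$ and $\Omega_0$ at hand: (a) $\Omega_0(JV,JW)=\Omega_0(V,W)$; and (b) $\Omega_0(V,JV)>0$ for $V\neq 0$.

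First I would fix the splitting $T(\mathbb{R}\times M)=\langle\partial_a\rangle\oplus\langle X\rangle\oplus\xi$, using the translation-invariant pullbacks of $X$, $\xi$, $\lambda$, $\omega$ from $M$, and record the elementary identities $\lambda(\partial_a)=0$, $\lambda(X)=1$, $\lambda|_\xi\equiv 0$, $da(X)=0$, $da|_\xi\equiv 0$, $\iota_{\partial_a}\omega=0$, $\iota_X\omega=0$ (the last one from Definition \ref{defn:hamiltonianVectorField}), together with $J\partial_a=X$, $JX=-\partial_a$ and $J\xi=\xi$ from Definition \ref{defn:adaptedJ}. Writing $V=\alpha\partial_a+\beta X+v$ and $W=\alpha'\partial_a+\beta'X+w$ with $v,w\in\xi$, a direct computation gives
\[
\Omega_0(V,W)=\alpha\beta'-\alpha'\beta+\omega(v,w).
\]
Since $JV=-\beta\partial_a+\alpha X+Jv$ has exactly the same shape with the triple $(\alpha,\beta,v)$ replaced by $(-\beta,\alpha,Jv)$, property (a) collapses to $\omega(Jv,Jw)=\omega(v,w)$, and property (b) collapses to $\alpha^2+\beta^2+\omega(v,Jv)>0$ whenever $(\alpha,\beta,v)\neq(0,0,0)$. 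Both follow at once from the $\omega$-compatibility of $J|_\xi$ recorded in item (3) of Definition \ref{defn:adaptedJ}. This establishes that $g$ is a Riemannian metric with $g(J\,\cdot\,,J\,\cdot\,)=g(\,\cdot\,,\cdot\,)$, i.e. $(J,g)$ is an almost-Hermitian structure on $\mathbb{R}\times M$.

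I do not expect a genuine obstacle here: the computation is purely algebraic and fits on a page. The one spot that deserves a sentence of care is that, because $M$ is not assumed to be of contact type or stable Hamiltonian, the $2$-form $\Omega_0=da\wedge\lambda+\omega$ need be neither closed nor manifestly non-degenerate, so one should not invoke the standard ``$\Omega$-compatible almost-complex structure'' package wholesale; instead non-degeneracy of $\Omega_0$ is recovered after the fact from (b), and everything else is bookkeeping in the splitting above.
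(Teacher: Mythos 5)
Your proof is correct, and it is exactly the ``quick algebraic computation'' the paper alludes to without spelling out: the pointwise reduction to $\Omega_0$-compatibility of $J$, the verification in the splitting $T(\mathbb{R}\times M)=\langle\partial_a\rangle\oplus\langle X\rangle\oplus\xi$ that $\Omega_0(V,W)=\alpha\beta'-\alpha'\beta+\omega(v,w)$, and the observation that both $J$-invariance and positivity of $\Omega_0(\cdot,J\cdot)$ then collapse to item (3) of Definition~\ref{defn:adaptedJ}. Your cautionary remark at the end is also apt: since $M$ is not assumed contact-type or stable Hamiltonian, $\Omega_0$ is generally not closed, so one really should argue at the level of pointwise compatibility rather than invoke the symplectic-compatibility package; this is precisely the care the paper's terminology (``adapted cylinder'' rather than ``symplectization'') is signalling.
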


We refer to almost-Hermitian manifolds of the form $(\mathbb{R} \times M, J, g)$ as \emph{adapted cylinders} over $M$. 

\begin{defn}
\label{defn:adaptedCylinder} An \textbf{$\eta$-adapted cylinder} (or \textbf{adapted cylinder} when the context is clear) over a framed Hamiltonian manifold $(M, \eta = (\lambda, \omega))$ is an almost-Hermitian manifold of the form $(\mathbb{R} \times M, g, J)$ where $(g,J)$ is the almost-Hermitian structure defined by a choice of an $\eta$-adapted almost-complex structure $J$ and 
$$g(-, -) = (da \wedge \lambda + \omega)(-, J-).$$
\end{defn}

\begin{rem}
The definition of an $\eta$-adapted almost-complex structure $J$ on $\mathbb{R} \times M$ was introduced in \cite{FishHoferFeral}, and mirrors the definition of an admissible almost-complex structure $J$ on the symplectization $\mathbb{R} \times M$ over a contact manifold introduced in \cite{HoferWeinstein}. Accordingly, \cite{FishHoferFeral} refers to $\mathbb{R} \times M$ as the ``symplectization'' due to this correspondence with the contact case. 

We prefer to use the terminology ``adapted cylinder'' because $\mathbb{R} \times M$ may not be symplectic if $M$ is not contact. More specifically, when $M$ is a framed Hamiltonian manifold that is not a contact manifold, $\mathbb{R} \times M$ to our knowledge does not necessarily admit a natural symplectic form that is compatible with an $\eta$-adapted almost-complex structure $J$. 
\end{rem}

Fix a choice of $\eta$-adapted cylinder $(\mathbb{R} \times M, g, J)$ over $M$. Now we will define ``$\omega$-finite'' pseudoholomorphic curves, which are $J$-holomorphic curves of the form
$$\mathbf{u} = (u, C, j, \mathbb{R} \times M, J, D, \mu)$$
satisfying some additional restrictions. 

We begin with defining a couple of useful notions of energy. Let $a$ denote the $\mathbb{R}$-coordinate projection function on $\mathbb{R} \times M$. 

Write $\mathcal{R}$ for the set of regular values of the real-valued map $a \circ u$. By Sard's theorem it follows that $\mathcal{R}$ has full Lebesgue measure in $\mathbb{R}$. 

\begin{defn} \label{defn:energy}
Let 
$$\mathbf{u} = (u, C, j, \mathbb{R} \times M, J, D, \mu)$$
be a $J$-holomorphic curve. The \textbf{$\lambda$-energy} of $\mathbf{u}$ is the function
$$E_\lambda(u, -): \mathcal{R} \to \mathbb{R}$$
defined by
$$E_\lambda(u, t) = \int_{(a \circ u)^{-1}(t)} u^*\lambda.$$

The \textbf{$\omega$-energy} of $\mathbf{u}$ is
$$E_\omega(u) = \int_C u^*\omega.$$
\end{defn}

The following lemma is a consequence of the adaptedness of $J$.

\begin{lem} \label{lem:omegaNonnegative}
Let
$$\mathbf{u} = (u, C, j, \mathbb{R} \times M, J, D, \mu)$$
be a connected $J$-holomorphic curve.

The $\omega$-energy $E_\omega(u)$ is non-negative, and it is equal to zero if and only if either the map $u$ is constant or has image contained in $\mathbb{R} \times \gamma \subset \mathbb{R} \times M$, where $\gamma$ is a trajectory of the Hamiltonian vector field.  

Moreover, $u^*\omega$ is non-negative pointwise on the tangent planes of $C$. 
\end{lem}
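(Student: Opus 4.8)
The plan is to compute $u^*\omega$ pointwise and show it is a non-negative multiple of an area form, which simultaneously gives pointwise non-negativity, non-negativity of the integral $E_\omega(u)$, and the characterization of the vanishing locus. I would work at a point $z \in C$ where $du_z \neq 0$ (on the critical set $u^*\omega$ trivially vanishes) and pick a $j$-holomorphic coordinate $s + it$ near $z$, so that $\partial_t u = J \partial_s u$ by the Cauchy–Riemann equation. Then $u^*\omega$ at $z$ is $\omega(\partial_s u, \partial_t u)\, ds \wedge dt = \omega(\partial_s u, J\partial_s u)\, ds \wedge dt$, so the whole claim reduces to showing that $\omega(V, JV) \geq 0$ for every tangent vector $V$ to $\mathbb{R} \times M$, with equality precisely when $V$ is a (real) multiple of $\partial_a$ or of $X$ — equivalently, when $V$ lies in the span of $\{\partial_a, X\}$.

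To see this, decompose an arbitrary $V = \alpha \partial_a + \beta X + V_\xi$ with $V_\xi \in \xi$, using the splitting $T(\mathbb{R}\times M) = \mathbb{R}\partial_a \oplus \langle X\rangle \oplus \xi$. By the adaptedness properties (Definition \ref{defn:adaptedJ}): $J\partial_a = X$, $JX = -\partial_a$, and $J$ preserves $\xi$. Hence $JV = \alpha X - \beta \partial_a + JV_\xi$. Now evaluate $\omega$ on the pair. Since $\omega = \Theta|_M$-type form with $\omega(X,-) \equiv 0$ (the defining property of $X$) and $\omega$ restricted to $\xi$ being $J$-compatible (hence positive), all cross terms involving $\partial_a$ or $X$ drop out — $\partial_a$ is killed because $\omega$ is pulled back from $M$ and $\omega$ is killed on $\partial_a$; $X$ is killed because $\omega(X,-) \equiv 0$. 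What survives is $\omega(V_\xi, JV_\xi)$, which by the compatibility of $J|_\xi$ with $\omega$ is $> 0$ unless $V_\xi = 0$. So $\omega(V, JV) = \omega(V_\xi, JV_\xi) \geq 0$ with equality iff $V_\xi = 0$, i.e. iff $V \in \mathrm{span}\{\partial_a, X\}$. This proves pointwise non-negativity of $u^*\omega$, and integrating gives $E_\omega(u) \geq 0$.

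For the vanishing characterization of $E_\omega(u)$: since $u^*\omega \geq 0$ pointwise, $E_\omega(u) = 0$ forces $u^*\omega \equiv 0$, hence at every non-critical point $z$ the image $du_z(T_zC)$ lies in $\mathrm{span}\{\partial_a, X\}$. If $u$ is not constant, the non-critical set is open and dense in (each component of) $C$; on it, the $2$-plane $\mathrm{span}\{\partial_a, X\} = \mathrm{span}\{\partial_a, J\partial_a\}$ is a $J$-invariant plane field, and the restriction of $u$ to the non-critical locus is an immersion into the integral foliation of this plane field, whose leaves are exactly $\mathbb{R} \times \gamma$ for $\gamma$ a trajectory of $X$ (since $X$ is the generator and $\partial_a$ the $\mathbb{R}$-direction, and these commute). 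By connectedness of $C$ and continuity, the image of $u$ lies in a single such $\mathbb{R} \times \gamma$. Conversely, if $u$ maps into some $\mathbb{R}\times\gamma$, then $du$ takes values in $\mathrm{span}\{\partial_a, X\}$ everywhere, so $u^*\omega \equiv 0$ by the computation above, giving $E_\omega(u) = 0$.

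The main obstacle — really the only place needing care — is the vanishing-locus argument when $u$ is non-constant: one must argue cleanly that "$du$ valued in the plane field $\mathrm{span}\{\partial_a,X\}$ on a dense set, plus connectedness" pins the image to a single leaf $\mathbb{R}\times\gamma$ rather than merely to the (possibly global and recurrent) union of leaves. This is handled by noting that $\mathrm{span}\{\partial_a, X\}$ is an involutive (indeed integrable) distribution — $[\partial_a, X] = 0$ since $J$ and $X$ are translation-invariant — so it defines a genuine foliation $\mathcal{F}$ of $\mathbb{R}\times M$ with leaves $\mathbb{R}\times\gamma$; the non-critical locus of $u$ maps into leaves of $\mathcal{F}$, and since that locus is connected after passing to each component of $C$ (or one invokes that its closure is all of $C$), the image of the whole component lies in the closure of one leaf, which since $M$ is closed and $\gamma$ an orbit is still contained in a set of the form $\mathbb{R} \times \overline{\gamma}$ — and a small additional continuity/openness argument, or simply working locally and using that $u^{-1}(\mathbb{R}\times\gamma)$ is open and closed, upgrades this to containment in a single $\mathbb{R}\times\gamma$. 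Everything else is the routine linear-algebra computation in Definition \ref{defn:adaptedJ} together with $\omega(X,-)\equiv 0$.
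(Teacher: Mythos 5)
The paper itself does not prove this lemma (it simply records it as "a consequence of the adaptedness of $J$", following Fish--Hofer), so there is nothing to compare against; judging your argument on its own, the pointwise computation is correct and is the heart of the matter: in the splitting $T(\mathbb{R}\times M)=\mathbb{R}\partial_a\oplus\langle X\rangle\oplus\xi$, the facts $\iota_{\partial_a}\omega=0$, $\iota_X\omega=0$, $J\partial_a=X$, $JX=-\partial_a$, and $J\xi=\xi$ give $\omega(V,JV)=\omega(V_\xi,JV_\xi)\ge 0$ with equality iff $V_\xi=0$, and together with $\partial_t u=J\partial_s u$ this yields pointwise non-negativity of $u^*\omega$, hence $E_\omega(u)\ge 0$, and the converse implication for maps into $\mathbb{R}\times\gamma$.

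There are, however, two loose ends in the vanishing characterization. First, and more substantively, "connected" in this paper means connected as a \emph{nodal} curve (Definition \ref{defn:connectedCurve}): $C$ may be disconnected as a surface, with components joined only through nodal pairs, and $u$ may be constant on some components and non-constant on others. Your argument, which works component by component on the surface $C$, only shows that each component lands in \emph{some} leaf of the foliation by $\mathbb{R}\times\gamma$; to get a single $\gamma$ for the whole image you must invoke the matching condition $u(\overline z)=u(\underline z)$ at nodes and connectedness of the configuration: two components meeting at a node have leaves sharing a point, hence the same leaf, and a constant component takes its value at a node of a neighbor, hence sits in that same leaf; induct over the dual graph. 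In particular your claim that the non-critical set is "open and dense in (each component of) $C$" is false on components where $u$ is constant, and in any case it is unnecessary together with the closure detour via $\mathbb{R}\times\overline{\gamma}$ (which by itself would not suffice, since $\overline{\gamma}$ can be much larger than $\gamma$, e.g.\ for a dense orbit): since $du_z=0$ at critical points, $du$ is tangent to the integrable distribution $\mathrm{span}\{\partial_a,X\}$ \emph{everywhere} once $u^*\omega\equiv 0$, and then the standard foliation-chart argument (the transverse coordinates of $u$ are locally constant, so the leaf containing $u(z)$ is a locally constant function of $z$, equivalently the preimage of each leaf is clopen) directly places each surface component in a single leaf. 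With these two repairs the proof is complete.
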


Now we can define $\omega$-finite pseudoholomorphic curves.

\begin{defn}\label{defn:omegaFiniteCurve}
A $J$-holomorphic curve 
$$\mathbf{u} = (u, C, j, \mathbb{R} \times M, J, D, \mu)$$ is \textbf{$\omega$-finite} if
\begin{itemize}
\item The map $u$ is proper.
\item The image $u(C)$ has finitely many connected components.
\item $E_\omega(u) < \infty$.
\item The domain $C$ is has at least one non-compact connected component. 
\item There is a compact set $K \subseteq \mathbb{R} \times M$ such that the image $u(\partial C)$ of the boundary of $C$ is contained in $K$.
\end{itemize}
\end{defn}

The second condition regarding the connectedness of $u(C)$ is unnatural, but required to rule out some pathological cases in the proof of Proposition $2.1$ in \cite{prequel}. However, it is extremely weak and quite easy to verify in practice.

Definition \ref{defn:omegaFiniteCurve} is rather similar to the definition of a ``feral curve'' given in \cite{FishHoferFeral}. 
However, the domain of a feral pseudoholomorphic curve satisfies several additional ``finite topology'' assumptions that the domain of an $\omega$-finite curve is not required to satisfy. One of them, the number of ``generalized punctures'' of a pseudoholomorphic curve, requires additional explanation.

We define this below, but again one should either check that this is well-defined or refer to \cite{FishHoferFeral}.

\begin{defn} \label{defn:punct}
For any compact sub-surface $C' \subseteq C$, write $\mathcal{C}_{\text{non-compact}}(C\setminus C')$ for the number of non-compact connected components of $C \setminus C'$. Then for any exhaustion of $C$ by compact sub-surfaces $\{C_k\}_{k \geq 0}$ as in Definition \ref{defn:curveGenus}, define
$$\text{Punct}(C) = \lim_{k \to \infty} \#\mathcal{C}_{\text{non-compact}}(C \setminus (C_k \setminus \partial C_k)) \in \mathbb{Z}_{\geq 0} \cup \{\infty\}.$$
\end{defn}

We leave it to the reader to check that this is well-defined. A description of a proof can be found in \cite[Remark $1.4$]{FishHoferFeral}. Now we can define a feral pseudoholomorphic curve.

\begin{defn} \label{defn:feralCurve}
A $J$-holomorphic curve
$$\mathbf{u} = (u, C, j, \mathbb{R} \times M, J, D, \mu)$$
is \textbf{feral} if it is $\omega$-finite, and moreover satisfies the following conditions:
\begin{itemize}
\item $\text{Genus}(C) < \infty$.
\item $\#D < \infty$.
\item $\#\mu < \infty$.
\item $\text{Punct}(C) < \infty.$
\end{itemize}
\end{defn}

\subsection{Statement of main theorems} \label{subsec:thmStatements}

We are now ready to state our main results. The first result requires non-vanishing of a certain Gromov-Witten invariant, which we will discuss following its statement. 

\begin{thm}
\label{thm:mainExample} Let $(W, \Omega)$ be a closed, oriented symplectic manifold of dimension $2n+2$ where $n \geq 1$. Let $H$ be a smooth Hamiltonian function on $W$ with $0$ as a regular value, and let $M = H^{-1}(0)$ denote the corresponding regular energy level. Let $\eta = (\lambda, \omega)$ denote a framed Hamiltonian structure on $M$ with $\omega = \Omega|_M$. 

Suppose that the Gromov-Witten invariant given in Definition \ref{defn:GWHypersurface} does not vanish:
$$\Psi^{W,M} \neq 0.$$

Then for any adapted cylinder
$$(\mathbb{R} \times M, J, g)$$
over $M$, there is an $\omega$-finite pseudoholomorphic curve denoted by
$$\mathbf{u} = (u, C, j, \mathbb{R} \times M, J, D, \mu).$$
\end{thm}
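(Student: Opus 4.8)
The plan is to construct the $\omega$-finite curve in $\mathbb{R} \times M$ by a neck-stretching degeneration of Gromov-Witten curves in $W$. First I would set up the degenerating family of almost-complex structures: fix a tubular neighborhood $(-\epsilon, \epsilon) \times M$ of $M = H^{-1}(0)$ in $W$, with the coordinate on the first factor essentially given by $H$ rescaled, and choose a tame almost-complex structure $J_W$ on $W$ whose restriction to this neck is modeled on an $\eta$-adapted almost-complex structure pulled back from $\mathbb{R} \times M$ (so that it agrees, after the identification, with the given $J$). Then insert necks of length $R \to \infty$, producing a family $J_R$ of tame almost-complex structures on $W$. Since $\Psi^{W,M} \neq 0$, for every $R$ there is a stable $J_R$-holomorphic curve $\mathbf{u}_R$ in $W$ in the relevant homology class, with the prescribed genus and marked-point/incidence conditions, and — crucially — the incidence conditions defining $\Psi^{W,M}$ force the image of $\mathbf{u}_R$ to intersect $M$ (this is exactly what Definition \ref{defn:GWHypersurface} is designed to guarantee). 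The $\omega$-energy and the symplectic area of these curves are uniformly bounded by the homology class, independently of $R$.

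Next I would extract a limit as $R \to \infty$. This is the heart of the argument and the place where one cannot simply quote SFT compactness, because $M$ is only a framed Hamiltonian hypersurface, not of contact or stable Hamiltonian type; the usual uniform gradient bounds and the absence of bubbling in the neck are not available. Here I would invoke the analytical machinery of Fish-Hofer \cite{FishHoferFeral}: their a priori estimates (the "target-local" $\lambda$-energy and $\omega$-energy bounds, and the fundamental estimate controlling the $\lambda$-energy in terms of the $\omega$-energy and the geometry) apply precisely to $\eta$-adapted cylinders and yield, after rescaling coordinates in the neck so that the neck becomes a longer and longer piece of $\mathbb{R} \times M$, a subsequential limit that contains at least one non-trivial component mapping properly into $\mathbb{R} \times M$. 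Concretely: because the curves $\mathbf{u}_R$ must cross $M$, after recentering the $\mathbb{R}$-coordinate appropriately there is a sequence of points on the domains mapping into the neck at bounded distance from the $\{a = 0\}$ slice, and the Fish-Hofer estimates prevent all the energy from escaping to the ends or concentrating into ghost bubbles, so a Gromov-type limit retains a component $\mathbf{u}$ in $\mathbb{R} \times M$.

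Finally I would verify that the resulting curve $\mathbf{u} = (u, C, j, \mathbb{R} \times M, J, D, \mu)$ satisfies every clause of Definition \ref{defn:omegaFiniteCurve}. Properness of $u$ follows from the fact that the limit is obtained as a proper limit of the neck pieces (no boundary is created in the interior of $\mathbb{R} \times M$, only at finite $a$-levels coming from where the neck was glued to the caps) together with the standard fact that a $J$-holomorphic curve with bounded energy cannot accumulate in a bounded region of a cylindrical end; finiteness of the number of connected components of $u(C)$ follows from the uniform genus and energy bounds inherited from the Gromov-Witten class; $E_\omega(u) < \infty$ follows from the uniform area bound and Lemma \ref{lem:omegaNonnegative} (pointwise non-negativity of $u^*\omega$, so no cancellation); the existence of a non-compact component of $C$ is exactly the content of the extraction step above, since the component that survives in $\mathbb{R} \times M$ must reach into at least one end to have come from an arbitrarily long neck; and $u(\partial C)$ lies in a fixed compact set because $\partial C$ comes from the interfaces where the stretched neck met the compact "caps" in $W$, which sit at bounded $a$-levels. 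The main obstacle, to reiterate, is the compactness step: one must show that the crossing of $M$ actually survives in the limit and is not lost to a ghost component or pushed off to infinity, and this is where the new estimates of \cite{FishHoferFeral}, rather than classical SFT compactness, are essential — together with handling the additional wrinkle, not present in \cite{FishHoferFeral}, that our curves arise from a genuinely compact symplectic manifold via neck stretching rather than being given a priori in $\mathbb{R} \times M$.
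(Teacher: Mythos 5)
Your overall strategy --- neck-stretch around $M$, use $\Psi^{W,M}\neq 0$ to produce crossing curves in the stretched manifolds, and pass to a limit in $\mathbb{R}\times M$ using Fish-Hofer's analytical machinery --- is the paper's approach, and you correctly identify that the hard step is the degeneration rather than the existence of GW curves. But there are two genuine gaps in the proposal as written.

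First, you assert without justification that inserting necks ``produces a family $J_R$ of tame almost-complex structures on $W$.'' This is precisely the content of Proposition~\ref{prop:tameJ}, which the paper flags as surprisingly subtle and devotes a careful construction (Lemma~\ref{lem:tameJ1}) to. An $\eta$-adapted $J$ on the neck has $J(\partial_a)=X$ and $\omega(\partial_a,X)=0$, so $\omega$ alone cannot tame in the $\langle\partial_a,X\rangle$-directions; you must pull $\Omega=\omega+d(a\lambda)$ back through a squeezing diffeomorphism $a\mapsto h_L(a)$ to produce $\omega+h_L(a)\,d\lambda+h_L'(a)\,da\wedge\lambda$, and then tameness on $\xi$ forces $|h_L|$ small (so the $h_L(a)\,d\lambda$ perturbation of $\omega$ stays subordinate) while tameness in the $\langle\partial_a,X\rangle$-directions forces $h_L'>0$ uniformly. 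The Peter--Paul step in the proof of Proposition~\ref{prop:tameJ} shows the cross-terms scale like $|h_L|^2$, so the two demands can be reconciled with $|h_L|\lesssim L^{-1}$ and $h_L'\gtrsim L^{-2}$; but with a generic monotone interpolation these conditions are in tension and tameness can fail. The same construction is then reused to get the uniform $\hat\omega$-energy bound (Lemma~\ref{lem:sameSmallOmega}), which you also take for granted: $E_\omega$ is \emph{not} simply topological, because $\omega$ and $\Omega_L$ differ by an extra term in the neck, and vanishing of that extra contribution requires $h_L=\beta_L$ near the interfaces so Stokes applies.

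Second, your argument that the limit curve has a non-compact component --- ``the component that survives\ldots must reach into at least one end to have come from an arbitrarily long neck'' --- does not actually establish this. The limit curve's \emph{image} is unbounded in $\mathbb{R}\times M$, but a priori the limit could consist of a countable string of \emph{closed} components marching off to infinity (see Figure~\ref{fig:badLimitCurve}); nothing about coming from long necks forbids this. The paper rules this out (Lemma~\ref{lem:limitCurveNonCompactComponent}) by combining the finite genus of the limit (Lemma~\ref{lem:limitCurveGenusBound}) --- so that infinitely many closed components would force infinitely many spheres --- with the $\omega$-energy quantization of Theorem~\ref{thm:omegaEnergyQuantization}, which gives each such sphere $\omega$-energy at least a fixed $\hbar>0$, contradicting the finite total $\omega$-energy. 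You cite both ingredients but do not connect them this way.

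Two smaller issues: to invoke exhaustive Gromov compactness you need uniformly bounded boundary components on the trimmed curves, which the paper obtains by first running target-local Gromov compactness on the trimmed-off end (Proposition~\ref{prop:boundaryComponentBound}); and the bound on the number of connected components of the domain does not follow from ``uniform genus and energy bounds'' alone but needs both the monotonicity estimate of Theorem~\ref{thm:monotonicity} and $\omega$-energy quantization (Propositions~\ref{prop:ncConnectedComponentsBound} and~\ref{prop:constantConnectedComponentsBound}).
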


As mentioned in the statement above, the Gromov-Witten invariant $\Psi^{W,M}$ is defined in Definition \ref{defn:GWHypersurface}. It is non-zero if there is $G \in \mathbb{N}$, $A \in H_2(W; \mathbb{Z})$, and submanifolds $Y_-$ and $Y_+$ lying in distinct components of $W \setminus M$ such that, for any tame almost-complex structure $J$ on $W$ (see Definition \ref{defn:tamedAndCompatible}), there is a stable, connected pseudoholomorphic curve
$$\mathbf{u} = (u, C, j, W, J, D, \mu)$$
such that $C$ is closed, $\text{Genus}_{\text{arith}}(C, D) = G$, $u$ represents the homology class $A$, and $u(C)$ intersects both $Y_-$ and $Y_+$, crossing the hypersurface $M$. This is the statement of Corollary \ref{cor:GWCrossingExistence}. 

The second result concerns hypersurfaces in a four-dimensional symplectic manifold $(W, \Omega)$ satisfying certain topological conditions. It can be viewed as an analogue of the main theorem of \cite{ChenWeinstein} for non-contact-type hypersurfaces. 

To understand these topological conditions, we need to establish two more pieces of notation. 

First, if $W$ is any smooth four-manifold, then $b_2^+(W)$ denotes the maximal possible dimension of a subspace of $H_2(W; \mathbb{R})$ that is positive-definite with respect to the intersection form
$$H_2(W; \mathbb{R}) \otimes H_2(W; \mathbb{R}) \to \mathbb{R}.$$

Second, recall that for any symplectic manifold $(W', \Omega')$, the space of almost-complex structures tamed by $\Omega'$ is contractible. It follows that the first Chern classes $c_1(TW', J)$ coincide for any almost-complex structure $J'$ tamed by $\Omega$. We use $c_1(W')$ to denote this common cohomology class. 

\begin{thm}
\label{thm:swExample} Let $(W, \Omega)$ be a closed, oriented symplectic manifold of dimension $4$. Let $H$ be a smooth Hamiltonian function on $W$ with $0$ as a regular value, and let $M = H^{-1}(0)$ denote the corresponding regular energy level. Let $\eta = (\lambda, \omega)$ denote a framed Hamiltonian structure on $M$ with $\omega = \Omega|_M$. 

Suppose that the following topological conditions hold.

\begin{itemize} 
\item $b_2^+(W) > 1$.
\item $c_1(W)^2 \neq 0$. 
\item The smooth manifold 
$$W_- = H^{-1}((-\infty, 0])) \subset W$$
bounding $M$ is such that the restriction to $\Omega$ of $W_-$ is exact and $c_1(W_-) \neq 0$. 
\end{itemize}

Then for any adapted cylinder
$$(\mathbb{R} \times M, g, J)$$
over $M$, there is an $\omega$-finite pseudoholomorphic curve denoted by
$$\mathbf{u} = (u, C, j, \mathbb{R} \times M, J, D, \mu).$$
\end{thm}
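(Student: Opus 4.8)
The plan is to reduce Theorem \ref{thm:swExample} to Theorem \ref{thm:mainExample}, or more precisely to the pseudoholomorphic-curve existence statement that underlies it, by replacing the Gromov--Witten input with a Seiberg--Witten input in dimension four. First I would invoke Taubes' non-vanishing theorem \cite{Taubes94}: since $b_2^+(W) > 1$, the Seiberg--Witten invariant of the canonical $\mathrm{Spin}^{\mathbb{C}}$ structure is $\pm 1$, and since $c_1(W)^2 \neq 0$ the canonical class is not torsion, so one gets a second basic class (namely $-c_1(W)$, up to sign conventions) with nontrivial invariant. Via Taubes' ``SW=Gr'' correspondence \cite{TaubesSWGR} (and the discussion in \cite{IonelParker97}), this produces, for a generic $\Omega$-tame $J$ on $W$, an embedded (possibly disconnected, possibly multiply covered on torus components) $J$-holomorphic curve Poincar\'e dual to $c_1(W)$, or rather to the class carrying the nonzero Gromov invariant. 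The goal is to arrange that some component of this curve must cross $M$.

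The crossing argument is where the hypotheses on $W_-$ enter, and I expect this to be the main obstacle. The point is to rule out the possibility that the entire Taubes current is supported in $W_+ = H^{-1}([0,\infty))$. Suppose for contradiction that the curve lies in $W_-$ (the case of $W_+$ being symmetric, or handled by the same class being usable on either side). Since $\Omega|_{W_-}$ is exact, a curve contained in $\mathrm{int}(W_-)$ with boundary on $M$ — or a closed curve in $W_-$ — has $\Omega$-area controlled by $\int_M$ of a primitive, hence by a topological quantity; but more importantly a closed $J$-holomorphic curve in the interior of $W_-$ would have zero symplectic area by exactness, forcing it to be constant, a contradiction. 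The hypothesis $c_1(W_-) \neq 0$ is what prevents the relevant homology class from dying when restricted to $W_-$: it guarantees the Poincar\'e-dual class to $c_1$ has nontrivial image, so the curve cannot simply be homologically trivial and pushed off. Making this precise requires a Mayer--Vietoris / excision argument comparing $H_2(W)$, $H_2(W_\pm)$ and $H_2(M)$, together with the observation that the Gromov invariant being nonzero in a class $A$ with $\langle \Omega, A \rangle > 0$ forbids $A$ from being represented inside an exact piece. I would package this as a lemma: under the three bulleted conditions, every $J$-holomorphic representative of the Taubes class has a component meeting both $W_- \setminus M$ and $W_+ \setminus M$, hence crossing $M$; this plays the role of Corollary \ref{cor:GWCrossingExistence} in the four-dimensional Seiberg--Witten setting, with the caveat (already flagged in the introduction) that the curve need not be connected.

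Once a $J_W$-holomorphic curve in $W$ crossing $M$ is in hand, for a suitable choice of $\Omega$-tame almost-complex structure on $W$ that is cylindrical and $\eta$-adapted near $M$, I would run the neck-stretching degeneration around $M$ exactly as in the proof of Theorem \ref{thm:mainExample}. The compactness argument — using the Fish--Hofer estimates \cite{FishHoferFeral} in place of the usual SFT compactness, since $M$ is neither contact nor stable Hamiltonian — extracts, from the sequence of curves with uniformly bounded $\Omega$-energy (bounded because the homology class and $\Omega$ are fixed), a limit building at least one component of which is a non-constant $J$-holomorphic curve in $\mathbb{R} \times M$ with finite $\omega$-energy, proper, with finitely many connected components in its image, and with boundary (if any) confined to a compact set. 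The fact that the original curve genuinely crosses $M$, rather than being tangent to or bouncing off it, is what ensures the broken limit has a piece in the cylindrical region $\mathbb{R} \times M$ with positive $\omega$-energy contribution surviving, so the resulting curve has a non-compact component and is $\omega$-finite in the sense of Definition \ref{defn:omegaFiniteCurve}. Since the neck-stretching and Fish--Hofer compactness machinery is shared verbatim with the proof of Theorem \ref{thm:mainExample}, the only genuinely new content here is the Seiberg--Witten crossing lemma described above, so I would structure the proof to cite the common part and concentrate on that lemma.
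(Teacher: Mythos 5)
Your overall strategy is the one the paper actually uses: Taubes' non-vanishing of the Seiberg--Witten invariant for the canonical spin-c structure, the SW=Gr correspondence to obtain pseudoholomorphic submanifolds Poincar\'e dual to $-c_1(W)$, a ``crossing lemma'' that forces some component to meet both sides of $M$, and then the same neck-stretching and Fish--Hofer compactness machinery as Theorem~\ref{thm:mainExample}. So you have identified the right architecture and the right inputs. That said, two points in your write-up are off or under-addressed.

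First, the crossing argument is not symmetric in $W_-$ and $W_+$, and your parenthetical ``the case of $W_+$ being symmetric'' is misleading: the two hypotheses (exactness of $\Omega|_{W_-}$ and $c_1(W_-)\neq 0$) both live on $W_-$, and they do two different jobs. Exactness of $\Omega|_{W_-}$ rules out any nonconstant closed component lying \emph{entirely} in $W_-$; the condition $c_1(W_-)\neq 0$ rules out \emph{all} components being disjoint from $W_-$ (otherwise restricting the class $-c_1(W)$ to $W_-$ would give zero). Together, some component must meet $W_-$ without being contained in it, hence crosses $M$. This is a direct restriction argument on cohomology classes, and does not really need a Mayer--Vietoris computation as you suggest; trying to set that up is unnecessary extra work and, as phrased, could lead you to believe the hypotheses on $W_-$ could be traded for hypotheses on $W_+$, which is not what the theorem asserts.

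Second, you invoke ``for a generic $\Omega$-tame $J$'' without flagging the issue that Taubes' Gromov invariant is defined in \cite{Taubes96} for $\Omega$-\emph{compatible} almost-complex structures, not tame ones. This matters because the neck-stretched almost-complex structures $\bar J_L$ are only shown to be \emph{tame} with respect to a pulled-back symplectic form (Proposition~\ref{prop:tameJ}), not compatible, so to extract curves in $W_L$ one needs the Gromov invariant to be computable with tame $J$. The paper explicitly identifies this as one of the two places requiring care in the Seiberg--Witten case (see the discussion following the statement of Theorem~\ref{thm:swExample}, and Propositions~\ref{prop:genericAdmissiblePairs} and \ref{prop:nonzeroGr}): one must check that the area bounds used in Taubes' transversality and compactness arguments still hold when ``compatible'' is replaced by ``tame,'' using the symmetrized metric $g(-,-) = \tfrac{1}{2}(\Omega(-,J-) - \Omega(J-,-))$. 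A complete proof needs to absorb this step rather than take it for granted. Relatedly, you mention adjunction gives a genus bound but should make explicit that this bound is what feeds the a priori topology estimates needed for exhaustive Gromov compactness, since unlike Theorem~\ref{thm:mainExample} the genus is not prescribed by a fixed Gromov--Witten insertion.
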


The proof uses Taubes' work on Seiberg-Witten invariants on symplectic manifolds. 

First, Taubes shows in \cite{Taubes94} that, if $b_2^+(W) > 1$, the Seiberg-Witten invariant of $(W, \Omega)$ corresponding to the class $-c_1(W)$ of the symplectic form does not vanish. Here the symplectic form is necessary to identify spin-c structures on $W$ with classes in $H^2(W; \mathbb{Z})$, as the Seiberg-Witten invariants usually take the former as an input.

Second, Taubes shows in \cite{TaubesSWGR} that, if $b_2^+(W) > 1$, the Seiberg-Witten invariants of $W$ equal the \emph{Gromov invariants} of $W$, a pseudoholomorphic curve counting invariant that we will discuss in more detail in Section \ref{sec:taubesGromov}. The non-vanishing of the Gromov invariant corresponding to $-c_1(W)$ implies that, if $c_1(W) \neq 0$, then for a \emph{generic} choice of tame almost-complex structure $J$, there is a pseudoholomorphic curve
$$\mathbf{u} = (u, C, j, W, J, \emptyset, \emptyset)$$
satisfying the following conditions:
\begin{itemize}
    \item $C$ is a finite, disjoint union of closed surfaces $C_k$ for $k = 1, \ldots, N$. 
    \item The images $u(C_k)$ are disjoint, embedded submanifolds of $W$, and the restriction of $u$ to $C_k$ is an embedding unless $C_k$ is a torus.
    \item The map $u$ represents the Poincar\'e dual of $-c_1(W)$. 
\end{itemize}

Third, the assumption that $c_1(W_-) \neq 0$ then implies that at least one component $C'$ of $C$ must be such that $u(C')$ intersects $W_-$. The assumption that $\Omega$ is exact on $W_-$ implies that $u(C')$ is not contained within $W_-$ for any component $C'$ of $C$. Therefore, $C$ always has at least one component $C'$ such that $u(C')$ intersects both $W_-$ and $W_+$. 

It is then a consequence of the Gromov compactness theorem, along with our topological assumptions, that there is a pseudoholomorphic curve 
$$\mathbf{u} = (u, C', j', W, J, D, \mu)$$
with connected domain $C'$ and image $u(C')$ intersecting both $W_-$ and $W_+$ for \emph{any} tame almost-complex structure $J$

The proof of Theorem \ref{thm:swExample} then proceeds in a similar manner to the proof of Theorem \ref{thm:mainExample}. However, there are two places where we must take care. 

First, Taubes only works with \emph{compatible} almost-complex structures in his definition of his Gromov invariant. Our neck stretching procedure requires existence of pseudoholomorphic curves for generic \emph{tame} almost-complex structures, not just generic compatible almost-complex structures. However, as we will discuss, Taubes' Gromov invariant can just as easily be defined for our required set of almost-complex structures, and moreover the definition of the Gromov invariant is independent of the choice of this tame almost-complex structure. 

Second, the existence statement for pseudoholomorphic curves deduced from nonvanishing of Taubes' Gromov invariant does not explicitly state the genus of the domain of the curve $C$. However, the fact that the image $u(C)$ is embedded, along with the adjunction formula, immediately allows us to bound the genus (see \cite[Lemma $2.4$]{ChenWeinstein} for an example of this argument in a related setting).

\subsection{Applications to non-unique ergodicity}

Theorems \ref{thm:mainExample} and \ref{thm:swExample} allow us to apply the results of \cite{prequel} to show that various Hamiltonian flows are not uniquely ergodic. The following general result is a restatement of \cite[Corollary $1.7$]{prequel}, with the references to other results within the statement of the former expanded for clarity.

\begin{prop}
\label{prop:nonUniqueErgodicity} \cite{prequel} Suppose $(M^{2n+1}, \eta = (\lambda, \omega))$ is a framed Hamiltonian manifold satisfying the following two conditions:
\begin{enumerate}
\item There is an $\eta$-adapted cylinder $(\mathbb{R} \times M, g, J)$ and an $\omega$-finite pseudoholomorphic curve
$$\mathbf{u} = (u, C, j, \mathbb{R} \times M, J, \emptyset, \emptyset).$$
\item Either $\omega^n$ is not exact, or $\omega$ is exact with primitive $\nu$ and the ``self-linking number''
$$\text{Lk}(\omega) = \int_M \nu \wedge \omega^n$$
does not vanish. 
\end{enumerate}

Then the Hamiltonian vector field $X$ is not uniquely ergodic. 
\end{prop}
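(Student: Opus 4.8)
The plan is to deduce Proposition~\ref{prop:nonUniqueErgodicity} directly by combining the hypothesized existence of an $\omega$-finite pseudoholomorphic curve with the measure-theoretic machinery of \cite{prequel}. Since this proposition is explicitly stated to be a restatement of \cite[Corollary~1.7]{prequel}, the ``proof'' I would write is really a bookkeeping argument: invoke the main construction theorem of \cite{prequel} (its Theorem~1.4) to manufacture an $X$-invariant probability measure from the curve $\mathbf{u}$, then invoke the ``interesting-ness'' theorems (Theorems~1.5, 1.6, 1.8 of \cite{prequel}) to show this measure differs from the normalized volume measure $(\int_M \lambda\wedge\omega^n)^{-1}\lambda\wedge\omega^n$, which together with the existence of a second distinct invariant measure yields non-unique ergodicity.

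First I would recall that unique ergodicity of the flow of $X$ means there is exactly one $X$-invariant Borel probability measure on $M$. The normalized volume measure $\mu_{\mathrm{vol}} = (\int_M \lambda\wedge\omega^n)^{-1}\lambda\wedge\omega^n$ is always $X$-invariant, by Remark~\ref{rem:metric} (the Hamiltonian vector field preserves $\lambda\wedge\omega^n$). So to prove non-unique ergodicity it suffices to produce a single $X$-invariant probability measure $\mu$ with $\mu \neq \mu_{\mathrm{vol}}$. Condition~(1) of the proposition gives the $\omega$-finite curve needed as input to the measure-construction theorem of \cite{prequel}, which produces such a $\mu$; condition~(2) — the non-exactness of $\omega^n$, or the non-vanishing of the self-linking number $\mathrm{Lk}(\omega)=\int_M \nu\wedge\omega^n$ — is precisely the hypothesis in the relevant theorems of \cite{prequel} guaranteeing $\mu\neq\mu_{\mathrm{vol}}$. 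I would organize the argument as: (a) from the curve, extract the invariant measure $\mu$ (cite Theorem~1.4 of \cite{prequel}); (b) in the case $\omega^n$ not exact, cite the theorem of \cite{prequel} showing $\int_M f\, d\mu = 0$ for a suitable closed form witnessing non-exactness while $\int_M f\, d\mu_{\mathrm{vol}}\neq 0$; (c) in the case $\omega$ exact with $\mathrm{Lk}(\omega)\neq 0$, cite the corresponding self-linking obstruction; (d) conclude $\mu\neq\mu_{\mathrm{vol}}$ in either case, hence two distinct invariant probability measures exist.

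The only genuine subtlety — and the place I would be most careful — is matching the precise hypotheses: the curve in condition~(1) has empty sets of nodal and marked points ($D=\emptyset$, $\mu=\emptyset$), so I must ensure the cited theorems of \cite{prequel} apply to $\omega$-finite curves in this restricted form (they should, since the measure construction only uses the image $u(C)$ and its energy, not the marked/nodal structure). I would also need to verify that the dichotomy in condition~(2) exactly exhausts the hypotheses under which \cite{prequel} proves the constructed measure is non-volume; if \cite{prequel} phrases this in terms of a cohomological pairing rather than literal exactness, I would translate accordingly. Beyond that, there is essentially nothing to prove here: the content lives entirely in \cite{prequel}, and this proposition is a convenience restatement, so the ``proof'' is two or three sentences of citation plus the observation that the normalized volume form is always invariant and a second distinct invariant measure forces non-unique ergodicity.

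\begin{proof}[Proof sketch]
Since the flow of $X$ preserves the volume form $\lambda\wedge\omega^n$ by Remark~\ref{rem:metric}, the normalized volume measure $\mu_{\mathrm{vol}} = (\int_M \lambda\wedge\omega^n)^{-1}\,\lambda\wedge\omega^n$ is an $X$-invariant Borel probability measure. By condition~(1) and the main measure-construction theorem of \cite{prequel}, the $\omega$-finite curve $\mathbf{u}$ gives rise to an $X$-invariant Borel probability measure $\mu$ on $M$. Under condition~(2), the relevant ``interesting-ness'' theorems of \cite{prequel} — the case $\omega^n$ non-exact and the case $\omega$ exact with $\mathrm{Lk}(\omega)\neq 0$ being treated separately there — show that $\mu \neq \mu_{\mathrm{vol}}$. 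Hence $M$ carries two distinct $X$-invariant probability measures, so $X$ is not uniquely ergodic.
\end{proof}
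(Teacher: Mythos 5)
Your proposal is correct and takes the same approach as the paper: the paper itself gives no proof of this proposition, explicitly labeling it a restatement of \cite[Corollary~1.7]{prequel}, so the ``proof'' is indeed just a citation plus the elementary observation that a second distinct invariant probability measure alongside $\mu_{\mathrm{vol}}$ forces non-unique ergodicity. Your sketch of how that corollary is assembled inside \cite{prequel} (Theorem~1.4 for the measure construction, Theorems~1.5/1.6/1.8 for the non-volume obstruction) is consistent with how the present paper summarizes the prequel's results in its introduction.
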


The following is a consequence of Proposition \ref{prop:nonUniqueErgodicity} and Theorem \ref{thm:mainExample}. The proof is given in \cite[Proposition $1.10$]{prequel}. 

\begin{prop}
\label{prop:application1} Let $(W, \Omega)$ be a closed symplectic manifold of dimension $2n+2$ that is $\text{GW}_G$-connected for some integer $G \geq 0$. Let 
$$H: W \to \mathbb{R}$$
be any smooth Hamiltonian such that $0$ is a regular value, and set $M = H^{-1}(0)$.

Suppose that one of the following two conditions hold:
\begin{itemize}
    \item The restriction of the two-form $\Omega^n$ is not exact on $M$. 
    \item $\Omega$ is exact on one of the two components of $W \setminus M$.
\end{itemize} 

Then the Hamiltonian vector field $X_H$ on $M$ is not uniquely ergodic. 
\end{prop}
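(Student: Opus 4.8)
The plan is to verify that the hypotheses of Proposition \ref{prop:nonUniqueErgodicity} are satisfied, so that its conclusion gives the result. There are two conditions to check: the existence of an $\omega$-finite pseudoholomorphic curve with no marked points and no nodal points in some adapted cylinder, and the cohomological condition on $\omega = \Omega|_M$. First I would unpack the definition of $\text{GW}_G$-connectedness (which is not in the excerpt but is presumably the hypothesis that the Gromov-Witten invariant $\Psi^{W,M}$ of Definition \ref{defn:GWHypersurface} is nonvanishing, possibly for the given genus $G$, for the relevant choices of homology class and of submanifolds $Y_\pm$ in the two components of $W \setminus M$). Granting this, Theorem \ref{thm:mainExample} applies directly: for any adapted cylinder $(\mathbb{R} \times M, J, g)$ over $M$ there is an $\omega$-finite pseudoholomorphic curve $\mathbf{u} = (u, C, j, \mathbb{R} \times M, J, D, \mu)$.

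The one subtlety is that Proposition \ref{prop:nonUniqueErgodicity}(1) requires the curve to have \emph{no} nodal points and \emph{no} marked points, i.e. $D = \emptyset$ and $\mu = \emptyset$, whereas Theorem \ref{thm:mainExample} produces a curve that a priori may carry both. I would handle this by the standard device of absorbing the nodal and marked points into the domain: given the $\omega$-finite curve produced by Theorem \ref{thm:mainExample}, one forgets the marked points (this changes nothing about the map $u$, the surface $C$, or any of the energy quantities, and all five bullet points in Definition \ref{defn:omegaFiniteCurve} are manifestly preserved), and one resolves the nodal pairs by passing to the connect-sum surface $\tilde{C}$ with the induced complex structure and the induced map $\tilde{u}: \tilde{C} \to \mathbb{R} \times M$ (well-defined precisely because $u(\overline{z}) = u(\underline{z})$ at each node). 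One then checks that $\tilde{\mathbf u} = (\tilde u, \tilde C, \tilde j, \mathbb{R} \times M, J, \emptyset, \emptyset)$ is still $\omega$-finite: properness of $\tilde u$ follows from properness of $u$ since the resolution is a proper modification away from a discrete set, $\tilde u(\tilde C) = u(C)$ has the same (finitely many) connected components, $E_\omega(\tilde u) = E_\omega(u) < \infty$, the image of $\partial \tilde C = \partial C$ still lies in the compact set $K$, and $\tilde C$ still has a non-compact component (resolving nodes does not compactify any end). Alternatively, one invokes the fact that \cite{prequel} surely states Proposition $2.1$ for curves with marked and nodal points and the reduction is already recorded there, in which case this paragraph is just a citation.

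For the second condition, I would simply translate the dichotomy in the statement. If $\Omega^n|_M$ is not exact, then since $\omega = \Omega|_M$ we have $\omega^n = \Omega^n|_M$ not exact, which is exactly the first alternative in Proposition \ref{prop:nonUniqueErgodicity}(2). If instead $\Omega$ is exact on one of the two components $W_\pm$ of $W \setminus M$, say $\Omega = d\beta$ there, then restricting $\beta$ to $M = \partial W_\pm$ gives a primitive $\nu$ of $\omega$, and Stokes' theorem on $W_\pm$ gives
$$\text{Lk}(\omega) = \int_M \nu \wedge \omega^n = \int_{W_\pm} d(\beta \wedge \Omega^n) = \int_{W_\pm} \Omega^{n+1} \neq 0,$$
where the last integral is nonzero (up to sign) because $\Omega^{n+1}$ is a volume form on the $(2n+2)$-dimensional manifold-with-boundary $W_\pm$, which has nonempty interior; the sign is fixed by the induced boundary orientation but is irrelevant since we only need non-vanishing. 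This puts us in the second alternative of Proposition \ref{prop:nonUniqueErgodicity}(2). In either case both hypotheses of Proposition \ref{prop:nonUniqueErgodicity} hold, so $X_H$ is not uniquely ergodic. The main obstacle, such as it is, is bookkeeping: making sure the marked/nodal-point reduction genuinely preserves every clause of $\omega$-finiteness, and confirming that the definition of $\text{GW}_G$-connectedness matches the input needed for Theorem \ref{thm:mainExample} (which, per the discussion after that theorem and Corollary \ref{cor:GWCrossingExistence}, amounts to the existence of a suitable crossing curve for every tame $J$).
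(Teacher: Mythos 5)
Your overall route is the same as the paper's (which simply defers to \cite[Proposition 1.10]{prequel}, but includes an explicitly analogous argument for Proposition \ref{prop:application2}): verify the two hypotheses of Proposition \ref{prop:nonUniqueErgodicity}, using Lemma \ref{lem:gwConnectedness} together with Theorem \ref{thm:mainExample} for the first, and a Stokes computation on $W_\pm$ for the second. The Stokes computation, including the observation that $\int_{W_\pm}\Omega^{n+1}\neq 0$ because $\Omega^{n+1}$ is a volume form and the sign is irrelevant, matches what the paper does in dimension $4$ and is correct.

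The one real misstep is the device you use to kill the nodal points. Performing a topological connect sum at each nodal pair $(\overline{z},\underline{z})$, as in Definition \ref{defn:curveArithmeticGenus}, removes small disks around $\overline{z}$ and $\underline{z}$ and glues along boundary circles; there is no ``induced'' $J$-holomorphic map on the resulting surface, and the hypothesis $u(\overline{z})=u(\underline{z})$ only equates \emph{values at two points}, not germs, so it does not let you extend $u$ across the gluing circle. What you actually want is much simpler and is exactly what the ``forget the marked points'' clause of your own paragraph does for $\mu$: define $\widetilde{\mathbf{u}}=(u,C,j,\mathbb{R}\times M,J,\emptyset,\emptyset)$. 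Definition \ref{defn:omegaFiniteCurve} makes no reference to $D$ or $\mu$, and all five bullets depend only on $(u,C)$, so they are preserved verbatim. (Dropping $D$ may disconnect $|C|$, but $\omega$-finiteness does not require connectedness, and the image $u(C)$ and its components are unchanged.) Your fallback remark --- that the prequel's Proposition 2.1 is stated for $\omega$-finite curves with nonempty $D$ and $\mu$ and the reduction is recorded there --- is also fine and is essentially what the paper does by citing \cite[Proposition 1.10]{prequel}. With the connect-sum paragraph replaced by this, the argument is complete.
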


The following is a direct consequence of Proposition \ref{prop:nonUniqueErgodicity} and Theorem \ref{thm:swExample}. We provide a proof below. 

\begin{prop}
\label{prop:application2} Let $M = H^{-1}(0)$ be a closed, regular energy level inside of a closed, symplectic $4$-manifold $(W, \Omega)$. 

Suppose that the following topological conditions hold.

\begin{itemize} 
\item $b_2^+(W) > 1$.
\item $c_1(W) \neq 0$. 
\item The smooth manifold 
$$W_- = H^{-1}((-\infty, 0])) \subset W$$
bounding $M$ is such that the restriction to $\Omega$ of $W_-$ is exact and $c_1(W_-) \neq 0$. 
\end{itemize}

Then the Hamiltonian vector field $X_H$ is not uniquely ergodic. 
\end{prop}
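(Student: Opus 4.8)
The plan is to derive Proposition \ref{prop:application2} as a formal consequence of Proposition \ref{prop:nonUniqueErgodicity} and Theorem \ref{thm:swExample}, checking that the hypotheses of the former are met. First I would observe that Proposition \ref{prop:nonUniqueErgodicity} requires two inputs: an $\omega$-finite pseudoholomorphic curve with no nodal or marked points in some $\eta$-adapted cylinder, and a cohomological condition on $\omega$. So the body of the proof is really just a matter of producing these two inputs from the stated topological hypotheses on $W$ and $W_-$.

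For the first input, I would invoke Theorem \ref{thm:swExample}. Its hypotheses are $b_2^+(W) > 1$, $c_1(W)^2 \neq 0$, and that $\Omega|_{W_-}$ is exact with $c_1(W_-) \neq 0$. The last of these is verbatim among the hypotheses of Proposition \ref{prop:application2}, and $b_2^+(W) > 1$ is likewise assumed directly. The one gap is that Proposition \ref{prop:application2} only assumes $c_1(W) \neq 0$, whereas Theorem \ref{thm:swExample} wants $c_1(W)^2 \neq 0$. I would close this by noting that the hypothesis $c_1(W_-) \neq 0$ together with exactness of $\Omega$ on $W_-$ already forces $c_1(W)^2 \neq 0$: since $\Omega$ is exact on $W_-$, the class $c_1(W_-)$ pairs nontrivially against $[\Omega]$-related cycles in a way that feeds into the intersection pairing on the closed manifold $W$ — more carefully, Taubes' nonvanishing of the Gromov invariant in class $-c_1(W)$ already yields an embedded curve representing $\mathrm{PD}(-c_1(W))$ with a component crossing $W_-$, and the adjunction/positivity-of-intersections argument sketched after Theorem \ref{thm:swExample} gives $c_1(W) \cdot c_1(W) \neq 0$; alternatively one simply replaces "$c_1(W)^2 \neq 0$" in the citation by the observation that the proof of Theorem \ref{thm:swExample} only ever uses $c_1(W) \neq 0$ plus $c_1(W_-) \neq 0$ (inspect the three-step outline: step two uses $c_1(W) \neq 0$, step three uses $c_1(W_-) \neq 0$). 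Either way, Theorem \ref{thm:swExample} applies and furnishes, for any adapted cylinder $(\mathbb{R} \times M, g, J)$, an $\omega$-finite curve $\mathbf{u} = (u, C, j, \mathbb{R} \times M, J, D, \mu)$; and since the feral/$\omega$-finite curves produced by the neck-stretching construction can be taken with $D = \mu = \emptyset$ (or one simply discards the marked points and absorbs the nodes, as the nonuniqueness conclusion only depends on the underlying curve), we obtain exactly the first hypothesis of Proposition \ref{prop:nonUniqueErgodicity}.

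For the second input, I would check the cohomological alternative in Proposition \ref{prop:nonUniqueErgodicity}(2). Here $\omega = \Omega|_M$ and, by hypothesis, $\Omega|_{W_-}$ is exact; write $\Omega|_{W_-} = d\beta$ and set $\nu = \beta|_M$, so $\omega = d\nu$ is exact on $M$ with primitive $\nu$. It then remains to show the self-linking number $\mathrm{Lk}(\omega) = \int_M \nu \wedge \omega^n = \int_M \nu \wedge \omega$ (recall $\dim M = 3$, so $n = 1$) is nonzero. By Stokes, $\int_M \nu \wedge d\nu = \int_{W_-} d(\beta \wedge d\beta) = \int_{W_-} d\beta \wedge d\beta = \int_{W_-} \Omega \wedge \Omega$, and this last integral is exactly (a positive multiple of) the symplectic volume of $W_-$, hence strictly positive — in particular nonzero. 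This verifies Proposition \ref{prop:nonUniqueErgodicity}(2).

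With both hypotheses of Proposition \ref{prop:nonUniqueErgodicity} in hand, that proposition immediately yields that $X_H$ — which is the Hamiltonian vector field associated to $\eta = (\lambda, \omega)$ on $M$ by Example \ref{exe:hypersurfaces} — is not uniquely ergodic, completing the proof. The only genuinely delicate point is the bookkeeping in the second paragraph, namely confirming that the curve supplied by Theorem \ref{thm:swExample} can be taken without marked or nodal points (matching the $\emptyset, \emptyset$ in Proposition \ref{prop:nonUniqueErgodicity}(1)) and that the weaker hypothesis $c_1(W) \neq 0$ here really does suffice in place of $c_1(W)^2 \neq 0$; I expect this to require either a one-line remark that the proof of Theorem \ref{thm:swExample} only uses $c_1(W) \neq 0$ in step two, or a short argument that $c_1(W_-) \neq 0$ with $\Omega|_{W_-}$ exact implies $c_1(W)^2 \neq 0$ via positivity of intersections for the Taubes curve. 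Everything else is a direct citation plus the Stokes computation for the self-linking number.
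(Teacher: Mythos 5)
Your proposal is correct and takes essentially the same route as the paper's own proof: invoke Theorem \ref{thm:swExample} to supply the $\omega$-finite curve required by condition (1) of Proposition \ref{prop:nonUniqueErgodicity} (forgetting nodal and marked points is harmless, since none of the conditions in Definition \ref{defn:omegaFiniteCurve} involve $D$ or $\mu$), and verify condition (2) by the same Stokes computation $\int_M \nu \wedge \omega = \int_{W_-} \Omega \wedge \Omega \neq 0$. On the one discrepancy you flag — $c_1(W) \neq 0$ here versus $c_1(W)^2 \neq 0$ in Theorem \ref{thm:swExample}, which the paper silently elides — only your second resolution is sound (the proof via Proposition \ref{prop:nonzeroGr2} uses nothing beyond $b_2^+(W)>1$, $c_1(W) \neq 0$, exactness of $\Omega|_{W_-}$, and $c_1(W_-) \neq 0$), whereas your sketched implication that $c_1(W_-) \neq 0$ together with exactness forces $c_1(W)^2 \neq 0$ via adjunction/positivity of intersections is not justified (the Taubes curve components may all have square zero) and should be dropped.
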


\begin{proof}
The proof follows from verifying that $M$ and $W$ satisfy the assumptions of Proposition \ref{prop:nonUniqueErgodicity}. Let $\eta = (\lambda, \omega)$ denote the framed Hamiltonian structure on $M$ given by the construction of \ref{exe:hypersurfaces}. Then the vector field $X_H$ equals the Hamiltonian vector field $X$ associated to $\eta$. 

Theorem \ref{thm:swExample} shows, given the conditions on $M$ and $W$, that the first condition of Proposition \ref{prop:nonUniqueErgodicity} is satisfied: there is an $\eta$-adapted cylinder $(\mathbb{R} \times M, g, J)$ and an $\omega$-finite pseudoholomorphic curve
$$\mathbf{u} = (u, C, j, \mathbb{R} \times M, J, \emptyset, \emptyset).$$

It remains to verify the second condition. By assumption, $\omega = \Omega|_M$ is exact. We need to show
$$\int_M \nu \wedge \omega \neq 0$$
where $\nu$ is a primitive of $\omega$. By Stokes' theorem and the fact that $\omega = \Omega|_M$, 
$$\int_M \nu \wedge \omega = \int_{W_-} \Omega \wedge \Omega \neq 0.$$

The proposition follows. 
\end{proof}

\subsection{Roadmap}

The rest of the paper is organized as follows. In Section \ref{sec:outline} we give a detailed outline of the proofs of Theorem \ref{thm:mainExample} and \ref{thm:swExample}. In Section \ref{sec:gw} we give an overview of the Gromov-Witten invariants and the relevant result regarding existence of pseudoholomorphic curves that we require for the proof of Theorem \ref{thm:mainExample}. In Section \ref{sec:taubesGromov} we give an overview of Taubes' Gromov invariant and the relevant result regarding existence of pseudoholomorphic curves that we require for the proof of Theorem \ref{thm:swExample}. In Section \ref{sec:analysis} we review the analytical background from \cite{FishHoferFeral} and elsewhere needed for the proofs of our main theorems. In Section \ref{sec:proofs} we introduce our neck stretching construction and prove our main theorems. 

\textbf{Acknowledgements:} I would like to thank my advisor, Helmut Hofer, for his encouragement, discussions regarding the work, and for his suggestions regarding the exposition. I would also like to thank Joel Fish and Chris Gerig for enlightening discussions regarding pseudoholomorphic curves, and Shaoyun Bai and John Pardon for discussions regarding Gromov-Witten invariants. 

\section{Outlines of Theorems \ref{thm:mainExample} and \ref{thm:swExample}} \label{sec:outline}

In this section, we will outline the proofs of Theorems \ref{thm:mainExample} and \ref{thm:swExample}. Let $(W, \Omega)$ be a closed symplectic manifold of dimension $2n+2$ and $M = H^{-1}(0)$ a closed, regular energy level of a smooth Hamiltonian $H$. Suppose that $W$ and $M$ satisfy the assumptions of Theorem \ref{thm:mainExample} or of Theorem \ref{thm:swExample}. Let $\eta = (\lambda, \omega)$ denote a Hamiltonian structure on $M$ constructed as in Example \ref{exe:hypersurfaces}, with $\omega$ equal to the restriction of $\Omega$ to $M$. 

The proofs of Theorems \ref{thm:mainExample} and \ref{thm:swExample} uses the ``neck-stretching'' technique from gauge theory and symplectic geometry. The most similar instances of this technique to our own are the recent work \cite{FishHoferFeral} and the symplectic field theory neck stretching construction of \cite{BEHWZ03}. The neck stretching construction in Section \ref{subsec:neckStretching} constructs from $W$ a sequence of almost-Hermitian manifolds $(W_L, \overline{J}_L, g_L)$ for any $L > 0$. Topologically, $W_L$ is obtained from $W$ by cutting open along $M$ and gluing in a cylinder $[-L, L] \times M$. 

The construction is performed so that the restriction of the  almost-complex structure $\bar{J}_L$ to $[-L, L] \times M$ agrees with a model $\eta$-adapted almost-complex structure $J_{\text{Neck}}$ on $\mathbb{R} \times M$. The restriction of the metric $g_L$ agrees with a model metric $g_{\text{Neck}}$ on $\mathbb{R} \times M$, so that 
$$(\mathbb{R} \times M, J_{\text{Neck}}, g_{\text{Neck}})$$
is an $\eta$-adapted cylinder.

Write $W_-$ and $W_+$ for the closures of the two components of $W \setminus M$. Both $W_-$ and $W_+$ have boundary $M$. 

It is also useful for us to define an almost-Hermitian manifold 
$$(\widetilde{W}_-, \bar{J}_-, g_-),$$ 
which topologically is given by gluing a cylindrical end $[0, \infty) \times M$ to $W_-$. The restriction of the almost-Hermitian structure $(\bar{J}_-, g_-)$ to the cylindrical end also agrees with $(J_{\text{Neck}}, g_{\text{Neck}})$. 

In Proposition \ref{prop:tameJ}, we show that, for sufficiently large $L$, there is a diffeomorphism 
$$\Phi_L: W \to W_L$$
such that the pullback $\Phi_L^*\bar{J}_L$ is tamed by the symplectic form $\Omega$. Although the existence of such a diffeomorphism is expected, the proof is surprisingly subtle and requires a careful choice of the diffeomorphism $\Phi_L$.  

In the setting of Theorem \ref{thm:mainExample}, the Gromov-Witten invariant $\Psi^{W,M}$ is defined in a manner that, when it does not vanish, implies the existence of a marked nodal $J$-holomorphic curve for any tame almost-complex structure $J$ in $W$ with closed domain that ``crosses'' the hypersurface, i.e. it intersects both connected components of $W \setminus M$. See Proposition \ref{prop:GWExistence} and Corollary \ref{cor:GWCrossingExistence}. 
In the setting of Theorem \ref{thm:swExample}, the topological conditions on $W$ and $M$, along with the non-vanishing of Taubes' Gromov invariant, imply the same statement. See Propositions \ref{prop:nonzeroGr} and \ref{prop:nonzeroGr2}. 

A marked nodal pseudoholomorphic curve in $W$ crossing the hypersurface $M$ is depicted in Figure \ref{fig:crossingCurve}.

\begin{figure}[ht] \includegraphics[width=.4\textwidth]{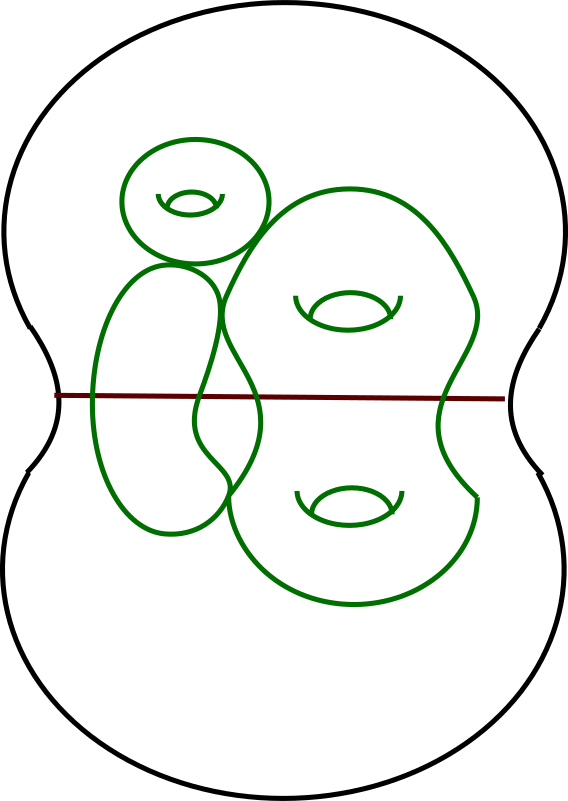} \caption{\label{fig:crossingCurve} A nodal curve (green) crossing the hypersurface $M$ (red) in $W$ (black).}\end{figure}

Combining this existence result, and the fact that the almost-complex structures in our neck stretching construction are tame, we deduce the existence of marked nodal $J_L$-holomorphic curves passing through the cylindrical region in $W_L$ for any sufficiently large $L > 0$. This is depicted in Figure \ref{fig:stretchedCrossingCurves}. 

\begin{figure}[ht] \includegraphics[width=.7\textwidth]{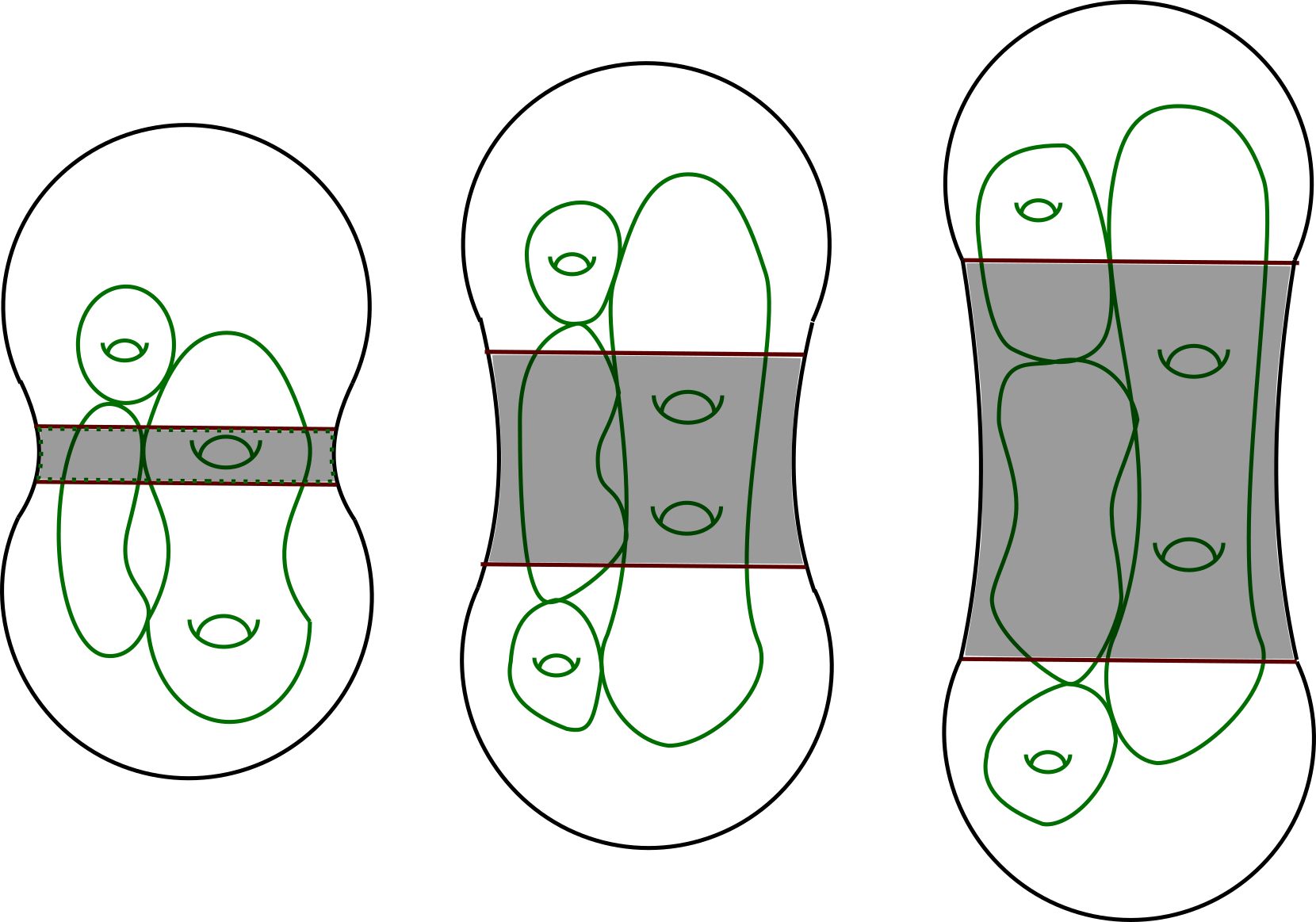} \caption{\label{fig:stretchedCrossingCurves} Nodal curves (green) in the neck-stretched manifolds (black) crossing longer and longer neck regions (dark gray).}\end{figure}

Next, write $W_-$ and $W_+$ for the closures of the two components of $W \setminus M$. The neck stretching construction yields natural embeddings
$$W_- \sqcup W_+ \hookrightarrow W_L$$
for any $L > 0$. 

We trim off the portions of these pseudoholomorphic curves that lie inside a neighborhood of $W_+$. This produces a sequence of pseudoholomorphic curves inside almost-Hermitian manifolds of the form
$$(W_- \cup [-L, L] \times M, \bar{J}_L, g_L)$$
which naturally embed (see Lemma \ref{lem:obviousInclusions} and the surrounding discussions) into the cylindrical-end manifold
$$(\widetilde{W}_-, \bar{J}_-, g_-)$$
which was defined previously. 

To eventually produce a feral pseudoholomorphic curve, the trimming needs to be done so that the pseudoholomorphic curves have domains with a uniformly bounded number of boundary components. The details are in Proposition \ref{prop:boundaryComponentBound}. 

Restricting to a suitable sequence $L_i \to \infty$, we obtain a sequence of pseudoholomorphic curves 
$$\mathbf{w}_i = (w_i, \widetilde{C}_i, \widetilde{j}_i, \widetilde{W}_-, \bar{J}_-, \widetilde{D}_i, \widetilde{\mu}_i)$$ in $\widetilde{W}_-$. The image $w_i(\partial\widetilde{C}_i)$ will have boundary lying near $\{2L_i\} \times M$ in the cylindrical end of $\widetilde{W}_-$, so as we take $i \to \infty$, the images of the curves ascend into the positive end. 

This is depicted in Figure \ref{fig:trimmedCurves}, ignoring the part of the pseudoholomorphic curves that lie outside the cylindrical end $[0,\infty) \times M$.  

\begin{figure}[ht] \includegraphics[width=.8\textwidth]{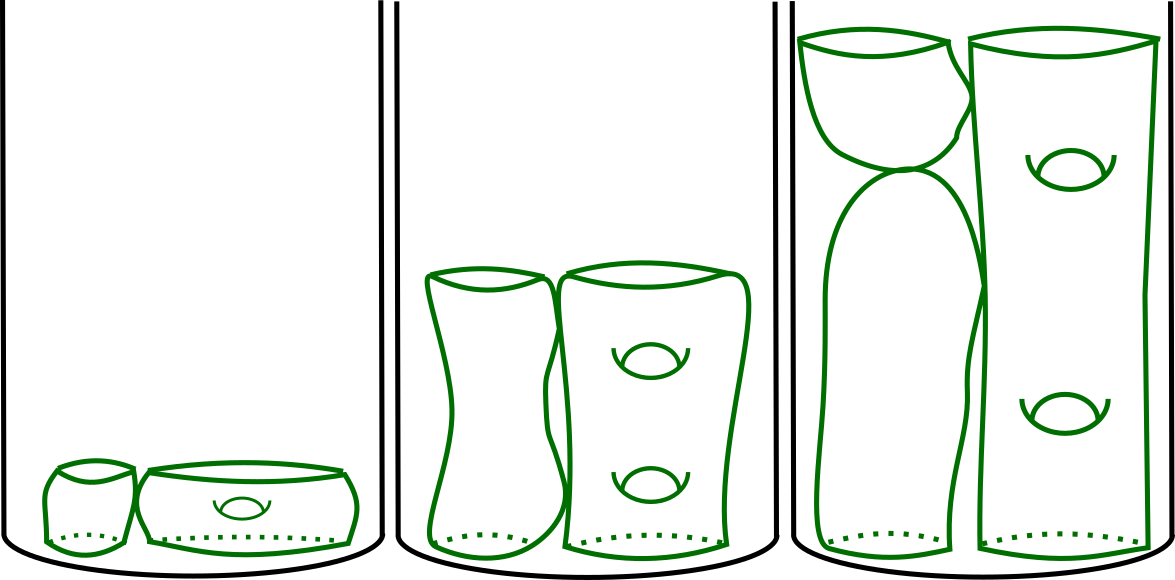}
 \caption{\label{fig:trimmedCurves} Longer and longer trimmed curves (green) in $[0,\infty) \times M$ (black).} \end{figure}
 
There is a compactness theorem for these types of $J$-holomorphic maps known as Fish-Hofer's ``exhaustive Gromov compactness'' theorem (\cite{FishHoferExhaustive}, also stated in Section \ref{subsec:exhaustiveCompactness}). By this theorem, after passing to a subsequence, the sequence of trimmed curves that we have constructed converge in a certain sense to a proper $\bar{J}_-$-holomorphic curve $\bar{\mathbf{w}}$ mapping into $\widetilde{W}_-$. The portion of $\bar{\mathbf{w}}$ in the cylindrical end $[0,\infty) \times M$ is depicted in Figure \ref{fig:limitCurve}. 

\begin{figure}[ht] \includegraphics[width=.5\textwidth]{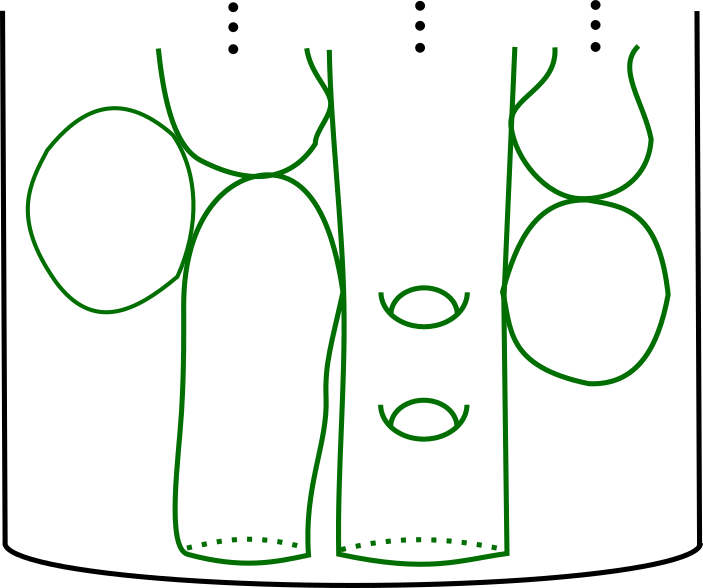}
 \caption{\label{fig:limitCurve} The limit curve (green) in $[0,\infty) \times M$ (black). It can get complicated!} \end{figure}

The first major technical step is to show that this sequence of curves satisfies the a priori estimates required by the exhaustive Gromov compactness theorem. Roughly, this means that the portions of the curves in any compact subset of $\widetilde{W}_-$ must have uniformly bounded area, genus, and marked/nodal points, where the uniform bound in question may depend on the choice of compact subset. 

The uniform bound on genus and marked points is evident from the construction of the sequence of pseudoholomorphic curves. We prove the area bound in Corollary \ref{cor:transitionAreaBounds} and the bound on the number of nodal points in Proposition \ref{prop:nodalPointBound}. The bounds on the genus, number of marked points, and number of nodal points we derive are stronger than what is required for exhaustive Gromov compactness. However, the genus bound is essential for our proof that the limit curve is $\omega$-finite. All three bounds are essential for our proof that the limit curve is feral. 

The second technical step is the extraction of an $\omega$-finite pseudoholomorphic curve from the limit pseudoholomorphic curve $\bar{\mathbf{w}}$. We restrict $\bar{\mathbf{w}}$ to the portion of the domain mapping into the cylindrical end, which produces a pseudoholomorphic curve $\hat{\mathbf{w}}$ in the $\eta$-adapted cylinder
$$(\mathbb{R} \times M, J_{\text{Neck}}, g_{\text{Neck}}).$$ 

We need to show that the limit pseudoholomorphic curve $\hat{\mathbf{w}}$ has finite $\omega$-energy, boundary mapping into a compact subset of $\mathbb{R} \times M$, the image of $\hat{\mathbf{w}}$ has finitely many connected components, and that the domain has a non-compact component.

From the specific properties of the constructed sequence in our situation, the second and third properties are not difficult to prove. 

The first property, the $\omega$-energy bound, is a consequence of a uniform $\omega$-energy bound on the sequence of pseudoholomorphic curves $\mathbf{w}_i$. The proof of this bound relies on the careful choices made in Proposition \ref{prop:tameJ}. See Lemma \ref{lem:sameSmallOmega} for details.

We also note that the domain of the limit pseudoholomorphic curve has finite (arithmetic) genus. 

The hardest part is showing that $\hat{\mathbf{w}}$ has a \emph{non-compact} component in its domain. It is not difficult to show that the image of the limit pseudoholomorphic curve $\hat{\mathbf{w}}$ is unbounded in $\mathbb{R} \times M$. Therefore, the only possible pathological case is where the image of $\hat{\mathbf{w}}$ contains an infinite string of closed surfaces in $\mathbb{R} \times M$ moving off to infinity, depicted in Figure \ref{fig:badLimitCurve}.

\begin{figure}[ht] \includegraphics[width=.5\textwidth]{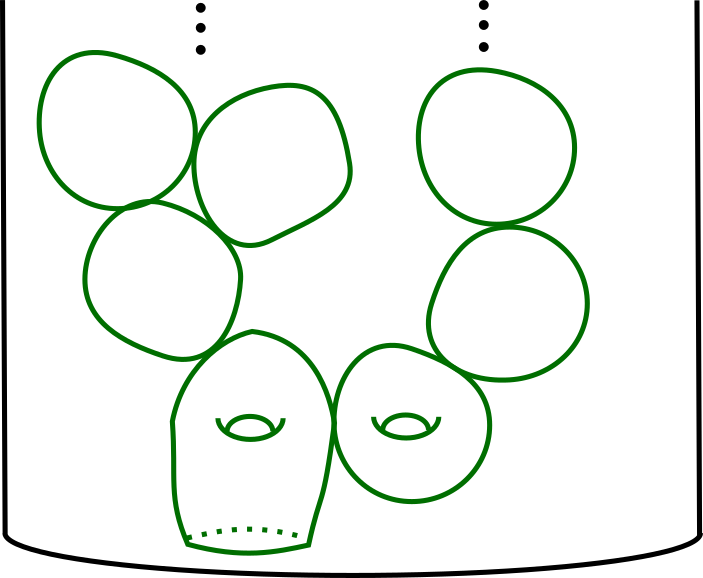}
 \caption{\label{fig:badLimitCurve} An example of a potential limit curve (green) with no non-compact components in $[0,\infty) \times M$ (black).} \end{figure}

We take care of this case by making the following two observations. First, since the domain of $\hat{\mathbf{w}}$ has finite genus, infinitely many of these closed surfaces must be spheres. Second, a ``$\omega$-energy quantization'' result of Fish-Hofer (see \cite[Theorem $4$]{FishHoferFeral} and its restatement in Theorem \ref{thm:omegaEnergyQuantization}) can prove that \emph{any} closed non-constant $J$-holomorphic sphere in $\mathbb{R} \times M$ has $\omega$-energy larger than some positive geometric constant. Therefore, the existence of infinitely many spherical components in the image would imply that the pseudoholomorphic curve $\hat{\mathbf{w}}$ has infinite $\omega$-energy, but it is known that the $\omega$-energy is finite. See the argument in Lemma \ref{lem:limitCurveNonCompactComponent}. 

The third and final technical step is to show that the limit curve $\hat{\mathbf{w}}$ is feral. We find that, given the previous work, it suffices to show that the number of nodal points and the number of ``generalized punctures'' (see Definition \ref{defn:punct}) of $\hat{\mathbf{w}}$ are finite. 

The fact that the number of nodal points is finite follows from combining the previous bounds with an explicit combinatorial formula for the arithmetic genus. See Proposition \ref{prop:limitCurveNodalPoints}. 

The fact that the number of generalized punctures is finite is shown using the same argument as in the proof of Proposition $4.49$ in \cite{FishHoferFeral}. See Proposition \ref{prop:limitCurveFinitePunctures}.

The details end up being rather technical. Results of Fish-Hofer from \cite{FishHoferExhaustive} and \cite{FishHoferFeral} are used throughout the proof. 

\subsection{Relationship to work of Fish-Hofer} It is worth noting that Fish and Hofer carry out a similar strategy in the proof of their main dynamical result in \cite{FishHoferFeral}, which concerns the Hamiltonian flow on an energy level $M$ in $W = \mathbb{CP}^2$ which lies in the complement of a complex line. 

They construct an almost-Hermitian manifold essentially identical to the manifold $$(\widetilde{W}_-, J_-, g_-)$$
that we construct, with a cylindrical end modeled on $[0, \infty) \times M$. 

Then they show that there is a sequence of pseudoholomorphic spheres
$$\mathbf{w}_i = (w_i, S^2, \sqrt{-1}, \widetilde{W}_-, J_-, \emptyset, \emptyset)$$
ascending into the cylindrical end. 

Their existence result does not use Gromov-Witten invariants or any other pseudoholomorphic curve-counting invariants. Instead, they use the fact that moduli spaces of pseudoholomorphic spheres on $\mathbb{CP}^2$ are extraordinarily well-behaved, along with the fact that $\widetilde{W}_-$ already contains a $J_-$-holomorphic sphere in the interior. The latter fact implies that the required moduli space of spheres is not empty. Also note that in their case it is not required for $J_-$ to be tamed by some ambient symplectic form. 

An application of exhaustive Gromov compactness for appropriate trimmings of these pseudoholomorphic curves produces a feral pseudoholomorphic curve in an $\eta$-adapted cylinder over $M$. 

A strategy like this is unlikely to work for arbitrary energy levels in arbitrary symplectic manifolds $(W, \Omega)$. Moduli spaces of pseudoholomorphic curves in the non-compact manifold $\widetilde{W}_-$ are not in general as well-behaved as in the case where $W$ is $\mathbb{CP}^2$. Indeed, we have no way of telling, for general hypersurfaces $M$, whether any such moduli space is even non-empty. 

Our contribution is the extension of the neck-stretching procedure to work in the very general setting of an arbitrary hypersurface in a symplectic manifold, which yields a plethora of new examples of feral pseudoholomorphic curves, in addition to the dynamical applications arising from the results in \cite{prequel}. 

This more general setting gives rise to additional issues that are not encountered in \cite{FishHoferFeral}. 

First, our sequence of trimmed pseudoholomorphic curves may have many components in their domain, so we need to establish a priori control on the number of components in each curve in the sequence. In \cite{FishHoferFeral}, the initial sequence of pseudoholomorphic curves (before trimming) simply have domain given by the Riemann sphere.

Second, as we mentioned before, in order to leverage pseudoholomorphic curve-counting invariants we must show that the almost-complex structures $\bar{J}_L$ on the stretched manifolds $W_L$ are tamed by some ambient symplectic form. This result, given in Proposition \ref{prop:tameJ} using a technical construction from Lemma \ref{lem:tameJ1}, requires careful analysis which is not required in \cite{FishHoferFeral}. The careful analysis in Lemma \ref{lem:tameJ1} is also required to show that the pseudoholomorphic curves $\mathbf{w}_i$ have uniformly bounded $\omega$-energy, among other results. 

\section{Gromov-Witten invariants} \label{sec:gw}

\subsection{Review of the invariant} We give a description of Gromov-Witten invariants of closed symplectic manifolds, concluding with a definition of the invariant $\Psi^{W,M}$ from the statement of Theorem \ref{thm:mainExample}.

The papers \cite{FukayaOno99, Pardon16, HWZ17} form a short list of references regarding various constructions of Gromov-Witten invariants. We will describe the construction in \cite{HWZ17}, following the exposition in \cite{Schmaltz19}. 

Fix a homology class $A \in H_2(W; \mathbb{Z})$, integers $G$ and $m$ greater than or equal to zero, and a tame almost-complex structure $J$. The \textbf{Gromov-Witten moduli space} 
$$\mathcal{S}_{A, G, m+2}(p, J)$$
of \cite{HWZ17} is, as a set, the set of all stable, connected pseudoholomorphic curves
$$\mathbf{u} = (u, C, j, W, J, D, \mu)$$
where $\text{Genus}_{\text{arith}}(C, D) = G$, $\#\mu = m+2$, and $u: C \to W$ is a map satisfying a \emph{perturbed} $J$-holomorphic curve equation and pushing forward $[C]$ to $A$. The perturbation parameter is denoted by $p$. 

The \textbf{polyfold regularization theorem} \cite[Theorem $15.3.7$, Corollary $15.3.10$]{BigPolyfoldBook} enables one to give $\mathcal{S}_{A, G, m+2}(p, J)$ the structure of a compact, oriented ``weighted branched suborbifold'' of an ambient polyfold which we will denote by $\mathcal{Z}_{A, g, m+2}$. 

Informally, a weighted branched orbifold is an orbifold equipped with structures necessary to define ``branched integrals'' of differential forms. 

Denote by 
$$\overline{\mathcal{M}}_{G, m+2}$$
the \textbf{Deligne-Mumford orbifold}, which as a set is equal to the set of stable marked nodal Riemann surfaces $(C, j, D, \mu)$ such that $\text{Genus}_{\text{arith}}(C, D) = G$, $\#\mu = m+2$, and the surface constructed by performing connect sums at each pair of nodal points is connected. The Gromov-Witten moduli space has a natural map
$$\pi: \mathcal{S}_{A, G, m+2}(p, J) \to \overline{\mathcal{M}}_{G, m+2}$$
given by forgetting the data of the $J$-holomorphic map and contracting any unstable components in the underlying marked nodal Riemann surface. 

For every $k \in \{1, \ldots, m+2\}$ is also an evaluation map
$$\text{ev}_k: \mathcal{S}_{A, G, m+2}(p, J) \to W$$
given by evaluation of the $J$-holomorphic map at the $k$th marked point. 

We can now write down the polyfold Gromov-Witten invariant.

\begin{defn}
\label{defn:polyfoldGromovWitten} Fix a homology class $A \in H_2(W; \mathbb{Z})$, integers $G$ and $m$ greater than or equal to zero, and a tame almost-complex structure $J$.

The \textbf{polyfold Gromov-Witten invariant} is the homomorphism 
$$\Psi^W_{A,G,m+2}: H_*(W; \mathbb{Q})^{\otimes m+2} \otimes H_*(\overline{\mathcal{M}}_{G, m+2}; \mathbb{Q}) \to \mathbb{Q}$$
defined by the branched integral of sc-smooth differential forms
$$\Psi^W_{A,G,m+2}(\alpha_1, \ldots, \alpha_{m+2}; \beta) = \int_{\mathcal{S}_{A, G, m+2}(p, J)} \bigwedge_{k=1}^{m+2} \text{ev}_k^*\text{PD}(\alpha_k) \wedge \pi^*\text{PD}(\beta).$$
\end{defn}

It is a consequence of the polyfold regularization theorem \cite[Theorem $15.3.7$, Corollary $15.3.10$]{BigPolyfoldBook} that $\Psi^W_{A,g,m+2}$ as described in Definition \ref{defn:polyfoldGromovWitten} is independent of the choice of abstract perturbation $p$ or tame almost-complex structure $J$. 

\subsection{Gromov-Witten invariants as intersection numbers} 

Informally, 
$$\Psi^W_{A,G,m+2}(\alpha_1, \ldots, \alpha_{m+2}; \beta)$$
can be thought of as a count of $J$-holomorphic curves intersecting (rationally-weighted) submanifolds representing the classes $\{\alpha_k\}_{k = 1}^{m+2}$ with the domain Riemann surface mapping via $\pi$ into a suborbifold of $\overline{\mathcal{M}}_{G, m+2}$ which represents the class $\beta$. 

This interpretation was put on rigorous footing by Schmaltz in \cite{Schmaltz19}.

\begin{thm}
\label{thm:polyfoldGWIntersection} \cite[Theorem $1.6$, Corollary $1.7$]{Schmaltz19} Fix a homology class $A \in H_2(W; \mathbb{Z})$, integers $G$ and $m$ greater than or equal to zero, and a tame almost-complex structure $J$.

Fix classes $\{\alpha_k\}_{k=1}^{m+2}$ in $H_*(W; \mathbb{Q})$ and $\beta \in H_*(\overline{\mathcal{M}}_{G, m+2}; \mathbb{Q})$ such that the classes $\{\alpha_k\}_{k=1}^{m+2}$ are represented by submanifolds $Y_k \subset W$ and the class $\beta$ is represented by an embedded suborbifold $O \subset \overline{\mathcal{M}}_{G, m+2}$ whose normal bundle has trivial fiberwise isotropy.

The polyfold Gromov-Witten invariant 
$$\Psi^W_{A,G,m+2}(\alpha_1, \ldots, \alpha_{m+2}; \beta)$$
may be defined as the intersection number
$$(\text{ev}_1 \times \ldots \times \text{ev}_{m+2} \times \pi)|_{\mathcal{S}_{A, G, m+2}(p, J)} \cdot (Y_1 \times \ldots \times Y_k \times O).$$
\end{thm}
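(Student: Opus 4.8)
The plan is to compute the branched integral in Definition \ref{defn:polyfoldGromovWitten} by replacing the Poincar\'e duals with Thom forms supported near the representing submanifolds, and then to localize the resulting integral to a finite set of transverse intersection points whose signed, rationally-weighted count is by definition the intersection number on the right-hand side. This is the argument of Schmaltz \cite{Schmaltz19}.

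First I would fix, for each submanifold $Y_k \subset W$ of codimension $d_k$, a tubular neighborhood together with a closed $d_k$-form $\tau_k$ supported there representing $\text{PD}(\alpha_k)$, namely a Thom form of the normal bundle of $Y_k$. I would do the same for the embedded suborbifold $O \subset \overline{\mathcal{M}}_{G,m+2}$: the hypothesis that its normal bundle has trivial fiberwise isotropy is exactly what makes it possible to construct an honest (orbifold) Thom form $\tau_O$ representing $\text{PD}(\beta)$ whose fiber integral over the normal directions is $1$. Since the branched integral over the compact, oriented weighted branched suborbifold $\mathcal{S}_{A,G,m+2}(p,J)$ is unchanged when a closed form is replaced by a cohomologous one (Stokes' theorem for branched integrals), $\Psi^W_{A,G,m+2}(\alpha_1,\dots,\alpha_{m+2};\beta)$ equals the branched integral of $\bigwedge_{k=1}^{m+2}\text{ev}_k^*\tau_k \wedge \pi^*\tau_O$.

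Next I would choose the abstract perturbation $p$ so that, in addition to cutting out $\mathcal{S}_{A,G,m+2}(p,J)$ as a compact weighted branched suborbifold of the expected dimension, the map $\text{ev}_1 \times \cdots \times \text{ev}_{m+2} \times \pi$ is transverse, in the appropriate branched sense, to $Y_1 \times \cdots \times Y_{m+2} \times O$. Since the total codimension $d_1 + \cdots + d_{m+2} + \text{codim}(O)$ equals $\dim \mathcal{S}_{A,G,m+2}(p,J)$ --- the only situation in which the invariant can be nonzero --- the transverse preimage is a compact, zero-dimensional weighted branched suborbifold: finitely many points carrying positive rational weights $w_i$ and signs $\epsilon_i = \pm 1$. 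The integrand is supported in a neighborhood of this preimage, and in local branch charts near each such point it is a product of Thom forms pulled back by a submersion onto the normal directions, so its branched integral contributes exactly $\epsilon_i w_i$; summing yields $\Psi^W_{A,G,m+2}(\alpha_1,\dots,\alpha_{m+2};\beta) = \sum_i \epsilon_i w_i$, which is precisely the branched intersection number in the statement.

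The hard part will be the transversality step, because $\pi$ is not a submersion: it contracts ghost components and collapses the strata of $\overline{\mathcal{M}}_{G,m+2}$, so one cannot directly apply an infinite-dimensional Sard-type theorem to $\text{ev} \times \pi$. Following Schmaltz, I would handle this by passing to a stabilized moduli problem --- adding auxiliary marked points and local slice conditions --- on which $\pi$ restricts to a submersion onto an \'etale chart of Deligne--Mumford space in a neighborhood of $O$, establishing transversality there via the sc-smooth structure and generic perturbation, and then checking that the contribution from the complementary region vanishes because the support of the integrand can be arranged to avoid the locus where $\pi$ degenerates. The triviality of the fiberwise isotropy of the normal bundle of $O$ is what guarantees that this local model is an ordinary manifold intersection, so that no fractional orbifold factors appear beyond the intrinsic branching weights $w_i$; the remaining work is the bookkeeping needed to match the orientation conventions of the branched-integral definition of $\Psi$ with those defining the intersection number.
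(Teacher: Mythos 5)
The paper does not prove this statement: Theorem \ref{thm:polyfoldGWIntersection} is quoted from Schmaltz \cite{Schmaltz19}, so there is no internal argument of the paper to compare yours against. Judged on its own terms, your sketch follows the same broad strategy as the cited proof: represent the Poincar\'e duals by Thom forms supported near $Y_1 \times \cdots \times Y_{m+2} \times O$, use Stokes' theorem for branched integrals to replace the integrand in Definition \ref{defn:polyfoldGromovWitten} by these representatives, arrange transversality, and localize the branched integral over $\mathcal{S}_{A,G,m+2}(p,J)$ to a finite, signed, rationally-weighted point count. The role you assign to the trivial fiberwise isotropy of the normal bundle of $O$ (existence of an honest orbifold Thom form with fiber integral $1$) is also the right one.

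Two cautions. First, the step ``choose the abstract perturbation $p$ so that $\text{ev}_1 \times \cdots \times \text{ev}_{m+2} \times \pi$ restricted to the solution set is transverse to $Y_1 \times \cdots \times Y_{m+2} \times O$'' is precisely the technical heart of \cite{Schmaltz19}: it is not automatic that generic sc$^+$-multisection perturbations of the Cauchy--Riemann section put the restricted map in general position with respect to a fixed submanifold/suborbifold, nor that the resulting count is independent of the choices, and you would need to invoke (or re-prove) Schmaltz's general position and invariance results rather than treat this as routine genericity. Second, your ``hard part'' paragraph misidentifies the difficulty: the transversality required is a condition on the restriction of $\text{ev} \times \pi$ to the finite-dimensional perturbed solution space, not a submersivity requirement on $\pi$, so the proposed stabilization of the moduli problem by auxiliary marked points and slice conditions is neither necessary nor the route taken in the cited work; the genuine issues are the general-position machinery for weighted branched suborbifolds and the orientation/weight bookkeeping, which your outline defers. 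As a reconstruction of the external theorem the plan is serviceable in outline, but those deferred points are exactly where the content of \cite{Schmaltz19} lies.
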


As noted in \cite{Schmaltz19}, Thom's resolution of the Steenrod problem for manifolds and the solution of the corresponding problem for orbifolds in \cite{Schmaltz19} imply that the entire Gromov-Witten invariant can be defined using the above theorem, by taking appropriate rational multiples of the classes $\{\alpha_k\}_{i=1}^{m+2}$ and $\beta$ so that they are represented by submanifolds/suborbifolds, and then using the fact that the Gromov-Witten invariant is a homomorphism. Theorem \ref{thm:polyfoldGWIntersection} yields the following concrete existence result for $J$-holomorphic curves. 

\begin{prop}
\label{prop:GWExistence} Fix a homology class $A \in H_2(W; \mathbb{Z})$, integers $G$ and $m$ greater than or equal to zero such that $2G + m \geq 1$, and a tame almost-complex structure $J$.

Fix classes $\{\alpha_k\}_{k=1}^{m+2}$ in $H_*(W; \mathbb{Q})$ and $\beta \in H_*(\overline{\mathcal{M}}_{G, m+2}; \mathbb{Q})$ such that the polyfold Gromov-Witten invariant
$$\Psi^W_{A, G, m+2}(\alpha_1, \ldots, \alpha_{m+2}; \beta)$$
is not equal to zero. 

Suppose that the classes $\{\alpha_k\}_{k=1}^{m+2}$ are represented by submanifolds $Y_k \subset W$. 

Then for any tame almost-complex structure $J$, there is a stable, connected pseudoholomorphic curve
$$\mathbf{u} = (u, C, j, W, J, D, \mu)$$
where $\text{Genus}_{\text{arith}}(C, D) = G$, $\#\mu = m+2$, and $u: C \to W$ is a $J$-holomorphic map pushing forward $[C]$ to $A$. Moreover, the image $u(C)$ intersects each of the submanifolds $\{Y_k\}_{k=1}^{m+2}$.
\end{prop}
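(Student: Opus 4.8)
The plan is to deduce the statement from the intersection-number description of the Gromov--Witten invariant in Theorem \ref{thm:polyfoldGWIntersection}, followed by a Gromov compactness argument passing from abstractly perturbed curves to honest $J$-holomorphic curves. Throughout, note that the hypothesis $2G+m\ge 1$ is exactly what guarantees that the Deligne--Mumford orbifold $\overline{\mathcal{M}}_{G,m+2}$ is a nonempty orbifold, so that the invariant, the map $\pi$, and the whole setup are well-defined.

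First I would reduce to the case in which $\beta$ is represented by an embedded suborbifold $O\subset\overline{\mathcal{M}}_{G,m+2}$ whose normal bundle has trivial fiberwise isotropy. By the solution of the Steenrod problem for orbifolds used in \cite{Schmaltz19}, some nonzero rational multiple $q\beta$ is represented by such a suborbifold $O$; since $\Psi^W_{A,G,m+2}(\alpha_1,\ldots,\alpha_{m+2};-)$ is a homomorphism that does not vanish on $\beta$, it does not vanish on the class $[O]=q\beta$. Now fix the tame almost-complex structure $J$ from the statement and a small admissible perturbation $p$. By the polyfold regularization theorem the invariant is independent of $J$ and $p$, and by Theorem \ref{thm:polyfoldGWIntersection} it equals the intersection number
$$(\text{ev}_1\times\cdots\times\text{ev}_{m+2}\times\pi)\big|_{\mathcal{S}_{A,G,m+2}(p,J)}\cdot(Y_1\times\cdots\times Y_{m+2}\times O),$$
which is therefore nonzero. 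A nonzero intersection number forces the intersection to be nonempty, so there is a stable, connected \emph{perturbed} pseudoholomorphic curve $\mathbf{u}_p=(u_p,C_p,j_p,W,J,D_p,\mu_p)$ with $\text{Genus}_{\text{arith}}(C_p,D_p)=G$, $\#\mu_p=m+2$, pushing forward $[C_p]$ to $A$, and with $\text{ev}_k(\mathbf{u}_p)\in Y_k$ for every $k$.

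Next I would remove the perturbation. Choose admissible perturbations $p_i$ with $\|p_i\|\to 0$ and write $\mathbf{u}_i:=\mathbf{u}_{p_i}$. Since $W$ is closed, $J$ is tame, and the homology class $A$ is fixed, the energies of the $\mathbf{u}_i$ are uniformly bounded while the inhomogeneous term tends to $0$; the compactness theorem underlying the polyfold construction (equivalently, Gromov compactness for inhomogeneous $J$-holomorphic maps with small inhomogeneous term and bounded energy) then yields, after passing to a subsequence, Gromov convergence of the $\mathbf{u}_i$ to a genuine stable $J$-holomorphic curve $\mathbf{u}=(u,C,j,W,J,D,\mu)$. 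Gromov convergence preserves connectedness, stability, the number of marked points, the arithmetic genus, and the represented homology class, and each evaluation map $\text{ev}_k$ is continuous under Gromov convergence; since $Y_k$ is a closed submanifold of the compact manifold $W$ and $\text{ev}_k(\mathbf{u}_i)\in Y_k$ for all $i$, we get $\text{ev}_k(\mathbf{u})\in Y_k$. Hence $\mathbf{u}$ is the desired stable, connected $J$-holomorphic curve with $\text{Genus}_{\text{arith}}(C,D)=G$, $\#\mu=m+2$, $u_*[C]=A$, and $u(C)\cap Y_k\neq\emptyset$ for every $k$.

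I expect the compactness step to be the main point requiring care: one must know that the zero sets of abstract perturbations $p_i\to 0$ Gromov-converge to honest $J$-holomorphic curves without changing the topological type, and with the marked-point evaluations surviving the limit. This is part of the compactness theory underlying \cite{BigPolyfoldBook}, but it is worth spelling out. A variant of the argument that bypasses the suborbifold reduction is to represent each $\text{PD}(\alpha_k)$ by a Thom form supported in a small tubular neighborhood of $Y_k$; nonvanishing of the branched integral defining $\Psi^W_{A,G,m+2}(\alpha_1,\ldots,\alpha_{m+2};\beta)$ then forces some curve in $\mathcal{S}_{A,G,m+2}(p,J)$ to evaluate into each of those neighborhoods, and shrinking the neighborhoods while letting $p_i\to 0$ gives the same conclusion.
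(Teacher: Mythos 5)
Your proposal is correct and follows essentially the same route as the paper's proof: Theorem \ref{thm:polyfoldGWIntersection} together with invariance of the polyfold invariant produces stable, connected curves for a sequence of abstract perturbations $p_k \to 0$ meeting the cycles $Y_k$, and Gromov compactness then yields the honest $J$-holomorphic limit with all properties preserved. The only difference is that you spell out details the paper leaves implicit (the Steenrod-type reduction of $\beta$ to a suborbifold representative and the behavior of the evaluation maps under Gromov convergence), which is fine.
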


\begin{proof}
Theorem \ref{thm:polyfoldGWIntersection} and the invariance of the polyfold Gromov-Witten invariants imply that for a sequence of abstract perturbations $p_k \to 0$, we obtain stable, connected pseudoholomorphic curves
$$\mathbf{u}_k = (u_k, C_k, j_k, W, J, D_k, \mu_k)$$
satisfying the desired properties, but instead with $u_k$ satisfying the $J$-holomorphic curve equation perturbed by $p_k$. Applying the Gromov compactness theorem to the sequence $\mathbf{u}_k$, along with the fact that $p_k \to 0$, immediately yields the desired pseudoholomorphic curve $\mathbf{u}$. 
\end{proof}

\subsection{The invariant $\Psi^{W,M}$} We are interested in guaranteeing the existence of pseudoholomorphic curves whose images cross the hypersurface $M = H^{-1}(0) \subset W$. 

This motivates the following definitions. Denote by $W_-$ and $W_+$ the two components of $W \setminus M$, defined concretely as the sets $H^{-1}((-\infty, 0))$ and $H^{-1}((0, \infty))$. 

\begin{defn}
A pair of integral, non-torsion cohomology classes $\alpha_+, \alpha_- \in H^*(W; \mathbb{Z})$ are \emph{separated} by $M$ if they are Poincar\'e dual to closed submanifolds $Y_+$ and $Y_-$ such that $Y_+ \subset W_+$ and $Y_- \subset W_-$, respectively.
\end{defn}

We denote the set of all pairs $(\alpha_+, \alpha_-)$ of cohomology classes separated by $M$ by the set
$$\text{Sep}(M) \subset H^*(W; \mathbb{Z}) \times H^*(W; \mathbb{Z}).$$

We package together the Gromov-Witten invariants with insertions from $\text{Sep}(M)$ into a single invariant denoted by $\Psi^{W,M}$. 

\begin{defn} \label{defn:GWHypersurface}
Define the number
$$\Psi^{W,M} \in \{0,1\}$$
to be equal to $1$ if there exists some $G \geq 0$, $m \geq 0$, $A \in H_2(X; \mathbb{Z})$ and $(\alpha_+, \alpha_-) \in \text{Sep}(M)$ such $2G+m \geq 1$ and
$$\Psi^W_{A,G,m+2}(\alpha_+, \alpha_-, \ldots): H^*(X^k; \mathbb{Q}) \otimes H^*(\overline{\mathcal{M}}_{g,m+2}; \mathbb{Q})\to \mathbb{Q}$$
is not the zero map. 

On the other hand, if this is the zero map for any $A$, $G$, $m$, and $(\alpha_+, \alpha_-) \in \text{Sep}(M)$, set $\Psi^{W,M} = 0$.
\end{defn}

The invariant $\Psi^{W,M}$ was first considered in a similar form by Liu-Tian in \cite{LiuTian00}, generalizing a result by Hofer-Viterbo \cite{HoferViterbo92} using the formalism of Gromov-Witten theory. They showed that non-vanishing of this invariant implies the Weinstein conjecture.

\begin{thm}
\cite[Theorem $1.1$]{LiuTian00} Suppose $M$ is a contact-type hypersurface and 
$$\Psi^{W,M} \neq 0.$$

Then the Reeb vector field on $M$ has a periodic orbit. 
\end{thm}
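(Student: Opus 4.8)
The plan is to deduce the Weinstein conjecture for $M$ from the existence of a crossing pseudoholomorphic curve supplied by $\Psi^{W,M} \neq 0$, then use the contact-type hypothesis to convert that curve into a periodic Reeb orbit. First I would unwind the definition: since $\Psi^{W,M} \neq 0$, there are $G, m \geq 0$ with $2G+m \geq 1$, a class $A \in H_2(W;\mathbb{Z})$, and a separated pair $(\alpha_+,\alpha_-) \in \mathrm{Sep}(M)$ with the corresponding Gromov-Witten map nonzero. Choosing additional homology insertions to saturate the $m+2$ marked points and a class $\beta$ so that the resulting number $\Psi^W_{A,G,m+2}(\alpha_+,\alpha_-,\ldots;\beta)$ is nonzero, and representing $\alpha_\pm$ by submanifolds $Y_\pm$ with $Y_+ \subset W_+$, $Y_- \subset W_-$ (and the remaining $\alpha_k$ by submanifolds $Y_k$), Proposition \ref{prop:GWExistence} gives, for \emph{any} tame $J$ on $W$, a stable connected pseudoholomorphic curve $\mathbf{u} = (u,C,j,W,J,D,\mu)$ with $u(C)$ meeting every $Y_k$; in particular $u(C)$ meets both $Y_- \subset W_-$ and $Y_+ \subset W_+$, so $u(C)$ must cross $M = H^{-1}(0)$.

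The next step is the neck-stretching argument, which is precisely the content of the rest of the paper specialized to the contact case. Since $M$ is contact-type, a neighborhood of $M$ in $W$ is symplectomorphic to a piece of the symplectization $(\mathbb{R}\times M, d(e^t\lambda))$, and stretching the neck produces a family of tame almost-complex structures on longer and longer necks; applying the crossing-curve existence above to each stretched $J$ and invoking the relevant compactness theorem (here the classical SFT compactness of \cite{BEHWZ03} suffices, since the contact case is covered by existing technology and does not need the feral-curve machinery) yields, in the limit, a nonconstant finite-energy punctured $J$-holomorphic curve in $\mathbb{R}\times M$ with at least one positive or negative puncture. The condition $2G+m\geq 1$ together with the crossing property guarantees the limiting building is nontrivial, i.e.\ it genuinely has a level in $\mathbb{R}\times M$ rather than degenerating entirely into $W_-$ or $W_+$. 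Finally, the asymptotic analysis of finite-energy curves in symplectizations (Hofer's lemma) shows that any puncture is asymptotic to a periodic Reeb orbit, which produces the desired closed orbit of the Reeb vector field on $M$.

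I would expect the main obstacle to be ensuring the limiting holomorphic building actually carries a \emph{nonconstant} component in the symplectization level $\mathbb{R}\times M$, so that a puncture — and hence a Reeb orbit — really appears. Energy-wise, the curve $u(C)$ passes from $W_-$ to $W_+$ through the neck region, and as the neck length goes to infinity the amount of ``$\lambda$-energy'' concentrated in the neck is bounded below by a geometric constant, which prevents the neck portion from disappearing; simultaneously the total $\omega$-energy (equivalently, $\int_C u^*\Omega = \langle \mathrm{PD}(A),[\Omega]\rangle$ up to the neck modification, controlled uniformly) stays bounded, so the SFT/Gromov compactness applies. Making this quantitative — a uniform lower bound on the $\lambda$-energy in the neck that survives the limit — is the crux, and it is exactly where the separated-pair hypothesis $(\alpha_+,\alpha_-) \in \mathrm{Sep}(M)$ is used: it forces the curve to genuinely traverse the neck rather than staying on one side. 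Once a nonconstant symplectization component is secured, the conclusion is immediate from standard asymptotics, and the contact-type hypothesis is what makes those asymptotics available (it is not needed for the existence of the crossing curve itself, only for the final translation into a Reeb orbit).
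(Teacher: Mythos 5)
There is an important mismatch of scope here: the paper does not prove this statement at all --- it is quoted from Liu--Tian \cite{LiuTian00} (generalizing Hofer--Viterbo \cite{HoferViterbo92}) purely as motivation for the invariant $\Psi^{W,M}$ --- so there is no in-paper proof to compare against. Judged on its own terms, your outline follows the expected strategy: nonvanishing of $\Psi^{W,M}$ gives, via Corollary \ref{cor:GWCrossingExistence}, a crossing curve for every tame $J$; one then stretches the neck along the contact-type hypersurface and tries to extract from the limit a punctured finite-energy curve in the symplectization, whose asymptotics detect a closed Reeb orbit. That is the right family of ideas, but two steps would not go through as written. First, you cannot simply ``invoke the classical SFT compactness of \cite{BEHWZ03}'': that theorem assumes the Reeb flow of $\lambda$ is nondegenerate (or Morse--Bott), which is precisely what cannot be assumed when the goal is to produce a single periodic orbit for an arbitrary contact form. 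One must either run Hofer-type bubbling-off and finite-energy analysis directly (which needs only a uniform Hofer energy bound and produces a periodic orbit from any nonconstant finite-energy curve with a puncture, without nondegeneracy), or perturb to nondegenerate contact forms, obtain orbits with a uniform action bound coming from the energy bound $\int_C u^*\Omega = \langle [\Omega], A\rangle$, and pass to a limit by Arzel\`a--Ascoli. Either way this is an argument that must be supplied, not a citation.

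Second, your identified ``crux'' --- a uniform lower bound on the $\lambda$-energy in the neck, attributed to the separated-pair hypothesis --- is not the correct mechanism, and the worry it is meant to address is largely moot in the contact setting. The separation hypothesis is used only to force the curve to meet both $Y_-\subset W_-$ and $Y_+\subset W_+$ for every tame $J$; once the limit object is connected and passes through constraints on both sides of the infinitely stretched neck, it necessarily has punctures (its pieces in the two completed halves cannot be closed), and each puncture yields a Reeb orbit by the asymptotic analysis. What rules out degenerate behavior of nonconstant pieces in the neck is energy quantization for curves crossing a slab of $\mathbb{R}\times M$ --- a lower bound on the $\omega$-energy (here $\omega=d\lambda$), the contact analogue of Theorem \ref{thm:omegaEnergyQuantization} --- combined with the uniform upper bound on $\int_C u^*\Omega$; the $\lambda$-energies are in general only controlled from above (as in Lemma \ref{lem:lambdaTrimBounds} and Theorem \ref{thm:fishHoferAreaBound}), not bounded below. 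With these two points repaired your sketch becomes the standard proof of the cited result; note also that this contact case is strictly easier than the theorems actually proved in the paper, whose whole point is that for a general hypersurface the contact normal form and SFT compactness are unavailable, forcing the use of the Fish--Hofer exponential area bounds and exhaustive Gromov compactness instead.
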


\subsection{Existence of pseudoholomorphic curves} 

Proposition \ref{prop:GWExistence} implies the following existence result for pseudoholomorphic curves when $\Psi^{W,M}$ does not vanish.

\begin{cor}
\label{cor:GWCrossingExistence} Suppose
$$\Psi^{W,M} \neq 0.$$

Then for any tame almost-complex structure $J$, the following is true. There exists some $G \geq 0$, $m \geq 0$, $A \in H_2(X; \mathbb{Z})$, and submanifolds $Y_+ \subset W_+$, $Y_- \subset W_-$ and a stable, connected pseudoholomorphic curve
$$\mathbf{u} = (u, C, j, W, J, D, \mu)$$
where $\text{Genus}_{\text{arith}}(C, D) = G$, $\#\mu = m+2$, and $u: C \to W$ is a $J$-holomorphic map pushing forward $[C]$ to $A$. Moreover, the image $u(C)$ intersects each of the submanifolds $Y_+$ and $Y_-$. 
\end{cor}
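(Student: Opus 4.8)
The plan is to unwind the definition of $\Psi^{W,M}$ and feed it directly into Proposition \ref{prop:GWExistence}. By Definition \ref{defn:GWHypersurface}, the hypothesis $\Psi^{W,M} \neq 0$ means $\Psi^{W,M} = 1$, so there exist $G \geq 0$, $m \geq 0$, a class $A \in H_2(W;\mathbb{Z})$, and a pair $(\alpha_+, \alpha_-) \in \text{Sep}(M)$ with $2G + m \geq 1$ such that the homomorphism $\Psi^W_{A,G,m+2}(\alpha_+, \alpha_-, \ldots)$ is not the zero map. Hence there are additional classes $\alpha_3, \ldots, \alpha_{m+2} \in H_*(W;\mathbb{Q})$ and a class $\beta \in H_*(\overline{\mathcal{M}}_{G,m+2};\mathbb{Q})$ with $\Psi^W_{A,G,m+2}(\alpha_+, \alpha_-, \alpha_3, \ldots, \alpha_{m+2}; \beta) \neq 0$.

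The next step is to arrange that all of the insertion classes are represented by submanifolds, so that Proposition \ref{prop:GWExistence} applies. For the classes $\alpha_+$ and $\alpha_-$: by definition of $\text{Sep}(M)$, their Poincar\'e duals are represented by closed submanifolds $Y_+ \subset W_+$ and $Y_- \subset W_-$ respectively. For the remaining classes $\alpha_3, \ldots, \alpha_{m+2}$, we invoke Thom's resolution of the Steenrod representability problem (as discussed after Theorem \ref{thm:polyfoldGWIntersection}): after replacing each $\alpha_k$ by a suitable nonzero rational multiple, which does not affect non-vanishing since $\Psi^W_{A,G,m+2}$ is a homomorphism multilinear in its arguments, each $\alpha_k$ is represented by a submanifold $Y_k \subset W$. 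The scaling can be absorbed into $\beta$ as well if needed; the key point is that non-vanishing is preserved under rescaling each slot.

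Now I apply Proposition \ref{prop:GWExistence} with the data $A$, $G$, $m$ (noting $2G + m \geq 1$ holds by our choice), the tame almost-complex structure $J$ (which was arbitrary), and the submanifolds $Y_+, Y_-, Y_3, \ldots, Y_{m+2}$. The proposition produces a stable, connected pseudoholomorphic curve $\mathbf{u} = (u, C, j, W, J, D, \mu)$ with $\text{Genus}_{\text{arith}}(C, D) = G$, $\#\mu = m+2$, $u_*[C] = A$, and image $u(C)$ intersecting each $Y_k$; in particular $u(C)$ meets both $Y_+$ and $Y_-$. Since $Y_+ \subset W_+$ and $Y_- \subset W_-$ lie in the two distinct components of $W \setminus M$, the image $u(C)$ is connected and meets both sides, so it crosses $M$. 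This gives exactly the claimed conclusion. The only genuinely delicate point is the representability reduction for the auxiliary classes $\alpha_k$ and tracking that rescaling does not destroy non-vanishing; everything else is bookkeeping on top of Proposition \ref{prop:GWExistence}.
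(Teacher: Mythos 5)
Your proof is correct and follows exactly the route the paper implicitly intends (the corollary is stated without a written-out proof, with only the remark that it follows from Proposition \ref{prop:GWExistence}). You correctly unwind Definition \ref{defn:GWHypersurface} to extract the data $G$, $m$, $A$, and $(\alpha_+,\alpha_-)\in\text{Sep}(M)$ with $2G+m\geq 1$, observe that $\alpha_\pm$ come already Poincar\'e dual to submanifolds $Y_\pm\subset W_\pm$ by the very definition of $\text{Sep}(M)$, and appeal to the Thom--Schmaltz representability remark (stated after Theorem \ref{thm:polyfoldGWIntersection}) to replace the remaining insertions by rational multiples represented by submanifolds without destroying non-vanishing; Proposition \ref{prop:GWExistence} then delivers the claimed curve, and the final sentence about $u(C)$ crossing $M$ is exactly the reason $\alpha_\pm$ were required to be separated by $M$.
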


\begin{exe} \label{exe:gwConnectedness}
Denote by $e \in H_0(W; \mathbb{Z})$ the point class. 
Then Proposition \ref{prop:GWExistence} implies that non-vanishing of 
$$\Psi^W_{A, G, m+2}(e, e; -)$$
implies that for any two points $p$ and $q$ in $W$, there is a stable, connected $J$-holomorphic curve 
$$\mathbf{u} = (u, C, j, W, J, D, \mu)$$
such that $C$ has arithmetic genus $G$, $u$ pushes forward $[C]$ to $A$, and the image $u(C)$ passes through $p$ and $q$. 
\end{exe}

Example \ref{exe:gwConnectedness} motivates the following definition, which is used to prove the dynamical result stated in Proposition \ref{prop:application1}.

\begin{defn} \label{defn:gwConnectedness}
A symplectic manifold where 
$$\Psi^W_{A, 0, m+2}(e, e; -)$$
does not vanish for some choice of $A$ and $m \geq 1$ is called \emph{rationally connected}. Informally, this means that there is a $J$-holomorphic sphere through any two points. 

A symplectic manifold where 
$$\Psi^W_{A, G, m+2}(e, e; -)$$
does not vanish for some choice of $A$ and $m$ is called \emph{$\text{GW}_G$-connected}. 
\end{defn}

Note that $(e, e) \in \text{Sep}(M)$ since by definition $M$ is a separating hypersurface. We conclude the following statement.

\begin{lem}\label{lem:gwConnectedness}
If a symplectic manifold $(W, \Omega)$ is $\text{GW}_G$-connected for any $G \geq 0$, then for any closed, separating hypersurface $M \subset W$, 
$$\Psi^{W, M} \neq 0.$$
\end{lem}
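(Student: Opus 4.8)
The statement follows almost immediately by unwinding definitions, so the plan is mainly to assemble the pieces correctly. Suppose $(W,\Omega)$ is $\mathrm{GW}_G$-connected, and let $M \subset W$ be a closed, separating hypersurface. By Definition \ref{defn:gwConnectedness}, there exist a class $A \in H_2(W;\mathbb{Z})$ and an integer $m \geq 0$ such that the homomorphism $\Psi^W_{A,G,m+2}(e,e;-)$ is not the zero map, where $e \in H_0(W;\mathbb{Z})$ denotes the point class.

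\textbf{Step 1: reduce to checking the index and separation conditions in Definition \ref{defn:GWHypersurface}.} To conclude $\Psi^{W,M} \neq 0$ it suffices to exhibit $G' \geq 0$, $m' \geq 0$, $A' \in H_2(W;\mathbb{Z})$, and a pair $(\alpha_+,\alpha_-) \in \mathrm{Sep}(M)$ with $2G' + m' \geq 1$ such that $\Psi^W_{A',G',m'+2}(\alpha_+,\alpha_-,\ldots)$ is not the zero map. I would take $G' = G$, $m' = m$, $A' = A$, and $\alpha_+ = \alpha_- = e$ (viewed as a cohomology class via Poincaré duality, i.e. the class dual to a point). Two things must be verified: that $2G + m \geq 1$, and that $(e,e) \in \mathrm{Sep}(M)$.

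\textbf{Step 2: verify the two conditions.} For the separation condition: since $M$ is separating, $W \setminus M$ has two components $W_-$ and $W_+$, each nonempty and open; pick a point $Y_- \in W_-$ and a point $Y_+ \in W_+$. These are closed (zero-dimensional) submanifolds Poincaré dual to the point class $e$, so $(e,e) \in \mathrm{Sep}(M)$ by definition. For the index condition $2G + m \geq 1$: this is the only place where one must be slightly careful. If $G \geq 1$ it is automatic. If $G = 0$, then $\mathrm{GW}_0$-connectedness is what Definition \ref{defn:gwConnectedness} calls rational connectedness, which by that same definition requires $m \geq 1$, so again $2G + m \geq 1$. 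Hence in all cases $2G + m \geq 1$, and feeding $G, m, A, (e,e)$ into Definition \ref{defn:GWHypersurface} shows $\Psi^{W,M} = 1 \neq 0$.

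\textbf{Main obstacle.} There is no serious obstacle here — the lemma is essentially a bookkeeping consequence of the definitions, and the only subtlety is confirming that the index bound $2G+m \geq 1$ built into Definition \ref{defn:GWHypersurface} is actually implied by the definition of $\mathrm{GW}_G$-connectedness for every $G \geq 0$ (in particular the $G = 0$ case, which is covered precisely because rational connectedness demands $m \geq 1$). One should also note, for cleanliness, that $e$ is non-torsion so that it legitimately lies in the domain of the pairing $\mathrm{Sep}(M)$ is built from; this is immediate since $H_0(W;\mathbb{Z}) \cong \mathbb{Z}$ for $W$ connected. With these remarks in place the proof is complete.
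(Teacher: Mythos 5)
Your outline and conclusion are correct, and you follow the same route the paper implicitly takes: unwind the definitions and observe that $(e,e) \in \text{Sep}(M)$ by taking a point in each component of $W \setminus M$. However, the justification you offer for the $G = 0$ case of the index bound $2G + m \geq 1$ does not match what the paper actually defines. You claim that $\text{GW}_0$-connectedness is ``what Definition~\ref{defn:gwConnectedness} calls rational connectedness, which by that same definition requires $m \geq 1$.'' But Definition~\ref{defn:gwConnectedness} gives two \emph{separate} definitions: rational connectedness requires a witness with $m \geq 1$, while $\text{GW}_G$-connectedness asks only for ``some choice of $A$ and $m$'' with no stated lower bound on $m$. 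Read literally, $\text{GW}_0$-connectedness is therefore \emph{a priori} weaker than rational connectedness, so identifying the two is not licensed by the text of the definition; your argument as written is circular on this point.

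The fact you actually need is that the invariant $\Psi^W_{A,G,m+2}$ has the Deligne--Mumford orbifold $\overline{\mathcal{M}}_{G,m+2}$ built into its very definition (Definition~\ref{defn:polyfoldGromovWitten}), and this orbifold is nonempty precisely when a stable genus-$G$ surface with $m+2$ marked points exists, i.e.\ when $2G + (m+2) - 2 > 0$, which is exactly $2G + m \geq 1$. In particular $\Psi^W_{A,0,2}$ is not a meaningful invariant, so any witness $(A,m)$ of $\text{GW}_0$-connectedness is automatically forced to have $m \geq 1$, and in general any witness of $\text{GW}_G$-connectedness automatically satisfies $2G + m \geq 1$. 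With this substitution for your $G = 0$ argument the rest of your proof --- choosing $Y_\pm$ to be single points in $W_\pm$ Poincar\'e dual to $e$, and feeding $(G, m, A, (e,e))$ into Definition~\ref{defn:GWHypersurface} --- goes through.
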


\section{Taubes' Gromov invariant} \label{sec:taubesGromov}

\subsection{Review of the invariant} 

Let $(W, \Omega)$ be a symplectic $4$-manifold with $b_2^+(W) > 1$.

Taubes' Gromov invariant, constructed by Taubes in \cite{Taubes96}, is a map
$$\text{Gr}: H_2(W; \mathbb{Z}) \to \mathbb{Z}.$$

For each $A \in H_2(W; \mathbb{Z})$, the integer $\text{Gr}(A)$ is a count of pseudoholomorphic curves that takes a more geometric approach than the definition of the Gromov-Witten invariants. 

Recall that an important part of the data of a pseudoholomorphic curve
$$\mathbf{u} = (u, C, j, W, J, D, \mu)$$ 
is the pseudoholomorphic map $u$. Suppose for simplicity that $D$ and $\mu$ are empty and $C$ is closed and connected. It is a standard fact (see \cite[Section $2.5$]{McduffSalamon12}) that the map $u$ factors into a composition $u' \circ \phi$ where 
$$\phi: (C, j) \to (C', j')$$
is a holomorphic branched covering of degree $m \geq 1$ and
$$u': (C', j) \to (W, J)$$
is a $J$-holomorphic map that is injective on an open, dense subset of $C'$. The degree $m$ of the covering is called the \emph{multiplicity} of $u$.

Two pseudoholomorphic curves of the type above, from the perspective of Taubes' Gromov invariant, are considered equivalent if the pseudoholomorphic maps have the same image and multiplicity.  

Given this consideration, it is more accurate to say that the primary objects of concern in Taubes' Gromov invariant are \emph{pseudoholomorphic submanifolds} rather than pseudoholomorphic curves as in Gromov-Witten theory. We define pseudoholomorphic submanifolds below.

\begin{defn}
\label{defn:jHolomorphicSubmanifold} Let $(W, J)$ be an almost-complex manifold. A smooth, embedded two-dimensional submanifold $\Sigma$ is a \textbf{pseudoholomorphic submanifold} or \textbf{$J$-holomorphic submanifold} if it satisfies either of the two equivalent conditions below:
\begin{itemize}
    \item For every point $p \in \Sigma$, the tangent plane $T_p\Sigma \subset T_p W$ is invariant under the action of the almost-complex structure $J$. 
    \item There is a pseudoholomorphic curve
    $$\mathbf{u} = (u, C, j, W, J, \emptyset, \emptyset)$$
    where $u$ is an embedding and the image $u(C)$ is equal to $\Sigma$. 
\end{itemize}
\end{defn}

The first definition in Definition \ref{defn:jHolomorphicSubmanifold} is more geometric, while the second definition is useful for the construction of moduli of pseudoholomorphic submanifolds. The definition of Taubes' Gromov invariant relies heavily on two properties of pseudoholomorphic curves unique to $4$-dimensional symplectic manifolds: the adjunction inequality and positivity of intersection.

\begin{thm}[Adjunction inequality]
\label{thm:adjunctionInequality} Let $(W, \Omega)$ be a closed symplectic manifold of dimension $4$, and let 
$$\mathbf{u} = (u, C, j, W, J, \emptyset, \emptyset)$$
be a $J$-holomorphic curve where $J$ is $\Omega$-tame, $C$ is a closed, connected surface of genus $G$, and $u$ represents the class $A$.

Then 
$$2 - 2G + A \cdot A - \text{PD}(c_1(W)) \cdot A \geq 0$$
with equality if and only if $u$ is an embedding. 
\end{thm}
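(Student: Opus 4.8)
The plan is to prove the adjunction inequality by combining the adjunction formula for embedded surfaces with the well-known fact that a pseudoholomorphic curve in a four-manifold can be decomposed near each of its self-intersections and branch/critical points in a way that contributes non-negatively to the self-intersection number. First I would reduce to the case where $u$ is \emph{somewhere injective}: using the factorization $u = u' \circ \phi$ through a branched cover $\phi \colon (C,j) \to (C',j')$ of degree $m$ (as recalled in Section~\ref{sec:taubesGromov} following \cite[Section 2.5]{McduffSalamon12}), one sees that if $u$ were multiply covered then $A = m A'$ with $A'$ represented by the somewhere-injective map $u'$, and the inequality for $A$ would follow from (a strengthened form of) the inequality for $A'$ together with $m \ge 1$; moreover $u$ is never an embedding when $m \ge 2$, which is consistent with the equality clause. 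So the substantive case is $m = 1$, i.e. $u$ somewhere injective.

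For a somewhere injective $J$-holomorphic map $u \colon C \to W$ with $C$ closed connected of genus $G$ representing $A$, I would invoke the two structural inputs highlighted in the paper: positivity of intersections and the local structure of pseudoholomorphic maps. The image $u(C)$ has at most finitely many singular points (self-intersections and critical points of $u$), and near each such point the local contribution to the algebraic self-intersection number $A \cdot A$ of the image is a strictly positive integer $\delta_p \ge 1$ (for a transverse double point $\delta_p = 1$; for a critical point or tangential intersection $\delta_p \ge 2$), by positivity of intersection in dimension four. Let $\delta(u) = \sum_p \delta_p \ge 0$ be the total singularity contribution. The adjunction formula then reads
$$
2 - 2G = -A \cdot A + \mathrm{PD}(c_1(W)) \cdot A + 2\delta(u),
$$
equivalently $2 - 2G + A \cdot A - \mathrm{PD}(c_1(W)) \cdot A = 2\delta(u) \ge 0$, with equality precisely when $\delta(u) = 0$, i.e. when $u$ has no singular points, i.e. when $u$ is an embedding. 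This is the asserted inequality and its equality clause.

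The one point requiring genuine care — and the main obstacle — is justifying the adjunction formula with the nonnegative correction term $2\delta(u)$ for a merely $\Omega$-\emph{tame} (not necessarily compatible) almost-complex structure, and confirming that $\delta(u)$ is well-defined, finite, and nonnegative. The ingredients are: (i) the Micallef--White / similarity-principle description of a pseudoholomorphic map near any point, which shows the image is locally a union of finitely many branches each of which is, after a local diffeomorphism, a holomorphic curve, so that singular points are isolated and finite in number on the compact domain; (ii) positivity of intersections for tame $J$, which gives $\delta_p \ge 1$ at each singular point and $\delta_p \ge 2$ at critical or tangential ones; and (iii) the identification of $\sum_p 2\delta_p$ with the difference $(2 - 2G) + A\cdot A - \mathrm{PD}(c_1(W))\cdot A$ via perturbing $u$ to an immersion with only transverse double points and comparing the normal Euler number to the geometric genus. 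All three are standard in the tame setting (see \cite[Chapter~2 and Appendix~E]{McduffSalamon12}), and I would cite them rather than reprove them; the proof is then just the display above.
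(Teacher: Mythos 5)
The paper does not prove Theorem~\ref{thm:adjunctionInequality}; it cites \cite{McDuff91}, so there is no internal argument to compare against. Your sketch for the somewhere-injective case is the standard one and is essentially correct: finiteness and isolation of singular points via the local Micallef--White description, positivity of intersections to get $\delta_p \geq 1$ (and $\geq 2$ at critical or tangential points) for tame $J$, and the adjunction formula
$$2 - 2G + A\cdot A - \mathrm{PD}(c_1(W))\cdot A = 2\delta(u),$$
with equality iff $\delta(u)=0$ iff $u$ is an embedding. Your remark that all three inputs hold in the tame (not merely compatible) setting is the right thing to emphasize.

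The genuine gap is your first paragraph, the claimed reduction from arbitrary $u$ to somewhere-injective $u$. That reduction is not available, because the adjunction inequality is simply false for multiply covered curves. Concretely, take the Hirzebruch surface $\mathbb{F}_2$ with its holomorphic section $\Sigma$ of self-intersection $-2$, for which $c_1\cdot[\Sigma]=0$ and the adjunction equality holds with $G'=0$. Let $\phi\colon \mathbb{P}^1 \to \mathbb{P}^1$ be the degree-two branched cover $z\mapsto z^2$ and set $u = \iota_\Sigma \circ \phi$. Then $u$ is a non-constant $J$-holomorphic map from a genus-zero surface representing $A = 2[\Sigma]$, with $A\cdot A = -8$ and $\mathrm{PD}(c_1)\cdot A = 0$, so the left side of the inequality is $2 - 0 - 8 - 0 = -6 < 0$. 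Riemann--Hurwitz (your ``$m\geq 1$'' step) is not enough: one has $2-2G = m(2-2G') - b$ with $b\geq 0$, and the branching term works \emph{against} you, while $A\cdot A = m^2 A'\cdot A'$ can be very negative. No ``strengthened form'' of the inequality for $u'$ fixes this. What is actually true is that the theorem, like the version in \cite{McDuff91} and \cite[Chapter~2, Appendix~E]{McduffSalamon12}, carries an implicit somewhere-injective hypothesis, and indeed that is the only case the paper uses (Lemma~\ref{lem:jHolomorphicSubmanifoldGenusBound} and Propositions~\ref{prop:admissibleJConsequences}, \ref{prop:nonzeroGr} all apply it to embedded pseudoholomorphic submanifolds). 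So you should delete the reduction paragraph, add the somewhere-injectivity assumption explicitly, and start directly from your second paragraph.
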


A proof of Theorem \ref{thm:adjunctionInequality} can be found in \cite{McDuff91}. 

\begin{thm}[Positivity of intersection]
\label{thm:positivityOfIntersection} 
Let $(W, \Omega)$ be a closed symplectic manifold of dimension $4$, and let  $J$ be an $\omega$-tame almost-complex structure. 
Let
$$\mathbf{u}_1 = (u_1, C_1, j_1, W, J, \emptyset, \emptyset)$$
and
$$\mathbf{u}_2 = (u_2, C_2, j_2, W, J, \emptyset, \emptyset)$$
be two pseudoholomorphic curves where $C_1$ and $C_2$ are closed, connected surfaces and $u_1$ and $u_2$ represent homology classes $A_1$ and $A_2$, respectively. Furthermore, assume that $u_1$ and $u_2$ do not have the same image. 

Then the set of intersections of $u_1(C_1)$ and $u_2(C_2)$ is discrete. Moreover, for each point $p$ lying in $u_1(C_1) \cap u_2(C_2)$, the intersection multiplicity 
$$m_p(u_1, u_2)$$
is positive, and equal to $1$ if and only if $u_1$ and $u_2$ are embeddings onto their images near $p$, and moreover $u_1(C_1)$ and $u_2(C_2)$ intersect transversely at $p$. 
\end{thm}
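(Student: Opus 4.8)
\textbf{Proof proposal for Theorem \ref{thm:positivityOfIntersection} (positivity of intersection).}

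The plan is to reduce the statement to the local model for intersections of $J$-holomorphic discs and invoke the classical positivity-of-intersection theorem of Gromov, McDuff, and Micallef–White. First I would observe that the global conclusions follow once the corresponding local statements are established at each intersection point $p \in u_1(C_1) \cap u_2(C_2)$: discreteness of the intersection set, positivity of each local multiplicity, and the sharp characterization of when the multiplicity equals $1$. So the real content is purely local. Near such a point $p$, choose local coordinates and use the standard fact (McDuff–Salamon, \cite{McduffSalamon12}) that every $J$-holomorphic curve has only isolated critical points and admits, near any point of its domain, a parametrization of the form $z \mapsto (z^k, z^{k+1}\psi(z) + \dots)$ after a smooth change of coordinates — i.e. $J$-holomorphic curves look like holomorphic curves to leading order. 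This lets me replace $u_1$ and $u_2$ locally by their "leading-order holomorphic approximations".

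The key steps, in order, would be: (1) reduce to two $J$-holomorphic maps from discs through a common point $p$, with distinct images; (2) invoke the Micallef–White theorem (or the exposition in \cite{McDuff91} / \cite{McduffSalamon12}) that after a $C^1$ change of coordinates on a neighborhood of $p$ in $W$, the two curves become \emph{honestly holomorphic} curves in $\mathbb{C}^2$; since they have distinct images, their intersection at $p$ is then isolated and the local intersection number is computed by the classical complex-analytic formula, which is a strictly positive integer — this simultaneously gives discreteness (isolated intersections, hence discrete since $C_1, C_2$ are compact) and positivity $m_p(u_1,u_2) \geq 1$; (3) analyze the equality case: $m_p(u_1,u_2) = 1$ forces both leading-order holomorphic models to be embedded and to meet transversally at $p$, which (tracing back through the $C^1$ coordinate change, which preserves tangency and transversality of the smooth images) means precisely that $u_1$ and $u_2$ are embeddings near $p$ and $u_1(C_1) \pitchfork u_2(C_2)$ at $p$; (4) sum over the finitely many intersection points if a global count is wanted, though the statement as given is already local plus the discreteness assertion.

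The homology classes $A_1, A_2$ and the symplectic/tameness hypotheses play only a supporting role here: tameness guarantees $J$-holomorphic curves have the required local normal form and that the naive geometric intersection number agrees with the homological one $A_1 \cdot A_2$; the hypothesis that $u_1$ and $u_2$ have distinct images is what prevents a continuum of intersection points and is essential for step (2). The main obstacle I anticipate is step (2) — the Micallef–White local intersection theory — which is genuinely hard analysis (it requires the asymptotic expansion of $J$-holomorphic curves at a singular point and a careful comparison with the integrable model), but since this is a standard cited result in the $4$-dimensional literature, I would simply quote \cite{McduffSalamon12} and \cite{McDuff91} rather than reprove it; the remaining steps are then bookkeeping about local intersection numbers and transversality under $C^1$ coordinate changes.
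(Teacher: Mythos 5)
The paper does not prove this theorem; it states ``A proof of Theorem \ref{thm:positivityOfIntersection} can be found in \cite{McDuff91}'' and moves on, treating positivity of intersection as a black-box input from the four-dimensional pseudoholomorphic curve literature. Your proposal correctly identifies the content of that cited proof — reduce to the local question at an isolated intersection point, use the Micallef--White local structure theorem (or McDuff's asymptotic analysis) to replace the $J$-holomorphic germs by holomorphic ones up to a $C^1$ change of coordinates, and then read off discreteness, positivity, and the transversality characterization of multiplicity one from classical complex intersection theory. This is essentially the same route as the reference the paper defers to, and you are right that the hard analytic content lives entirely in the local normal form theorem, which you sensibly quote rather than reprove.

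One small caution worth flagging: Micallef--White's theorem straightens \emph{one} curve at a time by a $C^1$ diffeomorphism; to compare \emph{two} intersecting curves one cannot naively make both simultaneously holomorphic in the same chart, and McDuff's actual argument handles this via a perturbation/degree-theory comparison of the two germs rather than a single simultaneous straightening. Your phrasing in step (2) elides this, but since you are explicitly citing the result rather than proving it, this does not affect the correctness of the outline.
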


A proof of Theorem \ref{thm:positivityOfIntersection} can be found in \cite{McDuff91}. 

We now proceed with the definition of Taubes' Gromov invariant. Fix a homology class $A \in H_2(W; \mathbb{Z})$, where $(W, \Omega)$ is a closed, symplectic $4$-manifold with $b_2^+(W) > 1$. 

We define from $A$ an integer
$$I(A) = \frac{1}{2}(\text{PD}(c_1(W)) \cdot A + A \cdot A)$$
where the dot indicates the intersection pairing of $W$. Using the adjunction inequality in Theorem \ref{thm:adjunctionInequality}, the genus of a pseudoholomorphic submanifold reprenting $A$ can be recovered from $I(A)$. 

\begin{lem}
\label{lem:jHolomorphicSubmanifoldGenusBound} Let $\Sigma$ be a $J$-holomorphic submanifold in $W$ representing the homology class $A$. Then the genus $G$ of $\Sigma$ is 
$$G = 1 - I(A) + A \cdot A.$$
\end{lem}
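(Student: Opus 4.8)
The plan is to derive the genus formula directly from the adjunction inequality (Theorem~\ref{thm:adjunctionInequality}) applied to the pseudoholomorphic submanifold $\Sigma$. By the second characterization in Definition~\ref{defn:jHolomorphicSubmanifold}, there is a $J$-holomorphic curve $\mathbf{u} = (u, C, j, W, J, \emptyset, \emptyset)$ with $u$ an embedding and $u(C) = \Sigma$; in particular $C$ is a closed surface whose genus is the genus $G$ of $\Sigma$, and $u$ represents the class $A$. Strictly speaking I would first reduce to the case that $C$ (equivalently $\Sigma$) is connected, since the adjunction inequality as stated assumes connectedness — but Taubes' convention is that the relevant submanifolds are connected, so I will simply assume this, matching the setup in which the lemma will be used.

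The main step: since $u$ is an embedding, Theorem~\ref{thm:adjunctionInequality} holds with \emph{equality}, giving
$$2 - 2G + A \cdot A - \text{PD}(c_1(W)) \cdot A = 0.$$
Solving for $G$ yields
$$G = 1 + \tfrac{1}{2}\big(A \cdot A - \text{PD}(c_1(W)) \cdot A\big) = 1 + \tfrac{1}{2}(A\cdot A) - \tfrac{1}{2}\big(\text{PD}(c_1(W))\cdot A + A\cdot A\big) + \tfrac{1}{2}(A \cdot A),$$
which, recognizing the middle term as exactly $I(A) = \tfrac{1}{2}\big(\text{PD}(c_1(W))\cdot A + A\cdot A\big)$, simplifies to
$$G = 1 - I(A) + A \cdot A.$$
This is the claimed identity, so the proof is essentially a one-line algebraic rearrangement once the equality case of adjunction is invoked.

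I do not anticipate a serious obstacle here; the only point requiring a word of care is the connectedness hypothesis needed to apply Theorem~\ref{thm:adjunctionInequality}, and the (trivial) observation that a $J$-holomorphic submanifold, being embedded, automatically realizes the equality case rather than a strict inequality. The computation itself is routine bookkeeping with the intersection form and the definition of $I(A)$.
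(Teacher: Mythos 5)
Your proof is correct and is exactly the argument the paper intends: the lemma is stated as an immediate consequence of the equality case of the adjunction formula (Theorem \ref{thm:adjunctionInequality}) for an embedded curve, followed by the same rearrangement using the definition of $I(A)$. Your remark about connectedness is the right caveat, and it is harmless since the submanifolds appearing in Taubes' construction are connected.
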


For any integer $d \in \mathbb{Z}$, write $\mathcal{A}_d$ for the set of pairs $(J, P)$ where $J$ is an $\Omega$-tame almost-complex structure and $P$ is either the empty set if $d \geq 0$ or a set of $d$ distinct, unordered points in $W$ if $d \geq 1$. 

We give $\mathcal{A}_d$ a topology by considering it as a subset of the space 
$$C^\infty(W; \text{Hom}(TW, TW)) \times \text{Sym}^d(W)$$
where $\text{Sym}^d(W)$ is the $d$-fold symmetric product of $W$ if $d \geq 1$ and a single point if $d \leq 0$. 

\begin{defn}
\label{defn:grGenerators} Fix $A \in H_2(W; \mathbb{Z})$ and a pair $(J, P) \in \mathcal{A}_{I(A)}$. 

Then the set 
$$\mathcal{H}(A, J, P)$$
is the set where each element $\mathfrak{C}$ is a finite set of pairs $\{(\Sigma_k, m_k)\}_{k=1}^N$ of connected $J$-holomorphic submanifolds $\Sigma_k$ and natural numbers $m_k \geq 1$ satisfying the following conditions:
\begin{itemize}
    \item The sum $\sum_{k=1}^N m_k[\Sigma_k]$ is equal to $A$.
    \item The submanifolds $\{\Sigma_k\}_{k=1}^N$ are all pairwise disjoint.
    \item The union
    $$\cup_{i=1}^N \Sigma_k$$
    contains the set $P$. 
    \item If, for any $k$, the submanifold $\Sigma_k$ is a sphere with negative self-intersection, the multiplicity $m_k$ is equal to $1$. 
\end{itemize}
\end{defn}

The Gromov invariant is defined only for certain choices of $(J, P) \in \mathcal{A}_{I(A)}$. We list all of the properties required of such a choice $(J, P)$ in the definition below. Two of these properties make reference to \emph{nondegeneracy} and \emph{$n$-nondegeneracy} of a pseudoholomorphic submanifold $\Sigma$. We point the reader to \cite[Definition $2.1$]{Taubes96} and \cite[Definition $4.1$]{Taubes96} for definitions of these notions, although we sketch them below.

Fix $(J, P) \in \mathcal{A}_{I(A)}$. Then a connected $J$-holomorphic submanifold $\Sigma$ representing the class $A$ and containing the set $P$ is nondegenerate when the direct sum of its normal deformation operator and the linearized evaluation map at $P$ has no kernel. A geometric consequence is that $\Sigma$ is isolated in the space of connected pseudoholomorphic submanifolds that represent $A$ and contain $P$. 

A connected pseudoholomorphic submanifold of genus $1$ representing the class $A$ is $n$-nondegenerate if the pullback of its associated normal deformation operator by an $n$-fold cover of tori has no kernel for any such cover. A geometric consequence is that there is no sequence of connected pseudoholomorphic submanifolds $\Sigma_k$ of genus $1$ collapsing onto an $n$-fold cover of $\Sigma$. 

\begin{defn}
\label{defn:admissibleJ} \cite[Definition $4.2$]{Taubes96} Fix an integer $d \geq 0$ and a pair $(J, P) \in \mathcal{A}_d$. The pair $(J, P)$ is \emph{admissible} if the following conditions hold for any class $A$ with $I(A) \leq d$:
\begin{itemize}
    \item There are finitely many connected pseudoholomorphic submanifolds representing $A$ that pass through $I(A)$ points in the set $P$.
    \item Every connected pseudoholomorphic submanifold as in the item above is nondegenerate.
    \item There are no connected pseudoholomorphic submanifolds representing $A$ that pass through more than $I(A)$ points in the set $P$. 
    \item There is an open neighborhood in $\mathcal{A}_d$ containing $(J, P)$ such that for every $(J', P')$ in this neighborhood, the first three items are satisfied. Moreover, the number of pseudoholomorphic manifolds as in the $(J', P')$ version of the first item coincides with the number in the $(J, P)$ version.
    \item There are no connected pseudoholomorphic submanifolds representing $A$ if $I(A) < 0$. 
    \item If $c_1(W) \cdot A = A \cdot A = 0$, then any pseudoholomorphic submanifold as in the first point is $n$-nondegenerate for every $n \in \mathbb{Z}$. 
\end{itemize}
\end{defn}

The work \cite{Taubes96} defines $\mathcal{A}_d$ as a set of pairs $(J, P)$ where $J$ is \emph{$\Omega$-compatible} rather than $\Omega$-tame. The definition \cite[Definition $4.2$]{Taubes96} of an admissible pair is the same apart from this distinction and minor cosmetic changes. 

In any case, admissible pairs are plentiful, as the following proposition shows.

\begin{prop}
\label{prop:genericAdmissiblePairs} \cite[Proposition $4.3$]{Taubes96} Fix $d \geq 0$. The the set of admissible pairs in $\mathcal{A}_d$ is a Baire set, i.e. it is a countable intersection of open and dense subsets. 
\end{prop}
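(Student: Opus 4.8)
The plan is to follow the standard Sard–Smale transversality strategy adapted to pseudoholomorphic submanifolds, exactly as in \cite[Section 4]{Taubes96}, and to reduce the claim to a countable union of transversality conditions, each of which defines an open dense subset of $\mathcal{A}_d$. Since $\mathcal{A}_d$ is a complete metric space (being a closed subset, cut out by the tameness condition, of a product of a Fréchet space of sections with a symmetric product of $W$), it is a Baire space, so a countable intersection of open dense subsets is again dense; this is what is meant by ``Baire set'' in the statement. The key observation allowing the reduction to countably many conditions is that there are only countably many homology classes $A\in H_2(W;\mathbb Z)$ with $I(A)\le d$, and for each such $A$ only finitely many ``configuration types'' $\mathfrak C=\{(\Sigma_k,m_k)\}$ are relevant, since the energy $\int_{\Sigma_k}\Omega$ of each component is bounded by $\Omega\cdot A$, which bounds $N$ and the multiplicities $m_k$.

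First I would set up, for each relevant class $A$ with $I(A)\le d$, the universal moduli space of connected $J$-holomorphic submanifolds representing $A$: i.e. the space of pairs $(\Sigma,J)$ with $\Sigma$ a $J$-holomorphic submanifold in class $A$. Using the second characterization in Definition \ref{defn:jHolomorphicSubmanifold}, this is modeled on the zero set of a $\bar\partial$-type section whose linearization is the normal deformation operator; the standard argument (varying $J$) shows the universal space is a smooth Banach manifold, and one applies the Sard–Smale theorem to the projection to $J$. The expected dimension of the fiber is $2I(A)$ by Lemma \ref{lem:jHolomorphicSubmanifoldGenusBound} together with the Riemann–Roch computation of the index of the normal operator, so imposing passage through $I(A)$ generic points cuts this down to dimension zero: for $(J,P)$ in a residual set one gets finitely many such submanifolds, all nondegenerate (this is the first two bullets of Definition \ref{defn:admissibleJ}), and passage through $I(A)+1$ points is a codimension-$2$ phenomenon, hence generically empty (third bullet); the class $I(A)<0$ case (fifth bullet) is the statement that a negative-dimensional moduli space is generically empty. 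Openness — the fourth bullet, including the count being locally constant — follows from Gromov compactness together with the fact that nondegenerate submanifolds are isolated and persist under small perturbations, where one must invoke positivity of intersection (Theorem \ref{thm:positivityOfIntersection}) and the adjunction inequality (Theorem \ref{thm:adjunctionInequality}) to rule out degenerations of the limiting configuration into extra components or multiply-covered pieces not already accounted for.

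The main obstacle is the last bullet: $n$-nondegeneracy for all $n$ in the torus case $c_1(W)\cdot A=A\cdot A=0$. Here the relevant moduli space of genus-one submanifolds has expected dimension $0$ but the naive transversality argument must be run simultaneously over all $n$-fold covers of the domain torus, and the cokernel of the pulled-back normal operator does not automatically vanish for generic $J$ by a single application of Sard–Smale; one needs Taubes' refined analysis (\cite[Section 5]{Taubes96}, equivalently the spectral-flow/wall-crossing input) showing that for generic $J$ no such cover lies in the kernel — this is a countable family of conditions indexed by $n$ and by the finitely many relevant classes, each open and dense, so the intersection is again residual. I would cite \cite[Proposition 4.3]{Taubes96} for this point rather than reproving it. Finally, I would note the only discrepancy with \cite{Taubes96} is that there $J$ ranges over $\Omega$-compatible structures whereas here over $\Omega$-tame ones; since the space of tame $J$ is still contractible and the transversality arguments use only that the relevant spaces of $J$'s are open in a Fréchet space and large enough to achieve surjectivity of the universal linearization, the proofs carry over verbatim, and I would remark this explicitly. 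Intersecting the finitely-or-countably-many residual sets produced above over all $A$ with $I(A)\le d$ yields the desired Baire set of admissible pairs.
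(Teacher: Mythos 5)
Your overall strategy matches the paper's: both proofs are essentially a pointer to \cite[Proposition 4.3]{Taubes96} with a Sard--Smale transversality sketch (universal moduli space, projection to $\mathcal{A}_d$, surjective differential), plus an explicit note that the only delta from Taubes is replacing compatible $J$ by tame $J$. The structure --- countably many classes $A$ with $I(A)\le d$, residual set for each, dimension counting via $I(A)$, citing Taubes for the $n$-nondegeneracy bullet --- all agrees with what the paper does.

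However, there is one substantive imprecision that amounts to a gap. You justify the ``carry over verbatim'' claim by saying the transversality arguments only use that the space of $J$'s is open in a Fr\'echet space and large enough to give surjectivity of the universal linearization. That part is correct but not where the actual issue lies. The paper identifies the real sticking point: the \emph{finiteness} bullet (and the openness bullet) rely on Gromov compactness, which requires an a priori area bound on the $J$-holomorphic submanifolds in a fixed class $A$. For $\Omega$-compatible $J$ Taubes gets this for free from $\text{Area}_g(\Sigma)=\langle\Omega,[\Sigma]\rangle$ with $g(-,-)=\Omega(-,J-)$; for merely tame $J$ that formula fails because $\Omega(-,J-)$ need not be symmetric. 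The fix, which the paper states explicitly, is to replace $g$ with the symmetrized metric $g(-,-)=\tfrac12\bigl(\Omega(-,J-)-\Omega(J-,-)\bigr)$, for which the same area identity holds. Without this observation your claim that everything carries over verbatim is unsupported, and a reader checking the first bullet would get stuck. You should add this point (and more generally separate the compactness step from the transversality step when arguing that tame suffices). As a minor aside, the paper also notes the sixth bullet can alternatively be handled by Wendl's equivariant transversality theorem rather than reproducing Taubes' argument; this is optional but worth mentioning.
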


Proposition \ref{prop:genericAdmissiblePairs} is a restatement of the first statement in \cite[Proposition $4.3$]{Taubes96}, except again our space $\mathcal{A}_d$ uses tame almost-complex structures rather than compatible almost-complex structures. 

Before sketching the proof of Proposition \ref{prop:genericAdmissiblePairs}, we recall the following property of tame/compatible almost-complex structures $J$ in closed symplectic manifolds $(W, \Omega)$. If $J$ is compatible, then there is a natural almost-Hermitian structure $(J, g)$ on $W$ where the metric $g$ is defined by
$$g(-, -) = \Omega(-, J-).$$

More generally, if $J$ is tame, then we can define $(J, g)$ by writing
$$g(-, -) = \frac{1}{2}(\Omega(J-, -) + \Omega(-, J-)).$$

In either case, let 
$$\mathbf{u} = (u, C, j, W, J, \emptyset, \emptyset)$$
be a pseudoholomorphic curve with closed domain such that $u$ pushes forward the fundamental class $[C]$ to the homology class $A$. Then a quick computation shows that the area of $\mathbf{u}$ is determined by the homology class:
$$\text{Area}_{u^*g}(C) = \langle \Omega, A \rangle.$$

In particular, if
$$\mathbf{u}_k = (u_k, C_k, j_k, W, J, \emptyset, \emptyset)$$
is a sequence of $J$-holomorphic curves with closed domain all representing the same homology class $A$, there is automatically a uniform bound on their area with respect to $(g, J)$ whenever $J$ is tame/compatible and $g$ is the canonically introduced metric.

\begin{proof}[Proof of Proposition \ref{prop:genericAdmissiblePairs}]

Proposition \ref{prop:genericAdmissiblePairs} can be proved by copying the proof of Proposition $4.3$ in \cite{Taubes96}, changing ``compatible'' to ``tame'' everywhere, and applying minor cosmetic changes where our notation differs from that of \cite{Taubes96}. 

The first main idea in the proof is the use of the Sard-Smale theorem and the implicit function theorem for smooth maps between Banach manifolds with Fredholm linearization. Taubes writes down a ``universal moduli space'' consisting of the relevant moduli spaces of pseudoholomorphic curves for each pair in his version of $\mathcal{A}_d$. The standard projection from this universal space to $\mathcal{A}_d$ is shown to have surjective differential everywhere, which via an argument using the Sard-Smale theorem and the implicit function theorem shows the second, third, fourth and fifth bullets in the proposition hold for $(J, P)$ lying in a Baire subset of $\mathcal{A}_d$. 

The key technical step is the proof that the projection has surjective differential, which works just as well whether $\mathcal{A}_d$ is defined using tame almost-complex structures versus compatible almost-complex structures. 

The first bullet in the proposition is proved using the Gromov compactness theorem, along with another Sard-Smale argument to eliminate the possibility of a sequence of pseudoholomorphic submanifolds limiting to something undesirable. The key reason that Taubes uses compatible $J$ in \cite{Taubes96} is to ensure that any sequence of connected pseudoholomorphic submanifolds representing a homology class $A$ will have uniformly bounded area with respect to the almost-Hermitian metric
$$g(-, -) = \Omega(-, J-)$$
which is necessary to appeal to the Gromov compactness theorem. 

However, as we discussed above the beginning of the proof, such a statement holds for tame $J$ as well, by using the metric
$$g(-, -) = \frac{1}{2}(\Omega(-, J-) - \Omega(J-, -))$$ 
so there is no issue in using tame $J$ instead. 

The sixth bullet can be proved either by repeating the proof in \cite{Taubes96} and again changing ``compatible'' to ``tame'' and applying minor cosmetic changes, or by applying \cite[Theorem $B$]{WendlSuperrigidity} from the recent work of Wendl on equivariant transversality for pseudoholomorphic curves. 
\end{proof}

The following proposition is proved using Definition \ref{defn:admissibleJ} and the adjunction inequality in Theorem \ref{thm:adjunctionInequality}. 

\begin{prop} \label{prop:admissibleJConsequences}
\cite[Proposition $4.3$]{Taubes96} Fix $A \in H_2(W; \mathbb{Z})$ with $I(A) \geq 0$ and suppose $(J, P) \in \mathcal{A}_{I(A)}$ is admissible. Then the set $\mathcal{H}(A, J, P)$ satisfies the following properties:
\begin{itemize}
    \item $\mathcal{H}(A, J, P)$ is finite.
    \item Any element $\mathfrak{C} = \{(\Sigma_k, m_k)\}$ has the property that, for every $k$, $\Sigma_k$ is nondegenerate when $m_k = 1$ and, if $m_k > 1$, $\Sigma_k$ is has genus $1$, has
    $$c_1(W) \cdot [\Sigma_k] = [\Sigma_k] \cdot [\Sigma_k] = 0$$
    and is $m_k$-nondegenerate. 
    \item If $(J', P')$ is sufficiently close to $(J, P)$ in $\mathcal{A}_{I(A)}$, then $\mathcal{H}(A, J', P')$ and $\mathcal{H}(A, J, P)$ have the same number of elements. 
\end{itemize}
\end{prop}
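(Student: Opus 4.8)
The plan is to deduce Proposition~\ref{prop:admissibleJConsequences} entirely from the definitions, the adjunction inequality (Theorem~\ref{thm:adjunctionInequality}), positivity of intersection (Theorem~\ref{thm:positivityOfIntersection}), the finiteness of area (Area$_{u^*g}(C) = \langle \Omega, A\rangle$ for a pseudoholomorphic curve representing $A$), and the Gromov compactness theorem. The three bullets are of different flavors, so I would treat them in turn.

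First I would prove the second bullet, since it is the most structural and is essentially unwrapping definitions. Fix an admissible $(J,P)$ and an element $\mathfrak{C} = \{(\Sigma_k, m_k)\}$ of $\mathcal{H}(A,J,P)$. For each $k$ with $m_k = 1$, the submanifold $\Sigma_k$ is a connected pseudoholomorphic submanifold appearing in a configuration whose homology classes sum to $A$, hence (together with a choice of $I([\Sigma_k])$ points of $P$ lying on it --- one uses that $\cup \Sigma_k \supseteq P$ and the additivity $\sum I(m_k[\Sigma_k]) $ relative to $I(A)$, plus the third bullet of Definition~\ref{defn:admissibleJ} forbidding submanifolds through too many points, to distribute exactly $I([\Sigma_k])$ points to $\Sigma_k$) it is one of the nondegenerate submanifolds guaranteed by the second bullet of Definition~\ref{defn:admissibleJ}. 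For $k$ with $m_k > 1$, the last bullet of Definition~\ref{defn:grGenerators} already rules out spheres of negative self-intersection, and I would invoke the adjunction inequality to force genus: if $c_1(W)\cdot[\Sigma_k]$ or $[\Sigma_k]\cdot[\Sigma_k]$ were nonzero one gets a strict inequality in the relevant index count which, combined with $I(A)\geq 0$ and the point-passing constraints of admissibility, is incompatible with $\Sigma_k$ having $m_k > 1$; this pins down $c_1(W)\cdot[\Sigma_k] = [\Sigma_k]\cdot[\Sigma_k] = 0$, hence genus $1$ by Lemma~\ref{lem:jHolomorphicSubmanifoldGenusBound}, and then the sixth bullet of Definition~\ref{defn:admissibleJ} gives $m_k$-nondegeneracy.

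Next I would prove finiteness of $\mathcal{H}(A,J,P)$. The set of connected pseudoholomorphic submanifolds representing a fixed class $B$ with $I(B)$ marked points is finite by admissibility (first bullet of Definition~\ref{defn:admissibleJ}), and there are only finitely many classes $B$ that can appear as a summand $m_k[\Sigma_k]$ of $A$: indeed $\langle\Omega, B\rangle \leq \langle \Omega, A\rangle$ by positivity of the $\omega$-area of each component, and for a fixed symplectic area there are only finitely many homology classes carrying an embedded pseudoholomorphic submanifold that can appear (using that $I(B)\geq 0$ is forced by the fifth bullet of Definition~\ref{defn:admissibleJ} and the genus/area constraints bound the finitely many relevant classes --- here one cites the standard fact, as in \cite{Taubes96}, that area bounds plus the structure of $\mathcal{H}$ give only finitely many configuration types). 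Since an element of $\mathcal{H}(A,J,P)$ is a finite unordered collection of (submanifold, multiplicity) pairs drawn from a finite pool and summing to $A$, there are finitely many such elements.

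Finally, the third bullet --- invariance of $\#\mathcal{H}(A,J,P)$ under small perturbation --- is where the real content lies, and I expect it to be the main obstacle. The fourth bullet of Definition~\ref{defn:admissibleJ} already provides, for each class $B$ with $I(B)\leq I(A)$, a neighborhood of $(J,P)$ in $\mathcal{A}_{I(A)}$ on which the admissibility conditions persist and the count of connected submanifolds representing $B$ through $I(B)$ points is locally constant; intersecting over the finitely many relevant $B$ gives a single neighborhood on which all component counts are constant. The subtlety is to promote constancy of the \emph{component} counts to constancy of the count of \emph{configurations}: one must check that as $(J,P)\to (J',P')$ the components of a configuration vary continuously and stay pairwise disjoint (positivity of intersection, Theorem~\ref{thm:positivityOfIntersection}, prevents two components from colliding without creating a positive intersection, which would violate disjointness in the limit), that no new configuration type appears (a Gromov compactness / Sard-Smale argument as sketched in the proof of Proposition~\ref{prop:genericAdmissiblePairs}, ruling out a sequence of configurations for $(J_i,P_i)\to(J,P)$ limiting onto something not of the expected combinatorial type), and that the negative-self-intersection-sphere and genus-$1$-multiplicity constraints are preserved under perturbation (automatic, since $[\Sigma_k]\cdot[\Sigma_k]$ and the genus are homological/topological and vary by an open condition). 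Assembling these, the bijection between $\mathcal{H}(A,J,P)$ and $\mathcal{H}(A,J',P')$ for $(J',P')$ near $(J,P)$ follows, completing the proof. Throughout, I would lean on \cite[Proposition~4.3]{Taubes96} for the parts of these arguments that are identical to Taubes' original, noting as before that the switch from compatible to tame almost-complex structures changes nothing because the area bound $\mathrm{Area}_{u^*g}(C) = \langle\Omega, A\rangle$ persists for tame $J$ with the symmetrized metric.
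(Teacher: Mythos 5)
Your overall toolkit (adjunction, positivity of intersection, area bounds for tame $J$, Gromov compactness, deference to Taubes) matches the paper's, but the one step the paper actually proves in detail --- the second bullet --- is exactly where your sketch has a genuine gap. The paper's mechanism is purely numerical: admissibility (third bullet of Definition \ref{defn:admissibleJ}) says each $\Sigma_k$ passes through at most $I(A_k)$ points of $P$, and since $\cup_k \Sigma_k \supset P$ this gives $I(A) \le \sum_k I(A_k)$; on the other hand the pairwise disjointness of the $\Sigma_k$ together with positivity of intersection (Theorem \ref{thm:positivityOfIntersection}) forces $A_{k_1}\cdot A_{k_2}=0$ for $k_1 \ne k_2$, so $I(A)$ expands componentwise with the multiplicities appearing (the paper writes $I(A)=\sum_k m_k^2 I(A_k)$); comparing the two expressions and using $I(A_k)\ge 0$ for each component (fifth bullet of admissibility) forces $m_k=1$ whenever $I(A_k)>0$. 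That is what pins a multiply covered component down to $I(A_k)=0$, whence Lemma \ref{lem:jHolomorphicSubmanifoldGenusBound} together with the exclusion of negative spheres in Definition \ref{defn:grGenerators} yields the square-zero torus conclusion, and the sixth bullet of admissibility supplies $m_k$-nondegeneracy. Your version of this step --- ``a strict inequality in the relevant index count \ldots incompatible with $\Sigma_k$ having $m_k>1$'' --- never identifies this comparison: you do not use disjointness/positivity of intersection to control $A\cdot A$, and the nonnegativity that does the work is that of the component indices $I(A_k)$, not of $I(A)$. As written, the crucial claim is asserted rather than argued.

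Two smaller mismatches. For the first bullet you outsource the finiteness of the possible classes $[\Sigma_k]$ to ``the standard fact, as in Taubes,'' whereas the paper proves it: adjunction plus $I(A_k)\ge 0$ give $A_k\cdot A_k \ge -1$, at most $b_2^-(W)$ components can have negative square, hence $A_k\cdot A_k \le A\cdot A + b_2^-(W)$ and $G_k \le A\cdot A + b_2^-(W)+1$; these genus bounds and the area bound $\langle\Omega,A\rangle$ feed a Gromov-compactness argument ruling out infinitely many distinct classes. You should supply this (or an equivalent) rather than cite the conclusion. Finally, your assessment of the third bullet is inverted: the paper reads it off immediately from the fourth bullet of Definition \ref{defn:admissibleJ}, and while your concern about promoting per-class counts to configuration counts is not unreasonable, it is not where the content of the proposition lies, and your lengthy sketch there does not compensate for the missing computation above.
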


\begin{proof}
The third point follows immediately from Definition \ref{defn:admissibleJ}. 

We prove the second point before the first point. Pick any element
$$\mathfrak{C} = \{(\Sigma_k, m_k)\}_{k=1}^N$$
of $\mathcal{H}(A, J, P)$. 

Set $A_k$ to be the class represented by $\Sigma_k$ for every $k$. Since $(J, P)$ is admissible, we find that $\Sigma_k$ contains at most $I(A_k)$ points in $P$. 

However, the union of the $\Sigma_k$ contain all of the points in $P$, so we find
$$I(A) \leq \sum_{k=1}^N I(A_k).$$

On the other hand, since any two of the submanifolds $\Sigma_k$ are disjoint, by positivity of intersections (Theorem \ref{thm:positivityOfIntersection}) we find 
$$A_{k_1} \cdot A_{k_2} = 0$$
for any $k_1 \neq k_2$. 

Then we compute
$$I(A) = \sum_k m_k^2 I(A_k).$$

It follows that
$$\sum_k m_k^2 I(A_k) \leq \sum_{k=1}^N I(A_k).$$

Since $(J, P)$ is admissible, each of the $I(A_k)$ are nonnegative. We conclude that if $I(A_k) > 0$, we must have $m_k = 1$. This proves the second point. 

The first point now follows immediately if we can show that there are finitely many homology classes $A'$ represented by a member $(\Sigma, m)$ of an element $\mathfrak{C} \in \mathcal{H}(A, J, P)$. 

Again fix an element
$$\mathfrak{C} = \{(\Sigma_k, m_k)\}_{k=1}^N$$
of $\mathcal{H}(A, J, P)$. Again write $A_k$ for the homology class represented by each $\Sigma_k$. Write $G_k$ for the genus of $\Sigma_k$. Then the formula in Lemma \ref{lem:jHolomorphicSubmanifoldGenusBound} shows
$$A_k \cdot A_k = -1 + G_k + I(A_k) \geq -1.$$

Therefore, the self-intersection of every class $A_k$ is bounded below by $-1$, and is equal to $-1$ exactly when $G_k = I(A_k) = 0$. In other words, when $\Sigma_k$ is an embedded sphere of self-intersection $-1$.

Let $b_2^-(W)$ be the maximum size of a negative definite subspace for the intersection form. It follows that at most $b_2^-(W)$ of the $\Sigma_k$ can have negative self-intersection, in which case they are spheres with $I(A_k) = 0$. 

It follows that, for any $k$ such that $A_k \cdot A_k \geq 0$, we have $A_k \cdot A_k \leq A \cdot A + b_2^-(W)$. This in turn implies that $G_k \leq A \cdot A + b_2^-(W) + 1$. 

Define a Riemannian metric by 
$$g(-, -) = \frac{1}{2}(\Omega(-, J-) - \Omega(J-, -)).$$

The identity given above the sketch of Proposition \ref{prop:genericAdmissiblePairs} shows for any $k$ that
$$\text{Area}_g(\Sigma_k) \leq \sum_{k=1}^N m_k\text{Area}_g(\Sigma_k) = \langle \Omega, A \rangle.$$

It follows that there is are a priori area and genus bounds on $\Sigma$ for any member $(\Sigma, m)$ of an element $\mathfrak{C} \in \mathcal{H}(A, J, P)$. 

Suppose for the sake of contradiction that there is a sequence of distinct cohomology classes $A_k$ represented by a $J$-holomorphic submanifold $\Sigma_k$, where $(\Sigma_k, m_k)$ is a member of an element of $\mathcal{H}(A, J, P)$. 

By our a priori area and genus bounds, a subsequence of these $\Sigma_k$ converge in the Gromov sense. This implies, once we pass to this sequence, the cohomology classes $A_k$ must coincide for sufficiently large $k$. This contradicts our initial assumption, and there are therefore only finitely many cohomology classes $A_k$ represented in elements of $\mathcal{H}(A, J, P)$. 
\end{proof}

Given Proposition \ref{prop:admissibleJConsequences}, the Gromov invariant is defined via the following theorem, which is a restatement of \cite[Theorem $1.1$]{Taubes96} with more exposition added in the statement. 

\begin{thm}
\cite[Theorem $1.1$]{Taubes96} \label{thm:gromovInvariantDefinition} Fix $A \in H_2(W; \mathbb{Z})$ and suppose $(J, P) \in \mathcal{A}_{I(A)}$ is admissible. Then there is an integer-valued function $R$ on the set of pairs $(\Sigma, m)$ of a connected $J$-holomorphic submanifold $\Sigma$ and an integer $m \geq 1$ such that the integers 
$$\text{Gr}(A, J, P) = \sum_{\mathfrak{C} \in \mathcal{H}(A, J, P)} \prod_{(\Sigma_k, m_k) \in \mathfrak{C}} r(\Sigma_k, m_k)$$
are all equal regardless of the choice of admissible $(J, P)$, with the common integer being the \textbf{Gromov invariant} $\text{Gr}(A)$. 
\end{thm}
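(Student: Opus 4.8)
The plan is to follow Taubes' argument in \cite{Taubes96} essentially verbatim, with the single structural change — allowing $\Omega$-tame rather than $\Omega$-compatible almost-complex structures — dealt with exactly as in the proof of Proposition \ref{prop:genericAdmissiblePairs}: the only place compatibility is used is to get an a priori area bound, and tameness already supplies one via the identity $\mathrm{Area}_g(\Sigma) = \langle \Omega, [\Sigma]\rangle$ for the metric $g(-,-) = \tfrac12(\Omega(-,J-) - \Omega(J-,-))$. So the substitution is harmless, and what remains is genuine Taubes. The proof splits into two tasks: (i) defining the local weight $r(\Sigma, m)$, and (ii) proving the resulting sum is independent of the admissible pair $(J,P)$.

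For (i), I would treat $m = 1$ and $m > 1$ separately, using Proposition \ref{prop:admissibleJConsequences} to see these are the only cases. When $m = 1$, $\Sigma$ is nondegenerate, so the direct sum of its normal deformation operator with the linearized evaluation at $P$ is an isomorphism of the relevant Banach spaces; I would set $r(\Sigma, 1) = \pm 1$ according to the sign of this isomorphism relative to fixed orientations, equivalently the parity of a spectral flow. When $m > 1$, Proposition \ref{prop:admissibleJConsequences} forces $\Sigma$ to be a torus with $c_1(W)\cdot[\Sigma] = [\Sigma]\cdot[\Sigma] = 0$ that is $m$-nondegenerate; here $r(\Sigma, m)$ is Taubes' local count of the connected pseudoholomorphic submanifolds of genus $1$ near $\Sigma$ representing $m[\Sigma]$, organized as a generating-function-type product over the relevant covers whose terms are signs determined by spectral flow of the pulled-back normal operators on the tori covers. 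The point of the $m$-nondegeneracy hypothesis is exactly that no genus-$1$ submanifold collapses onto a cover of $\Sigma$, so this count is finite and well-defined.

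For (ii), I would join two admissible pairs $(J_0, P_0)$ and $(J_1, P_1)$ by a generic path in $\mathcal{A}_{I(A)}$, assemble the parametrized moduli space of multiplicity-one constituents representing the classes appearing in $\mathcal{H}(A, J, P)$ as a $1$-dimensional cobordism, and enumerate the codimension-one degenerations along it: (a) a transient failure of nondegeneracy, where a submanifold is born and dies in a cancelling pair, producing no net change; (b) two disjoint constituents colliding, which is ruled out — or shown to cancel — by positivity of intersection (Theorem \ref{thm:positivityOfIntersection}), since $A_{k_1}\cdot A_{k_2} = 0$ would be violated on such a wall; (c) a family of genus-$1$ constituents degenerating onto a multiple cover of a torus, where the definition of $r(\Sigma, m)$ is precisely rigged so the contributions balance across the wall. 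Gromov compactness, available because of the tame-case area bound recalled above, guarantees these are the only walls and that no constituent escapes to infinity or drops a homology class.

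The hard part will be (c): the multiply-covered torus analysis and the verification that $r(\Sigma, m)$ has exactly the right wall-crossing behavior. This is the technical core of \cite{Taubes96} — it requires understanding the local structure of the moduli space of genus-$1$ pseudoholomorphic submanifolds near a cover of $\Sigma$ together with the spectral flow of the associated normal operators, and it is the reason $n$-nondegeneracy is built into the notion of admissibility. Everything else reduces to a routine Sard–Smale and cobordism argument that is identical to Taubes' once "compatible" is replaced by "tame".
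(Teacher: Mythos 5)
Your proposal is correct and follows essentially the same route as the paper: the paper likewise treats Theorem \ref{thm:gromovInvariantDefinition} as Taubes' theorem, defining $R(\Sigma,1)$ by the parity of the spectral flow, deferring the multiply-covered torus weights and the wall-crossing/cobordism argument to \cite[Section 3]{Taubes96}, and noting that the only modification needed for tame (rather than compatible) almost-complex structures is the a priori area bound $\mathrm{Area}_g(\Sigma) = \langle \Omega, [\Sigma]\rangle$, exactly as you argue. Your sketch simply expands Taubes' internal argument in more detail than the paper chooses to reproduce.
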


The function $R(\Sigma, 1)$ takes values in $\pm 1$, and is defined using the spectral flow from a complex-linear Cauchy-Riemann operator to the normal deformation operator of $C$. It is $1$ if this spectral flow is even, and $-1$ if it is odd. This function can be regarded as an orientation on the zero-dimensional space of pseudoholomorphic submanifolds $\Sigma$ such that the pair $(\Sigma, 1)$ appears in some element of $\mathcal{H}(A, J, P)$. 

The definition of $R(C, m)$ is more complicated. We refer the reader to \cite[Section $3$]{Taubes96} for a definition. 

The proof of Theorem \ref{thm:gromovInvariantDefinition} was written down in \cite{Taubes96} in the case where $\mathcal{A}_d$ is defined using compatible almost-complex structures rather than tame almost-complex structures. 

Again, however, the proof works when $\mathcal{A}_d$ is defined using tame almost-complex structures by verbatim applying the argument in \cite{Taubes96} and making the necessary cosmetic changes. The only place where the fact that the almost-complex structures are compatible is used is to derive area bounds for the use of the Gromov compactness theorem. Again, it suffices to use tame almost-complex structures, as we noted, for example, in our sketch of Proposition \ref{prop:genericAdmissiblePairs}. 

\subsection{Existence of pseudoholomorphic curves} 

Now that we have discussed the construction of the Gromov invariant, we can write down the existence result for pseudoholomorphic curves required for the proof of Theorem \ref{thm:swExample}. 

Using Gromov compactness, we can deduce the following proposition is true when the Gromov invariant does not vanish. 

\begin{prop}
\label{prop:nonzeroGr} Let $(W, \Omega)$ be a closed, symplectic $4$-manifold with $b_2^+(W) > 1$. Suppose there is a homology class $A$ such that $\text{Gr}(A) \neq 0$. Then for any tame almost-complex structure $J$ and any set of $I(A)$ points $P$, there is an integer $N \geq 1$ and, for every integer $k$ between $1$ and $N$ inclusive, a stable, connected $J$-holomorphic curve
$$\mathbf{u}_k = (u_k, C_k, j_k, W, J, D_k, \emptyset)$$
and an integer $m_k \geq 1$ such that the following conditions are satisfied:
\begin{itemize}
    \item The sum $\sum_{k=1}^N m_k (u_k)_*[C_k]$ is equal to $A$.
    \item For every $k$, the arithmetic genus of $(C_k, D_k)$ is bounded above by $A \cdot A + b_2^-(W) + 1$. 
    \item For every $k$, the self-intersection of $(u_k)_*[C_k]$ is greater than or equal to $-1$, with equality if and only if $(C_k, D_k)$ has arithmetic genus zero. 
    \item For every $k_1 \neq k_2$, the algebraic intersection number of $(u_{k_1})_*[C_{k_1}]$ and $(u_{k_2})_*[C_{k_2}]$ is zero. 
    \item The set of points $P$ is contained in the union of the images $u_k(C_k)$ over all $k$. 
\end{itemize}
\end{prop}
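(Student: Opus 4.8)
The plan is to extract the required configuration as a Gromov limit of the pseudoholomorphic submanifold configurations counted by $\mathrm{Gr}(A)$ for a sequence of admissible pairs approximating $(J,P)$.

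First, note that $\mathrm{Gr}(A)\neq 0$ forces $I(A)\geq 0$: if $I(A)<0$, admissibility (Definition \ref{defn:admissibleJ}) gives $\mathcal{H}(A,J',P')=\emptyset$ for admissible $(J',P')$, hence $\mathrm{Gr}(A)=0$. By Proposition \ref{prop:genericAdmissiblePairs} the admissible pairs are dense in $\mathcal{A}_{I(A)}$, so I would choose admissible pairs $(J_\nu,P_\nu)\to(J,P)$ with $J_\nu\to J$ in $C^\infty$. Since $\mathrm{Gr}(A,J_\nu,P_\nu)=\mathrm{Gr}(A)\neq 0$ by Theorem \ref{thm:gromovInvariantDefinition}, the index set of the sum defining it is nonempty, so fix $\mathfrak{C}_\nu=\{(\Sigma_k^\nu,m_k^\nu)\}_{k=1}^{N_\nu}\in\mathcal{H}(A,J_\nu,P_\nu)$ and write $A_k^\nu=[\Sigma_k^\nu]$.

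Second, I would collect uniform bounds on the $\mathfrak{C}_\nu$. By the definition of $\mathcal{H}$, $\sum_k m_k^\nu A_k^\nu=A$, the $\Sigma_k^\nu$ are pairwise disjoint, and $m_k^\nu=1$ when $\Sigma_k^\nu$ is a sphere of negative self-intersection. Positivity of intersection (Theorem \ref{thm:positivityOfIntersection}) applied to the disjoint $\Sigma_{k_1}^\nu,\Sigma_{k_2}^\nu$ gives $A_{k_1}^\nu\cdot A_{k_2}^\nu=0$ for $k_1\neq k_2$, so $A\cdot A=\sum_k (m_k^\nu)^2\,A_k^\nu\cdot A_k^\nu$. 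Exactly as in the proof of Proposition \ref{prop:admissibleJConsequences}, the genus formula of Lemma \ref{lem:jHolomorphicSubmanifoldGenusBound} together with admissibility ($I(A_k^\nu)\geq 0$) gives $A_k^\nu\cdot A_k^\nu\geq -1$ and $G_k^\nu:=\mathrm{Genus}(\Sigma_k^\nu)\leq A\cdot A+b_2^-(W)+1$, and summing the self-intersections gives $A\cdot A\geq -b_2^-(W)$, so that this bound is $\geq 1$. With the metric $g_\nu(\cdot,\cdot)=\tfrac{1}{2}\bigl(\Omega(\cdot,J_\nu\cdot)-\Omega(J_\nu\cdot,\cdot)\bigr)$ one has $\mathrm{Area}_{g_\nu}(\Sigma_k^\nu)=\langle\Omega,A_k^\nu\rangle$ and $\sum_k m_k^\nu\langle\Omega,A_k^\nu\rangle=\langle\Omega,A\rangle$. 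Since $\{J_\nu\}\cup\{J\}$ is precompact in $C^\infty$, the monotonicity lemma supplies a uniform $\hbar>0$ bounding below the $g_\nu$-area of every nonconstant closed $J_\nu$-holomorphic curve; hence $\sum_k m_k^\nu\leq\hbar^{-1}\langle\Omega,A\rangle$, and in particular $N_\nu$ and the $m_k^\nu$ are bounded independently of $\nu$.

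Third, I would pass to the limit. After a subsequence $N_\nu\equiv N$ and $m_k^\nu\equiv m_k$; since the $A_k^\nu$ obey uniform bounds on $\langle\Omega,\cdot\rangle$ and on self-intersection they range over a finite set, so after a further subsequence $A_k^\nu\equiv A_k$. For each fixed $k$, the inclusions $\Sigma_k^\nu\hookrightarrow W$ are connected closed $J_\nu$-holomorphic curves of uniformly bounded genus and $g_\nu$-area with $J_\nu\to J$; by the Gromov compactness theorem, after a further subsequence they converge to a stable connected nodal $J$-holomorphic curve $\mathbf{u}_k=(u_k,C_k,j_k,W,J,D_k,\emptyset)$ with $(u_k)_*[C_k]=A_k$ and with $\mathrm{Genus}_{\mathrm{arith}}(C_k,D_k)=G_k^\nu\leq A\cdot A+b_2^-(W)+1$. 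The first bullet is then $\sum_k m_k A_k=A$; the third bullet follows from $A_k\cdot A_k=-1+\mathrm{Genus}_{\mathrm{arith}}(C_k,D_k)+I(A_k)$ and $I(A_k)\geq 0$; the fourth bullet is the (purely homological) limit of $A_{k_1}^\nu\cdot A_{k_2}^\nu=0$. For the fifth bullet, given $p\in P$ pick $p_\nu\in P_\nu$ with $p_\nu\to p$; each $p_\nu$ lies on some $\Sigma_{k(\nu)}^\nu$, and after a subsequence $k(\nu)\equiv k$, so $p=\lim_\nu p_\nu\in u_k(C_k)$ by Gromov convergence, and since $P$ is finite a diagonal argument treats all of its points. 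This gives all five bullets.

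The main obstacle is the uniform bound on $N_\nu$ and the $m_k^\nu$: without it the componentwise Gromov limit cannot be organized and the genus bound loses content, and this is the one step that goes beyond formal consequences of Definition \ref{defn:admissibleJ} and Theorem \ref{thm:gromovInvariantDefinition}, requiring the monotonicity lemma uniformly over the precompact family $\{J_\nu\}$. A secondary point needing care is that Gromov compactness is applied with the varying structures $J_\nu\to J$, and that it is the \emph{arithmetic} genus (not merely the geometric genus) that is preserved in the limit, so that the bound transfers to $(C_k,D_k)$.
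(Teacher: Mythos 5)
Your argument is correct and follows the paper's overall strategy (approximate $(J,P)$ by admissible pairs, extract configurations from $\mathrm{Gr}(A)\neq 0$, derive uniform area/genus bounds, then take componentwise Gromov limits), but it diverges at the key step of bounding $N_\nu$ and the multiplicities. The paper first proves that only finitely many homology classes can appear among the $A_k^\nu$ -- by a compactness--contradiction argument: a sequence of submanifolds in pairwise distinct classes would have uniformly bounded area and genus, so a subsequence Gromov-converges and the classes must stabilize -- and only then extracts a uniform lower bound $\kappa>0$ on $\langle\Omega,A_k^\nu\rangle$ from that finite set, giving $N_k\leq\kappa^{-1}\langle\Omega,A\rangle$. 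You instead get the lower bound $\hbar$ directly from a monotonicity/area-quantization statement for nonconstant closed $J_\nu$-holomorphic curves, uniform over the $C^\infty$-precompact family $\{J_\nu\}\cup\{J\}$. Your route is more direct and isolates exactly where the bound on $N_\nu$ comes from, but it relies on uniformity of the monotonicity constant over a varying family of almost-complex structures, which is standard but not stated in the paper (Theorem \ref{thm:monotonicity} is quoted there for a fixed almost-Hermitian structure); the paper's route only invokes Gromov compactness with $J_\nu\to J$, which it quotes anyway. One genuine blemish: your claim that uniform bounds on $\langle\Omega,A_k^\nu\rangle$ and on self-intersection force the classes $A_k^\nu$ to range over a finite set is not justified as stated -- purely homological bounds of this kind do not yield finiteness in general. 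This is harmless, however, because you do not actually need the classes to be constant before applying Gromov compactness: applying it to the sequence $\Sigma_k^\nu$ for each fixed $k$ (using your area and genus bounds) already forces $A_k^\nu$ to stabilize along the subsequence, which is precisely the mechanism the paper uses to prove its finiteness statement. Your handling of the fifth bullet (following $p_\nu\in P_\nu$ with $p_\nu\to p$ through the Gromov limit) is in fact slightly more careful than the paper's, which passes over this point quickly.
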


\begin{proof}
Fix a tame almost-complex structure $J$ and a set of $I(A)$ points $P$. 

Then we can pick a sequence $(J_k, P_k) \in \mathcal{A}_{I(A)}$ of admissible pairs converging to $(J, P)$. 

The fact that $\text{Gr}(A) \neq 0$ implies that, for every $k$, there is an integer $N_k \geq 1$ and, for every integer $\ell$ between $1$ and $N_k$, inclusive, a $J_k$-holomorphic curve
$$\mathbf{u}_k^\ell = (u_k^\ell, C_k^\ell, j_k^\ell, W, J, \emptyset, \emptyset)$$
and an integer $m_k^\ell \geq 1$ such that the following conditions are satisfied:
\begin{itemize}
    \item The sum $\sum_{\ell = 1}^{N_k} m_k^\ell (u_k^\ell)_*[C_k^\ell]$ is equal to $A$ for every $k$. 
    \item For every $k$ and $\ell$, the map $u_k^\ell$ is an embedding.
    \item For every $k$ and $\ell$, the Riemann surface $(C_k^\ell, j_k^\ell)$ is a closed, connected surface with genus equal to $$1 - I( (u_k^\ell)_*[C_k^\ell]) + (u_k^\ell)_*[C_k^\ell] \cdot (u_k^\ell)_*[C_k^\ell]$$ and bounded above by $1 + b_2^-(W) + A \cdot A$. 
    \item The integer $m_k^\ell$ is equal to $1$ if the genus of $C_k^\ell$ is not equal to $1$.
    \item For any $k$ and $\ell_1 \neq \ell_2$, the images $u_k^{\ell_1}(C_k^{\ell_1})$ and $u_k^{\ell_2}(C_k^{\ell_2})$ are disjoint. Moreover, the algebraic intersection number of $(u_k^{\ell_1})_*[C_k^{\ell_1}]$ and $(u_k^{\ell_2})_*[C_k^{\ell_2}]$ is equal to $0$. 
    \item For any $k$, the set of points $P$ is contained in the union of the images $u_k^\ell(C_k^\ell)$ over all $\ell$ between $1$ and $N_k$, exclusive.
\end{itemize}

Every point in the above, apart from the remarks regarding genus in the second point and algebraic intersection in the fourth point, is immediate from the definition of the set $\mathcal{H}(A, J_k, P_k)$ of elements contributing to $\text{Gr}(A)$. 

The formula for the genus in the third point arises from Lemma \ref{lem:jHolomorphicSubmanifoldGenusBound}. The bound for the genus was derived in the proof of Proposition \ref{prop:admissibleJConsequences}. 

The fifth point, regarding algebraic intersection, follows from the fact that $u_k^{\ell_1}(C_k^{\ell_1})$ and $u_k^{\ell_2}(C_k^{\ell_2})$ are disjoint and positivity of intersections for $J_k$-holomorphic curves (Theorem \ref{thm:positivityOfIntersection}). 

To produce a set of $J$-holomorphic curves as in the statement of the proposition, we will first show that the sequence $N_k$ is uniformly bounded in $k$, and then apply the Gromov compactness theorem. 

For any $k$ and $\ell$, use the notation $A_k^\ell$ to denote the homology class $(u_k^\ell)_*[C_k^\ell]$. Use the notation $G_k^\ell$ to denote the genus of $C_k^\ell$, which is equal to $1 - I(A_k^\ell) + A_k^\ell \cdot A_k^\ell$. 

For any $k$, the assumptions above imply that
$$A \cdot A = \sum_{\ell = 1}^{N_k} (m_k^\ell)^2 A_k^\ell \cdot A_k^\ell$$
and
$$I(A) = \sum_{\ell = 1}^{N_k} I(A_k^\ell).$$

Also recall that we must have $I(A_k^\ell) \geq 0$ for every $\ell$ by the admissibility of $(J_k, P_k)$. 

To bound $N_k$, it suffices to show that there are finitely many homology classes among the union of the classes $A_k^\ell$ over all $k$ and all $\ell$ between $1$ and $N_k$, inclusive. 

We can show that there is no sequence $k_n \to \infty$ and a corresponding sequence $\ell_n$ of integers between $1$ and $N_{k_n}'$, inclusive, such that the homology classes $A_{k_n}^{\ell_n}$ are all pairwise distinct. This implies our desired conclusion, since if there were infinitely many distinct $A_k^\ell$, we could construct such a sequence. 

Indeed, the pseudoholomorphic curves $\mathbf{u}_{k_n}^{\ell_n}$ satisfy uniform area bounds of $\langle \Omega, A \rangle$ and uniform genus bounds of $1 - I(A) + A \cdot A$. It follows by the Gromov compactness theorem that, after passing to a subsequence, $\mathbf{u}_{k_n}^{\ell_n}$ converges in the Gromov sense. This implies that the classes $A_{k_n}^{\ell_n}$ must coincide for large $n$, so the homology classes $A_{k_n}^{\ell_n}$ cannot all be pairwise distinct. Now we observe that, due to this finiteness statement, there is some constant $\kappa > 0$ independent of $k$ and $\ell$ such that, for any $k$ and any $\ell$ between $1$ and $N_k$, 
$$\langle \Omega, A_k^\ell \rangle \geq \kappa.$$

Observe that for any $k$ and $\ell$ between $1$ and $N_k$, inclusive, $\langle \Omega, A_k^\ell \rangle > 0$, since none of the pseudoholomorphic maps $u_k^\ell$ are constant. 

It follows that, for any $k$,
\begin{align*}
\langle \Omega, A \rangle &\geq \sum_{\ell = 1}^{N_k} m_k^\ell \langle \Omega, A_k^\ell \rangle\\
&\geq N_k\kappa
\end{align*}
so we find
$$N_k \leq \kappa^{-1}\langle \Omega, A \rangle.$$

We conclude that the sequence $N_{k}$ is uniformly bounded. Another consequence of the fact that $\langle \Omega, A_k^\ell \rangle \geq \kappa$ is that $m_k^\ell$ must be bounded above by $\kappa^{-1}\langle \Omega, A \rangle$ as well for any $k$ and $\ell$. 

We pass to a subsequence so that $N_k$ is equal to some positive integer $N$ for every $k$. We pass to a further subsequence so that for every $k$ and every $\ell$ between $1$ and $N$, inclusive, the integers $m_k^\ell$ are all equal to an integer $m^\ell \geq 1$. 

Finally, the homology classes $A_k^\ell$ come from a set of finitely many distinct classes. We can therefore pass to another subsequence so that for every $k$ and every $\ell$ between $1$ and $N$, inclusive, the classes $A_k^\ell$ are all equal to a class $A^\ell$. 

It follows that for every $k$ and every $\ell$ between $1$ and $N$, the genera $G_k^\ell$ are all equal to an integer $G^\ell \geq 0$ as well. 

We conclude that after passing to another subsequence, we can assume that for every $\ell$ between $1$ and $N$, the sequences of pseudoholomorphic curves $\mathbf{u}_k^\ell$ all converge in the Gromov sense to a stable, connected $J$-holomorphic curve $\mathbf{u}^\ell$. 

The set $\mathbf{u}^\ell$ form the desired set of $J$-holomorphic curves and the integers $m^\ell$ for the desired set of multiplicities in the statement of the proposition. 
\end{proof}

We now apply Proposition \ref{prop:nonzeroGr} to the situation of Theorem \ref{thm:swExample}. 

\begin{prop}
\label{prop:nonzeroGr2} Let $(W, \Omega)$ be a symplectic $4$-manifold and $M = H^{-1}(0)$ a closed, regular energy level such that the assumptions of Theorem \ref{thm:swExample} are satisfied. Then there is an open neighborhood $U$ of $M$ in $W$ and a constant $\rho > 0$ depending only on $W$, $\Omega$, and the class $c_1(W)$ such that for any tame almost-complex structure $J$ on $W$, there is a stable, connected $J$-holomorphic curve
$$\mathbf{u} = (u, C, j, W, J, D, \emptyset)$$
satisfying the following properties:
\begin{itemize}
    \item The image $u(C)$ intersects both components of $W \setminus M$. 
    \item The arithmetic genus of $(C, D)$ is bounded above by $\text{PD}(c_1(W)) \cdot \text{PD}(c_1(W)) + b_2^-(W) + 1$. 
    \item The integral of $u^*\Omega$ over $C$ is bounded above by $\rho$. 
\end{itemize}
\end{prop}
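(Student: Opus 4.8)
The plan is to feed the nonvanishing of the Gromov invariant for the canonical class into Proposition~\ref{prop:nonzeroGr}, and then to single out one component of the resulting pseudoholomorphic configuration that genuinely crosses $M$, using the three topological hypotheses of Theorem~\ref{thm:swExample}.

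First I would set $A_0 := \text{PD}(-c_1(W)) \in H_2(W;\mathbb{Z})$. Since $b_2^+(W) > 1$, the combination of Taubes' nonvanishing theorem \cite{Taubes94} with the identification $\text{SW} = \text{Gr}$ of \cite{TaubesSWGR} gives $\text{Gr}(A_0) \neq 0$ (this is precisely the input described in the outline of Theorem~\ref{thm:swExample}). A direct computation from the definition of $I$ shows $I(A_0) = 0$, and the hypothesis $c_1(W)^2 \neq 0$ forces $c_1(W) \neq 0$, hence $A_0 \neq 0$. I then apply Proposition~\ref{prop:nonzeroGr} with this $A_0$ and $P = \emptyset$: for \emph{any} tame almost-complex structure $J$ this produces an integer $N \geq 1$, stable connected $J$-holomorphic curves $\mathbf{u}_k = (u_k, C_k, j_k, W, J, D_k, \emptyset)$ for $k = 1, \dots, N$, and multiplicities $m_k \geq 1$ with $\sum_k m_k (u_k)_*[C_k] = A_0$, each $(C_k, D_k)$ having arithmetic genus at most $A_0 \cdot A_0 + b_2^-(W) + 1 = \text{PD}(c_1(W)) \cdot \text{PD}(c_1(W)) + b_2^-(W) + 1$, which is exactly the desired genus bound.

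Next I would record the area bound and discard the ghost components. Because $J$ is $\Omega$-tame and each $C_k$ is a closed (nodal) surface, $\int_{C_k} u_k^*\Omega = \langle \Omega, (u_k)_*[C_k]\rangle$, and pointwise nonnegativity of $u_k^*\Omega$ shows this is nonnegative, vanishing iff $u_k$ is constant iff $(u_k)_*[C_k] = 0$. Throwing away the constant $\mathbf{u}_k$ (which contribute $0$ to $A_0$), I may assume every $(u_k)_*[C_k] \neq 0$, so $\langle \Omega, (u_k)_*[C_k]\rangle > 0$ for all $k$ while still $\sum_k m_k (u_k)_*[C_k] = A_0$. Setting $\rho := \langle \Omega, \text{PD}(-c_1(W))\rangle$ — a purely topological quantity depending only on $W$, $\Omega$, and $c_1(W)$ — I obtain $\int_{C_k} u_k^*\Omega \leq \sum_\ell m_\ell \langle \Omega, (u_\ell)_*[C_\ell]\rangle = \rho$ for every $k$, and in particular $\rho > 0$. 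So every surviving $\mathbf{u}_k$ already satisfies the genus and area bullets, and it remains only to exhibit one whose image crosses $M$.

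The hard part is this last step. Write $W_- = H^{-1}((-\infty, 0])$, the compact manifold with boundary $M$, as in Theorem~\ref{thm:swExample}. I would first argue that some $u_k(C_k)$ meets the open region $H^{-1}((-\infty,0))$: otherwise all the images lie in $H^{-1}([0,\infty))$, and (using that $0$ is a regular value, so that a collar of $M$ deformation retracts $H^{-1}([0,\infty))$ into $H^{-1}([\varepsilon,\infty))$ for small $\varepsilon > 0$) the class $A_0 = \sum_k m_k (u_k)_*[C_k]$ is then represented by a cycle disjoint from $W_-$; a standard Poincar\'e--Lefschetz duality argument (a class represented by a cycle disjoint from $W_-$ has vanishing restriction of its Poincar\'e dual to $W_-$) forces $c_1(W_-) = 0$, contradicting the hypothesis. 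Fixing such a $k_1$, I would then argue that $u_{k_1}(C_{k_1})$ is not contained in $W_-$: if it were, exactness of $\Omega|_{W_-}$ together with Stokes' theorem on the closed surface $C_{k_1}$ would give $\int_{C_{k_1}} u_{k_1}^*\Omega = 0$, contradicting $\langle \Omega, (u_{k_1})_*[C_{k_1}]\rangle > 0$. Since $u_{k_1}(C_{k_1})$ is connected, meets $H^{-1}((-\infty,0))$, and is not contained in $H^{-1}((-\infty,0])$, it must also meet $H^{-1}((0,\infty))$, so it intersects both components of $W \setminus M$; taking $\mathbf{u} := \mathbf{u}_{k_1}$ and letting $U$ be any collar neighborhood of $M$ (the listed conclusions do not otherwise constrain $U$) completes the proof. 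The main obstacle is exactly the interplay of the three hypotheses here — $c_1(W_-) \neq 0$ pushes the configuration into $W_-$, exactness of $\Omega|_{W_-}$ pushes it back out, and $c_1(W) \neq 0$ keeps the configuration nonempty — along with the minor bookkeeping needed to handle ghost components and the possibility that the configuration touches $M$.
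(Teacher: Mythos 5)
Your proof is correct and follows essentially the same route as the paper's: non-vanishing of $\text{Gr}(-\text{PD}(c_1(W)))$ via Taubes, feeding it into Proposition~\ref{prop:nonzeroGr} with $P = \emptyset$, the area bound from tameness and the homological decomposition of $-\text{PD}(c_1(W))$, then using exactness of $\Omega|_{W_-}$ and $c_1(W_-) \neq 0$ to single out a component whose image meets both sides of $M$. The only differences are cosmetic: you discard ``ghost'' configurations (unnecessary, since each $\mathbf{u}_k$ from Proposition~\ref{prop:nonzeroGr} is a Gromov limit of embedded submanifolds in a fixed nonzero class, hence nonconstant), you reverse the order of the two containment arguments, and you are slightly more careful than the paper about images that touch $M$ itself by pushing off via a collar --- a small point the paper's wording (``disjoint from $W_-$'') elides but that is handled the same way.
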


\begin{proof}
In \cite{Taubes94}, it is shown that the Seiberg-Witten invariant of $W$ corresponding to $-\text{PD}(c_1(W))$ does not vanish if $c_1(W) \neq 0$. 

In \cite{TaubesSWGR}, it is shown that the Seiberg-Witten and Gromov invariants coincide, so $\text{Gr}(-\text{PD}(c_1(W))) \neq 0$. Observe that $I(-\text{PD}(c_1(W))) = 0$. 

Fix any tame almost-complex structure $J$. Write $A = -\text{PD}(c_1(W))$. 

It follows from Proposition \ref{prop:nonzeroGr} that there is an integer $N \geq 1$ and, for every integer $k$ between $1$ and $N$ inclusive, a stable, connected $J$-holomorphic curve
$$\mathbf{u}_k = (u_k, C_k, j_k, W, J, D_k, \emptyset)$$
and an integer $m_k \geq 1$ such that the following conditions are satisfied:
\begin{itemize}
    \item The sum $\sum_{k=1}^N m_k (u_k)_*[C_k]$ is equal to $A$.
    \item For every $k$, the arithmetic genus of $(C_k, D_k)$ is bounded above by $A \cdot A + b_2^-(W) + 1$. 
    \item For every $k$, the self-intersection of $(u_k)_*[C_k]$ is greater than or equal to $-1$, with equality if and only if $(C_k, D_k)$ has arithmetic genus zero. 
    \item For every $k_1 \neq k_2$, the algebraic intersection number of $(u_{k_1})_*[C_{k_1}]$ and $(u_{k_2})_*[C_{k_2}]$ is zero. 
\end{itemize}

Since $J$ is tamed by $\Omega$, the integral of $u_k^*\Omega$ over $C_k$ for any $k$ is nonnegative. Moreover, by the first bullet, we have that
$$\sum_{k=1}^N m_k \int_{C_k} u_k^*\Omega = \langle \Omega, A \rangle.$$

These two observations together imply that, for any $k$,
$$\int_{C_k}u_k^*\Omega \leq \langle \Omega, A \rangle.$$

It suffices to show from the above that there is one $k$ such that $u_k(C_k)$ intersects both components of $W \setminus M$. This implies the proposition by taking $\mathbf{u}$ to be $\mathbf{u}_k$ and $\rho = \langle \Omega, A \rangle$. 

By assumption, the smooth manifold
$$W_- = H^{-1}((-\infty, 0]) \subset W$$
bounding $M$ is such that the restriction of $\Omega$ to $W_-$ is exact and $c_1(W_-) \neq 0$. 

It follows from the fact that $\Omega$ restricts to an exact two-form on $W_-$ that for every $k$, the image $u_k(C_k)$ does not lie entirely within $W_-$ for any $k$.

Now suppose for the sake of contradiction that $u_k(C_k)$ is disjoint from $W_-$ for every $k$. Since the sum
$$\sum_{k=1}^N m_k (u_k)_*[C_k]$$
is Poincar\'e dual to $-c_1(W)$, this would imply that the restriction of $c_1(W)$ to $W_-$, which is equal to $c_1(W_-)$, is equal to zero. Therefore, we arrive at a contradiction and there is some $k$ such that $u_k(C_k)$ intersects both components of $W \setminus M$. 
\end{proof}

\section{Analytic preliminaries}
\label{sec:analysis}

In this section, we provide the analytical background for the proof of our main theorems. 

There are three new major tools to introduce. The first is Fish-Hofer's exhaustive Gromov compactness theorem \cite{FishHoferExhaustive}, a version of the standard Gromov compactness theorem for pseudoholomorphic curves mapping into non-compact almost-complex manifolds. 

The second tool is Fish-Hofer's ``exponential area bound'' theorems from \cite{FishHoferFeral}. These serve to estimate the local area of pseudoholomorphic curves. They are essential in situations where we do not have a priori local area bounds on pseudoholomorphic curves, which is the case in our setting because we are performing a neck stretching operation around an arbitrary hypersurface $M \subset W$ rather than a contact-type or stable Hamiltonian hypersurface. 

The third tool is Fish-Hofer's ``$\omega$-energy quantization'' theorem from \cite{FishHoferFeral}. This is a variant of similar theorems in the pseudoholomorphic curve literature. It gives a uniform lower bound, depending only on ambient geometry, on the $\omega$-energy of a pseudoholomorphic curve in an adapted cylinder $(\mathbb{R} \times M, J, g)$ satisfying certain conditions on its image. 

We also write down the monotonicity theorem for pseudoholomorphic curves in almost-complex manifolds proved by Fish in \cite{FishCurves}. 

\subsection{Exhaustive Gromov compactness} \label{subsec:exhaustiveCompactness}

In this subsection, we present the exhaustive Gromov compactness theorem of \cite{FishHoferExhaustive}, along with several related definitions and results. Everything in this subsection is from either \cite{FishTarget}, \cite{FishHoferExhaustive}, or \cite{FishHoferFeral}. 

Fix an almost-Hermitian manifold $(W, g, J)$. Recall that this means $g$ is a Riemannian metric on the almost-complex manifold $(W,J)$ for which
$$g(J-, J-) = g(-,-).$$

Fix some stable, connected $J$-holomorphic curve 
$$\mathbf{u} = (u, C, j, W, J, D, \mu).$$ 

\begin{defn}
We say $\mathbf{u}$ is \textbf{boundary-immersed} if $u|_{\partial C}$ is an immersion, and \textbf{generally-immersed} if the restriction of $u$ to any component of $C$ is non-constant. Moreover, we say $\mathbf{u}$ is \textbf{compact} if the domain $C$ is compact. 
\end{defn}

Next, we define the notion of the \emph{oriented blow-up} of a Riemann surface, a \emph{decoration} on a Riemann surface, as well as the corresponding \emph{normalization}.

\begin{defn} \label{defn:orientedBlowup}
Let $C$ be a Riemann surface with a set of nodal points 
$$D = \{(\bar z_1, \underline z_1), ..., (\bar z_n, \underline z_n)\}$$. 
\begin{itemize}
    \item The \textbf{oriented blow-up} $C^D$ as in \cite{BEHWZ03} is $C \setminus D$ compactified with boundary circles $\bar \Gamma_i = (T_{\bar z_i} C \setminus \{0\}) / \mathbb{R}_+^*$ and $\underline \Gamma_i = (T_{\underline z_i} C \setminus \{0\}) / \mathbb{R}_+^*$ corresponding to the nodal points $\bar z_i$, $\underline z_i$ for every $i$. 
    \item A \textbf{decoration} $r$ on $C$ is a set of orientation-reversing orthogonal homeomorphisms $r_i: \bar \Gamma_i \to \underline \Gamma_i$ for every $i$. 
    \item The \textbf{normalization} $C^{D,r}$ is defined to be $C^D / (p \sim r_i(p))$. Write $\Gamma_i$ for the circle in $C^{D,r}$ obtained by identifying $\bar\Gamma_i$ and $\underline\Gamma_i$ via $r_i$, and $\Gamma = \cup_{i=1}^n \Gamma_i$. 
\end{itemize}
\end{defn}

To clarify, in the above an orientation-reversing orthogonal homeomorphism $r$ is defined to be a homeomorphism $r$ between two circles such that $r(e^{i\theta}z) = e^{-i\theta}r(z).$

We need a notion of area for our curves. In the case where $g$ is a metric associated to a symplectic form $\Omega$ for which $J$ is compatible, then this would merely be the integral of $\Omega$ over the curve. The definition we give below is analogous.

\begin{defn}
Define the two-form
$$\Theta(-,-) = g(J-,-).$$

If $\mathbf{u} = (u, C, j, W, J, D, \mu)$ is a stable, generally-immersed pseudoholomorphic curve then we set
$$\text{Area}_{u^*g}(C) = \int_C u^*\Theta.$$

If $\mathbf{u} = (u, C, j, W, J, D, \mu)$ is any stable pseudoholomorphic curve, define $C_{\text{nc}} \subseteq C$ to be the (possibly empty) union of all of the components of $C$ on which $u$ is not constant. Then we set
$$\text{Area}_{u^*g}(C) = \text{Area}_{u^*g}(C_{\text{nc}}).$$
\end{defn}

Now we define Gromov convergence, target-local Gromov compactness, and exhaustive Gromov-compactness. 

First, the following is a consequence of the uniformization theorem.

\begin{defn}
Suppose $(C,j)$ is a compact Riemann surface and there is some set of points $\Lambda \subset C$ such that
$$\chi(C \setminus \Lambda) < 0.$$

Write $h^{j,\Lambda}$ for the unique complete hyperbolic metric of constant curvature $-1$ on $C \setminus \Lambda$. 
\end{defn}

Now we can define Gromov convergence. The definition below is from \cite[Definition $2.35$]{FishHoferFeral}. 

\begin{defn}[Gromov convergence] \label{defn:gromovConvergence}  Let $J_k$ be a sequence of almost-complex structures on $W$, and 
$$\mathbf{u}_k = (u_k, C_k, j_k, W, J_k, D_k, \mu_k)$$ be a sequence of stable, connected, compact, boundary-immersed $J_k$-holomorphic curves. We say $u_k$ \textbf{converges in the Gromov sense} to a stable, connected, compact, boundary-immersed $J$-holomorphic curve 
$$\mathbf{u} =  (u, C, j, W, J, D, \mu)$$
provided the following holds for sufficiently large $k$:
\begin{itemize}
    \item $J_k \to J$ in $C^{\infty}(W)$.
    \item There exist marked points
    $$\mu_k' \subset C_k \setminus (\mu_k \cup D_k)$$
    and
    $$\mu' \subset C \setminus (\mu \cup D)$$
    such that $\# \mu_k' = \#\mu'$,
    for each connected component $\widetilde{C}_k \subset C_k$, 
    $$\chi(\widetilde{C}_k \setminus \Lambda_k) < 0,$$
    and for each connected component $\widetilde{C} \subset C$, 
    $$\chi(\widetilde{C} \setminus \Lambda) < 0,$$
    where $\Lambda_k = \mu_k \cup \mu'_k \cup D_k$ and $\Lambda = \mu \cup \mu' \cup D$. 
    \item There exists a decoration $r$ for $C$, decorations $r_k$ for $C_k$ and diffeomorphisms $\phi_k: C^{D, r} \to C_k^{D_k, r_k}$ of the normalizations such that the following hold:
    \begin{itemize}
        \item $\phi_k(\mu) = \mu_k.$
        \item $\phi_k(\mu') = \mu'_k.$
        \item For any special circle $\Gamma_i \subset C^{D, r}$, $\phi_k(\Gamma_i)$ is a $h^{j_k,\Lambda_k}$-geodesic in $C_k \setminus \Lambda_k$. 
    \end{itemize}
    \item $\phi_k^*h^{j,\Lambda_k}$ converges to $h^{j,\Lambda}$ in $C^{\infty}_{loc}(C^{D,r} \setminus (\mu \cup \mu' \cup \Gamma)$.
    \item $\phi_k^*u_k$ converges to $u$ in $C^0(C^{D,r})$ and in $C^{\infty}_{loc}(C^{D, r}\setminus \Gamma)$.
    \item Any connected component of $\partial C$ has $\phi_k^*h^{j_k,\Lambda_k}$-length uniformly bounded away from $0$ and $\infty$. 
\end{itemize}
\end{defn}

Now we can state the \emph{target-local Gromov compactness theorem}, which was proved in \cite{FishTarget}. 

\begin{rem}
The target-local Gromov compactness theorem, and many other constructions in this paper, require restricting a pseudoholomorphic curve
$$\mathbf{u} = (u, C, j, W, J, D, \mu)$$
to a sub-surface $\widetilde{C} \subset C$. The natural definition of such a restriction is a pseudoholomorphic curve of the form
$$\widetilde{\mathbf{u}} = (u, \widetilde{C}, j, W, J, D \cap \widetilde{C}, \mu \cap \widetilde{C}).$$

However, it may be the case that $\widetilde{C}$ contains one but not the other of a pair of nodal points in $D$, in which case the above is not well-defined. We fix the convention that $D \cap \widetilde{C}$ is the set of pairs of nodal points such that \emph{both} nodal points lie in $\widetilde{C}$. 

We also remark that $\widetilde{\mathbf{u}}$ is well-defined only if $D$ and $\mu$ do not have any points lying on the boundary of $\widetilde{C}$. We will always assume that this is the case in any restriction that we define. 
\end{rem}

To understand the statement of the target-local Gromov compactness theorem, we briefly recall the standard Gromov compactness theorem, originally formulated in \cite{Gromov85}. Let 
$$\mathbf{u}_k = (u_k, C_k, j_k, W, J_k, D_k, \mu_k)$$
be a sequence of stable, connected $J_k$-holomorphic curves with \emph{closed} domains $C_k$ mapping into a \emph{closed} almost-complex manifold $W$, with the sequence $J_k$ converging in $C^\infty$ to some almost-complex structure $J$ on $W$. Then if the curves have uniformly bounded area, genus, and number of marked/nodal points, a subsequence converges to a stable $J$-holomorphic curve as in Definition \ref{defn:gromovConvergence}.  

 The target-local Gromov compactness theorem generalizes this to the case where the curves $\mathbf{u}_k$ have \emph{compact} domain, possibly with boundary, mapping into a \emph{compact} manifold $W$, which may also have boundary. 
 
 In this case, even if the curves $\mathbf{u}_k$ have uniformly bounded area, genus, and number of marked/nodal points, in the absence of a nice boundary condition they could have wild behavior near the boundary. The target-local Gromov compactness theorem can be thought of as saying that we can still recover a Gromov-type limit \emph{away} from the boundary, given uniformly bounded area, genus, and number of marked/nodal points.

\begin{thm}[Target-local Gromov compactness, \cite{FishTarget} and \cite{FishHoferExhaustive}] \label{thm:targetLocal} 
 Let $(W, g, J)$ be an almost-Hermitian manifold and $(g_k, J_k)$ a sequence of almost-Hermitian structures on $W$ converging in $C^{\infty}$ to $(g, J)$. Let $\mathcal{K}_1, \mathcal{K}_2\subset \text{Int}(W)$ be compact regions (i.e. smooth, compact codimension zero submanifolds that possibly have boundary) such that $\mathcal{K}_1 \subset \text{Int}(\mathcal{K}_2).$

Let $$\mathbf{u}_k = (u_k, C_k, j_k, W, J_k, D_k, \mu_k)$$ be a sequence of stable, connected, compact $J_k$-holomorphic curves such that $u_k(\partial C_k) \cap \mathcal{K}_2 = \emptyset$. Suppose there exists some uniform constant $\kappa > 0$ such that for every $k$,
\begin{itemize}
    \item  $\text{Area}_{u_k^*g_k}(C_k) \leq \kappa$,
    \item $\text{Genus}_{\text{arith}}(C_k, D_k) \leq \kappa,$
    \item $\# (\mu_k \cup D_k) \leq \kappa.$
\end{itemize}

Then there exist compact surfaces with boundary $\widetilde{C}_k \subset C_k$ satisfying $u_k(C_k \setminus \widetilde{C}_k) \subset W \setminus \mathcal{K}_1$ such that the restrictions 
$$\widetilde{\mathbf{u}}_k = (u_k, \widetilde{C}_k, j_k, W, J_k, D_k \cap \widetilde{C}_k, \mu_k \cap \widetilde{C}_k)$$
converge in the Gromov sense to a stable, connected, compact boundary-immersed $J$-holomorphic curve.
\end{thm}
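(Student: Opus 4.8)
The plan is to reduce this to the classical Gromov compactness argument in two moves: (1) prove a uniform interior $\epsilon$-regularity estimate powered by the monotonicity theorem of \cite{FishCurves}, and (2) truncate each domain along a good level set so that the discarded part leaves $\mathcal{K}_1$ while the remaining part maps into a fixed compact region with controlled boundary. To set up, one fixes auxiliary compact regions $\mathcal{K}_1 \subset \text{Int}(\mathcal{K}_1')$ and $\mathcal{K}_1' \subset \text{Int}(\mathcal{K}_2)$, together with a smooth function $\rho\colon W \to [0,1]$ equal to $1$ on a neighborhood of $\mathcal{K}_1$ and supported in $\text{Int}(\mathcal{K}_1')$.

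\emph{Step 1: $\epsilon$-regularity and finitely many bad points.} Using the monotonicity theorem of \cite{FishCurves} together with the mean value inequality for $|du_k|^2$ (subharmonic up to a curvature error), one obtains constants $\hbar>0$ and $r_0>0$ depending only on $(W,g,J)$ — and stable under $C^\infty$-small perturbation, hence applicable to $(g_k,J_k)$ once $k$ is large — with the property that if $p\in\text{Int}(\mathcal{K}_2)$, $u_k(\partial C_k)\cap B_{r_0}(p)=\emptyset$, and $\text{Area}_{u_k^*g_k}\big(u_k^{-1}(B_{r_0}(p))\big)<\hbar$, then $|du_k|$ on $u_k^{-1}(B_{r_0/2}(p))$ is bounded in terms of this area. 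Monotonicity is exactly what rules out a nontrivial-but-vanishing scenario and converts small local area into a gradient bound. Since $\text{Area}_{u_k^*g_k}(C_k)\le\kappa$, after passing to a subsequence the set of points of $\mathcal{K}_1'$ at which the gradients $|du_k|$ concentrate is finite, of cardinality at most $\kappa/\hbar$; away from this bad set and from $u_k(\partial C_k)$, elliptic bootstrapping gives uniform $C^\infty_{\mathrm{loc}}$ bounds, so after a further subsequence $u_k$ converges in $C^\infty_{\mathrm{loc}}$ on the preimage of $\text{Int}(\mathcal{K}_1')$ minus the bad points.

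\emph{Step 2: conformal structure, bubbling, no lost area.} Next one enlarges the marked point sets by auxiliary sets $\mu_k'$ of uniformly bounded cardinality so that every component of $C_k\setminus(D_k\cup\mu_k\cup\mu_k')$ is hyperbolic; the bounds on $\text{Genus}_{\text{arith}}(C_k,D_k)$ and $\#(\mu_k\cup D_k)$ bound the complexity of these surfaces. By Deligne--Mumford compactness and the thick--thin decomposition of the metrics $h^{j_k,\Lambda_k}$, after a subsequence the hyperbolic structures converge on the thick parts while the thin parts degenerate to finitely many nodes or collapse; matching this with the $C^\infty_{\mathrm{loc}}$ convergence of the maps, each gradient concentration from Step 1 is absorbed into the nodal structure of a limit curve, and a further application of monotonicity shows each concentration carries at least $\hbar$ of area, so no area escapes and the limit has finite area $\le\kappa$. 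This is precisely the standard Gromov compactness argument run on the portion of the domain mapping well inside $\mathcal{K}_1'$.

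\emph{Step 3: the truncation, and the main obstacle.} For each $k$, Sard's theorem lets one pick $c\in(0,1)$ a common regular value of all $\rho\circ u_k$ and of $\rho\circ u$, and one sets $\widetilde{C}_k$ to be the union of those components of $(\rho\circ u_k)^{-1}([c,1])$ that meet $u_k^{-1}(\mathcal{K}_1)$. Then $u_k(C_k\setminus\widetilde{C}_k)\subset W\setminus\mathcal{K}_1$, while $u_k(\widetilde{C}_k)\subset\mathcal{K}_1'$ and $u_k(\partial\widetilde{C}_k)\subset\{\rho\circ u_k=c\}$, a fixed compact subset of $\text{Int}(W)$ disjoint from $u_k(\partial C_k)$; since $c$ is regular, $\partial\widetilde{C}_k$ is a finite union of circles along which $du_k\ne 0$ (here one uses that $du_k$ is either zero or injective), so $\widetilde{\mathbf{u}}_k$ is boundary-immersed. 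The crux is uniform control of the truncations: (a) the number of boundary circles of $\widetilde{C}_k$ is uniformly bounded, since each annulus of $C_k$ running from $u_k^{-1}(\mathcal{K}_1)$ out across $\{\rho\circ u_k=c\}$ carries a definite amount of area by monotonicity, so only $O(\kappa)$ of them fit; and (b) each boundary circle has $h^{j_k,\Lambda_k}$-length bounded away from $0$ and $\infty$ — a quantitative ``no short loop near the cut'' estimate, obtained by combining monotonicity with the Step 1 gradient bounds valid near $\{\rho\circ u_k=c\}$, where the area is small so the curve is essentially graphical. Establishing (b) is the hardest point. Since one may run the argument on a single component of $\widetilde{C}_k$ at a time, it suffices to treat the connected case; and with (a) and (b) in hand the $\widetilde{\mathbf{u}}_k$ have uniformly bounded area, genus, $\#(\mu_k\cup D_k)$, and boundary length, mapping into the fixed compact region $\mathcal{K}_1'$ with boundary away from their image boundary, so the machinery of Steps 1--2 applies directly to yield Gromov convergence to a stable, connected, compact, boundary-immersed $J$-holomorphic curve, completing the proof.
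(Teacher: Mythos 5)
First, note that the paper does not prove Theorem \ref{thm:targetLocal}: it is cited as a black box from \cite{FishTarget} and \cite{FishHoferExhaustive}, so there is no ``paper's own proof'' to compare against. Evaluating your sketch on its own terms: Steps 1--2 are the standard Gromov compactness outline ($\epsilon$-regularity from monotonicity, finitely many bubble points, thick--thin decomposition, Deligne--Mumford), and Step 3(a) is fine as stated because the annuli in question must traverse a definite extrinsic distance between $\mathcal{K}_1$ and $\{\rho = c\}$, so monotonicity forces each to carry at least $\hbar$ of area. The problem is Step 3(b), which you correctly flag as the crux but then justify with an argument that does not work.

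The $h^{j_k,\Lambda_k}$-length of a boundary circle of $\widetilde{C}_k$ is a purely \emph{intrinsic} quantity of the domain Riemann surface $(C_k,j_k)$ relative to $\Lambda_k$; it is not controlled by any extrinsic information about the map $u_k$. In particular, if the conformal structures $j_k$ degenerate along embedded annuli of modulus $\to\infty$ (a node forming in the Deligne--Mumford limit), the thin part of $C_k$ contains essential circles of $h^{j_k,\Lambda_k}$-length $\to 0$. Nothing prevents $u_k$ from mapping such a thin tube near the level set $\{\rho \circ u_k = c\}$ with bounded gradient, small local energy, and graphical image -- yet cutting along that level set then produces boundary circles whose hyperbolic length goes to zero, violating the last clause of Definition \ref{defn:gromovConvergence}. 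Monotonicity and the Step 1 gradient bounds control the map, not the conformal geometry of the domain, so they cannot supply the lower length bound. This is precisely why the actual proof in \cite{FishTarget} cannot truncate along a pullback of a fixed ambient cutoff: the truncating curves must be chosen in the domain, compatibly with the hyperbolic geometry (along collars or by a covering/pigeonhole argument that tracks thin parts), and that choice has to be threaded through the rest of the bubbling analysis. As written, Step 3(b) is a genuine gap, and it is exactly the part of the theorem that is new relative to ordinary Gromov compactness.
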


Next, we define exhaustive Gromov convergence. This notion and the following compactness theorem were introduced and proven, respectively in \cite{FishHoferExhaustive}. 

One way of thinking about the exhaustive Gromov compactness is as an ``iterated'' version of target-local Gromov compactness. Consider a sequence of $J$-holomorphic curves 
$$\mathbf{u}_k = (u_k, C_k, j_k, W, J_k, D_k, \mu_k)$$ 
where now the domains of the curves $C_k$ are still compact, but the target manifold $W$ is not required to be compact. If the images of the curves are contained within a compact subset of $W$, then it is clear that, as long as they have uniform bounds on their area, genus, and number of marked/nodal points, we can extract a limit by use of the target-local Gromov compactness theorem.

If not, then we can still perform the following procedure. Exhaust $W$ by a sequence of compact, codimension zero submanifolds $\{W^\ell\}_{\ell \geq 1}$. Then for any $\ell$, fix
$$C_k^\ell = u_k^{-1}(W^\ell)$$
and 
$$\mathbf{u}_k^\ell = (u_k, C_k^\ell, j_k, W^\ell, J_k, D_k \cap C_k^\ell, \mu_k \cap C_k^\ell)$$ to be the restriction of $\mathbf{u}_k$ to $C_k^\ell$. 

Then the sequence $\mathbf{u}_k^\ell$
is a sequence of $J$-holomorphic curves mapping into the compact manifold $W^\ell$. Suppose that this sequence of curves has a uniform bound (in $k$, not necessarily in $\ell$) on their area, genus, and number of marked/nodal points. Therefore, we can apply target-local Gromov compactness to pass to a subsequence in the index $k$ and obtain a Gromov limit curve 
$$\mathbf{u}^\ell = (u^\ell, C^\ell, j^\ell, W^\ell, J, D^\ell, \mu^\ell)$$.

It follows by a diagonal trick that we can pass to a subsequence in the index $k$ such that, for any $\ell \geq 1$, the restricted curves $\mathbf{u}_k^\ell$ converge to the curve $\mathbf{u}^\ell$ in $W^\ell$.

Recall that target-local Gromov compactness requires a uniform bound on area, genus, and number of marked/nodal points. The corresponding requirements for the exhaustive Gromov compactness theorem were hinted at above. To use the exhaustive Gromov compactness theorem, we require a uniform bound in $k$ on area, genus, and number of marked/nodal points for the pseudoholomorphic curves $\mathbf{u}_k^\ell$ in $W^\ell$. However, this uniform bound may depend on $\ell$ and increase as $\ell$ increases. 

Of course, this description skims over several technical details in the proof of this theorem. We state the complete definitions and results below.

\begin{defn} 
Let $(W, J, g)$ be a possibly non-compact almost-Hermitian manifold. An \textbf{exhausting sequence} of $W$ is a sequence of almost-Hermitian manifolds $(W_k, J_k, g_k)$ such that 
\begin{itemize}
    \item $W_k$ embeds smoothly as an proper open subset of $W_{k+1}$, and the closure of $W_k$ in $W_{k+1}$ is a compact submanifold with boundary.
    \item $W = \cup_k W_k$ with respect to the embeddings given above.
    \item $(J_k, g_k)$ converge to $(J, g)$ in $C^{\infty}_{loc}(W)$. 
\end{itemize}
\end{defn}

\begin{defn}[Exhaustive Gromov convergence] Let $(W, g, J)$ be an almost-Hermitian manifold equipped with an exhausting sequence $(W_k, g_k, J_k)$. 

Let 
$$\mathbf{u}_k = (u_k, C_k, j_k, W_k, J_k, D_k, \mu_k)$$
be a sequence of stable, connected, compact, boundary-immersed $J_k$-holomorphic curves. 

We say $\mathbf{u}_k$ \textbf{converges in the exhaustive Gromov sense} to a stable, connected, compact boundary-immersed $J$-holomorphic curve 
$$\mathbf{u} = (u, C, j, W, J, D, \mu)$$ provided there exists a sequence $\{C^\ell\}_{\ell \in \mathbb{N}}$ of compact surfaces with boundary $C^\ell \subset C$, and a collection $\{C_k^\ell\}_{k \geq \ell}$ of compact surfaces with boundary $C_k^\ell \subset C_k$ such that the following hold:
\begin{itemize}
    \item $C^\ell \subset C^{\ell + 1} \setminus \partial C^{\ell + 1}$ for every $\ell$. 
    \item $C \setminus \partial C = \cup_{\ell} C^\ell$. 
    \item For every pair of natural numbers $k > \ell$, $C^\ell_k \subset C^{\ell+1}_k \setminus \partial C^{\ell+1}_k$. 
    \item For every pair of natural numbers $k \geq \ell$, $u_k^{-1}(W_\ell) \subset C^\ell_k$.
    \item For any fixed $\ell \in \mathbb{N}$, the restrictions 
    $$\mathbf{u}_k^\ell = (u_k, C_k^\ell, j_k, W^\ell, J_k, D_k \cap C_k^\ell, \mu_k \cap C_k^\ell)$$
    of the curves $\mathbf{u}_k$ to the sub-surfaces $C_k^\ell$ converge in the Gromov sense to the restriction 
    $$\mathbf{u}^\ell = (u^\ell, C^\ell, j^\ell, W^\ell, J, D^\ell, \mu^\ell)$$
    of the curve $\mathbf{u}$ to the sub-surface $C^\ell$. 
\end{itemize}

\end{defn}

\begin{thm}[Exhaustive Gromov compactness, \cite{FishHoferFeral}] \label{thm:exhaustiveGromov} 
Let $(W, g, J)$ be an almost-Hermitian manifold equipped with an exhausting sequence $(W_k, g_k, J_k)$. 

Let 
$$\mathbf{u}_k = (u_k, C_k, j_k, W_k, J_k, D_k, \mu_k)$$ be a sequence of stable, connected, compact, boundary-immersed $J_k$-holomorphic curves. Set $\hat C^\ell_k = u_k^{-1}(W_\ell)$ for any pair $k \geq \ell$. Suppose there is a sequence of constants $\kappa_\ell$ uniform in $k \geq \ell$ satisfying the following:
\begin{itemize}
    \item $\sup_{k \geq \ell} \text{Area}_{u_k^*g_\ell}(\hat C^\ell_k) \leq \kappa_\ell$,
    \item $\text{Genus}_{\text{arith}}(\hat C^\ell_k, \hat C^\ell_k \cap D_k) \leq \kappa_\ell,$
    \item $\#(\hat C^\ell_k \cap (\mu_k \cup D_k) \leq \kappa_\ell.$
\end{itemize}

Then a subsequence of the $\mathbf{u}_k$ converges in the exhaustive Gromov sense to a stable, connected, compact boundary-immersed $J$-holomorphic curve
$$\mathbf{u} = (u, C, j, W, J, D, \mu).$$. 
\end{thm}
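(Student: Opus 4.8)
The plan is to run the target-local Gromov compactness theorem (Theorem~\ref{thm:targetLocal}) once for each piece $W_\ell$ of the exhaustion, extract nested subsequences, diagonalize, and then assemble the resulting family of ``partial'' Gromov limits into a single $J$-holomorphic curve $\mathbf{u}$. The hypotheses are tailored to exactly this: the constants $\kappa_\ell$ are precisely the area, arithmetic genus, and marked/nodal point bounds that Theorem~\ref{thm:targetLocal} asks for on the restrictions $\hat C^\ell_k = u_k^{-1}(W_\ell)$, with the understanding that $\kappa_\ell$ may grow with $\ell$, so that the limit curve may have infinite arithmetic genus, consistently with its domain being non-compact.

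First I would fix $\ell$ and produce a Gromov limit over $W_\ell$. For $k$ large enough that $\overline{W_{\ell+1}}$ is a compact submanifold of $W_k$, I would trim $\mathbf{u}_k$ to a compact, boundary-immersed sub-curve whose image lies in $W_{\ell+1}$, whose boundary avoids a compact region $\mathcal{K}_2$ with $\overline{W_\ell}\subset\text{Int}(\mathcal{K}_2)$ and $\mathcal{K}_2\subset\text{Int}(W_{\ell+1})$, and which still contains $u_k^{-1}(\overline{W_\ell})$; the number of connected components of such a trimming that actually meet $u_k^{-1}(\overline{W_\ell})$ is bounded uniformly in $k$ by the monotonicity theorem of \cite{FishCurves} together with the area bound $\kappa_{\ell+1}$, and the area, arithmetic genus, and $\#(\mu_k\cup D_k)$ of each component are controlled by $\kappa_{\ell+1}$. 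Applying Theorem~\ref{thm:targetLocal} with $\mathcal{K}_1=\overline{W_\ell}$ and $\mathcal{K}_2$ as above, component by component, and passing to a subsequence in $k$, yields compact sub-surfaces on which these trimmed curves converge in the Gromov sense over $W_\ell$ to a stable, boundary-immersed $J$-holomorphic curve; I would let $\mathbf{u}^\ell$ denote the union of these limits, a $J$-holomorphic curve mapping into $W_\ell$, and record the associated compact sub-surfaces $C^\ell_k\subset C_k$ with $u_k^{-1}(W_\ell)\subset C^\ell_k$.

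Next I would perform this construction for $\ell=1$, pass to a further subsequence for $\ell=2$, and so on, then take a diagonal subsequence; this gives a single subsequence of $\mathbf{u}_k$ (not relabeled) together with, for every $\ell$, compact sub-surfaces $C^\ell_k$ as above whose restrictions $\mathbf{u}_k^\ell$ converge in the Gromov sense over $W_\ell$ to $\mathbf{u}^\ell$. I would then check that the partial limits are compatible across scales: since $W_\ell\subset W_{\ell+1}$, the restriction of $\mathbf{u}^{\ell+1}$ to the part of its domain mapping into $W_\ell$ and the curve $\mathbf{u}^\ell$ are both Gromov limits of the same sequence of restrictions of $\mathbf{u}_k^{\ell+1}$ to $u_k^{-1}(W_\ell)$, so by uniqueness of Gromov limits they are identified by a biholomorphism respecting marked points, nodal points, and decorations, and these identifications are mutually compatible for varying $\ell$. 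Taking the direct limit of the domains $C^\ell$ along these identifications produces a Riemann surface $C$, with $\partial C$ the ``ends'' not contained in any $C^\ell$, equipped with an induced complex structure $j$, nodal set $D$, marked set $\mu$, and a $J$-holomorphic map $u\colon C\to W=\cup_\ell W_\ell$ restricting to $u^\ell$ on $C^\ell$; using that the $\mathbf{u}_k$ are connected, one verifies $\mathbf{u}=(u,C,j,W,J,D,\mu)$ is stable, connected, and boundary-immersed. Finally I would verify, directly from the construction, the nesting $C^\ell\subset C^{\ell+1}\setminus\partial C^{\ell+1}$, the exhaustion $C\setminus\partial C=\cup_\ell C^\ell$, the nesting and containment properties of the $C^\ell_k$, and the Gromov convergence of each $\mathbf{u}_k^\ell$ to $\mathbf{u}^\ell$ over $W_\ell$, which is exactly what the definition of exhaustive Gromov convergence demands.

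The step I expect to be the main obstacle is the combination of the per-level trimming with the cross-scale reconciliation: the curves $\mathbf{u}_k$ are connected with boundary of a priori uncontrolled image, so one must excise pieces carefully — bounding the relevant number of components via monotonicity and the $\kappa_\ell$ area bounds, and pushing the newly created boundary out of the compact region where convergence is claimed — and then match the partial limits at consecutive scales using uniqueness of Gromov limits, which requires careful bookkeeping of the hyperbolic metrics $h^{j,\Lambda}$, their geodesics, and the decorations appearing in Definition~\ref{defn:gromovConvergence}. This is precisely the part of the argument that is carried out in detail in \cite{FishHoferExhaustive}.
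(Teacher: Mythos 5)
Your outline matches the paper's own treatment: the paper does not reprove this theorem but cites it from Fish--Hofer, and the informal discussion preceding the statement describes exactly your strategy --- restrict the curves to preimages of the exhausting compact pieces, apply target-local Gromov compactness (Theorem~\ref{thm:targetLocal}) at each level, pass to nested subsequences and diagonalize, with the delicate trimming and cross-scale matching of partial limits deferred to \cite{FishHoferExhaustive}. So the proposal is correct in approach and essentially the same as the intended argument.
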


\subsection{Exponential area bounds} \label{subsec:exponentialAreaBounds}

Fix a framed Hamiltonian manifold $(M, \eta = (\lambda, \omega))$ and an $\eta$-adapted cylinder $(\mathbb{R} \times M, g, J)$. 

Suppose we have a marked nodal compact boundary-immersed pseudoholomorphic curve
$$\mathbf{u} = (u, C, j, \mathbb{R} \times M, J, D, \mu).$$
 Moreover, suppose that there is a compact interval $I = [b_0, b_1] \subseteq \mathbb{R}$ such that
$$(a \circ u)(C) = I$$
and
$$(a \circ u)(\partial C) \subseteq \{b_0, b_1\}.$$

In other words, $u$ maps $C$ into $I \times M$ and maps the boundary $\partial C$ into the boundary of $I \times M$. 

Fish-Hofer's ``exponential area bound'' theorem, namely \cite[Theorem $3$]{FishHoferFeral}, provides bounds for the area
$$\text{Area}_{u^*g}(C) = \int_C u^*(da \wedge \lambda + \omega)$$
in terms of the $\lambda$-energy, $\omega$-energy, and ``height'' $b_1 - b_0$ of the curve. A virtually identical estimate (\cite[Theorem $8$]{FishHoferFeral}) also holds in the slightly more general case of pseudoholomorphic curves in ``realized Hamiltonian homotopies'', which we will describe soon after the present case of pseudoholomorphic curves in $\eta$-adapted cylinders.

Now let us make a first attempt to bound $\text{Area}_{u^*g}(C)$ and then discuss the refined bound from \cite{FishHoferFeral}. Suppose that the $\omega$-energy is finite:
$$E_\omega(u) = \int_C u^*\omega < \infty.$$ This is always going to be the case in any curves that we consider. The remaining part of the area to estimate is the integral
$$\int_C u^*(da \wedge \lambda).$$

However, by the co-area formula (see \cite[Lemma $4.13$]{FishHoferFeral} for a statement of this formula specific to this situation) and the information given above, we can write
$$\int_C u^*(da \wedge \lambda) = \int_I \big(\int_{(a \circ u)^{-1}(t)} u^*\lambda \big) dt = \int_I E_\lambda(u, t) dt.$$

It follows immediately that
$$\text{Area}_{u^*g}(C) \leq (b_1 - b_0)\sup_{t \in I} E_\lambda(u,t) + E_0.$$

Therefore, we have a rough estimate for the area of $C$ that depends only on the ``height'' $b_1 - b_0$, its $\omega$-energy, and its $\lambda$-energy. 

This can be refined by establishing more precise a priori control of $E_\lambda(u,t)$. While the $\lambda$-energy may not be bounded independently of $t$, it is shown in \cite{FishHoferFeral} that $E_\lambda(u, t)$ grows at worst \emph{exponentially} in $t$. This follows from applying Gronwall's inequality to a differential inequality of the form
$$\partial_t E_\lambda(u, t) \leq \kappa_0 E_\lambda(u,t) + \kappa_1(t)$$
where $\kappa_1(t)$ is some function that is controlled by the $\omega$-energy of $C$. 

It follows that for any pseudoholomorphic curve $\mathbf{u}$ such as the one above, as long as we have a priori control over $E_\lambda(u, t)$ for some $t \in [b_0,b_1]$, we get an area estimate that depends only on this a priori bound, the ``height'' $b_1 - b_0$ of the curve, and the $\omega$-energy. The full statement is given below, along with a precise formulation of the exponential growth of the $\lambda$-energy. Note that it is stated differently than in \cite{FishHoferFeral} to align more with our discussion above.

\begin{thm} \label{thm:fishHoferAreaBound}
\cite[Theorem $3$]{FishHoferFeral} Let $I = [b_0,b_1]$ be a compact interval in $\mathbb{R}$ and
$$\mathbf{u} = (u, C, j, I \times M, J, D, \mu)$$
a marked nodal compact boundary-immersed pseudoholomorphic curve such that
$$(a \circ u)(C) = I$$
and
$$u(\partial C) \subseteq \{b_0, b_1\} \times M.$$

Then write 
$$E_0 = \inf\{E_\lambda(u, b_0), E_\lambda(u, b_1)\}.$$

Then there is a constant $\kappa = \kappa(M, \eta, g, J) > 0$ such that
$$\sup_{t \in [b_0,b_1]} E_\lambda(u, t) \leq \kappa(e^{\kappa(b_1 - b_0)} + 1)(E_0 + E_\omega(u))$$
and
$$\text{Area}_{u^*g}(C) \leq \kappa(E_0 + E_\omega(u))(e^{\kappa (b_1 - b_0)} + 1) + E_\omega(u).$$
\end{thm}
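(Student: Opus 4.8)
The plan is to first establish the differential inequality $\partial_t E_\lambda(u,t) \le \kappa_0 E_\lambda(u,t) + \kappa_1(t)$ for almost every $t \in [b_0,b_1]$, with $\int_I \kappa_1(t)\,dt$ controlled by $E_\omega(u)$, and then to integrate it via Gronwall. To get the differential inequality, I would start from the co-area formula (as in \cite[Lemma $4.13$]{FishHoferFeral}), which identifies $E_\lambda(u,t)$ with the $u^*\lambda$-integral over the slice $(a\circ u)^{-1}(t)$, and differentiate in $t$. The key geometric input is that, because $J$ is $\eta$-adapted, the pullback $u^*\lambda$ and $u^*\omega$ can be compared pointwise to the area form $u^*(da\wedge\lambda + \omega)$ on $C$: one writes $d(u^*\lambda) = u^*d\lambda$, decomposes $d\lambda$ using the splitting $TM = \langle X\rangle \oplus \xi$ and the adaptedness conditions from Definition \ref{defn:adaptedJ}, and bounds the resulting terms by a constant multiple of the pointwise area density plus a multiple of $u^*\omega$. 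Applying Stokes' theorem on the region $(a\circ u)^{-1}([t,t'])$ and then differentiating produces the stated inequality, with $\kappa_0$ depending only on $(M,\eta,g,J)$ through the $C^0$-size of $d\lambda$ relative to the metric, and $\kappa_1(t)$ bounded by a constant times the $\omega$-energy density of the slab, so that $\int_I \kappa_1 \le \kappa\, E_\omega(u)$.

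Next I would run Gronwall's inequality. Since we do not know a priori which endpoint has the smaller $\lambda$-energy, I would apply the one-sided estimate in whichever direction starts from $E_0 = \inf\{E_\lambda(u,b_0), E_\lambda(u,b_1)\}$: integrating the differential inequality forward (or backward) from that endpoint gives
$$E_\lambda(u,t) \le e^{\kappa_0(b_1-b_0)}\Big(E_0 + \int_I \kappa_1(s)\,ds\Big) \le e^{\kappa_0(b_1-b_0)}\big(E_0 + \kappa\, E_\omega(u)\big)$$
for all $t$ in $[b_0,b_1]\cap\mathcal{R}$, hence the claimed bound on $\sup_t E_\lambda(u,t)$ after absorbing constants into a single $\kappa$ and adding the harmless $+1$ so the bound is also valid when $b_1-b_0$ is small. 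One subtlety worth flagging: $E_\lambda(u,\cdot)$ is only defined on the full-measure set $\mathcal{R}$ of regular values of $a\circ u$, so the differentiation and Gronwall argument should be phrased in the integrated form $E_\lambda(u,t') - E_\lambda(u,t) \le \int_t^{t'}(\kappa_0 E_\lambda(u,s) + \kappa_1(s))\,ds$ for $t,t' \in \mathcal{R}$, which is exactly how \cite{FishHoferFeral} handles it; the absolute continuity of $t \mapsto E_\lambda(u,t)$ on $\mathcal{R}$ follows from the co-area formula.

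Finally, the area bound follows by combining the $\lambda$-energy bound with the decomposition $\mathrm{Area}_{u^*g}(C) = \int_C u^*(da\wedge\lambda) + \int_C u^*\omega$, using Lemma \ref{lem:omegaNonnegative} to see the second term equals $E_\omega(u) \ge 0$, and the co-area formula to write the first term as $\int_I E_\lambda(u,t)\,dt \le (b_1-b_0)\sup_{t}E_\lambda(u,t)$. Substituting the exponential bound on the sup and again absorbing the factor $(b_1-b_0)$ into the exponential (using $x e^{\kappa x} \le \kappa^{-1}e^{2\kappa x}$, or simply enlarging $\kappa$ since $(b_1-b_0)(e^{\kappa(b_1-b_0)}+1) \le \kappa'(e^{\kappa'(b_1-b_0)}+1)$ for a larger constant) yields
$$\mathrm{Area}_{u^*g}(C) \le \kappa(E_0 + E_\omega(u))(e^{\kappa(b_1-b_0)}+1) + E_\omega(u),$$
as desired. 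The main obstacle I anticipate is the first step: extracting the differential inequality with the \emph{correct} structure — in particular, ensuring that the inhomogeneous term $\kappa_1(t)$ is genuinely controlled by the $\omega$-energy (and not, say, by the $\lambda$-energy or the area, which would make Gronwall circular or useless). This is exactly where adaptedness of $J$ and the closedness $d\omega = 0$ are essential, and where the careful pointwise comparison estimates of Fish-Hofer do the real work; everything after that is Gronwall plus bookkeeping of constants.
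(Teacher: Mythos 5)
This theorem is cited from \cite{FishHoferFeral} and the paper does not give its own proof; it only offers a motivating sketch in the paragraphs immediately preceding the statement. Your plan matches that sketch exactly: apply the co-area formula to write $\int_C u^*(da\wedge\lambda) = \int_I E_\lambda(u,t)\,dt$, use Stokes on the slab $(a\circ u)^{-1}([t,t'])$ together with the pointwise comparison $|u^*d\lambda| \le \kappa\, u^*(da\wedge\lambda + \omega)$ (which follows from the adaptedness of $J$ and the splitting $TM = \langle X\rangle \oplus \xi$) to derive the integrated differential inequality $E_\lambda(u,t') - E_\lambda(u,t) \le \kappa_0 \int_t^{t'} E_\lambda(u,s)\,ds + \kappa\big(F(t')-F(t)\big)$ with $F$ the cumulative $\omega$-energy, apply Gronwall from the better endpoint, and then combine with the area decomposition. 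Your caveat about $E_\lambda(u,\cdot)$ being defined only on regular values and working in integrated form is well placed, and your identification of the key obstacle — that the inhomogeneous term must be genuinely controlled by $E_\omega$ rather than by $E_\lambda$ or area — is exactly where the adaptedness is doing the real work.
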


At one point, we will need a version of this area bound for pseudoholomorphic curves in ``realized Hamiltonian homotopies''. A realized Hamiltonian homotopy, as introduced in \cite[Definition $2.9$]{FishHoferFeral} and defined below, is a more general version of an $\eta$-adapted cylinder. 

\begin{defn} \label{defn:realizedHamiltonianHomotopy}
Let $M$ be a smooth, closed manifold of dimension $2n+1$ and let $I \subseteq \mathbb{R}$ be some connected interval. Write $a: \mathbb{R} \times M \to \mathbb{R}$ for the coordinate projection. Then a \textbf{realized Hamiltonian homotopy} on $I \times M$ is the datum $\hat\eta = (\hat\lambda, \hat\omega)$ of a one-form $\hat\lambda$ and a two-form $\hat\omega$ such that the following holds:
\begin{itemize}
    \item $\hat\lambda(\partial_a) = 0$,
    \item $\hat\omega(\partial_a, -) = 0$,
    \item The restriction of $d\hat\omega$ to any slice $a^{-1}(t) = \{t\} \times M \subseteq I \times M$ is $0$,
    \item $da \wedge \hat\lambda \wedge \hat\omega^n > 0$,
    \item The one-form $\hat\lambda$ is translation-invariant,
    \item If $I$ is unbounded, the two-form $\hat\omega$ is translation-invariant near $\pm\infty$. 
\end{itemize}
\end{defn}

A realized Hamiltonian homotopy has an associated Hamiltonian vector field as well.

\begin{defn} \label{defn:homotopyHamiltonianVectorField}
Let $(I \times M, \hat\lambda, \hat\omega)$ be a realized Hamiltonian homotopy. The \textbf{Hamiltonian vector field} $\hat X$ associated to the realized Hamiltonian homotopy is the unique vector field such that:
\begin{itemize}
    \item $da(\hat X) = 0$,
    \item $\hat\lambda(\hat X) = 1$,
    \item $\hat\omega(\hat X, -) = 0$.
\end{itemize}
\end{defn}

Finally, we define the almost-Hermitian structure associated to a realized Hamiltonian homotopy.

\begin{defn} \label{defn:homotopyadaptedJ} 
Let $(I \times M, \hat\eta = (\hat\lambda, \hat\omega))$ be a realized Hamiltonian homotopy with associated Hamiltonian vector field $\hat X$. An almost-complex structure $J$ on $I \times M$ is \textbf{$\hat\eta$-adapted} (or \textbf{adapted} when the context is clear) if the following holds:
\begin{itemize}
    \item $J(\partial_a) = \hat X$,
    \item $J$ preserves the distribution $\ker(da \wedge \hat\lambda)$,
    \item If $I$ is unbounded, then $J$ is translation-invariant near $\pm\infty$,
    \item The tensor $g(-,-) = (da \wedge \hat\lambda + \hat\omega)(-,J-)$ is a Riemannian metric on $I \times M$ such that
    $$g(J-, J-) = g(-,-).$$
\end{itemize}
\end{defn}

Given a choice of $\hat\eta$-adapted almost-complex structure $J$ as in Definition \ref{defn:homotopyadaptedJ}, it follows that the pair $(J,g)$ where $g$ is the induced metric given in the definition forms an almost-Hermitian structure on $I \times M$. 

There is an analogous version of the exponential area bound in Theorem \ref{thm:fishHoferAreaBound} for realized Hamiltonian homotopies. Note that we can define analogues of the $\lambda$- and $\omega$-energies for $J$-holomorphic curves in realized Hamiltonian homotopies by integrating $\hat\lambda$ and $\hat\omega$ instead, respectively.

\begin{defn}
Let $(I \times M, \hat\eta = (\hat\lambda, \hat\omega))$ be a realized Hamiltonian homotopy, and $J$ an $\hat\eta$-adapted almost-complex structure on $I \times M$.

Let 
$$\mathbf{u} = (u, C, j, I \times M, J, D, \mu)$$
be a $J$-holomorphic curve. As usual, let $a: I \times M \to I$ be the $I$-coordinate projection, and $\mathcal{R}$ the set of regular values of $a \circ u$. 

Then the \textbf{$\hat\lambda$-energy} of $\mathbf{u}$ is defined by
$$E_{\hat\lambda}(u, t) = \int_{(a \circ u)^{-1}(t)} u^*\hat\lambda$$
for any $t \in \mathcal{R}$. 

The \textbf{$\hat\omega$-energy} of $\mathbf{u}$ is defined by
$$E_{\hat\omega}(u) = \int_{C} u^*\hat\omega.$$
\end{defn}

\begin{rem}
Consider the realized Hamiltonian homotopy given by the region 
$$[-L-3\epsilon_0, L+3\epsilon_0] \times M \subseteq W_L$$
as in Lemma \ref{lem:transitionIsHomotopy}. 

Consider some $J$-holomorphic curve 
$$\mathbf{u} = (u, C, j, [-L-3\epsilon_0, L+3\epsilon_0] \times M, J, D, \mu).$$

Then the $\hat\omega$-energy is given explicitly by the integral
$$E_{\hat\omega}(u) = \int_C u^*(\omega + d(\beta_L(t)\lambda)).$$

If the image of $u$ is contained in $[-L, L] \times M$, then we can define its $\omega$-energy $E_\omega(u)$ by integrating the two-form $u^*\omega$ over the domain $C$. Since $\hat\omega = \omega + d(\beta_L(t)\lambda)$ agrees with $\omega$ on $[-L, L] \times Y$, we see in this case that
$$E_{\hat\omega}(u) = E_\omega(u),$$
that is the $\hat\omega$-energy coincides with the ordinary $\omega$-energy. 
\end{rem}

Now we can state the exponential area bound for realized Hamiltonian homotopies. 
\begin{thm} \label{thm:fishHoferHomotopyAreaBound}
Let $I = [b_0,b_1]$ be a compact interval and $(I \times M, \hat\eta = (\hat\lambda, \hat\omega))$ be a realized Hamiltonian homotopy. Pick an $\hat\eta$-adapted $J$ and let $(J,g)$ be the induced almost-Hermitian structure. Let 
$$\mathbf{u} = (u, C, j, I \times M, J, D, \mu)$$
be a marked nodal compact boundary-immersed $J$-holomorphic curve such that
$$(a \circ u)(C) = I$$
and
$$(a \circ u)(\partial C) \subseteq \{b_0, b_1\}.$$

Then write 
$$E_0 = \inf\{E_{\hat\lambda}(u, b_0), E_{\hat\lambda}(u, b_1)\}.$$

Then there is a constant $\kappa = \kappa(M, \hat\eta, g, J) > 0$  such that
$$\sup_{t \in [b_0,b_1]} E_{\hat\lambda}(u, t) \leq \kappa(e^{\kappa(b_1 - b_0)} + 1)(E_0 + E_{\hat\omega}(u))$$
and
$$\text{Area}_{u^*g}(C) \leq \kappa(E_0 + E_{\hat\omega}(u))(e^{\kappa (b_1 - b_0)} + 1) + E_{\hat\omega}(u).$$
\end{thm}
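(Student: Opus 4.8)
The plan is to reduce Theorem \ref{thm:fishHoferHomotopyAreaBound} to the already-established estimate for $\eta$-adapted cylinders, Theorem \ref{thm:fishHoferAreaBound}, by observing that the proof of the latter is entirely local and only uses the structural features shared by realized Hamiltonian homotopies. Concretely, the estimate in Theorem \ref{thm:fishHoferAreaBound} is derived from two ingredients: (i) the co-area formula $\int_C u^*(da \wedge \hat\lambda) = \int_I E_{\hat\lambda}(u,t)\, dt$, which needs only $\hat\lambda(\partial_a) = 0$ and translation-invariance of $\hat\lambda$ so that $E_{\hat\lambda}(u,t)$ makes sense slice-by-slice; and (ii) the differential inequality $\partial_t E_{\hat\lambda}(u,t) \leq \kappa_0 E_{\hat\lambda}(u,t) + \kappa_1(t)$, where $\kappa_1(t)$ is controlled by the $\hat\omega$-energy. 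So the first step is to write down these two ingredients in the realized-Hamiltonian-homotopy setting, checking that each axiom of Definition \ref{defn:realizedHamiltonianHomotopy} and Definition \ref{defn:homotopyadaptedJ} is exactly what is needed.

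The key step is the differential inequality. Here I would differentiate $E_{\hat\lambda}(u,t) = \int_{(a\circ u)^{-1}(t)} u^*\hat\lambda$ in $t$ using the first variation / Stokes' theorem on the region $(a\circ u)^{-1}([t,t'])$, which gives $E_{\hat\lambda}(u,t') - E_{\hat\lambda}(u,t) = \int_{(a\circ u)^{-1}([t,t'])} u^*d\hat\lambda$. One then decomposes $d\hat\lambda$ on $I\times M$: the condition $d\hat\omega|_{\{t\}\times M} = 0$ together with $\hat\omega(\partial_a,-)=0$ means $d\hat\omega = dt \wedge \sigma_t$ for a $t$-family of one-forms, and the positivity condition $da\wedge\hat\lambda\wedge\hat\omega^n > 0$ pins down the relationship between $d\hat\lambda$, $\hat\lambda$, $\hat\omega$ and the Hamiltonian vector field $\hat X$. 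Writing $d\hat\lambda = f\, \hat\omega + (\text{terms involving } dt \text{ and } \iota_{\hat X}) + (\text{a form } \hat\omega\text{-tame up to bounded error})$ and using the pointwise non-negativity of $u^*\hat\omega$ on tangent planes (the analogue of Lemma \ref{lem:omegaNonnegative}, which holds since $J$ is $\hat\eta$-adapted), one bounds the integrand of $\int u^*d\hat\lambda$ by $\kappa_0\, |u^*(da\wedge\hat\lambda)| + \kappa_1 |u^*\hat\omega|$ pointwise, with $\kappa_0,\kappa_1$ depending only on $(M,\hat\eta,g,J)$ by compactness of $M$ and $I$. Integrating over the slice and invoking the co-area formula again converts this into $\partial_t E_{\hat\lambda}(u,t) \leq \kappa_0 E_{\hat\lambda}(u,t) + \kappa_1 \big(\tfrac{d}{dt}\!\int_{(a\circ u)^{-1}([b_0,t])} u^*\hat\omega\big)$; since $u^*\hat\omega \geq 0$, the cumulative $\hat\omega$-mass is monotone and bounded by $E_{\hat\omega}(u)$.

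The final step is to run Gronwall's inequality on this differential inequality. Starting from whichever endpoint realizes $E_0 = \inf\{E_{\hat\lambda}(u,b_0), E_{\hat\lambda}(u,b_1)\}$ and integrating forward (or backward, reversing the sign of $a$, which swaps $\hat X \mapsto -\hat X$ and leaves the structure of the argument intact), Gronwall gives $\sup_{t\in[b_0,b_1]} E_{\hat\lambda}(u,t) \leq \kappa(e^{\kappa(b_1-b_0)}+1)(E_0 + E_{\hat\omega}(u))$ after absorbing constants. Then, exactly as in the discussion preceding Theorem \ref{thm:fishHoferAreaBound}, the co-area formula yields
$$\text{Area}_{u^*g}(C) = \int_C u^*(da\wedge\hat\lambda) + \int_C u^*\hat\omega = \int_{b_0}^{b_1} E_{\hat\lambda}(u,t)\, dt + E_{\hat\omega}(u) \leq (b_1-b_0)\sup_{t} E_{\hat\lambda}(u,t) + E_{\hat\omega}(u),$$
and substituting the supremum bound and re-absorbing the factor $(b_1-b_0)$ into the exponential (using $x e^{\kappa x} \leq \kappa^{-1} e^{2\kappa x}$) gives the stated area estimate. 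I expect the main obstacle to be the pointwise algebra in the differential inequality step: one must carefully expand $u^*d\hat\lambda$ on a $J$-holomorphic plane and bound it by the $da\wedge\hat\lambda$ and $\hat\omega$ contributions using only $\hat\eta$-adaptedness of $J$ and the positivity axioms, without any contact-type or stable-Hamiltonian hypothesis. This is precisely the content of \cite[Theorem $8$]{FishHoferFeral}, so in practice the cleanest route may be to cite that theorem directly after noting our hypotheses are a special case of its hypotheses; but the argument above explains why it is true and would be included for completeness.
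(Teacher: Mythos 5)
Your proposal matches the paper's approach: Theorem \ref{thm:fishHoferHomotopyAreaBound} is stated here purely as a restatement of \cite[Theorem 8]{FishHoferFeral}, with no independent proof supplied, and you correctly identify that the cleanest route is to cite that result once the hypotheses are checked. Your accompanying sketch captures the right mechanism (Stokes on a slab to express $E_{\hat\lambda}(u,t') - E_{\hat\lambda}(u,t)$ as $\int u^*d\hat\lambda$, a pointwise comparison against the area form $da \wedge \hat\lambda + \hat\omega$ on $J$-complex planes, then the co-area formula and Gronwall), but the ``decomposition of $d\hat\lambda$'' detour is unnecessary and slightly off-target: since $\hat\lambda$ is required to be translation-invariant, $d\hat\lambda$ is simply the pullback of a fixed two-form on $M$, and the pointwise bound $|u^*d\hat\lambda| \leq \kappa\, u^*(da \wedge \hat\lambda + \hat\omega)$ on $J$-complex tangent planes follows directly from compactness of $I \times M$ and positivity of $da \wedge \hat\lambda + \hat\omega$ on such planes; the observation $d\hat\omega = dt \wedge \sigma_t$ is not needed for this step.
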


\subsection{Monotonicity and energy quantization} 

The following statement, which is due to \cite{FishCurves}, is a version of the monotonicity theorem for pseudoholomorphic curves, which states that a proper pseudoholomorphic curve in a small ball in an almost-Hermitian manifold has a uniform lower bound on its area. 

\begin{thm} \label{thm:monotonicity}
\cite[Corollary $3.7$]{FishCurves} Let $(W, J,g)$ be a compact almost-Hermitian manifold, possibly with boundary. Let 
$$\mathbf{u} = (u, C, j, W, J, \emptyset, \mu)$$
be a connected, compact, generally-immersed pseudoholomorphic curve without nodal points such that $u(\partial C) \subset \partial W$. Then for any $\epsilon > 0$, there exists a constant $r_0 = r_0(\epsilon, W, g, J)$ such that the following holds. For any point $z \in C$ and any $$r < \inf\{r_0, \text{dist}_g(u(z), \partial W)\},$$ we have the inequality
$$\text{Area}_{u^*g}(u^{-1}(B_r(u(z)))) \geq (1 + \epsilon)^{-1}\pi r^2.$$
\end{thm}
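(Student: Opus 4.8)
The plan is to run the classical monotonicity-formula argument for surfaces with uniformly bounded mean curvature, adapted to the pseudoholomorphic setting. The first step is to observe that the image of a $J$-holomorphic curve is an \emph{almost minimal} surface: away from its (isolated) branch points, $u(C)$ is an immersed surface whose mean curvature vector $\vec{H}$ satisfies a pointwise bound $|\vec{H}| \le c_0$, where $c_0 = c_0(W, g, J)$ depends only on $\|\nabla J\|_{C^0}$ and the metric $g$. This follows from the fact that a $J$-holomorphic map is weakly conformal and has tension field algebraically controlled by $\nabla J$; since $W$ is compact (possibly with boundary), all the relevant ambient quantities --- the sectional curvature, a lower bound on the injectivity radius, and the $C^1$-norm of $J$ --- are uniformly bounded, which is what will make the final constant $r_0$ depend only on $(W,g,J)$ and $\epsilon$.

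Next I would localize. Fix $z \in C$, set $p = u(z)$, and consider $r < \inf\{r_0, \text{dist}_g(p, \partial W)\}$ with $r_0$ to be chosen at the end. Because $u(\partial C) \subset \partial W$, the ball $B_r(p)$ is disjoint from $u(\partial C)$, so $u^{-1}(B_r(p))$ is a compact surface with boundary mapping into $\partial B_r(p)$, and the restriction of $u$ to it is a proper $J$-holomorphic curve with exactly the boundary behavior needed for the monotonicity computation. For $r_0$ below the injectivity radius, the distance function $\rho = \text{dist}_g(\cdot, p)$ on $B_{r_0}(p)$ has Hessian comparable to the Euclidean one, with an error of size $O(\rho)$ controlled by the ambient curvature.

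The core step is the differential inequality. Writing $A(r) = \text{Area}_{u^*g}(u^{-1}(B_r(p))) = \int_{u^{-1}(B_r(p))} u^*\Theta$, I would apply the first variation of area along the radial vector field $\rho\,\nabla\rho$ restricted to the surface, together with the coarea formula; combining the bound $|\vec{H}| \le c_0$ with the Hessian comparison for $\rho$ gives, for almost every $r \le r_0$, an inequality of the form $\frac{d}{dr}\bigl(e^{\Lambda r} A(r) / r^2\bigr) \ge 0$ for a constant $\Lambda = \Lambda(W, g, J)$. Since the component of $C$ through $z$ is non-constant, in suitable coordinates near $z$ the map $u$ has the normal form $w \mapsto (w^k + o(|w|^k))$ for some integer $k \ge 1$, so the area density $\Theta_p := \lim_{s \to 0^+} A(s)/(\pi s^2)$ exists and is at least $1$. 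Integrating the monotonicity inequality from $0$ to $r$ then yields $e^{\Lambda r} A(r)/(\pi r^2) \ge \Theta_p \ge 1$, i.e. $A(r) \ge e^{-\Lambda r}\pi r^2$. Finally, given $\epsilon > 0$, shrink $r_0$ further so that $e^{-\Lambda r_0} \ge (1+\epsilon)^{-1}$; then for every $r < \inf\{r_0, \text{dist}_g(p, \partial W)\}$ we obtain $\text{Area}_{u^*g}(u^{-1}(B_r(p))) \ge (1+\epsilon)^{-1}\pi r^2$, as claimed.

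The main obstacle is the first step: making precise the ``almost minimality'' of $J$-holomorphic curves in a genuinely non-Kähler almost-Hermitian manifold and extracting a uniform mean-curvature bound, together with the careful bookkeeping of the error terms (from $\nabla J$ and from the ambient curvature) in the first-variation computation, so that $\Lambda$ and the threshold $r_0$ depend only on $(W, g, J)$ and $\epsilon$. Branch points need only a minor remark: they are isolated, hence of zero area, so they do not affect $A(r)$, and the density at $p$ is still an integer $\ge 1$. Everything else is the standard monotonicity argument, carried out at this level of generality in \cite{FishCurves}.
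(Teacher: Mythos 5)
The paper does not prove this statement; it is quoted directly from \cite[Corollary $3.7$]{FishCurves} and used as a black box. Your sketch --- a uniform mean-curvature bound on $u(C)$ coming from $\nabla J$ and the ambient geometry (available since $W$ is compact), followed by the classical Hessian-comparison / first-variation computation producing $\frac{d}{dr}\bigl(e^{\Lambda r}A(r)/r^2\bigr)\ge 0$, integrated down to the density $\ge 1$ at $u(z)$, with $r_0$ shrunk so $e^{-\Lambda r_0}\ge(1+\epsilon)^{-1}$ --- is the correct and expected proof and is essentially the route taken in \cite{FishCurves}, with the genuinely delicate step being exactly the one you flag, namely making the ``almost minimal'' estimate precise in the non-K\"ahler setting.
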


The second statement is a slightly less general version of Theorem $4$ in \cite{FishHoferFeral}, which asserts the existence of a lower bound on the $\omega$-energy of a pseudoholomorphic curve in an adapted cylinder over a framed Hamiltonian manifold given certain restrictions on its image.

\begin{thm} \cite[Theorem $4$]{FishHoferFeral} \label{thm:omegaEnergyQuantization}
Let $(M, \eta = (\lambda, \omega))$ be a framed Hamiltonian manifold, and let $(\mathbb{R} \times M, J,g)$ be an $\eta$-adapted cylinder over $M$. Fix an integer $G \geq 0$ and real numbers $h > 0$ and $\Lambda > 0$.

Let $(C, j)$ be a compact, connected Riemann surface, possibly with boundary, with genus bounded above by $G$.
Let
$$\mathbf{u} = (u, C, j, \mathbb{R} \times M, J, \emptyset, \mu)$$
be a $J$-holomorphic curve with $u$ non-constant. 

Suppose that there exists $a_0 \in \mathbb{R}$ such that $a_0 \in (a \circ u)(C)$, $(a \circ u)(\partial C)$ does not intersect the interval $[a_0 + h, a_0 - h]$, and $(a \circ u)(C)$ lies in either $[a_0, \infty)$ or $(-\infty, a_0]$. 

Then there is a constant $\hbar = \hbar(M, \eta, g, J, G, h) > 0$ such that 
$$\int_C u^*\omega \geq \hbar.$$
\end{thm}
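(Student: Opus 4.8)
The plan is to argue by contradiction, essentially reproducing the proof of \cite[Theorem $4$]{FishHoferFeral}. Suppose the conclusion fails: there is a sequence of non-constant $J$-holomorphic curves $\mathbf{u}_k = (u_k, C_k, j_k, \mathbb{R} \times M, J, \emptyset, \mu_k)$ satisfying the hypotheses with the fixed $G$ and $h$ but with $\int_{C_k} u_k^*\omega \to 0$. Using translation invariance of $J$, I would normalize $a_0 = 0$ for every $k$, and after passing to a subsequence assume $(a \circ u_k)(C_k) \subseteq [0, \infty)$ for all $k$ (the case $(-\infty, 0]$ being symmetric). Then $(a \circ u_k)(\partial C_k)$ is disjoint from $[-h,h]$, hence contained in $(h, \infty)$; choosing a level $h'$ that is a common regular value of all the $a \circ u_k$ and lies in $(h/2, h)$ (with minor bookkeeping in the exceptional case of a closed curve of height less than $h$), the restriction of $\mathbf{u}_k$ to the connected component $\hat C_k$ of $(a \circ u_k)^{-1}([0, h'])$ containing a point $z_k$ with $u_k(z_k) \in \{0\} \times M$ is a non-constant, compact, boundary-immersed $J$-holomorphic curve with $u_k(\partial \hat C_k) \subseteq \{h'\} \times M$, genus at most $G$, no nodal points, $(a \circ u_k)(\hat C_k) = [0, h']$, and $\int_{\hat C_k} u_k^*\omega \to 0$ (adding at most two auxiliary marked points to make it stable).

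The core of the argument is a uniform two-sided area bound on the $\hat C_k$. The lower bound is soft: regarding $\hat C_k$ as mapping into the compact almost-Hermitian manifold $W' = [-h/4, h'] \times M$, with $u_k(\partial \hat C_k) \subseteq \partial W'$ and $\text{dist}_g(u_k(z_k), \partial W') = h/4$, the monotonicity theorem (Theorem \ref{thm:monotonicity}) gives $\text{Area}_{u_k^*g}(\hat C_k) \geq \delta_0$ for a constant $\delta_0 = \delta_0(M, \eta, g, J, h) > 0$ independent of $k$. The upper bound is the technical heart of \cite[Theorem $4$]{FishHoferFeral}, and the place where the non-stability of $\eta$ bites: there is no monotonicity of $E_\lambda(u_k, \cdot)$ in the level and no a priori area control, but applying the exponential area bound (Theorem \ref{thm:fishHoferAreaBound}) on the slab $[0, h']$ reduces the problem to a uniform bound on $E_\lambda(u_k, t)$ at a single level $t$ just above $0$, and such a bound is obtained in \cite{FishHoferFeral} from a careful analysis near the lowest level of the curve, where the winding of $u_k$ around the level sets $\{t\} \times M$ is constrained by the genus and the $\omega$-energy. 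Since $G$ is fixed and $\int u_k^*\omega \to 0$, this forces $\sup_{t \in [0, h']} E_\lambda(u_k, t) \to 0$, whence Theorem \ref{thm:fishHoferAreaBound} gives $\text{Area}_{u_k^*g}(\hat C_k) \leq \kappa$ uniformly (in fact $\to 0$, which already contradicts the lower bound, but only boundedness is needed below).

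Given uniform bounds on area, genus, and number of marked and nodal points, the target-local Gromov compactness theorem (Theorem \ref{thm:targetLocal}), applied with a slightly shrunk interior target region containing level $0$ but bounded away from level $h'$, shows that after trimming and passing to a subsequence the $\hat{\mathbf{u}}_k$ converge in the Gromov sense to a stable, connected, compact, boundary-immersed $J$-holomorphic curve $\hat{\mathbf{u}}_\infty$ mapping into a slab $[0, h''] \times M$ with $0 < h'' < h'$. By $C^0$-convergence its image contains a point at level $0$; since its boundary lies over levels bounded away from $0$, any such point is an interior point of the domain; its $\omega$-energy equals $\lim \int_{\hat C_k} u_k^*\omega = 0$; and by the lower area bound it is non-constant. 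By Lemma \ref{lem:omegaNonnegative} every component of $\hat{\mathbf{u}}_\infty$ is constant or has image in $\mathbb{R} \times \gamma$ for some trajectory $\gamma$ of $X$. Tracing the chain of nodes (all interior, and all lying over level $0$ once one of them does) from a level-$0$ point of the image, there is a non-constant component $C^*$ whose image meets $\{0\} \times M$ at an interior point $z^* \in C^*$. On $C^*$ the map factors through a holomorphic map into the Riemann surface $\mathbb{R} \times \gamma$, on which $a$ is the real part of the holomorphic coordinate $a + \sqrt{-1}s$ ($s$ the flow parameter of $X$, so $J\partial_a = \partial_s$); hence $a \circ u|_{C^*}$ is harmonic, and a harmonic function with an interior minimum (at $z^*$, value $0$) is locally constant, so $a \circ u|_{C^*} \equiv 0$ and $u(C^*) \subseteq \{0\} \times \gamma$. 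Since $\{0\} \times \gamma$ is one-dimensional and $du$ is complex-linear, $du \equiv 0$ on $C^*$, contradicting non-constancy.

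This contradiction proves the theorem; concretely, for each $G$ and $h$ one obtains a uniform positive lower bound $\hbar = \hbar(M, \eta, g, J, G, h)$ on $\int_C u^*\omega$ over all curves as in the statement. The step I expect to be the main obstacle is the uniform area upper bound — equivalently, the control of $E_\lambda$ on level sets near the bottom of a low-$\omega$-energy curve — since this is precisely where the absence of a contact or stable Hamiltonian structure removes the standard a priori estimates and one must invoke Fish-Hofer's exponential area bounds and their fine local analysis; everything else is soft, and this is the only point at which the genus bound $G$ and the height $h$ enter $\hbar$.
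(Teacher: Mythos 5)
You should first note what ``the paper's proof'' of this statement is: the paper does not prove Theorem \ref{thm:omegaEnergyQuantization} at all --- it is quoted from \cite{FishHoferFeral}, and the only argument supplied is the remark that follows it, which sketches how to replace target-local Gromov compactness by the compactness theorem for $J$-holomorphic currents so as to drop the genus hypothesis. Your outline reproduces the general compactness-contradiction shape that this remark attributes to Fish--Hofer (monotonicity for a lower area bound, exponential area bounds and target-local compactness for an upper bound, then analysis of a zero-$\omega$-energy limit). However, the step you yourself flag as the heart of the matter --- a uniform bound, indeed decay, of $E_\lambda(u_k,t)$ at a level just above the bottom, hence a uniform area bound --- is not proved: it is asserted by appeal to a ``careful analysis near the lowest level'' in which ``the winding of $u_k$ \ldots is constrained by the genus and the $\omega$-energy.'' That attribution is off target: by the paper's own remark the genus hypothesis enters only as an input to target-local Gromov compactness, not through any $\lambda$-energy estimate, and your appeal to Theorem \ref{thm:fishHoferAreaBound} is also incomplete as stated, since the bottom level $a_0$ is an interior minimum of $a\circ u_k$ (a critical level), so the quantity $E_0$ appearing there is not a priori controlled. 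As written, the central estimate of your proposal is a genuine gap.

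The gap can be closed, and in the one-sided setting of this statement it closes so cheaply that the entire second half of your argument becomes unnecessary. Normalize $a_0=0$, with image in $[0,\infty)\times M$ and $(a\circ u)(\partial C)\subset(h,\infty)$. For a regular value $t\in(0,h)$ set $C_t=(a\circ u)^{-1}([0,t])$; its boundary lies entirely over $\{t\}$, so Stokes' theorem (orienting the level set as $\partial C_t$) gives $E_\lambda(u,t)=\int_{C_t}u^*d\lambda$. Since $d\lambda$ is a fixed translation-invariant two-form with $|d\lambda|_g\le\kappa_0$, and $u^*(da\wedge\lambda+\omega)$ is the $u^*g$-area form on the ($J$-complex) tangent planes of the curve, this yields $E_\lambda(u,t)\le\kappa_0\,\mathrm{Area}_{u^*g}(C_t)\le\kappa_0E_\omega(u)+\kappa_0\int_0^tE_\lambda(u,s)\,ds$, using the coarea formula and Lemma \ref{lem:omegaNonnegative}; Gronwall then gives $E_\lambda(u,t)\le\kappa_0e^{\kappa_0t}E_\omega(u)$ and $\mathrm{Area}_{u^*g}(C_t)\le e^{\kappa_0t}E_\omega(u)$. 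Combined with your own lower bound --- Theorem \ref{thm:monotonicity} applied in $[-h/4,t_1]\times M$, $t_1\in(h/2,h)$ a regular value, around a point of the image on $\{0\}\times M$, with $r=\min(r_0,h/8)$ --- this gives $\pi r^2/2\le\mathrm{Area}_{u^*g}(C_{h/2})\le e^{\kappa_0h/2}E_\omega(u)$, i.e. $E_\omega(u)\ge\hbar:=\tfrac{\pi r^2}{2}e^{-\kappa_0h/2}$, with no genus bound, no Gromov limit, and no zero-energy-limit analysis (which is consistent with the paper's remark that the genus hypothesis is removable). The compactness machinery, and with it the genus bound, is what one needs for the stronger form of Fish--Hofer's Theorem $4$, where the curve need not lie on one side of $a_0$ and the Stokes/Gronwall initialization above is unavailable. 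So either fill your gap along these lines --- in which case most of your proposal can be deleted --- or, if you keep the compactness route for the general statement, you still owe an actual proof of the uniform area bound, and the genus-constrained-winding mechanism you gesture at will not supply it.
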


\begin{rem}
We repeat the remark of \cite{FishHoferFeral} that the assumed genus bound in Theorem \ref{thm:omegaEnergyQuantization} can be done away with. The proof of Theorem \ref{thm:omegaEnergyQuantization} in \cite{FishHoferFeral} relies on a compactness-contradiction argument using target-local Gromov compactness, which requires a genus bound. 

We provide a sketch of how to remove the genus bound assumption here. The main idea is to replace the usage of target-local Gromov compactness with the compactness theorem for $J$-holomorphic currents, which does not require a genus bound. The interested reader can refer to \cite[Appendix B]{prequel} for a detailed overview of the terminology used below. 

Assume for the sake of contradiction that there is a sequence $\mathbf{u}_k$ of pseudoholomorphic curves satisfying the conditions of the theorem, but with possibly unbounded genus and with 
$$\int_{C_k} u_k^*\omega \to 0.$$

We observe that Theorem \ref{thm:fishHoferAreaBound} implies that the curves $\mathbf{u}_k$ have uniformly bounded area. Moreover, our assumptions imply that the associated $J$-holomorphic currents have no boundary in $(a_0 - h, a_0 + h) \times M$. 

Restricting to $(a_0 - h, a_0 + h) \times M$ and appealing to the compactness theorem for $J$-holomorphic currents, we obtain a closed $J$-holomorphic current $T$ in $(a_0 - h, a_0 + h) \times M$ such that $T(\omega) = 0$. We note by the assumptions on the curves $\mathbf{u}_k$ that the support of $T$ lies in $[a_0, a_0 + h) \times M$ or $(a_0 - h, a_0] \times M$. 

Using the fact that $T(\omega) = 0$ and repeating the arguments of Lemmas $B.13$ and $B.14$ in \cite{prequel} shows that $T$ is not zero and has support invariant under translation in the $\mathbb{R}$-factor. 

It follows that the support of $T$ is a nonempty closed subset of $(a_0 - h, a_0 + h) \times M$ that is invariant under translation, and lies in either $[a_0, a_0 + h) \times M$ or $(a_0 - h, a_0] \times M$. This is impossible, and therefore we arrive at a contradiction. 
\end{rem}

\section{Proofs of main theorems} \label{sec:proofs}

In this section, we prove Theorems \ref{thm:mainExample} and \ref{thm:swExample}. The theorems will be proved simultaneously, using either Corollary \ref{cor:GWCrossingExistence} or Proposition \ref{prop:nonzeroGr2} to ensure existence of pseudoholomorphic curves depending on whether we are in the situation of Theorem \ref{thm:mainExample} or \ref{thm:swExample}, respectively. 

Let $(W^{2n+2}, \Omega)$ be a closed symplectic manifold, and let $M = H^{-1}(0)$ be a closed, regular energy level of a smooth Hamiltonian $H$ on $W$. Recall from Example \ref{exe:hypersurfaces} that a framed Hamiltonian structure $(\lambda, \omega)$ was defined on $M$, in the following manner.

First, set $\omega = \Omega|_M$. Then, choose a compatible almost-complex structure $J$ on $W$. Recall that this is an almost-complex structure $J$ such that
$$\Omega(J-, J-) = \Omega(-, -)$$
and
$$\Omega(-, J-) > 0.$$

Let $\lambda$ be a one-form such that the kernel of $\lambda$ is $TM \cap J(TM)$. Then $\eta = (\lambda, \omega)$ is a framed Hamiltonian structure on $M$ that we fix for the remainder of this subsection.

The choice of complex structure $J$ defines an almost-Hermitian structure $(J,g)$ on $W$ by setting
$$g(-, -) = \Omega(-, J-).$$

Suppose further that the symplectic manifold $(W^{2n+2}, \Omega)$ and the hypersurface $M$ satisfy either the assumptions of Theorem \ref{thm:mainExample} or of Theorem \ref{thm:swExample}. 

\subsection{Neck stretching} \label{subsec:neckStretching}

We begin by introducing a neck stretching procedure for $W$ along the hypersurface $M$.  

Topologically, the neck-stretching procedure cuts $W$ along $M$ and glues in the cylinder $[-L, L] \times M$, for a real number $L > 0$. Call this manifold $W_L$.

Furthermore, we will define an almost-Hermitian structure $(\bar J_L, g_L)$ on $W_L$ that restricts to the almost-Hermitian structure $(J, g)$ defined above outside of a neighborhood of the cylinder $[-L, L] \times M$. On the other hand, the restriction of $\bar J_L$ to the cylinder $[-L,L] \times M$ is $\eta$-adapted, and so $([-L, L] \times M, \bar J_L, g_L)$ is canonically identified as a region inside of an $\eta$-adapted cylinder (see Definitions \ref{defn:adaptedJ} and \ref{defn:adaptedCylinder}). 

Before we proceed further, we briefly discuss neck stretching in the case where $(M, \lambda, \omega)$ is a \emph{contact-type hypersurface}, as carried out in \cite{BEHWZ03}. 

The hypersurface $M$ is contact-type when $\lambda$ is a contact form, $\omega = d\lambda$, and the hypersurface $M$ has a transverse vector field defined nearby whose flow conformally expands the ambient symplectic form. 

The flow of this vector field defines a collar neighborhood $(-\epsilon, \epsilon) \times M$ of $M$ on which we can write 
$$\Omega = d(e^a \lambda).$$ 

It follows that any $\eta$-adapted almost-complex structure on this neighborhood is in fact \emph{compatible} with the symplectic form.

If we delete $M$ and glue in a cylinder $[-L, L] \times M$ to form the stretched manifold $W_L$, it follows that we can define an almost-Hermitian structure $(\bar J_L, g_L)$ such that first, $\bar J_L$ is \emph{compatible} with the symplectic form $\Omega_L$ which restricts to $d(e^a\lambda)$ on $[-L, L] \times M$ and second, $\bar J_L$ restricts to an $\eta$-adapted almost-complex structure on the neck $[-L, L] \times M$. 

The existence of such a specific structure is very useful for proving area/energy estimates on $\bar J_L$-holomorphic curves in $W_L$. 

The $\lambda$-energy of a $\bar J_L$-holomorphic curve, i.e. the integral of $\lambda$ along the intersection of the curve with a slice 
$$\{t\} \times M \subset [-L, L] \times M \subset W_L,$$
is, in fact, controlled by the $\omega$-energy, which can in turn be controlled independently of the neck length $L$. See \cite[Lemma $9.2$]{BEHWZ03} for a proof of this. 

Now let us return to the case where $M$ is an arbitrary hypersurface.

We are not guaranteed to have such a normal form for the symplectic form $\Omega$ near $M$ in the case where $M$ is an arbitrary hypersurface. Correspondingly, we have no control over the $\lambda$-energy of our pseudoholomorphic curves, and as we take the stretching parameter $L$ to be arbitrarily large, it may increase without bound. 

We can, however, ensure that the almost complex structure $\bar J_L$ satisfies the following two properties. First, we can fix $\bar J_L$ to be compatible with respect to the symplectic form $\Omega$ \emph{outside of a neighborhood} of the neck $[-L, L] \times M$. Second, we can fix $\bar J_L$ to be $\eta$-adapted on the neck $[-L, L] \times M$. To define $\bar J_L$ on the entire manifold, we must make several auxiliary choices to interpolate between these two structures. 

We will see that a careful choice of this interpolation allows us to maintain enough control over $\bar J$-holomorphic curves in $W_L$ to apply the exhaustive Gromov compactness theorem described in Section \ref{subsec:exhaustiveCompactness}. 

For example, we are still able to control the $\lambda$-energy of pseudoholomorphic curves \emph{sufficiently near the boundary} of the neck $[-L, L] \times M$. If $$\mathbf{u} = (u, C, j, W_L, \bar J_L, D, \mu)$$
denotes a $\bar J_L$-holomorphic curve in $W_L$ with closed domain $C$, and $t$ is a real number such that $|t - L| < 1$, we can bound the $\lambda$-energy
$$E_\lambda(\mathbf{u}, t) = \int_{u^{-1}(\{t\} \times M)} u^*\lambda$$
by a constant depending only on the homology class represented by $u(C)$ (see Lemma \ref{lem:lambdaTrimBounds}). The starting point is the following lemma, which is immediate by a Moser-type argument.

\begin{lem} \label{lem:moser}
There is a constant $\epsilon \in (0,1)$, a neighborhood $U$ of $M$ and a diffeomorphism
$$\Phi: (-\epsilon, \epsilon) \times M \to U$$
such that
$$\Phi(0, m) = m$$
for any point $m \in M$ and 
$$\Phi^*\Omega = \omega + d(a\lambda)$$
where $a$ denotes the $\mathbb{R}$-coordinate in $(-\epsilon, \epsilon) \times M$.
\end{lem}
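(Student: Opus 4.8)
The plan is to produce the collar neighborhood by a Moser-type deformation argument starting from the fact that $M = H^{-1}(0)$ is a regular level set. First I would fix a tubular neighborhood of $M$ in $W$: since $M$ is a closed, cooriented hypersurface (it separates $W$ into $W_\pm = H^{\lessgtr}(0)$), the normal bundle is trivial, so there is some $\delta > 0$ and a diffeomorphism $\Psi_0 \colon (-\delta, \delta) \times M \to U_0$ onto a neighborhood $U_0$ of $M$ with $\Psi_0(0, m) = m$. Pulling back, $\Psi_0^*\Omega$ is a closed two-form on $(-\delta, \delta) \times M$ whose restriction to $\{0\} \times M$ is exactly $\omega = \Omega|_M$ by construction. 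Write $a$ for the $\mathbb{R}$-coordinate.

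Next I would identify the target normal form. The two-form $\omega + d(a\lambda) = \omega + da \wedge \lambda + a\, d\lambda$ restricts to $\omega$ on $\{0\} \times M$ as well, and it is closed since $d\omega = 0$. The key point is that on $\{0\} \times M$ both two-forms agree, and their ``derivative in the $a$-direction'' can be matched: the interior product of $\Psi_0^*\Omega$ with $\partial_a$, restricted to $\{0\} \times M$, is some one-form $\sigma$ on $M$, and I would like $\sigma = \lambda$. This may fail for the initial choice of $\Psi_0$, but it can be arranged because $\lambda \wedge \omega^n > 0$ means $\lambda$ is (up to positive scaling) the unique coorienting one-form compatible with the framed Hamiltonian structure; more carefully, one adjusts $\Psi_0$ by a diffeomorphism of the form $(a, m) \mapsto (f(a,m), m)$ (a reparametrization of the normal direction, fiberwise) so that the normal derivative of the pulled-back symplectic form at $M$ becomes $da \wedge \lambda$. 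A cleaner route: since $\iota_{\partial_a}\Psi_0^*\Omega|_{\{0\}\times M}$ is a one-form whose wedge with $\omega^n$ is positive (because $\Psi_0^*\Omega$ is nondegenerate and $\omega^n$ spans the top form on $M$), and $\lambda$ has the same property, one can precompose with a fiberwise rescaling of $a$ to match them. Once the one-jets along $M$ agree, one invokes the relative Moser argument.

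The relative Moser step is the substance: set $\Omega_t = (1-t)\Psi_0^*\Omega + t(\omega + d(a\lambda))$ for $t \in [0,1]$. Both endpoints are closed, agree to first order along $\{0\}\times M$, and hence $\Omega_1 - \Omega_0 = d\beta$ for a one-form $\beta$ vanishing to second order along $M$ (Poincar\'e lemma in the collar, with the fiberwise vanishing tracked). Shrinking the collar to $(-\epsilon, \epsilon) \times M$ if necessary so that all $\Omega_t$ remain nondegenerate (possible since nondegeneracy is open and holds at $t=0$ near $M$, using compactness of $M$), solve $\iota_{V_t}\Omega_t = -\beta$ for the time-dependent vector field $V_t$; because $\beta$ vanishes along $M$, so does $V_t$, so $M$ is fixed by the flow and the flow exists for time $1$ on a possibly smaller collar. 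The time-$1$ flow $\phi_1$ then satisfies $\phi_1^*(\omega + d(a\lambda)) = \Psi_0^*\Omega$, and $\Phi = \Psi_0 \circ \phi_1^{-1}$ is the desired diffeomorphism, with $\Phi(0,m) = m$ since $\phi_1$ fixes $\{0\}\times M$. Finally one may rescale to ensure $\epsilon \in (0,1)$.

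The main obstacle I expect is the matching of the one-jet along $M$ — i.e. arranging that after the initial tubular-neighborhood identification the normal derivative of $\Psi_0^*\Omega$ at $M$ is precisely $da \wedge \lambda$ (with the correct $\lambda$, not merely some coorienting form). This is where the specific structure of $\eta = (\lambda, \omega)$ enters, and one must use the positivity $\lambda \wedge \omega^n > 0$ together with a fiberwise reparametrization of the normal coordinate to pin down the scaling; everything after that is the standard relative Moser trick, which only requires $d\omega = 0$ and compactness of $M$. I would not belabor these routine computations in the write-up, citing them as ``a standard Moser-type argument'' as the lemma statement already anticipates.
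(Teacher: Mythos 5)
The paper gives no written proof (it is dispatched as ``immediate by a Moser-type argument''), so I will assess your proposal on its own merits. The overall plan — take a collar, match the two closed forms along $\{0\}\times M$, then run the relative Moser argument — is the right one and is presumably what the authors have in mind. The gap is in the matching step.

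You correctly observe that one must arrange $\iota_{\partial_a}\Psi_0^*\Omega|_{\{0\}\times M} = da\wedge\lambda$, i.e.\ that the one-form $\sigma := \iota_{\partial_a}\Psi_0^*\Omega\big|_{T(\{0\}\times M)}$ equals $\lambda$. But you then claim this can be achieved by precomposing with a fiberwise rescaling $(a,m)\mapsto (f(a,m),m)$. That is false in general: such a map only multiplies $\sigma$ by the positive function $\partial_a f(0,m)$, so it cannot change $\ker(\sigma)\subset TM$. For a generic initial collar, $\ker(\sigma)$ will not equal $\xi = \ker\lambda$, and then $\sigma$ is not a positive multiple of $\lambda$ and no rescaling can match them. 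The supporting assertion that ``$\lambda\wedge\omega^n > 0$ means $\lambda$ is (up to positive scaling) the unique coorienting one-form'' is also not true: \emph{every} one-form $\alpha$ on $M$ with $\alpha(X)>0$ (where $X$ spans $\ker\omega$) satisfies $\alpha\wedge\omega^n > 0$, because only the $\langle X\rangle$-component of $\alpha$ survives the wedge with $\omega^n$. So $\sigma\wedge\omega^n>0$ pins down only the sign of $\sigma(X)$, not the form itself.

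The fix is to choose the initial collar more carefully rather than trying to correct it afterwards by reparametrizing $a$ alone. Let $X$ be the Hamiltonian vector field on $M$ (so $\ker\omega = \langle X\rangle$) and let $J$ be the $\Omega$-compatible almost-complex structure from Example \ref{exe:hypersurfaces} used to define $\xi$ and $\lambda$. Then $JX$ is transverse to $M$ and lies in $\xi^{\Omega\perp}$ (the symplectic orthogonal complement of $\xi$). Setting $N = -JX/\Omega(X,JX)$ and building the collar $\Psi_0$ by flowing an extension of $N$, one gets $\sigma = \iota_N\Omega|_{TM}$, which vanishes on $\xi$ and satisfies $\sigma(X)=1$, hence $\sigma = \lambda$ exactly. (Equivalently, one can allow the fiber-preserving reparametrization to move in the $M$-direction as well, $(a,m)\mapsto(f(a,m),g(a,m))$ with $g(0,\cdot)=\mathrm{id}$; the additional term $\iota_{\partial_a g}\omega$ realizes an arbitrary one-form annihilating $X$, which together with the rescaling lets you hit $\lambda$.) One further small inaccuracy: the relative Poincar\'e lemma needs the two closed forms to agree as \emph{sections of $\Lambda^2 T^*((-\delta,\delta)\times M)$ along $\{0\}\times M$} (zeroth order), giving a primitive $\beta$ with $\beta|_{\{0\}\times M}=0$; it does not need, and one should not claim, first-order agreement or second-order vanishing of $\beta$. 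With the corrected collar, the rest of your Moser step goes through as written.
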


To carry out our neck stretching, we use the form for the neighborhood given to us by Lemma \ref{lem:moser} to define the gluing of a neck into the cut-open manifold. 

A rough schematic is given in Figure \ref{fig:neckStretching} below. 

\begin{figure}[ht] \includegraphics[scale=.25]{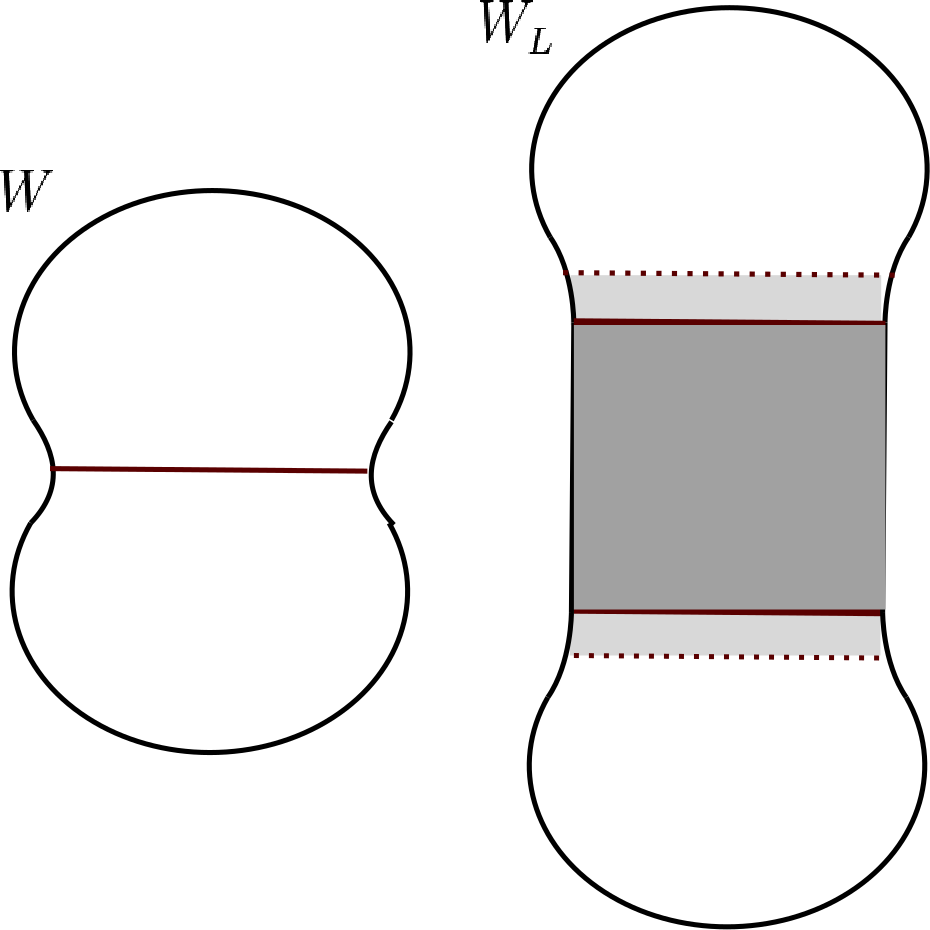} \caption{\label{fig:neckStretching} The neck stretching procedure. A ``neck'' $[-L,L] \times Y$ is glued in with a symplectization structure (dark gray). The light gray region indicates the ``transition'' region where the symplectization structure is interpolated with the almost-K\"ahler structure on $W$ induced by a choice of a compatible almost-complex structure.}\end{figure}

\subsubsection{Defining the manifolds} 

To define the manifolds $(W_L, \bar J_L, g_L)$, we begin by defining the smooth manifold $W_L$ topologically. 

Recall that the manifold $M$ is the zero set of some smooth function $H: W \to \mathbb{R}$. Fix
\begin{equation} \label{eq:wMinus} W_- = H^{-1}( (-\infty, 0]) \end{equation}
and
\begin{equation} \label{eq:wPlus} W_+ = H^{-1}( [0, \infty) ). \end{equation}

These definitions provide natural embeddings
$$M \hookrightarrow W_\pm.$$

Lemma \ref{lem:moser} provides, for any sufficiently small $\epsilon_0 > 0$, smooth embeddings 
\begin{equation} \label{eq:iotaMinus} \iota_-: [-8\epsilon_0, 0] \times M \hookrightarrow W_-\end{equation}
and
\begin{equation} \label{eq:iotaPlus} \iota_+: [0, 8\epsilon_0] \times M \hookrightarrow W_+ \end{equation}
such that the symplectic form $\Omega$ pulls back via $\iota_-$ or $\iota_+$ to the two-form
$$\omega + d(a\lambda).$$

Moreover, the composition of $\iota_-$ with the embedding
$$p \mapsto (0,p)$$
of $M$ onto 
$$\{0\} \times M \subset [-8\epsilon_0, 0] \times M$$
is the embedding
$$M \hookrightarrow W_-.$$

The analogous statement is true regarding the embedding $\iota_+$. 
It will be useful in what follows to define, for any $s \in \mathbb{R}$ the ``shift'' map 
\begin{equation} \label{eq:shiftMap} \text{Sh}_s: \mathbb{R} \times M \to \mathbb{R} \times M \end{equation}
by
$$(a, p) \mapsto (a + s, p).$$

We will at times abuse notation by regarding the shift map (\ref{eq:shiftMap}) as a map
$$\text{Sh}_s: I_1 \times M \to I_2 \times M$$
where $I_1$ and $I_2$ are subsets of $\mathbb{R}$. In any instance of this, the map will always be well-defined. That is, for any $a \in I_1$, we must have $a + s \in I_2$. 

Now pick any $L > 0$. We define the smooth manifold $W_L$ in the following manner. 

Fix a small $\epsilon_0 > 0$ and recall the embeddings $\iota_\pm$ from (\ref{eq:iotaMinus}) and (\ref{eq:iotaPlus}). 

There are also natural embeddings
$$[-8\epsilon_0, 0] \times M \hookrightarrow [-L+8\epsilon_0, L + 8\epsilon_0] \times M$$
and
$$[0, 8\epsilon_0] \times M \hookrightarrow [-L + 8\epsilon_0, L + 8\epsilon_0] \times M.$$

Define the smooth manifold $W_L$ by gluing $W_-$, $[-L + 8\epsilon_0, L + 8\epsilon_0] \times M$, and $W_+$ together. 

The gluing of $W_-$ and $[-L + 8\epsilon_0, L + 8\epsilon_0] \times M$ is done along $[-8\epsilon_0, 0] \times M$, which is realized as a subset of the former via the smooth embedding $\iota_-$, and realized as a subset of the latter via the shift map $\text{Sh}_{-L}$ from (\ref{eq:shiftMap}). 

Similarly, $W_+$ and $[-L + 8\epsilon_0, L + 8\epsilon_0] \times M$ are glued together along $[0, 8\epsilon_0] \times M$ using $\iota_+$ and $\text{Sh}_L$. 

This definition furnishes us with natural smooth embeddings
$$W_\pm \hookrightarrow W_L$$
and
$$[-L + 8\epsilon_0, L + 8\epsilon_0] \times M \hookrightarrow W_L.$$

We will use these embeddings without comment to regard $W_\pm$ and $[-L + 8\epsilon_0, L + 8\epsilon_0] \times M$ as embedded, compact codimension-zero submanifolds of $W_L$. 

We can also define two smooth manifolds $\widetilde{W}_\pm$
with cylindrical ends as follows.

Define $\widetilde{W}_-$ to be the gluing of $W_-$ and $[-8\epsilon_0, \infty) \times M$ along the region $[-8\epsilon_0, 0] \times M$. The region embeds into $W_-$ via $\iota_-$ and into the infinite half-cylinder via the tautological map. 

This definition yields natural smooth embeddings
$$W_- \hookrightarrow \widetilde{W}_-$$
and
$$[-8\epsilon_0, \infty) \times M \hookrightarrow \widetilde{W}_-.$$

Similarly, define $\widetilde{W}_+$ to be the gluing of $W_+$ and $(-\infty, 8\epsilon_0] \times M$ along the region $[0, 8\epsilon_0] \times M$. The region embeds into $W_+$ via $\iota_+$ and into the infinite half-cylinder via the tautological map. 

These definitions yields natural smooth embeddings 
$$W_+ \hookrightarrow \widetilde{W}_+$$
and
$$(-\infty, 8\epsilon_0] \times M \hookrightarrow \widetilde{W}_+.$$

We will use these embeddings without comment to regard $W_\pm$, as well as the respective half-cylinders, as embedded, compact codimension-zero submanifolds of $\widetilde{W}_\pm$. 

In what follows, we will define almost-Hermitian structures $(\bar J_\pm, g_\pm)$ on $\widetilde{W}_\pm$ as well.

\subsubsection{The almost-Hermitian structure on the neck} \label{subsubsec:neckStretchingNeck}

Choose an $\eta$-adapted cylinder $(\mathbb{R} \times M, J_{\text{Neck}}, g_{\text{Neck}})$ over $M$. 

Recall that for any $L > 0$, there is a natural embedding
$$[-L-2\epsilon_0, L+2\epsilon_0] \times M \hookrightarrow W_L.$$

For any $L > 0$, we define the almost-Hermitian structure $(\bar J_L, g_L)$ on the image of $[-L-2\epsilon_0, L+2\epsilon_0] \times M \hookrightarrow W_L$ to be the pushforward of $(J_{\text{Neck}}, g_{\text{Neck}})$ by this embedding. 

There is a natural embedding
$$[-2\epsilon_0, \infty) \times M \hookrightarrow \widetilde{W}_-.$$

Define $(\bar J_-, g_-)$ on the image of $[-2\epsilon_0, \infty) \times M$ to be the pushforward of $(J_{\text{Neck}}, g_{\text{Neck}})$ by this embedding. 

There is a natural embedding
$$(-\infty, 2\epsilon_0] \times M \hookrightarrow \widetilde{W}_+.$$

Define $(\bar J_+, g_+)$ on the image of $(-\infty, 2\epsilon_0] \times M$ to be the pushforward of $(J_{\text{Neck}}, g_{\text{Neck}})$ by this embedding. 

\subsubsection{The almost-Hermitian structure in between} \label{subsubsec:neckStretchingBetween}

We will define, for a sufficiently small real number $\epsilon_0$, two almost-Hermitian manifolds
$$(T_- = [-8\epsilon_0,0] \times M, J_{\text{Between},-}, g_{\text{Between},-})$$
and
$$(T_+ = [0,8\epsilon_0] \times M, J_{\text{Between},+}, g_{\text{Between},+}).$$

We define an almost-Hermitian manifold 
$$(T, J_{\text{Between}}, g_{\text{Between}})$$
as the disjoint union of $T_\pm$. 

The embeddings $\iota_\pm$ from (\ref{eq:iotaMinus}) and (\ref{eq:iotaPlus}) yield embeddings
$$T_\pm \hookrightarrow W_\pm$$
which define, for any $L$, an embedding
$$T \hookrightarrow W_L.$$

We will define the almost-Hermitian structure $(\bar J_L, g_L)$ on the image of $T$ to be the pushforward of $(J_{\text{Between}}, g_{\text{Between}})$ via this embedding. 

Similarly, the embeddings $\iota_\pm$ from (\ref{eq:iotaMinus}) and (\ref{eq:iotaPlus}) yield embeddings
$$T_\pm \hookrightarrow W_\pm$$
which define embeddings
$$T_\pm \hookrightarrow \widetilde{W}_\pm.$$

We will define the almost-Hermitian structures $(\bar J_\pm, g_\pm)$ on the respective images of $T_\pm$ to be the pushforwards of $(J_{\text{Between},\pm}, g_{\text{Between},\pm})$ via these embeddings.

We will carry out the construction in three steps. First, we define a two-form $\hat\omega$ on $T$. We use this to define the almost-complex structure $J_{\text{Between}, \pm}$. We then use this to define the metric $g_{\text{Between}, \pm}$.

Fix a smooth function 
$$\beta_*: [-8\epsilon_0, 8\epsilon_0] \to \mathbb{R}$$
satisfying the following properties:
\begin{itemize}
    \item $\beta_*(t) = t$ if $t \in [-8\epsilon_0, -4\epsilon_0]$ or $t \in [4\epsilon_0, 8\epsilon_0]$.
    \item $\beta_*(t) = 0$ if $t \in [-2\epsilon_0, 2\epsilon_0]$.
    \item $\beta_*'(t) > 0$ if $t \in [-4\epsilon_0, -2\epsilon_0)$ or $t \in (2\epsilon_0, 4\epsilon_0]$. 
\end{itemize}

It will also be convenient for the proof of Lemma \ref{lem:tameJ1} and other results later to assume that 
$$\beta_*''(t) < 0$$
for $t \in (-4\epsilon_0, -2\epsilon_0)$ and
$$\beta_*''(t) > 0$$
for $t \in (2\epsilon_0, 4\epsilon_0)$. 

Then define the two-form $\hat\omega$ on $T$ by
$$\hat\omega = \omega + \beta_*(a)d\lambda.$$

Now we define $J_{\text{Between},-}$. As long as we assume $\epsilon_0$ is sufficiently small, the tangent bundle of $T_-$ splits into a direct sum
$$\ker(da \wedge \lambda) \oplus \ker(\hat\omega).$$

Define a vector field $\bar X$ on $T_-$ by stipulating $da(\bar X) \equiv 0$, $\lambda(\bar X) \equiv 1$, and $\hat\omega(\bar X, -) \equiv 0$. 

Then define $J_{\text{Between},-}$ on $T_-$ by assuming it satisfies the following properties:
\begin{itemize}
    \item $J_{\text{Between},-}(\partial_a) = \bar X$ and $J_{\text{Between},-}(\bar X) = -\partial_a$,
    \item $J_{\text{Between},-}$ preserves the summands $\ker(da \wedge \lambda)$ and $\ker(\hat\omega)$,
    \item The tensor $\hat\omega(-, J_{\text{Between},-}-)$ is symmetric and positive-definite on $\ker(da \wedge \lambda)$.
    \item $J_{\text{Between},-} = J_{\text{Neck}}$ on $[-2\epsilon_0, 0] \times M$. 
\end{itemize}

An identical construction, replacing the symbol ``$-$'' with ``$+$'' defines $J_{\text{Between},+}$ on $T_+$. In particular, we have that $J_{\text{Between},+}$ coincides with $J_{\text{Neck}}$ on $[0, 2\epsilon_0] \times M$. 

To define the metric $g_{\text{Between}, \pm}$, define a smooth function $\chi: \mathbb{R} \to [0,1]$ by stipulating that 
\[
\chi(t) = \begin{cases}
1\text{ if $t < 3\epsilon_0$}, \\
0\text{ if $t > 5\epsilon_0$}.
\end{cases}
\]

It is not used anywhere, but we may as well suppose $\chi'(t) < 0$ on $[3\epsilon_0, 5\epsilon_0]$.

Then fix a smooth function
$$\theta: [-8\epsilon_0, 8\epsilon_0] \to \mathbb{R}$$
by setting
$$\theta(t) = \chi(|t|) + (1 - \chi(|t|))\beta_*'(t).$$

Define the metric $g_{\text{Between}, \pm}$ on $T_\pm$ by the formula
$$g_{\text{Between}, \pm}(-, -) = (\theta(a)(da \wedge \lambda) + \hat\omega)(-, J_{\text{Between},\pm}-).$$

We observe that $g_{\text{Between}, -}$ agrees with $g_{\text{Neck}}$ on $[-2\epsilon_0, 0] \times M$ and $g_{\text{Between}, +}$ agrees with $g_{\text{Neck}}$ on $[0, 2\epsilon_0] \times M$. 

\subsubsection{The almost-Hermitian structure on the core} \label{subsubsec:neckStretchingCore}

We will first define the almost-Hermitian structure $(\bar J_L, g_L)$ on $W_L$ on the complement of the cylinder $$[-L-6\epsilon_0,L+6\epsilon_0] \times M.$$

Choose an almost-complex structure $J_{\text{Core}}$ on $W$ that is compatible with the symplectic form $\Omega$. Recall that this defines an almost-Hermitian structure $(J_{\text{Core}}, g_{\text{Core}})$ by setting
$$g_{\text{Core}}(-, -) = \Omega(-, J_{\text{Core}}-).$$

We can choose $J_{\text{Core}}$ so that the pullback of $J_{\text{Core}}$ by the embeddings $\iota_-$ and $\iota_+$ coincide on $[-8\epsilon_0, -6\epsilon_0] \times M$ with $J_{\text{Between},-}$ and on $[6\epsilon_0, 8\epsilon_0] \times M$ with $J_{\text{Between},+}$. By definition, it follows that the pullbacks of $g_{\text{Core}}$ on these regions are $g_{\text{Between},-}$ and $g_{\text{Between},+}$ respectively. 

Define
$$\text{Core}(W)$$
to be the complement in $W$ of the union of the two regions
$$\iota_-( (-6\epsilon_0, 0] \times M)$$
and
$$\iota_+( [0, 6\epsilon_0) \times M).$$

There are as a result of this definition natural embeddings 
$$\text{Core}(W) \hookrightarrow W_L$$
for every $L > 0$. 

For any $L > 0$, we define $(\bar J_L, g_L)$ on the image of $\text{Core}(W)$ to be the pushforward of $(J_{\text{Core}}, g_{\text{Core}})$ via this embedding. 

Define 
$$\text{Core}(W_\pm) = \text{Core}(W) \cap W_\pm.$$

There are as a result of this definition natural embeddings
$$\text{Core}(W_\pm) \hookrightarrow \widetilde{W}_\pm.$$

Define the almost-Hermitian structures $(\bar J_\pm, g_\pm)$ on the image of $\text{Core}(W_\pm)$ in $\widetilde{W}_\pm$ to be the pushforward of $(J, g)$ via this embedding. 

The constructions in Sections \ref{subsubsec:neckStretchingNeck}, \ref{subsubsec:neckStretchingBetween} and \ref{subsubsec:neckStretchingCore} have produced the desired almost-Hermitian manifolds
$$(W_L, \bar J_L, g_L)$$
and
$$(\widetilde{W}_\pm, \bar J_\pm, g_\pm).$$

\subsection{Properties of the neck-stretching construction}

We now list some properties of the almost-Hermitian manifolds 
$$(W_L, \bar J_L, g_L)$$
and
$$(\widetilde{W}_\pm, \bar J_\pm, g_\pm)$$
that we will use later. 

It will be useful to define the following smooth functions. 

For any $L > 0$, define a smooth function 
$$\beta_L: [-L - 8\epsilon_0, L + 8\epsilon_0] \to \mathbb{R}$$
by
\begin{equation} \label{eq:betaL}
\beta_L(t) = \begin{cases}
\beta_*(t + L) \text{ if $t \in [-L - 8\epsilon_0, -L]$},\\
0\text{ if $t \in [-L, L]$},\\
\beta_*(t - L)\text{ if $t \in [L, L + 8\epsilon_0]$}.
\end{cases}
\end{equation}

The function $\beta_L$ is schematically graphed in Figure \ref{fig:betaFunction} below.

\begin{figure}[ht] \includegraphics[width=.5\textwidth]{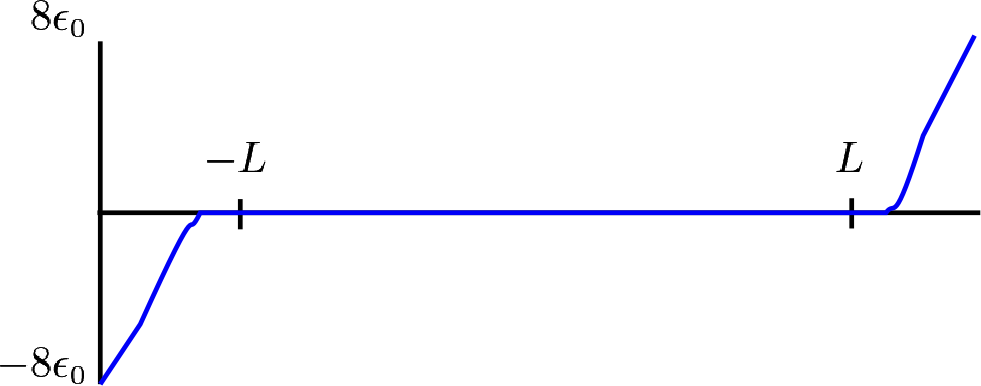} \caption{\label{fig:betaFunction} A graph of the function $\beta_L$.}
\end{figure}

Define a smooth function
$$\beta_-: [-8\epsilon_0, \infty) \to \mathbb{R}$$
by
\begin{equation}
\label{eq:betaMinus}
\beta_-(t) = \begin{cases}
\beta_*(t) \text{ if $t \in [- 8\epsilon_0, 0]$},\\
0\text{ if $t \in [0, \infty)$}.
\end{cases}
\end{equation}

Define a smooth function
$$\beta_+: (-\infty, 8\epsilon_0] \to \mathbb{R}$$
by
\begin{equation} \label{eq:betaPlus}
\beta_+(t) = \begin{cases}
\beta_*(t) \text{ if $t \in [0, 8\epsilon_0]$},\\
0\text{ if $t \in (-\infty, 0]$}.
\end{cases}
\end{equation}

We write down the following technical lemma.

\begin{lem} \label{lem:uniformOmegaHatBounds}
There is some constant $\kappa_0 \geq 1$ depending only on the almost-Hermitian structures $(J_{\text{Between},\pm}, g_{\text{Between},\pm})$ such that the following statements hold:
\begin{itemize}
    \item The two-form $d\lambda$ on 
    $$[-L-8\epsilon_0,L+8\epsilon_0] \times M \subset W_L$$
    has $g_L$-norm bounded above by $\kappa_0$. 
    \item The two-form $d\lambda$ on 
    $$[-8\epsilon_0, \infty) \times M \subset \widetilde{W}_-$$
    and the two-form $d\lambda$ on 
    $$(-\infty, 8\epsilon_0] \times M \subset \widetilde{W}_+$$
    have respectively their $g_-$ and $g_+$ norms bounded above by $\kappa_0$. 
    \item For any vector $V \in \ker(da \wedge \lambda)$, we have
    $$(\omega + \beta_L(a)d\lambda)(V, \bar J_L(V)) \geq \kappa_0^{-1}||V||^2_{g_L}$$
    and
    $$(\omega + \beta_\pm(a)d\lambda)(V, \bar J_\pm(V)) \geq \kappa_0^{-1}||V||^2_{g_\pm}.$$
\end{itemize}
\end{lem}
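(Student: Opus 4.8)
The plan is to prove Lemma \ref{lem:uniformOmegaHatBounds} by reducing all three bullets to a compactness argument on the ``between'' regions $T_\pm = [\pm 8\epsilon_0 \mp 8\epsilon_0, \cdot] \times M$, exploiting the fact that the local model for the almost-Hermitian structure on the neck region of $W_L$ (and of $\widetilde W_\pm$) is, up to translation, one of only finitely many pieces: the $\eta$-adapted cylinder structure $(J_{\text{Neck}}, g_{\text{Neck}})$ on the central portion, and the fixed structures $(J_{\text{Between},\pm}, g_{\text{Between},\pm})$ on the compact transition pieces $T_\pm$. The key observation is that on the central portion $[-L,L]\times M$ the structure is translation-invariant and $\beta_L \equiv 0$, so there $\bar J_L = J_{\text{Neck}}$, $g_L = g_{\text{Neck}}$, and $\omega + \beta_L(a)d\lambda = \omega$; adaptedness of $J_{\text{Neck}}$ (Definition \ref{defn:adaptedJ}(3)) gives $\omega(V, J_{\text{Neck}}V) > 0$ for $0 \neq V \in \xi = \ker(da\wedge\lambda)$, and since this inequality is a continuous, positive, translation-invariant function of $V$ on the unit sphere bundle of $\xi$ over the compact manifold $M$, it has a positive minimum $c_0 > 0$. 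Likewise $\|d\lambda\|_{g_{\text{Neck}}}$ is a translation-invariant continuous function on compact $M$, hence bounded by some $c_1$.

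The steps, in order, would be: (1) Record that on $[-L,L]\times M$ the relevant structures are precisely $(J_{\text{Neck}}, g_{\text{Neck}})$, independent of $L$, and extract the constants $c_0, c_1$ above from compactness of $M$. (2) On the transition pieces: the images of $T_- = [-8\epsilon_0,0]\times M$ and $T_+ = [0,8\epsilon_0]\times M$ inside the neck of $W_L$ near $a = \mp L$ (and inside $\widetilde W_\pm$) carry, by construction in Section \ref{subsubsec:neckStretchingBetween}, the pushforward of the fixed structures $(J_{\text{Between},\pm}, g_{\text{Between},\pm})$ via a shift map — and under the shift $a \mapsto a\mp L$ the function $\beta_L$ restricted there equals $\beta_*$, so $\omega + \beta_L(a)d\lambda$ pulls back to $\hat\omega = \omega + \beta_*(a)d\lambda$ on $T_\pm$. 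Thus all three quantities in the lemma, restricted to these regions, are pullbacks of fixed quantities on the compact manifolds $T_\pm$: $\|d\lambda\|_{g_{\text{Between},\pm}}$ and, for $V \in \ker(da\wedge\lambda)$ of unit $g_{\text{Between},\pm}$-norm, the value $\hat\omega(V, J_{\text{Between},\pm}V)$. The latter is positive by construction of $g_{\text{Between},\pm}$ — indeed $g_{\text{Between},\pm}(V,V) = (\theta(a)(da\wedge\lambda) + \hat\omega)(V, J_{\text{Between},\pm}V) = \hat\omega(V, J_{\text{Between},\pm}V)$ for $V \in \ker(da\wedge\lambda)$, since $da\wedge\lambda$ vanishes on $\ker(da\wedge\lambda)$, so the inequality in the third bullet is an \emph{identity} with constant $1$ on these pieces. (3) Take $\kappa_0 = \max(1, c_1, c_0^{-1}, \text{analogous constants from } T_\pm)$ and assemble: since the neck of each $W_L$ (resp. each $\widetilde W_\pm$) is covered by translates of the central $(J_{\text{Neck}},g_{\text{Neck}})$-region together with the two fixed pieces $T_\pm$ — with the $L$-dependence entering only through translation, which preserves all the relevant norms and forms — the bounds obtained on the model pieces transfer to $W_L$ and $\widetilde W_\pm$ uniformly in $L$.

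I expect the only real obstacle to be bookkeeping: one must carefully check that the ``neck'' of $W_L$ — meaning $[-L-8\epsilon_0, L+8\epsilon_0]\times M$ — decomposes into exactly the central translation-invariant slab $[-L+\text{(something)}, L-\text{(something)}]\times M$ on which $\bar J_L = J_{\text{Neck}}$, plus two copies of the transition region on which $\bar J_L$ is the (shifted) pushforward of $J_{\text{Between},\pm}$, and that $\beta_L$ matches $\beta_*$ under the appropriate shift on those transition regions (this is immediate from \eqref{eq:betaL}). Similarly for $\widetilde W_\pm$ using \eqref{eq:betaMinus} and \eqref{eq:betaPlus}. Once the geometry is organized this way, there is genuinely no analysis: every bound is a continuous function on a compact space, and the $L$-dependence has been quarantined into isometric translations. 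A secondary minor point is that on the overlap regions $[-2\epsilon_0,0]\times M$ etc., the constructions are arranged ($J_{\text{Between},\pm} = J_{\text{Neck}}$ and $g_{\text{Between},\pm} = g_{\text{Neck}}$ there) precisely so that the model pieces agree on overlaps, so the covering is consistent and no ambiguity arises in defining $\kappa_0$.
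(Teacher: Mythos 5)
Your proposal is correct and matches the paper's argument in its essential structure: decompose the neck into the central translation-invariant slab (where $\bar J_L = J_{\text{Neck}}$, $g_L = g_{\text{Neck}}$, and $\beta_L \equiv 0$) plus the two fixed compact transition pieces $T_\pm$ carrying $(J_{\text{Between},\pm}, g_{\text{Between},\pm})$ and $\beta_*$, and then obtain $L$-uniform constants from compactness of $M$ (resp. of $T_\pm$) together with the fact that the $L$-dependence enters only through shifts, which preserve all the relevant tensors. Your observation that the third bullet is actually an identity with constant $1$ — because for $V\in\ker(da\wedge\lambda)$ one has $(da\wedge\lambda)(V,\cdot)=0$, so $\|V\|^2_{g}$ reduces exactly to $(\omega+\beta\,d\lambda)(V,JV)$ in each model — is a small sharpening of the paper's phrasing (which merely invokes positive-definiteness and translation-invariance to get a ratio bounded below), but it is the same argument.
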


\begin{proof}
The first two bullet points are immediate. The restrictions of the metrics $g_L$ to slices of the form $\{t\} \times M$ in $[-L-8\epsilon_0,L+8\epsilon_0] \times M$ vary in a fixed compact family of Riemannian metrics on $M$. 

For the third bullet point, we observe that by construction $J_{\text{Between},\pm}$ is such that
$$(\omega + \beta_*(a)d\lambda)(-, J_{\text{Between},\pm}-)$$
is symmetric and positive-definite. 

Moreover, by construction
$J_{\text{Neck}}$ is such that
$$\omega(-, J_{\text{Neck}}-)$$
is symmetric and positive-definite on $\ker(da \wedge \lambda)$, and also translation-invariant, so the quantity
$$\omega(V, J_{\text{Neck}}(V))/||V||^2_{g_{\text{Neck}}}$$
is uniformly bounded away from zero over all nonzero $V$ in $\ker(da \wedge \lambda)$. Furthermore, note that $\omega + \beta_L(a)d\lambda = \omega$ in $[-L-2\epsilon_0, L+2\epsilon_0] \times M$. 

Putting these two facts together yields the third bullet point. 
\end{proof}

The next proposition is especially important and makes explicit use of Lemma \ref{lem:uniformOmegaHatBounds} above. It shows that, for large $L$, the almost-complex structures $\bar J_L$ are tamed by a symplectic structure on $W_L$, equal to the pullback of the symplectic form $\Omega$ via a diffeomorphism from $W_L$ to $W$. This is necessary for us to exploit pseudoholomorphic curve counting invariants to construct pseudoholomorphic curves in the manifolds $W_L$, since these invariants always require the ambient almost-complex structure to be tame. 

\begin{prop} \label{prop:tameJ}
There is a constant $L(\epsilon_0, \kappa_0, \beta_*) \geq 1$  depending only on $\epsilon_0$, the constant $\kappa_0$ from Lemma \ref{lem:uniformOmegaHatBounds}, and the function $\beta_*$ such that, for any $L \geq L(\epsilon_0, \kappa_0, \beta_*)$, there is a diffeomorphism 
$$\Phi_L: W_L \to W$$
such that $\bar J_L$ is tamed by the symplectic form $(\Phi_L)^*\Omega$ for any sufficiently large $L$.
\end{prop}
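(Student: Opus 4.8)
The plan is to build the diffeomorphism $\Phi_L : W_L \to W$ region by region and then check the taming inequality $((\Phi_L)^*\Omega)(V, \bar J_L V) > 0$ separately on each region, with the neck contributing the only nontrivial estimate. Recall that $W_L$ was assembled from $W_-$, a long cylinder $[-L-8\epsilon_0, L+8\epsilon_0]\times M$, and $W_+$, glued along collars where $\Omega$ has the Moser normal form $\omega + d(a\lambda)$. The manifold $W$ contains the fixed collar $U \cong (-\epsilon,\epsilon)\times M$ of Lemma~\ref{lem:moser}, also carrying $\omega + d(a\lambda)$. So topologically $\Phi_L$ is forced: it must be the identity on $\mathrm{Core}(W_\pm)$ (identified with the corresponding pieces of $W$), and on the long neck together with the two ``between'' regions $T_\pm$ it must be a map collapsing $[-L-8\epsilon_0, L+8\epsilon_0]\times M$ onto a piece of the collar $(-\epsilon,\epsilon)\times M$. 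The key freedom, and the point of the proposition, is to choose this collapsing map carefully.

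First I would fix the ``between'' and ``core'' regions. On $\mathrm{Core}(W)$, $\bar J_L = J_{\mathrm{Core}}$ is by construction compatible with $\Omega$, so if $\Phi_L$ restricts to the identity there (via the canonical embeddings), taming is automatic and $L$-independent. On the two transition regions $T_\pm = [\mp 8\epsilon_0, 0/0, \pm 8\epsilon_0]\times M$, the almost-complex structure $\bar J_L$ equals $J_{\mathrm{Between},\pm}$, which is compatible with the two-form $\hat\omega = \omega + \beta_*(a)d\lambda$ (this is exactly how $J_{\mathrm{Between},\pm}$ was defined: $\hat\omega(-, J_{\mathrm{Between},\pm}-)$ is symmetric positive-definite on $\ker(da\wedge\lambda)$, and $J_{\mathrm{Between},\pm}$ swaps $\partial_a$ and $\bar X$). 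So on $T_\pm$ I want $\Phi_L$ to pull $\Omega$ back to something close to $\hat\omega$, or at least to a form still taming $J_{\mathrm{Between},\pm}$; since $\beta_*$ interpolates between $\beta_*(a) = a$ near the core end and $\beta_*(a) = 0$ near the neck end, the natural choice is a shear $(a,p)\mapsto (\beta_*(a), p)$ composed into the collar coordinate — this pulls $\omega + d(a\lambda)$ back to $\omega + d(\beta_*(a)\lambda) = \omega + \beta_*(a)d\lambda + \beta_*'(a)\,da\wedge\lambda = \hat\omega + \beta_*'(a)\,da\wedge\lambda$, and since $\beta_*' \geq 0$ and $J_{\mathrm{Between},\pm}(\partial_a) = \bar X$ with $(da\wedge\lambda)(\partial_a, \bar X) = 1 > 0$, this added term only helps taming. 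These two regions are also $L$-independent.

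The main work is the neck $[-L-2\epsilon_0, L+2\epsilon_0]\times M$, where $\bar J_L = J_{\mathrm{Neck}}$ is $\eta$-adapted but \emph{not} a priori compatible with any restriction of $\Omega$. Here $\Phi_L$ must compress the whole interval $[-L-2\epsilon_0, L+2\epsilon_0]$ (roughly) into the short interval $(-\epsilon,\epsilon)$ of the fixed collar, i.e. $\Phi_L(a,p) = (\rho_L(a), p)$ for some smooth $\rho_L$ with $\rho_L' > 0$ small, $\rho_L' \to 0$ as $L\to\infty$, matched at the ends to the shears on $T_\pm$. Then $(\Phi_L)^*\Omega = (\Phi_L)^*(\omega + d(a\lambda)) = \omega + d(\rho_L(a)\lambda) = \omega + \rho_L(a)\,d\lambda + \rho_L'(a)\,da\wedge\lambda$. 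Evaluating on $\bar J_L = J_{\mathrm{Neck}}$: write any $V = c\,\partial_a + d\,X + V^\xi$ with $V^\xi\in\xi = \ker\lambda\cap\ker da$. Using $J_{\mathrm{Neck}}\partial_a = X$, $J_{\mathrm{Neck}}X = -\partial_a$, the $da\wedge\lambda$ term contributes $(c^2+d^2) > 0$, the $\omega$ term contributes $\omega(V^\xi, J_{\mathrm{Neck}}V^\xi) \geq \kappa_0^{-1}\|V^\xi\|^2$ by Lemma~\ref{lem:uniformOmegaHatBounds} (third bullet, with $\beta_L \equiv 0$ on the neck), but the cross term $\rho_L(a)\,d\lambda(V, J_{\mathrm{Neck}}V)$ is the enemy: $d\lambda$ need not be positive against $J_{\mathrm{Neck}}$, and by Lemma~\ref{lem:uniformOmegaHatBounds} (first bullet) its $g_L$-norm is at most $\kappa_0$, so this term is bounded below only by $-\kappa_0\,|\rho_L(a)|\,\|V\|_{g_L}^2$. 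Thus I need $|\rho_L(a)|$ uniformly small — small enough that $\kappa_0\,|\rho_L| < \min(1, \kappa_0^{-1})$ or so — on the whole neck. Since $\rho_L$ maps roughly $2L$ worth of length into the bounded collar, $|\rho_L| < \epsilon$ automatically, and by choosing $\epsilon_0$ (hence the available collar width) and the matching data appropriately, one forces $|\rho_L(a)| \leq C\epsilon_0$ with $C$ depending only on $\beta_*$; this is where the threshold $L(\epsilon_0,\kappa_0,\beta_*)$ enters — for $L$ large, $\rho_L$ can be taken with $\rho_L' > 0$ everywhere (a genuine diffeomorphism) while keeping its image inside a $\kappa_0^{-2}$-neighborhood of $0$. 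I also have to double check the matching/transition zones $a\approx \pm(L+2\epsilon_0)$ where $\rho_L$ interpolates into the $T_\pm$ shears, and the regions $[-L-8\epsilon_0,-L-6\epsilon_0]\times M$ etc.\ where $\bar J_L$ transitions into $J_{\mathrm{Core}}$; there, $(\Phi_L)^*\Omega$ is a perturbation of $\Omega$ by a term of size $O(|\rho_L|) = O(\epsilon_0)$, and taming is stable under small perturbations of the symplectic form against a fixed compact family of almost-complex structures, so shrinking $\epsilon_0$ handles it uniformly in $L$.

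The expected main obstacle is precisely the neck cross-term $\rho_L(a)\,d\lambda(V,\bar J_L V)$: one must simultaneously (i) make $\rho_L$ a diffeomorphism onto its image with $\rho_L' > 0$, (ii) match $C^\infty$-smoothly to the fixed shears on $T_\pm$ and to the core, and (iii) keep $|\rho_L|$ below the geometric threshold set by $\kappa_0$ from Lemma~\ref{lem:uniformOmegaHatBounds} — and the compatibility of (i)–(iii) is exactly what requires $L$ to be large and $\epsilon_0$ to have been chosen small at the outset, which is the content of the constant $L(\epsilon_0,\kappa_0,\beta_*)$. Everything outside the neck is either $L$-independent (core, transition regions) or a controlled $O(\epsilon_0)$ perturbation, so once the neck estimate is in place the proposition follows by assembling the pieces and invoking openness of the taming condition on the finitely many overlap regions.
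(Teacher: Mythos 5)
Your outline is correct at the level of strategy: identity on the core, the Moser shear on the collar, a compression $\rho_L$ of the long neck into the fixed collar, and the observation that the dangerous term is $\rho_L(a)\,d\lambda$. But the quantitative accounting in the neck has a genuine gap, and it is exactly the gap that Lemma~\ref{lem:tameJ1} exists to close.

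When you ``evaluate on $\bar J_L$'' you write that ``the $da\wedge\lambda$ term contributes $(c^2+d^2)>0$.'' That drops the factor $\rho_L'(a)$. The actual contribution is $\rho_L'(a)\,(c^2+d^2)$, and since $\rho_L$ is forced to be a strictly increasing map of an interval of length $\sim 2L$ into a fixed bounded interval while staying $C^0$-close to $\beta_L\equiv 0$, you cannot keep $\rho_L'$ bounded away from $0$: it is necessarily of order $L^{-1}$ or smaller over most of the neck. Once this coefficient is restored, the taming inequality for a vector with both a nonzero $\partial_a$-$X$ component and a nonzero $\xi$-component becomes, after Cauchy--Schwarz/Peter--Paul on the mixed $d\lambda$-terms,
$$\bigl(\kappa_0^{-1} - O(\kappa_0)\,|\rho_L-\beta_L|\bigr)\,\|V^\xi\|^2 \;+\; \bigl(\rho_L' - O(\kappa_0^3)\,|\rho_L-\beta_L|^2\bigr)\,(c^2+d^2) \;>\;0,$$
which imposes two competing constraints: not only $|\rho_L-\beta_L|\lesssim\kappa_0^{-2}$, as you say, but also the \emph{lower} bound $\rho_L' \gtrsim \kappa_0^3\,|\rho_L-\beta_L|^2$. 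Your proposal does not identify the second constraint, and your stated sufficient condition ``$\kappa_0|\rho_L|<\min(1,\kappa_0^{-1})$'' is false precisely because the coefficient on $(c^2+d^2)$ is not $1$ but $\rho_L'$, which tends to $0$. Whether the two constraints can be met simultaneously while keeping $\rho_L'>0$ everywhere and matching $\beta_L$ near the ends is the delicate point; the paper resolves it by constructing $h_L$ with $|h_L-\beta_L|\lesssim\kappa_*^{-1}L^{-1}$ and $h_L'\gtrsim\kappa_*^{-1}L^{-2}$, so that $h_L'/|h_L-\beta_L|^2\gtrsim\kappa_*$, and then taking $\kappa_*\gg\kappa_0^3$. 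Without this balance the argument does not close.

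Two secondary remarks. First, ``keeping its image inside a $\kappa_0^{-2}$-neighborhood of $0$ for $L$ large'' is closer to what the paper does ($|h_L-\beta_L| = O(L^{-1})$), but your alternative ``$|\rho_L|\le C\epsilon_0$ by shrinking $\epsilon_0$'' is on shaky ground, since $\kappa_0$ itself depends on the between-structures $(J_{\mathrm{Between},\pm},g_{\mathrm{Between},\pm})$ built on the $\epsilon_0$-collar, so shrinking $\epsilon_0$ to beat $\kappa_0^{-2}$ is not obviously non-circular and the paper avoids it. Second, the cross term is \emph{not} bounded only by $-\kappa_0|\rho_L|\,\|V\|^2$: since $d\lambda$ annihilates $\partial_a$, there is no pure $(c^2+d^2)$ contribution to the cross term, only $x^2$, $xy$, $xz$ pieces; this finer structure (equation~(\ref{eq:tameJ1}) in the paper) is what makes the quadratic, rather than linear, lower bound $\rho_L'\gtrsim|\rho_L-\beta_L|^2$ achievable, and your cruder estimate would impose the unachievable $\rho_L'\gtrsim\kappa_0|\rho_L|$.
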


The proof of Proposition \ref{prop:tameJ} requires the careful construction of a family of smooth, increasing functions 
$$h_L: [-L - 8\epsilon_0, L+8\epsilon_0] \to [-8\epsilon_0, 8\epsilon_0]$$
for sufficiently large $L$. 

\begin{lem}
\label{lem:tameJ1} Let $\kappa_* \geq 1$ be any constant. There is a constant $L(\epsilon_0, \kappa_*, \beta_*) \geq 1$ depending only on $\epsilon_0$, $\kappa_*$ and the function $\beta_*$ so that, for any $L \geq L(\epsilon_0, \kappa_*, \beta_*)$, we can construct a smooth function 
$$h_L: [-L - 8\epsilon_0, L+8\epsilon_0] \to [-8\epsilon_0, 8\epsilon_0]$$
that has the following properties:
\begin{itemize}
    \item $h_L'(t) > 0$ everywhere,
    \item $h_L(t) = \beta_L(t)$ in a neighborhood of $[-L-8\epsilon_0,-L-11\epsilon_0/4]$ and of $[L+11\epsilon_0/4,L+8\epsilon_0]$,
    \item The function $h_L$ satisfies the bound $$|h_L - \beta_L|_{C^0} \leq 100\kappa_*^{-1} L^{-1}$$
    \item The derivative of $h_L$ satisfies the bound $$h_L'(t) \geq \frac{1}{2}\kappa_*^{-1}L^{-2}.$$
\end{itemize}
\end{lem}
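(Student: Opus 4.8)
Looking at this lemma, I need to construct a smooth increasing function $h_L$ on $[-L-8\epsilon_0, L+8\epsilon_0]$ that approximates $\beta_L$ in $C^0$, agrees with $\beta_L$ near the ends, but has a uniform (albeit small, polynomially decaying in $L$) lower bound on its derivative. The obstruction is clear: $\beta_L$ is literally constant (equal to $0$) on the long interval $[-L,L]$, so it has zero derivative there; to make a genuinely increasing function that stays $C^0$-close to $\beta_L$, I must let it climb from roughly $0$ to roughly $0$ across a length-$2L$ interval while keeping $|h_L - \beta_L|$ of size $\sim L^{-1}$, which forces the slope to be of size $\sim L^{-1}/L = L^{-2}$. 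So the shape of the bounds in the statement is exactly what such a construction produces, and the job is to build it carefully.

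Here is the plan. First I would reduce to building $h_L$ by modifying $\beta_L$ only on the "plateau" region $[-L-5\epsilon_0/2, L+5\epsilon_0/2]$ (where $\beta_L$ is identically $0$ on $[-L,L]$ and, by the definition of $\beta_*$, already identically $0$ on $[-L-2\epsilon_0,-L]$ and $[L,L+2\epsilon_0]$, with $\beta_*'>0$ only on $(\pm 2\epsilon_0,\pm 4\epsilon_0)$ after shifting). On the complementary end regions, near $[-L-8\epsilon_0, -L-11\epsilon_0/4]$ and $[L+11\epsilon_0/4, L+8\epsilon_0]$, I simply set $h_L = \beta_L$, which is already strictly increasing there because $\beta_*'(t)=1>0$ on $[4\epsilon_0,8\epsilon_0]$ and $\beta_*'(t)>0$ on $[2\epsilon_0,4\epsilon_0]$ (and similarly on the left via $\beta_*'(t)=1$ on $[-8\epsilon_0,-4\epsilon_0]$). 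The real work is on the middle: I want a smooth function $g_L$ on, say, $[-L-3\epsilon_0, L+3\epsilon_0]$ that matches $\beta_L$ with all derivatives at both endpoints of a slightly smaller matching interval, satisfies $0 \le g_L \le c L^{-1}$, and has $g_L' \ge c' L^{-2}$. Concretely I would take a fixed smooth strictly increasing "profile" $\psi: [0,1] \to [0,1]$ that is flat to infinite order at $0$ and $1$ in the appropriate sense — or better, since I need monotonicity I should NOT make it flat at both ends but rather arrange the gluing more cleverly: take $h_L$ on the middle to be an affine rescaling $h_L(t) = a_L + b_L\, t$ of the identity plus a small smooth correction supported near the two transition zones that smooths the corners while preserving positivity of the derivative. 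The parameters $a_L, b_L$ are chosen so the linear part has slope $b_L \sim L^{-2}$ and the correction terms, of $C^1$-size $O(L^{-2})$, patch $h_L$ to equal $\beta_L$ to infinite order near $t = \pm(L + 11\epsilon_0/4)$.

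More precisely, the key construction I would write down: fix once and for all a smooth bump-type transition function $\tau: \mathbb{R}\to[0,1]$ with $\tau \equiv 0$ on $(-\infty, 0]$, $\tau \equiv 1$ on $[1,\infty)$, $0\le \tau' \le 2$. For $L$ large, on the interval $[-L - 8\epsilon_0, L+8\epsilon_0]$ define
\[
h_L(t) = \beta_L(t) + \varepsilon_L\, \Psi_L(t),
\]
where $\varepsilon_L := 100\kappa_*^{-1}L^{-1}$ and $\Psi_L$ is a fixed smooth function, valued in $[-1,1]$, built from $\tau$ so that: (i) $\Psi_L \equiv 0$ on the two end-neighborhoods where $h_L$ must equal $\beta_L$; (ii) on the plateau $[-L,L]$, $\Psi_L$ is the affine function interpolating so that $h_L$ increases at rate exactly $\varepsilon_L \cdot (\text{const}/L) \ge \frac12 \kappa_*^{-1}L^{-2}$; (iii) in the two transition strips of width $\sim \epsilon_0$ connecting these regions, $\Psi_L$ transitions smoothly while its derivative, multiplied by $\varepsilon_L$, is dominated in absolute value by $\beta_L'$ wherever $\beta_L' > 0$ — this is where one uses that $\beta_*'$ is bounded below by a fixed positive constant on the relevant closed subintervals $[\pm 2\epsilon_0, \pm 4\epsilon_0]$ away from their endpoints, combined with the concavity/convexity assumptions ($\beta_*'' <0$ on $(-4\epsilon_0,-2\epsilon_0)$, $>0$ on $(2\epsilon_0,4\epsilon_0)$) to control the sign near the endpoints of those subintervals. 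Then $h_L' = \beta_L' + \varepsilon_L \Psi_L'$ is positive everywhere because on the plateau $\beta_L' = 0$ and $\varepsilon_L\Psi_L' \ge \frac12\kappa_*^{-1}L^{-2}>0$, on the end regions $h_L' = \beta_L' > 0$, and on the transition strips $|\varepsilon_L \Psi_L'| \le \frac12 \beta_L' $ by choosing $L$ large (since $\varepsilon_L \to 0$ while $\|\Psi_L'\|_\infty$ is $O(L^{-1})$, the product is $O(L^{-2})$, far smaller than the fixed positive lower bound on $\beta_L'$ on the compact transition strips), plus on the strips $\beta_L' \ge \frac12 \kappa_*^{-1}L^{-2}$ need not hold — rather there $h_L' \ge \frac12\beta_L' \ge$ (a fixed positive constant) $\ge \frac12\kappa_*^{-1}L^{-2}$ for $L$ large, and near the strip endpoints one must be slightly more careful and let $\Psi_L'$ keep the same sign as $\beta_L'$, which is arrangeable since both vanish to the same order there. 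The $C^0$ bound $|h_L - \beta_L|_{C^0} = \varepsilon_L |\Psi_L|_{C^0} \le \varepsilon_L = 100\kappa_*^{-1}L^{-1}$ is then immediate, and membership in $[-8\epsilon_0, 8\epsilon_0]$ follows since $\beta_L$ maps into $[0,8\epsilon_0]$ and $\varepsilon_L < \epsilon_0$ for $L$ large, so $h_L \in [-\epsilon_0, 8\epsilon_0 + \epsilon_0]$ — here I would actually need to shrink slightly, so I'd instead choose the codomain bookkeeping so that $h_L$ stays within $[-8\epsilon_0,8\epsilon_0]$, e.g. by noting the maximum of $\beta_L$ is $8\epsilon_0$ attained only at the right end where $\Psi_L \equiv 0$, hence $h_L \le 8\epsilon_0$ there, and elsewhere $\beta_L \le 8\epsilon_0 - \delta$ for a fixed $\delta$, so $h_L \le 8\epsilon_0$ for $L$ large; similarly on the left.

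The main obstacle, and the step deserving the most care, is the transition strips: reconciling "$h_L'$ positive and bounded below by $\sim L^{-2}$" with "$h_L = \beta_L$ to infinite order near the strip endpoints" and "$h_L$ within $100\kappa_*^{-1}L^{-1}$ of $\beta_L$". At the endpoint of a strip adjacent to the plateau, $h_L$ must match the affine climb (slope $\sim\varepsilon_L/L$), while at the endpoint adjacent to the end-region it must match $\beta_L$ (slope $= \beta_*' $, a fixed positive number), so $h_L'$ varies over the strip from something tiny to something of order $1$, and I must verify it never touches zero and never overshoots the $C^0$ tube. The concavity/convexity hypotheses on $\beta_*$ are exactly the lever for this: they guarantee $\beta_*'$ is monotone on each of $(2\epsilon_0, 4\epsilon_0)$ and $(-4\epsilon_0,-2\epsilon_0)$, so on the strip $\beta_L$ itself is already a "nice" monotone-derivative interpolation between flat and slope-one, and the correction $\varepsilon_L\Psi_L$ only needs to be a further small monotone nudge whose derivative I can explicitly cut off to vanish wherever $\beta_L'$ is already comfortably positive. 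I would allocate most of the proof's prose to writing $\Psi_L$ as an explicit piecewise formula built from $\tau$, computing $h_L'$ region by region, and invoking "$L$ sufficiently large" to absorb all the $O(L^{-1})$-versus-constant comparisons; the four bulleted conclusions then follow by direct inspection.
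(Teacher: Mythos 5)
Your overall strategy---write $h_L = \beta_L + \varepsilon_L \Psi_L$ with a small tilt $\varepsilon_L \Psi_L$ rather than building $h_L'$ directly---is a legitimate alternative to what the paper actually does (the paper defines a positive derivative $f_L$ approximating $\max(\beta_L', \kappa_*^{-1} L^{-2})$, tweaks it by small negative corrections supported in $[-L-11\epsilon_0/4, -L-5\epsilon_0/2]$ and $[L+5\epsilon_0/2, L+11\epsilon_0/4]$ to hit the right integral, and integrates). But your proposed resolution of the one genuinely delicate step is backwards, and as written it would fail.

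Here is the problem. As you note, $\Psi_L$ must be $\equiv 0$ near both ends and must climb by at least about $1/100$ across the interior plateau $[-L-2\epsilon_0, L+2\epsilon_0]$ in order to give $h_L' \ge \tfrac12 \kappa_*^{-1} L^{-2}$ there. Consequently $\Psi_L$ has to come back down by $\gtrsim 1/100$ across one of the transition strips, so $\Psi_L'$ is negative of size $O(\epsilon_0^{-1})$ (not $O(L^{-1})$ as you write; the product $\varepsilon_L \Psi_L'$ there is $O(L^{-1})$, not $O(L^{-2})$) on a sub-interval of length $O(\epsilon_0)$. To have $h_L' = \beta_L' + \varepsilon_L \Psi_L' > 0$ on that sub-interval you therefore need $\beta_L'$ to be bounded below by a fixed positive constant \emph{exactly where} $\Psi_L'$ is negative. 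You write instead that one should cut off $\Psi_L'$ ``to vanish wherever $\beta_L'$ is already comfortably positive,'' and that one can ``let $\Psi_L'$ keep the same sign as $\beta_L'$.'' Both of these are the wrong way around: $\beta_L' \ge 0$ everywhere, so $\Psi_L'$ cannot globally match it in sign, and if one pushes the negative part of $\Psi_L'$ into the region where $\beta_L'$ is small (near $\pm(L+2\epsilon_0)$, where $\beta_L'$ actually vanishes), then $h_L'$ goes negative and the construction fails. The correct placement, which is what the paper's construction does with its compactly supported negative corrections to $f_L$, is to localize the downward part of the tilt in the \emph{outer} portion of each strip (near $\pm(L+11\epsilon_0/4)$) where $\beta_L'$ is bounded away from $0$ independently of $L$, and to keep $\Psi_L'$ nonnegative near $\pm(L+2\epsilon_0)$ so that $h_L'$ inherits the required lower bound there from $\varepsilon_L \Psi_L'$ alone. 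With that reversal your construction can be completed, but as stated your transition-strip design would produce a function with $h_L'<0$ somewhere and hence does not prove the lemma.
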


A schematic example of $h_L$ is graphed in Figure \ref{fig:hFunction} below. The picture is essentially the same as in Figure \ref{fig:betaFunction}, but the flat part in the middle is replaced by a line of slope comparable to $L^{-2}$. 

\begin{figure}[ht]
    \centering
    \includegraphics[width=.5\textwidth]{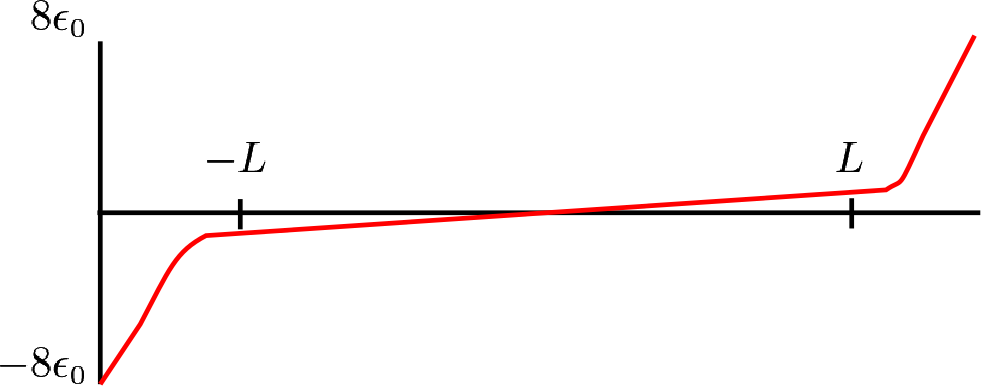}
    \caption{A graph of the function $h_L$.}
    \label{fig:hFunction}
\end{figure}

\begin{proof}[Proof of Lemma \ref{lem:tameJ1}]
We now give a construction of the function $h_L$. We begin with some initial observations regarding the function $\beta_L$. 

As long as $L$ is larger than a constant depending on $\epsilon_0$, $\kappa_*$, and the function $\beta_*$, it follows that $\beta_L'(t)$ is strictly greater than $100\epsilon_0^{-1}L^{-1}$ for $t$ in $[-L-8\epsilon_0, -L-9\epsilon_0/4]$ and in $[L+9\epsilon_0/4, L+8\epsilon_0]$. 

Recall by construction that $\beta_L'$ is decreasing from $1$ to $0$ on $[-L-4\epsilon_0, -L-2\epsilon_0]$, increasing from $0$ to $1$ on $[L+2\epsilon_0, L+4\epsilon_0]$ and zero on $[-L-2\epsilon_0, L+2\epsilon_0]$. Moreover, we have that $\beta_L''(t) = \beta_*''(t + L)$ on $[-L-4\epsilon_0, -L-2\epsilon_0]$ and $\beta_*''(t - L)$ on $[L+2\epsilon_0, L+4\epsilon_0]$. 

It follows that, as long as $L$ is larger than a constant depending only on $\kappa_*$ and the function $\beta_*$, there are unique constants $L_- \in (-L-9\epsilon_0/4, -L-2\epsilon_0)$ and $L_+ \in (L+2\epsilon_0, L + 9\epsilon_0/4)$ such that
$$\beta_L'(L_-) = \beta_L'(L_+) = \kappa_*^{-1}L^{-2}.$$

Now we define a positive smooth function 
$$f_L(t): [-L-8\epsilon_0, L+8\epsilon_0] \to \mathbb{R}$$
which satisfies the following properties:
\begin{itemize}
    \item The function $f_L(t) - \beta_L'(t)$ on $[-L-8\epsilon_0, -L-9\epsilon_0/4]$ is compactly supported in the interior of $[-L-11\epsilon_0/4, -L-5\epsilon_0/2]$ and bounded in absolute value by $10\epsilon_0^{-1}\kappa_*^{-1}L^{-1}$. 
    \item The function $f_L(t) - \beta_L'(t)$ on $[L+9\epsilon_0/4, L+8\epsilon_0]$ is compactly supported in the interior of $[L+5\epsilon_0/2, L+11\epsilon_0/4]$ and bounded in absolute value by $10\epsilon_0^{-1}\kappa_*^{-1}L^{-1}$. 
    \item $f_L(t) = \kappa_*^{-1}L^{-2}$ for $t$ in $[-L-2\epsilon_0, L+2\epsilon_0]$. 
    \item $f_L(t)$ is bounded away from $\max(\beta_L'(t), \kappa_*^{-1}L^{-2})$ on $[-L-9\epsilon_0/4, L+9\epsilon_0/4]$ by at most $L^{-4}$. 
    \item The integral of $f_L(t)$ from $-L-3\epsilon_0$ to $-L-2\epsilon_0$ is equal to 
    $$-\kappa_*^{-1}L^{-2}(L + 2\epsilon_0) - \beta_L(-3\epsilon_0)$$
    and the integral of $f_L(t)$ from $L+2\epsilon_0$ to $L+3\epsilon_0$ is equal to
    $$\beta_L(3\epsilon_0) - \kappa_*L^{-2}(L+2\epsilon_0).$$
\end{itemize}

The function $f_L$ can be constructed as follows. Start with the continuous function $\max(\beta_L'(t), \kappa_*^{-1}L^{-2})$. By our observations above, this is equal to $\beta_L'(t)$ outside of $[-L-9\epsilon_0/4, L+9\epsilon_0/4]$. 

Subtract off a nonnegative function compactly supported in the interior of $[-L-11\epsilon_0/4, -L-5\epsilon_0/2]$ with integral slightly larger than $\kappa_*^{-1}L^{-2}(L + 2\epsilon_0)$. Such a function can be chosen to be bounded above by $10\epsilon_0^{-1}\kappa_*^{-1}$.

Subtract off an nonnegative function compactly supported in the interior of $[L+5\epsilon_0/2, L+11\epsilon_0/4]$ that also satisfies the above properties regarding its integral and its upper bound. 

The resulting function is smooth except at the points $L_\pm$ in $(-L-9\epsilon_0/4, L+9\epsilon_0/4)$. Choose $f_L$ to be a smooth function which agrees with this function outside of a small neighborhood of $L_\pm$ and is at most $L^{-4}$ away from this function inside these neighborhoods. 

Now set
$$h_L(t) = \int_{-L-8\epsilon_0}^t f_L(s) ds.$$

The first bullet point of the lemma follows from the fact that $f_L$ is positive. 

To verify the second bullet point of the lemma, it suffices to verify that
$$\int_{-L-8\epsilon_0}^{L+3\epsilon_0} f_L(s) ds = \beta_L(L+3\epsilon_0).$$

Recall that the integral of $f_L$ from $-L-8\epsilon_0$ to $-L-2\epsilon_0$ is, by the first and last properties defining $f_L$, equal to 
$$-\kappa_*^{-1}L^{-2}(L + 2\epsilon_0) - \beta_L(-L-8\epsilon_0).$$

By the third property of $f_L$, the integral of $f_L$ from $-L-2\epsilon_0$ to $L+2\epsilon_0$ is $2\kappa_*^{-1}L^{-2}(L+2\epsilon_0)$. 

By the last property of $f_L$, the integral of $f_L(t)$ from $L+2\epsilon_0$ to $L+3\epsilon_0$ is equal to
    $$\beta_L(3\epsilon_0) - \kappa_*L^{-2}(L+2\epsilon_0).$$
    
Adding these all up yields the desired identity for the integral of $f_L$ from $-L-8\epsilon_0$ to $L+3\epsilon_0$ and therefore verifies the second bullet point of the lemma.

Next, we verify the third bullet point of the lemma. Suppose $L$ is sufficiently large so that
$$L^{-4} \leq \kappa_*^{-1}L^{-2}$$
and
$$L \geq 100\epsilon_0^{-1}.$$

For any $t \in [-L-8\epsilon_0, L+8\epsilon_0]$, 
\begin{align*}
    |h_L(t) - \beta_L(t)| &\leq \int_{-L-8\epsilon_0}^t |f_L(t) - \beta_L'(t)| \\
    &\leq \int_{-L-8\epsilon_0}^{L+8\epsilon_0} |f_L(t) - \beta_L'(t)| \\
    &\leq 20\kappa_*^{-1}L^{-1} + \int_{-L-9\epsilon_0/4}^{L+9\epsilon_0/4} |f_L(t) - \beta_L'(t)| \\
    &\leq 20\kappa_*^{-1}L^{-1} + \int_{-L-9\epsilon_0/4}^{L+9\epsilon_0/4} 10\kappa_*^{-1}L^{-2} \\
    &\leq 30\kappa_*^{-1}L^{-1}.
\end{align*}

The second inequality uses the first and second properties of $f_L$. The third inequality uses the fourth property of $f_L$. 

Finally, we verify the fourth bullet point of the lemma. Observe by construction that $h_L'(t) = f_L(t)$ everywhere. 

If $t$ lies outside the interval $[-L-9\epsilon_0/4, L+9\epsilon_0/4]$ then by the first and second properties of $f_L$, along with the fact that 
$$\beta_L'(\pm(L + 9\epsilon_0/4)) \geq 100\epsilon_0^{-1}L^{-1},$$
we find that $f_L(t)$ is bounded below by $90\epsilon_0^{-1}L^{-1}$. We can take $L$ sufficiently large so that this is at least $\frac{1}{2}\kappa^{-1}L^{-2}$. 

If $t$ lies inside the interval $[-L-9\epsilon_0/4, L+9\epsilon_0/4]$ then by construction it is bounded below by $\kappa_*^{-1}L^{-2} - L^{-4}$. We can take $L$ sufficiently large so that this is at least $\frac{1}{2}\kappa^{-1}L^{-2}$. 
\end{proof}

We use the function from Lemma \ref{lem:tameJ1} to prove Proposition \ref{prop:tameJ}. 

\begin{proof}[Proof of Proposition \ref{prop:tameJ}]
Let $\kappa_0 \geq 1$ be the constant given by Lemma \ref{lem:uniformOmegaHatBounds}. Let $\kappa_* \geq 1$ be a constant equal to $10^{10}\kappa_0^3$. Fix $\Phi_L$ to be the identity map sending
$$\text{Core}(W) \subset W_L$$
to
$$\text{Core}(W) \subset W.$$

We extend the definition of $\Phi_L$ to all of $W_L$ by defining it on the neck region as a composition of a diffeomorphism 
$$[-L - 8\epsilon_0, L + 8\epsilon_0] \times M \to [-8\epsilon_0, 8\epsilon_0] \times M$$
and the natural embedding
$$[-8\epsilon_0, 8\epsilon_0] \times M \hookrightarrow W$$
given by Lemma \ref{lem:moser}. 

This diffeomorphism is defined for $L$ sufficiently large be the map
$$(t, p) \mapsto (h_L(t),p)$$
where $h_L$ is the smooth function
$$h_L: [-L-8\epsilon_0, L+8\epsilon_0] \to [-8\epsilon_0,8\epsilon_0]$$
given to us by Lemma \ref{lem:tameJ1} with the parameter $\kappa_*$ fixed to be $10^{10}\kappa_0^3$. 

Recall that on $\text{Core}(W)$ we have that $(\Phi_L)^*\Omega$ is a symplectic form and $\bar J_L$ is compatible with $(\Phi_L)^*\Omega$.

It remains to check that $\bar J_L$ is tamed by $(\Phi_L)^*\Omega$ on $[-L-6\epsilon_0, L+6\epsilon_0] \times M$. 

Inside this region,
$$(\Phi_L)^*\Omega = \omega + h_L(a) d\lambda + h_L'(a) da \wedge \lambda.$$

By definition, since $h_L' > 0$, the two-form
$$\widetilde{\omega}_L = \omega + \beta_L(a)d\lambda + h_L'(a)da \wedge \lambda$$
is positive on any $\bar J_L$-complex line. 

Since $h_L = \beta_L$ on $[-L-8\epsilon_0,-L-3\epsilon_0]$ and $[L+3\epsilon_0,L+8\epsilon_0]$, it follows immediately that
$\bar J_L$ is tamed by $(\Phi_L)^*\Omega$ outside of $[-L-3\epsilon_0, L+3\epsilon_0] \times M$. 

We complete the proposition by verifying that $\bar J_L$ is tamed by $(\Phi_L)^*\Omega$ inside $[-L-3\epsilon_0, L+3\epsilon_0] \times M$.

The tangent bundle of $[-L-3\epsilon_0, L+3\epsilon_0] \times M$ admits a $g_L$-orthogonal splitting given by
$$\ker(\omega + \beta_L(a)d\lambda) \oplus \ker(da \wedge \lambda).$$

Observe that $\ker(\omega + \beta_L(a)d\lambda)$ is spanned by $\partial_a$ and $\bar J_L(\partial_a)$, and that these tangent vectors have unit $g_L$-length. 

Fix any tangent vector $V$ of unit $g_L$-length. It follows that it can be written uniquely as 
$$V = x V_1 + y \partial_a + z \bar{J}_L(\partial_a)$$
where $V_1$ is a tangent vector of unit length in $\ker(\omega + \beta_L(a)d\lambda)$ and $x^2 + y^2 + z^2 = 1$. 

We compute
\begin{equation}
\label{eq:tameJ1}
\begin{split}
    (\Phi_L)^*\Omega(V, \bar J_L(V)) &= (\Phi_L)^*\Omega(xV_1 + y \partial_a + z \bar{J}_L(\partial_a), x\bar J_L(V_1) + y \bar J_L(\partial_a) - z \partial_a) \\
    &= (\omega + h_L(a)d\lambda)(xV_1 + z\bar{J}_L(\partial_a), x\bar{J}_L(V_1) + y\bar{J}_L(\partial_a)) \\
    &\; + h_L'(a)(da \wedge \lambda)(y \partial_a + z \bar{J}_L(\partial_a), y \bar J_L(\partial_a) - z \partial_a) \\
    &= x^2(\omega + \beta_L(a)d\lambda)(V_1, \bar{J}_L(V_1)) \\
    &\; + x^2(h_L(a) - \beta_L(a))d\lambda(V_1, \bar{J}_L(V_1)) \\
    &\; + xy(h_L(a) - \beta_L(a))d\lambda(V_1, \bar{J}_L(\partial_a)) \\
    &\; - xz(h_L(a) - \beta_L(a))d\lambda(\bar{J}_L(V_1), \bar{J}_L(\partial_a)) \\
    &\; + h_L'(a)(y^2 + z^2).
\end{split}
\end{equation}

An application of Lemma \ref{lem:uniformOmegaHatBounds} and the Peter-Paul inequality shows that
\begin{equation}
    \label{eq:tameJ2}
    \begin{split}
        (\Phi_L)^*\Omega(V, \bar J_L(V)) &\geq x^2(\kappa_0^{-1} - \kappa_0|h_L(a) - \beta_L(a)|) \\
        &\; - x(y+z)\kappa_0|h_L(a) - \beta_L(a)| + h_L'(a)(y^2 + z^2) \\
        &\geq x^2(\kappa_0^{-1}/2 - \kappa_0|h_L(a) - \beta_L(a)|) \\
        &\; + (h_L'(a) - 10\kappa_0^3|h_L(a) - \beta_L(a)|^2)(y^2 + z^2).
    \end{split}
\end{equation}

We now appeal to the bounds guaranteed to us by Lemma \ref{lem:tameJ1}. 

First, suppose $L$ is greater than $1000\kappa_0^2$. It follows from the third bullet in Lemma \ref{lem:tameJ1} that
\begin{align*}
    \kappa_0^{-1}/2 - \kappa_0|h_L(a) - \beta_L(a)| &\geq \kappa_0^{-1}/2 - 100\kappa_0L^{-1} \\
    &\geq \kappa_0^{-1}/2 - \kappa_0^{-1}/10 \\
    &> 0.
\end{align*}

Second, the third and fourth bullets in Lemma \ref{lem:tameJ1} show that
\begin{align*}
    h_L'(a) - 10\kappa_0^3|h_L(a) - \beta_L(a)|^2 &\geq 10^{-9}\kappa_0^{-3}L^{-2} - 10^{-16}\kappa_0^{-3} L^{-2} \\
    &> 0. 
\end{align*}

Now observe that since $x^2 + y^2 + z^2 = 1$, at least one of the nonnegative numbers $x^2$ and $y^2 + z^2$ will be positive. We conclude from (\ref{eq:tameJ2}) and what was said above that
$$(\Phi_L)^*\Omega(V, \bar{J}_L(V)) > 0$$
as desired. 
\end{proof}

It will also be useful to write down some further observations regarding the geometry of the manifolds
$$(W_L, \bar J_L, g_L)$$
and
$$(\widetilde{W}_\pm, J_\pm, g_\pm).$$

First, we note that the cylindrical region 
$$[-L-3\epsilon_0, L+3\epsilon_0] \times M \subset W_L$$
can be given the structure of a \emph{realized Hamiltonian homotopy} as given in Definition \ref{defn:realizedHamiltonianHomotopy}. 

\begin{lem} \label{lem:transitionIsHomotopy} 
The region 
$$[-L-4\epsilon_0, L+4\epsilon_0] \times M \subset W_L$$
 is a realized Hamiltonian homotopy with $\hat\lambda = \lambda$ and $\hat\omega_L = \omega + \beta_L(t)d\lambda$. Moreover, the restriction $(J,g)$ of the almost-Hermitian structure $(\bar J_L, g_L)$ is such that $J$ is adapted with respect to the realized Hamiltonian homotopy $\hat\eta = (\hat\lambda, \hat\omega_L)$ and $g$ is the induced metric. 
\end{lem}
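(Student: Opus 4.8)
The proof is a matter of unwinding Definitions~\ref{defn:realizedHamiltonianHomotopy}, \ref{defn:homotopyHamiltonianVectorField} and~\ref{defn:homotopyadaptedJ} against the piecewise construction of $(\bar J_L, g_L)$ in Sections~\ref{subsubsec:neckStretchingNeck}--\ref{subsubsec:neckStretchingCore}. Write $\hat\lambda = \lambda$ and $\hat\omega_L = \omega + \beta_L(a)\,d\lambda$, where $\lambda,\omega,d\lambda$ denote the translation-invariant pullbacks of the corresponding forms on $M$. Recall that on $[-L-2\epsilon_0,L+2\epsilon_0]\times M$ one has $\bar J_L = J_{\text{Neck}}$ and $\beta_L\equiv 0$, while $T_\pm$ is embedded in the neck coordinate as $[\mp L\mp 8\epsilon_0,\mp L]\times M$, where $\bar J_L = J_{\text{Between},\pm}$ and $\hat\omega_L$ agrees with the two-form $\hat\omega=\omega+\beta_*(a)\,d\lambda$ from the ``between'' construction.

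First I would check that $(\hat\lambda,\hat\omega_L)$ is a realized Hamiltonian homotopy. The conditions $\hat\lambda(\partial_a)=0$, $\hat\omega_L(\partial_a,-)=0$ and translation-invariance of $\hat\lambda$ are immediate from the pullback structure, and the clause about translation-invariance near $\pm\infty$ is vacuous since the interval is compact. For the slice condition, $d\hat\omega_L = \beta_L'(a)\,da\wedge d\lambda$ (using $d\omega=0$ and $d(d\lambda)=0$), which restricts to zero on each $\{t\}\times M$. The one condition with content is $da\wedge\hat\lambda\wedge\hat\omega_L^n>0$. Here I would invoke the embeddings $\iota_\pm$ of~(\ref{eq:iotaMinus})--(\ref{eq:iotaPlus}): since $\iota_\pm^*\Omega = da\wedge\lambda + (\omega+a\,d\lambda)$ is symplectic on $[-8\epsilon_0,0]\times M$ and $[0,8\epsilon_0]\times M$ respectively, and $(\omega+a\,d\lambda)^{n+1}=0$ for dimension reasons, expanding the top power (using $(da\wedge\lambda)^2=0$) gives $(\iota_\pm^*\Omega)^{n+1}=(n+1)\,da\wedge\lambda\wedge(\omega+a\,d\lambda)^n$, a positive volume form; hence $\lambda\wedge(\omega+c\,d\lambda)^n>0$ pointwise on $M$ for every constant $c\in[-8\epsilon_0,8\epsilon_0]$. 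Since $\beta_L$ takes values in $[-8\epsilon_0,8\epsilon_0]$ (the range of $\beta_*$), at each point $(t_0,p)$ the form $da\wedge\lambda\wedge\hat\omega_L^n$ equals $da\wedge\lambda\wedge(\omega+\beta_L(t_0)\,d\lambda)^n$, which is positive.

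Next I would verify that $\bar J_L$ restricted to the region is $\hat\eta$-adapted. The crucial observation is that the Hamiltonian vector field $\hat X$ of $(\hat\lambda,\hat\omega_L)$, determined by $da(\hat X)=0$, $\lambda(\hat X)=1$, $\hat\omega_L(\hat X,-)=0$, coincides on the neck (where $\hat\omega_L=\omega$) with the translation-invariant extension of the Hamiltonian vector field $X$ of $(M,\eta)$, and on $T_\pm$ (where $\hat\omega_L=\hat\omega$) with the vector field $\bar X$ used to define $J_{\text{Between},\pm}$. Granting this, $J(\partial_a)=\hat X$ holds because $J_{\text{Neck}}(\partial_a)=X$ and $J_{\text{Between},\pm}(\partial_a)=\bar X$ by construction; $J$ preserves $\ker(da\wedge\hat\lambda)$ because that distribution is the pullback of $\xi=\ker\lambda\subset TM$, preserved by $J_{\text{Neck}}$ via Definition~\ref{defn:adaptedJ}(3) and by $J_{\text{Between},\pm}$ by construction; and the translation-invariance clause is again vacuous. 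Finally, the induced tensor $g(-,-)=(da\wedge\hat\lambda+\hat\omega_L)(-,\bar J_L-)$ must be shown to coincide with $g_L$ and to be a $\bar J_L$-invariant Riemannian metric: on the neck this is literally the defining formula for $g_{\text{Neck}}$, which is part of an almost-Hermitian structure by hypothesis; on $T_\pm$, where $g_{\text{Between},\pm}=(\theta(a)\,da\wedge\lambda+\hat\omega)(-,J_{\text{Between},\pm}-)$, the identification reduces to the fact that $\theta\equiv 1$ there, which follows from $\theta(t)=\chi(|t|)+(1-\chi(|t|))\beta_*'(t)$ together with $\chi(|t|)=1$ for $|t|\le 3\epsilon_0$ and $\beta_*'(t)=1$ for $|t|\ge 4\epsilon_0$. (Thus the identification of $g_L$ with the induced metric is cleanest on the sub-cylinder $[-L-3\epsilon_0,L+3\epsilon_0]\times M$, which is the interval actually used later; one should state the lemma for an interval lying inside the locus $\{\theta\equiv 1\}$.) Positive-definiteness and $\bar J_L$-invariance of $g$ on the $T_\pm$ pieces then follow from the $g_L$-orthogonal splitting $\langle\partial_a,\hat X\rangle\oplus\ker(da\wedge\lambda)$: on the first factor $\hat\omega_L$ vanishes and $da\wedge\lambda$ supplies the standard positive pairing with $\bar J_L$, on the second factor $\hat\omega_L(-,\bar J_L-)$ is positive-definite and symmetric by construction of $J_{\text{Between},\pm}$, and there are no cross terms.

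I expect the only step requiring more than a direct comparison of definitions to be the volume positivity $da\wedge\hat\lambda\wedge\hat\omega_L^n>0$, where one genuinely uses that $\Omega$ is symplectic near $M$ via the Moser normal form of Lemma~\ref{lem:moser}; everything else is bookkeeping across the neck/transition interfaces. The remaining subtlety is purely cosmetic: the interval on which $g_L$ literally equals the induced metric is governed by the locus $\{\theta\equiv 1\}$ rather than by all of $[-L-4\epsilon_0,L+4\epsilon_0]$.
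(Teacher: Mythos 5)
Your proof is correct and follows the route the paper clearly intends — the lemma is stated without proof because it is meant to be a definitional unwind across the neck, transition, and core pieces, and you have carried that out carefully: the pointwise positivity $da\wedge\hat\lambda\wedge\hat\omega_L^n>0$ via the Moser normal form, the identification of $\hat X$ with $X$ on the neck and with $\bar X$ on $T_\pm$, preservation of $\ker(da\wedge\hat\lambda)$, and the computation of the induced metric. The one thing worth emphasizing is that the caveat you flag at the end is not merely cosmetic fussiness but a real inconsistency in the paper's statement: on $|t|\in(3\epsilon_0,4\epsilon_0)$ one has $\chi(|t|)<1$ (since $\chi'<0$ on $[3\epsilon_0,5\epsilon_0]$) and $\beta_*'(t)<1$ (since $\beta_*''>0$ on $(2\epsilon_0,4\epsilon_0)$ and $\beta_*'(4\epsilon_0)=1$), so $\theta<1$ there and $g_L\neq (da\wedge\hat\lambda+\hat\omega_L)(-,\bar J_L-)$. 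Indeed the paper itself implicitly concedes this: the remark immediately following the lemma, and the verification step inside the proof of Proposition~\ref{prop:tameJ} that uses the orthonormality of $\partial_a$ and $\bar J_L\partial_a$, both work on $[-L-3\epsilon_0,L+3\epsilon_0]\times M$ rather than $[-L-4\epsilon_0,L+4\epsilon_0]\times M$. So the lemma as written should have $3\epsilon_0$ in place of $4\epsilon_0$, exactly as you suggest.
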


Second, it will be useful to define several compact, codimension zero submanifolds of $W_L$ and $\widetilde{W}_\pm$.

\begin{defn}
\label{defn:someCompactRegions}
Fix any $\ell > -8\epsilon_0$. Then we set
$$\widetilde{W}_{-}^\ell = \text{Core}(W_-) \cup [-8\epsilon_0, \ell] \times M \subset \widetilde{W}_-$$
and
$$\widetilde{W}_{+}^\ell = \text{Core}(W_+) \cup [-\ell, 8\epsilon_0] \times M \subset \widetilde{W}_+.$$

If furthermore $\ell < 2L + 8\epsilon_0$ we set
$$W_{L,-}^\ell = \text{Core}(W_-) \cup [-L - 8\epsilon_0, -L + \ell] \times M \subset W_L,$$
$$W_{L,+}^\ell = \text{Core}(W_+) \cup [L - \ell, L + 8\epsilon_0] \subset W_L,$$
and
$$W_{L}^\ell = W_{L,-}^\ell \cup W_{L,+}^\ell.$$
\end{defn}

The following lemma is immediate from our constructions of $(W_L, \bar J_L, g_L)$ and $(\widetilde{W}_\pm, \bar J_\pm, g_\pm)$. 

\begin{lem}
\label{lem:obviousInclusions}
Fix any $L > 0$ and any $\ell \in (-8\epsilon_0, 2L + 8\epsilon_0)$. 
Then the embedding
$$W_{L,-}^{\ell} \hookrightarrow \widetilde{W}_-$$
defined to be the identity on $\text{Core}(W_-)$ and the shift map $\text{Sh}_L$ on $[-L-8\epsilon_0, -L+\ell] \times M$ is an isomorphism from the almost-Hermitian manifold
$$(W_{L, -}^{\ell}, \bar J_L, g_L)$$
onto the almost-Hermitian manifold
$$(\widetilde{W}_{-}^{\ell}, \bar J_L, g_L).$$

The embedding
$$W_{L,+}^\ell \hookrightarrow \widetilde{W}_+$$
defined to be the identity on $\text{Core}(W_+)$ and the shift map $\text{Sh}_{-L}$ on $[L-\ell, L+8\epsilon_0] \times M$ is an isomorphism from the almost-Hermitian manifold
$$(W_{L, +}^\ell, \bar J_L, g_L)$$
onto the almost-Hermitian manifold
$$(\widetilde{W}_+^\ell, \bar J_+, g_+).$$
\end{lem}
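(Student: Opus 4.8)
The statement in question, Lemma \ref{lem:obviousInclusions}, asserts that certain natural inclusions of compact codimension-zero pieces of the neck-stretched manifolds $W_L$ into the cylindrical-end manifolds $\widetilde{W}_\pm$ are isomorphisms of almost-Hermitian manifolds. I will prove it by unwinding the definitions of all the pieces involved and checking that the relevant almost-complex structures and metrics agree piece by piece.

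The plan is as follows. First I would recall, from the construction in Section \ref{subsec:neckStretching}, that both $W_L$ and $\widetilde{W}_-$ are assembled by gluing $\text{Core}(W_-)$ (or $\text{Core}(W)$), a piece of the "between" region $T_\pm$ realized via the embeddings $\iota_\pm$, and a cylinder — in the case of $W_L$ a finite cylinder $[-L+8\epsilon_0,L+8\epsilon_0]\times M$ glued via the shift maps $\text{Sh}_{\mp L}$, and in the case of $\widetilde{W}_-$ the half-cylinder $[-8\epsilon_0,\infty)\times M$ glued via the tautological inclusion. The key observation is that the almost-Hermitian structures $(\bar J_L,g_L)$ and $(\bar J_-,g_-)$ were \emph{defined} by pushing forward the \emph{same} model data: on the core, $(J_{\text{Core}},g_{\text{Core}})$ and $(J,g)$ respectively (which agree as $\text{Core}(W_-) = \text{Core}(W)\cap W_-$ carries the restriction of the same structure); on the between region $T_\pm$, the structure $(J_{\text{Between},\pm}, g_{\text{Between},\pm})$; and on the cylindrical part, the pushforward of the model adapted cylinder $(J_{\text{Neck}}, g_{\text{Neck}})$ twisted by $\beta_L$ versus $\beta_\pm$. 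So I would define the candidate map $\Psi$ to be the identity on $\text{Core}(W_-)$ (equivalently on $T_-$, where both sides use $\iota_-$) and the shift $\text{Sh}_L$ on $[-L-8\epsilon_0,-L+\ell]\times M$, check that these two partial definitions agree on the overlap region $\iota_-([-8\epsilon_0,0]\times M)$ — they do, because on $W_L$ that region is glued in via $\text{Sh}_{-L}$ so that $\text{Sh}_L$ undoes it, giving precisely the identity into $W_- \subset \widetilde{W}_-$ — and conclude $\Psi$ is a well-defined diffeomorphism onto $\widetilde{W}_-^\ell$.

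Then I would verify $\Psi^*\bar J_- = \bar J_L$ and $\Psi^*g_- = g_-$ by checking on each of the three overlapping open pieces. On $\text{Core}(W_-)$ this is immediate since $\Psi$ is the identity and both structures are the restriction of $(J,g)$ (resp.\ $(J_{\text{Core}},g_{\text{Core}})$, which by the compatibility arranged in Section \ref{subsubsec:neckStretchingCore} restricts correctly). On the between region $T_-$ both are the pushforward of $(J_{\text{Between},-}, g_{\text{Between},-})$ via $\iota_-$, and $\Psi$ restricted there is the identity, so they match. On the open cylinder $(-2\epsilon_0,-L+\ell)\times M \subset W_L$ mapped by $\text{Sh}_L$ to $(-2\epsilon_0-L+L,\ldots)$—wait, more carefully, on $[-L-8\epsilon_0,-L+\ell]\times M \subset W_L$ the structure is the $\text{Sh}_{\mp}$-pushforward of the model, and $\text{Sh}_L$ carries it to $[-8\epsilon_0,\ell]\times M\subset\widetilde W_-$ where the structure is defined the same way; here one must observe that $\beta_L(t)=\beta_*(t+L)=\beta_-(t+L)$ for $t$ in the left half of the neck, since both $\beta_L$ and $\beta_-$ are built from the same $\beta_*$ shifted appropriately, so that the $\beta$-twisted metrics and the metric $g$ agree under $\text{Sh}_L$. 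The argument for the "$+$" statement and $\widetilde{W}_+$ is identical with $\text{Sh}_{-L}$, $\beta_+$, and $\iota_+$.

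I do not expect any serious obstacle: the lemma is, as the excerpt itself says, "immediate from our constructions," and the entire content is bookkeeping. The one point requiring genuine care is the overlap check — confirming that the identity-on-core and shift-on-neck definitions of $\Psi$ glue consistently along $\iota_-([-8\epsilon_0,0]\times M)$, and that the twisting functions $\beta_L$ and $\beta_-$ were deliberately set up (in \eqref{eq:betaL} and \eqref{eq:betaMinus}) so that $\beta_L(\cdot) = \beta_-(\cdot + L)$ on the relevant interval, which is exactly what makes $\text{Sh}_L$ an isometry rather than merely a diffeomorphism. Everything else is a direct appeal to the definitions in Sections \ref{subsubsec:neckStretchingNeck}, \ref{subsubsec:neckStretchingBetween}, and \ref{subsubsec:neckStretchingCore}.
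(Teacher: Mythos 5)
Your proof is correct and is exactly the bookkeeping the paper leaves implicit — the paper gives no argument at all, saying only that the lemma ``is immediate from our constructions of \ldots'', so your piece-by-piece verification is essentially the intended one. The key observations you make — that the identity-on-core and shift-on-cylinder definitions agree on the overlap because $\text{Sh}_L$ undoes the $\text{Sh}_{-L}$ used in the gluing, that the between-region structure $(J_{\text{Between},-},g_{\text{Between},-})$ is pushed forward via the same $\iota_-$ on both sides, and that the neck structure $(J_{\text{Neck}},g_{\text{Neck}})$ is translation-invariant so $\text{Sh}_L$ preserves it — are the whole content of the lemma.

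One small point worth flagging, which is really a looseness in the paper's statement rather than a defect in your argument: your $\beta$-matching observation, $\beta_L(t)=\beta_-(t+L)$, holds only for $t\le L+2\epsilon_0$, i.e.\ it fails on the ``$+$'' transition region $(L+2\epsilon_0,L+8\epsilon_0]\times M$, where $\beta_L(t)=\beta_*(t-L)\neq 0$ while $\beta_-(t+L)=0$. Correspondingly the structures $\bar J_L$ and $\bar J_-$ do \emph{not} agree under $\text{Sh}_L$ there: $\bar J_L$ is $J_{\text{Between},+}$ but $\text{Sh}_L$ lands in the pure $J_{\text{Neck}}$ half-cylinder of $\widetilde W_-$. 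So as literally stated (with $\ell$ allowed all the way up to $2L+8\epsilon_0$), the lemma is slightly too generous: it is correct precisely for $\ell\le 2L+2\epsilon_0$, so that $W_{L,-}^\ell$ stays out of the image of $T_+$. This restricted range covers every application in the paper (e.g.\ the choices $\ell=100$ and $\ell=2\sigma_i-1/4$), and your proof implicitly assumes it, so nothing downstream breaks. It would be cleaner for you to state the needed range explicitly rather than leave it implicit in the phrase ``the left half of the neck.'' You may also note that the paper's statement of the target in the first half of the lemma has a typo — $(\widetilde W_-^\ell,\bar J_L,g_L)$ should read $(\widetilde W_-^\ell,\bar J_-,g_-)$, as in the second half.
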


\subsection{Constructing a sequence of pseudoholomorphic curves} 

Recall we have assumed that the symplectic manifold $(W, \Omega)$ and the hypersurface $M$ satisfy either the assumptions of Theorem \ref{thm:mainExample} or of Theorem \ref{thm:swExample}. 

\subsubsection{The construction in the setting of Theorem \ref{thm:mainExample}} 

Suppose first that the assumptions of Theorem \ref{thm:mainExample} are satisfied. By Corollary \ref{cor:GWCrossingExistence}, this implies that there is a closed submanifold $Y_+$ in $W_+$, a closed submanifold $Y_-$ in $W_-$, a homology class $A \in H_2(W; \mathbb{Z})$ and integers $G \geq 0$, $m \geq 0$ such that $2G + m \geq 1$ and for any $\Omega$-tame almost-complex structure $J$, there exists a stable, connected $J$-holomorphic curve
$$\mathbf{u} = (u, C, j, W, J, D, \mu)$$
where $\text{Genus}_{\text{arith}}(C, D) = G$, $\#\mu = m+2$, $u$ pushes forward $[C]$ to $A$, and $u(C)$ intersects both $Y_-$ and $Y_+$. 

We are free to choose the constant $\epsilon_0 > 0$ in the construction of Section \ref{subsec:neckStretching} sufficiently small so that $Y_-$ and $Y_+$ lie in the interior of $\text{Core}(W)$. By definition, we find they lie in the interiors of $\text{Core}(W_-)$ and $\text{Core}(W_+)$ respectively. For any $L > 0$, let $(W_L, \bar J_L, g_L)$ be the almost-Hermitian manifold constructed in Section \ref{subsec:neckStretching}. Recall by Proposition \ref{prop:tameJ}, for sufficiently large $L$, there is a diffeomorphism
$$\Phi_L: W_L \to W$$
such that $\bar J_L$ is tamed by $(\Phi_L)^*\Omega$. In other words, the almost-complex structure 
$$J_L = d\Phi_L \circ \bar J_L \circ d\Phi_L^{-1}$$
is tamed by $\Omega$, and there exists a $J_L$-holomorphic stable, connected pseudoholomorphic curve 
$$\mathbf{v}_L = (v_L, C_L, j_L, W, J_L, D_L, \mu_L)$$
of arithmetic genus $G$ with $m+2$ marked points representing the homology class $A$ that intersects $Y_-$ and $Y_+$.

Using the fact that $Y_\pm$ lie in the interior of $\text{Core}(W)$, we can define them as submanifolds of $W_L$ via the embedding
$$\text{Core}(W) \hookrightarrow W_L.$$

It follows that
$$\mathbf{u}_L = (u_L = \Phi_L^{-1} \circ v_L, C_L, j_L, W_L, \bar J_L, D_L, \mu_L)$$
is a stable, connected $\bar J_L$-holomorphic curve with arithmetic genus $G$ and $m+2$ marked points such that $u_L$ represents the class $(\Phi_L^{-1})_*A$ and intersects $Y_-$ and $Y_+$. 

Take the set of all $\mathbf{u}_L$ for $L \in \mathbb{N}$ sufficiently large to produce a sequence of stable, connected pseudoholomorphic curves
$$\mathbf{u}_k = (u_k, C_k, j_k, W_k, \bar J_k, D_k, \mu_k)$$
for $k \in \mathbb{N}$ sufficiently large. 

Now these pseudoholomorphic curves satisfy the following additional properties:
\begin{itemize}
    \item $\text{Genus}_{\text{arith}}(C_k, D_k) = G$ for every $k$. 
    \item $\#\mu_k = m + 2$ for every $k$. 
    \item There is a constant $\rho > 0$ depending only on $W$, $\Omega$, and $A$ such that for every $k$,
    $$\int_{C_k} (\Phi_k \circ u_k)^*\Omega \leq \rho.$$
    \item The image $u_k(C_k)$ intersects both components of $\text{Core}(W) \subset W_k$ for every $k$. 
\end{itemize}

Everything but the third bullet is immediate from the above considerations. The third bullet follows from the fact that $u_k$ represents the class $(\Phi_L^{-1})^*A$ for every $k$. The constant $\rho$ can be taken to be equal to the pairing $\langle \Omega, A \rangle$. 

\subsubsection{The construction in the setting of Theorem \ref{thm:swExample}} 

Now suppose instead that the assumptions of Theorem \ref{thm:swExample} are satisfied. Then by Proposition \ref{prop:nonzeroGr2}, there is an open neighborhood $V$ of $M$, a homology class $A \in H_2(W; \mathbb{Z})$, a real number $\rho > 0$ depending only on $W$, $\Omega$, and $A$, an integer $G \geq 0$ such that for any $\Omega$-tame almost-complex structure $J$, there exists a stable, connected $J$-holomorphic curve
$$\mathbf{u} = (u, C, j, W, J, D, \mu)$$
where $\text{Genus}_{\text{arith}}(C, D) = G$, $\#\mu = 0$, the integral of $u^*\Omega$ over $C$ is bounded above by $\rho$, and $u(C)$ intersects both components of $W \setminus V$. 

We assume that the constant $\epsilon_0$ from the neck stretching construction of Section \ref{subsec:neckStretching} is sufficiently small so that the collar neighborhood $[-8\epsilon_0, 8\epsilon_0] \times M \hookrightarrow W$ is contained in $V$. Proposition \ref{prop:nonzeroGr2} then produces for any sufficiently large $L$ a stable, connected $J_L$-pseudoholomorphic curves
$$\mathbf{v}_L = (v_L, C_L, j_L, W, J_L, D_L, \mu_L)$$
which we compose with the diffeomorphism $\Phi_L$ to produce a stable, connected $\bar J_L$-pseudoholomorphic curve
$$\mathbf{u}_L = (u_L = \Phi_L^{-1} \circ v_L, C_L, j_L, W_L, \bar J_L, D_L, \mu_L).$$

Take the set of all $\mathbf{u}_L$ for $L \in \mathbb{N}$ sufficiently large to produce a sequence of stable, connected pseudoholomorphic curves
$$\mathbf{u}_k = (u_k, C_k, j_k, W_k, \bar J_k, D_k, \mu_k)$$
for $k \in \mathbb{N}$ sufficiently large. 

The pseudoholomorphic curves $\mathbf{u}_k$ satisfy the following properties:
\begin{itemize}
    \item $\mathbf{u}_k$ is stable and connected.
    \item $\text{Genus}_{\text{arith}}(C_k, D_k) \leq G$ for every $k$. 
    \item $\#\mu_k = 0$ for every $k$. 
    \item There is a constant $\rho > 0$ depending only on $W$, $\Omega$, and $A$ such that for every $k$,
    $$\int_{C_k} (\Phi_k \circ u_k)^*\Omega \leq \rho.$$
    \item The image $u_k(C_k)$ intersects both components of $\text{Core}(W) \subset W_k$ for every $k$. 
\end{itemize}

This discussion has proved the following proposition, which we use as input for constructing a feral pseudoholomorphic curve.

\begin{prop}
\label{prop:sequenceOfCurves} Assume that $(W, \Omega)$ and the energy level $M = H^{-1}(0)$ satisfy the assumptions of either Theorem \ref{thm:mainExample} or of Theorem \ref{thm:swExample}. Then there is some $k_0 \geq 1000$, a constant $\rho > 0$ depending only on the symplectic manifold $(W, \Omega)$ and nonnegative integers $G$ and $m$ such that, for every $k \geq k_0$, there is a stable, connected pseudoholomorphic curve
$$\mathbf{u}_k = (u_k, C_k, j_k, W_k, \bar J_k, D_k, \mu_k)$$
satisfying the following properties:
\begin{itemize}
    \item $\mathbf{u}_k$ is stable and connected.
    \item $\text{Genus}_{\text{arith}}(C_k, D_k) \leq G$ for every $k$. 
    \item $\#\mu_k \leq m$ for every $k$. 
    \item There is a constant $\rho > 0$ depending only on $W$, $\Omega$, and $A$ such that for every $k$,
    $$\int_{C_k} (\Phi_k \circ u_k)^*\Omega \leq \rho.$$
    \item The image $u_k(C_k)$ intersects both components of $\text{Core}(W) \subset W_k$ for every $k$. 
\end{itemize}
\end{prop}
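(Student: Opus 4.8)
The plan is to assemble the proposition directly from the two case analyses carried out above, so the argument is essentially bookkeeping: in each regime one produces, for an \emph{arbitrary} $\Omega$-tame almost-complex structure on $W$, a stable connected pseudoholomorphic curve that crosses $M$; transports it to the neck-stretched manifold $W_L$ via the taming diffeomorphism of Proposition \ref{prop:tameJ}; and reads off the five listed properties. Concretely, if $(W,\Omega)$ and $M$ satisfy the hypotheses of Theorem \ref{thm:mainExample}, then Corollary \ref{cor:GWCrossingExistence} supplies $G \geq 0$, $m_0 \geq 0$ with $2G + m_0 \geq 1$, a class $A \in H_2(W;\mathbb{Z})$, and closed submanifolds $Y_- \subset W_-$, $Y_+ \subset W_+$, such that for every $\Omega$-tame $J$ there is a stable connected $J$-holomorphic curve of arithmetic genus $G$ with $m_0+2$ marked points, representing $A$, whose image meets both $Y_-$ and $Y_+$. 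If instead the hypotheses of Theorem \ref{thm:swExample} hold, then Proposition \ref{prop:nonzeroGr2} supplies a class $A$, an integer $G \geq 0$, an open neighborhood $V$ of $M$, and $\rho > 0$ depending only on $W$, $\Omega$, $c_1(W)$, such that for every $\Omega$-tame $J$ there is a stable connected $J$-holomorphic curve with $\text{Genus}_{\text{arith}} \leq G$, no marked points, $\int u^*\Omega \leq \rho$, and image meeting both components of $W \setminus V$. In the first case I would put $m = m_0+2$ and $\rho = \langle\Omega, A\rangle$; in the second, $m = 0$.

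Next I would fix the neck-stretching parameter $\epsilon_0 > 0$ of Section \ref{subsec:neckStretching} small enough that $Y_-, Y_+$ lie in the interior of $\text{Core}(W)$ (respectively that the collar $[-8\epsilon_0, 8\epsilon_0]\times M \hookrightarrow W$ is contained in $V$); this is possible since $Y_\pm$ are closed and avoid $M$ (respectively $V$ is open and contains $M$), whereas $\text{Core}(W)$ exhausts $W$ as $\epsilon_0 \to 0$. With $\epsilon_0$ fixed, Lemma \ref{lem:uniformOmegaHatBounds} yields $\kappa_0$ and Proposition \ref{prop:tameJ} yields a threshold $L_0 = L(\epsilon_0,\kappa_0,\beta_*)$ such that for $L \geq L_0$ there is a diffeomorphism $\Phi_L\colon W_L \to W$, equal to the identity on $\text{Core}(W)$, with $\bar J_L$ tamed by $(\Phi_L)^*\Omega$; equivalently $J_L := d\Phi_L \circ \bar J_L \circ d\Phi_L^{-1}$ is $\Omega$-tame. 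Feeding $J = J_L$ into the existence statement above gives a curve $\mathbf{v}_L = (v_L, C_L, j_L, W, J_L, D_L, \mu_L)$, and I would set $\mathbf{u}_L = (\Phi_L^{-1}\circ v_L, C_L, j_L, W_L, \bar J_L, D_L, \mu_L)$.

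Then I would check the five properties. Since $\Phi_L$ acts only on the target, it leaves the domain, its complex structure, and the marked and nodal point sets untouched, so $\mathbf{u}_L$ is stable and connected with $\text{Genus}_{\text{arith}}(C_L, D_L)$ and $\#\mu_L$ inherited verbatim from $\mathbf{v}_L$ (namely $= G$ and $= m_0+2 = m$ in the first case, and $\leq G$ and $0 \leq m$ in the second). Because $\Phi_L$ is the identity on $\text{Core}(W)$ and the crossing data ($Y_\pm$, or the portion of $v_L(C_L)$ lying outside $V$) sit inside $\text{Core}(W)$, the image $u_L(C_L)$ still meets both components of $\text{Core}(W)\subset W_L$. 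Finally $\int_{C_L}(\Phi_L\circ u_L)^*\Omega = \int_{C_L} v_L^*\Omega$, which equals $\langle\Omega, A\rangle$ in the first case and is $\leq\rho$ in the second; in both cases this bound depends only on $(W,\Omega)$ and $A$. Taking $k_0 = \max(1000, \lceil L_0\rceil)$ and, for $k \geq k_0$, letting $\mathbf{u}_k$ be $\mathbf{u}_L$ with $L = k$, then completes the construction.

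The one point requiring care — the ``main obstacle'', such as it is — is the \emph{order} of the choices: both $\kappa_0$ and the threshold $L_0$ depend on $\epsilon_0$, so one must fix $\epsilon_0$ first, small enough to absorb the incidence data into $\text{Core}(W)$, and only afterwards invoke Proposition \ref{prop:tameJ}. Once this is observed there is no circularity, and the remainder is simply a transcription of the two case analyses already performed in the text.
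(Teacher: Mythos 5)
Your argument is correct and follows the paper's own proof essentially verbatim: in each of the two regimes you invoke Corollary \ref{cor:GWCrossingExistence} or Proposition \ref{prop:nonzeroGr2}, transport the curve to $W_L$ via the diffeomorphism of Proposition \ref{prop:tameJ}, and verify the five bullets, with $\rho = \langle \Omega, A\rangle$. Your explicit remark about fixing $\epsilon_0$ before invoking Proposition \ref{prop:tameJ} matches the paper's (implicit) ordering of choices, so there is nothing to add.
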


\subsection{Energy and topology bounds for exhaustive Gromov compactness}

Let 
$$\mathbf{u}_k = (u_k, C_k, j_k, W_k, \bar J_k, D_k, \mu_k)$$
be the stable, connected pseudoholomorphic curves given by Proposition \ref{prop:sequenceOfCurves}, and $\rho$, $G$, and $m$ the accompanying sequence of constants. 

Our plan now is to trim the curves $\mathbf{u}_k$ so that their images lie in the complement of a neighborhood of $\text{Core}(W_+) \subset W_k$ then apply the exhaustive Gromov compactness theorem introduced in Section \ref{subsec:exhaustiveCompactness}.

Along the way, we need to prove a few uniform analytic and topological estimates on the maps $u_k$. In what follows, we use the notation $\rho$ to denote any constant that independent of $k$ and depends only on $W$, $\Omega$, $M$, $\eta$, $g$, $J$, $\epsilon_0$, and the functions $\beta_*$ and $\{h_L\}_{L > 0}$. We will also need the notation $\rho_\ell$ to denote a constant independent of $k$, depending on all of the above that is also a function of a natural number $\ell$. We can assume that such constants increase in successive appearances. 

Our first bound is an area bound for the parts of the curves $\mathbf{u}_k$ in $\text{Core}(W)$. 

\begin{lem} \label{lem:outsideAreaBounds}
Write $$C_{k,\text{Core}} = u_k^{-1}(\text{Core}(W))$$ for every $k$. Then there is a constant $\rho > 0$ independent of $k$ such that, 
$$\text{Area}_{u_k^*g_k}(C_{k,\text{Core}}) \leq \rho$$
for every $k$. 
\end{lem}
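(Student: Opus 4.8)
The plan is to exploit the fact that on $\text{Core}(W)$ the metric $g_L$ equals the pushforward of the almost-Kähler metric $g_{\text{Core}}(-,-) = \Omega(-, J_{\text{Core}}-)$, so area of a pseudoholomorphic piece there is literally the $\Omega$-area of its image, which is controlled by homology. More precisely, recall $\Phi_k$ restricts to the \emph{identity} on $\text{Core}(W) \subset W_k$, and $\bar J_k$ is compatible with $g_{\text{Core}}$ (not merely tamed) on $\text{Core}(W)$. Hence for the restricted curve $\mathbf{u}_k|_{C_{k,\text{Core}}}$ we have $u_k(C_{k,\text{Core}}) \subset \text{Core}(W)$, $\Phi_k \circ u_k = u_k$ on this set, and $u_k^*g_k = u_k^* g_{\text{Core}}$ there. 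Since $\bar J_k$-holomorphic maps are $J_{\text{Core}}$-holomorphic on $\text{Core}(W)$ and $J_{\text{Core}}$ is $\Omega$-compatible, the two-form $\Theta_{g_{\text{Core}}}(-,-) = g_{\text{Core}}(J_{\text{Core}}-,-)$ equals $\Omega$ on $\text{Core}(W)$, so
$$\text{Area}_{u_k^*g_k}(C_{k,\text{Core}}) = \int_{C_{k,\text{Core}}} u_k^*\Theta_{g_{\text{Core}}} = \int_{C_{k,\text{Core}}} u_k^*\Omega = \int_{C_{k,\text{Core}}} (\Phi_k \circ u_k)^*\Omega.$$

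The remaining point is that this last integral is bounded by the bound $\rho$ on the whole-curve integral $\int_{C_k}(\Phi_k \circ u_k)^*\Omega$ from Proposition \ref{prop:sequenceOfCurves}. This requires knowing that $(\Phi_k \circ u_k)^*\Omega$ is pointwise nonnegative on $C_k$, so that restricting the domain of integration only decreases the integral. That nonnegativity holds because $\Phi_k^*\Omega$ tames $\bar J_k$ on all of $W_k$ (this is exactly Proposition \ref{prop:tameJ}), and for a $\bar J_k$-holomorphic map the pullback of any taming form is nonnegative on complex lines, hence on tangent planes of $C_k$. Thus
$$\int_{C_{k,\text{Core}}} (\Phi_k \circ u_k)^*\Omega \;\leq\; \int_{C_k} (\Phi_k \circ u_k)^*\Omega \;\leq\; \rho,$$
and combining with the displayed equality above gives the claim with the same constant $\rho$.

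The main obstacle — really the only subtlety — is being careful about \emph{which} metric and taming form live on which region of $W_k$: one must verify that on $\text{Core}(W) \subset W_k$ the diffeomorphism $\Phi_k$ is the identity and $\bar J_k = (J_{\text{Core}})$ with $g_k = g_{\text{Core}}$, so that the area form of $\mathbf{u}_k$ restricted to $C_{k,\text{Core}}$ coincides with $u_k^*\Omega$ exactly (not just up to a tameness constant), while on the complement one only has taming, which still suffices for the pointwise sign needed to throw away the rest of the curve. A minor bookkeeping remark: one should note that constant components of $C_k$ contribute zero area and zero to $\int u_k^*\Omega$, so the convention in the definition of $\text{Area}_{u^*g}$ (which ignores constant components) causes no discrepancy. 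Once these identifications are in place the proof is the two displayed inequalities above.
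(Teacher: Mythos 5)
Your argument is correct and is essentially the paper's own proof: identify the $g_k$-area form on $C_{k,\text{Core}}$ with $(\Phi_k\circ u_k)^*\Omega$ using that $\bar J_k$ is $\Omega$-compatible on $\text{Core}(W)$ where $\Phi_k$ is the identity, then use global tameness of $\bar J_k$ by $\Phi_k^*\Omega$ to enlarge the domain of integration, and invoke the $\rho$-bound from Proposition~\ref{prop:sequenceOfCurves}. The extra remarks you add (explicitly noting $\Phi_k|_{\text{Core}(W)}=\mathrm{id}$, compatibility versus taming, and the convention for constant components) are correct clarifications of what the paper's proof implicitly uses.
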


\begin{proof}

On $C_{k,\text{Core}}$, the form $(\Phi_k \circ u_k)^*\Omega$ coincides with the two-form given by $g_k(\bar J_k-,-)$. The latter two-form is, for every $k$, the area form on $C_k$ with respect to the metric $g_k$ because the tangent planes of $u_k(C_k)$ are $J_k$-invariant.

It follows that
$$\text{Area}_{u_k^*g_k}(C_{k,\text{Core}}) = \int_{C_{k,\text{Core}}} (\Phi_k \circ u_k)^*\Omega.$$

Because $\bar J_k$ is tamed by $\Phi_k^*\Omega$, the two-form $(\Phi_k \circ u_k)^*\Omega$ is nonnegative on every tangent plane of $C_k$. Therefore, we have for every $k$ a bound of the form
\begin{align*}
    \text{Area}_{u_k^*g_k}(C_{k,\text{Core}}) &= \int_{C_{k,\text{Core}}} (\Phi_k \circ u_k)^*\Omega \\
    &\leq \int_{C_k} (\Phi_k \circ u_k)^*\Omega \\
    &\leq \rho.
\end{align*}

The second inequality follows from the integral bound given in Proposition \ref{prop:sequenceOfCurves}. 
\end{proof}

Our next bound is an area bound for the parts of the curves $\mathbf{u}_k$ in the regions 
$$[-k-8\epsilon_0,-k-3\epsilon_0] \times M \hookrightarrow W_k$$
and
$$[k+3\epsilon_0,k+8\epsilon_0] \times M \hookrightarrow W_k.$$

\begin{lem}
\label{lem:transitionAreaBounds} Write $C_{k,\text{Between}}$ for the preimage under $u_k$ of 
$$([-k-8\epsilon_0,-k-3\epsilon_0] \cup [k+3\epsilon_0,k+8\epsilon_0]) \times M.$$

Then there is a constant $\rho > 0$ independent of $k$ such that, 
$$\text{Area}_{u_k^*g_k}(C_{k,\text{Between}}) \leq \rho$$
for every $k$. 
\end{lem}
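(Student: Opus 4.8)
The claim is an area bound for the portions of the curves $\mathbf{u}_k$ lying in the two transition regions $[-k-8\epsilon_0,-k-3\epsilon_0]\times M$ and $[k+3\epsilon_0,k+8\epsilon_0]\times M$ inside $W_k$. The key observation is that these regions carry the structure of a realized Hamiltonian homotopy: by Lemma \ref{lem:transitionIsHomotopy}, the larger region $[-k-4\epsilon_0,k+4\epsilon_0]\times M \subset W_k$ is a realized Hamiltonian homotopy with $\hat\lambda = \lambda$ and $\hat\omega_k = \omega + \beta_k(t)\,d\lambda$, and the restriction of $(\bar J_k, g_k)$ is an adapted almost-Hermitian structure. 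So the transition regions are compact sub-intervals (of fixed length $5\epsilon_0$, independent of $k$) of a realized Hamiltonian homotopy. The strategy is therefore to apply Fish--Hofer's exponential area bound for realized Hamiltonian homotopies, Theorem \ref{thm:fishHoferHomotopyAreaBound}, to the restriction of $\mathbf{u}_k$ to $u_k^{-1}\big([-k-8\epsilon_0,-k-3\epsilon_0]\times M\big)$ and to $u_k^{-1}\big([k+3\epsilon_0,k+8\epsilon_0]\times M\big)$.

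First I would set up the restriction properly. Strictly, Theorem \ref{thm:fishHoferHomotopyAreaBound} wants a curve whose image has $a$-projection equal to the whole interval $I=[b_0,b_1]$ with boundary mapping into the two ends $\{b_0,b_1\}\times M$. To arrange this, one passes to regular values of $a\circ u_k$ near the endpoints (using that $\mathcal{R}$ has full measure) and restricts $\mathbf{u}_k$ to the preimage of a slightly shrunk compact interval, say $[-k-8\epsilon_0 + \delta, -k-3\epsilon_0]$ intersected appropriately; alternatively one invokes the standard convention (as \cite{FishHoferFeral} does) that the area bound holds for the preimage of any compact sub-interval, with the constant depending only on $M$, $\eta$, the adapted data, and the interval length. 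Since the interval length here is exactly $5\epsilon_0$ for every $k$, the exponential factor $e^{\kappa(b_1-b_0)}$ is a fixed constant independent of $k$. The only remaining $k$-dependent input to Theorem \ref{thm:fishHoferHomotopyAreaBound} is $E_0 = \inf\{E_{\hat\lambda}(u_k,b_0), E_{\hat\lambda}(u_k,b_1)\}$ and the $\hat\omega$-energy $E_{\hat\omega}(u_k)$ of the restricted curve.

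The substantive step is therefore to bound these two energies uniformly in $k$. For the $\hat\omega$-energy: on the transition region $\hat\omega_k = \omega + \beta_k(a)\,d\lambda$, so $E_{\hat\omega}$ of the restricted curve equals $\int u_k^*\omega + \int u_k^*(\beta_k(a)\,d\lambda)$; one controls the first term because $u_k^*\omega \geq 0$ pointwise (Lemma \ref{lem:omegaNonnegative}, valid since the restriction of $(\bar J_k,g_k)$ is adapted there) and $\int_{C_k}u_k^*\omega$ is itself dominated by the total $\Omega$-area bound $\rho$ of Proposition \ref{prop:sequenceOfCurves} — indeed $\int u_k^*\omega \le \text{Area}_{u_k^*g_k}$ on regions where $\omega$ bounds the area form, or more directly via Lemma \ref{lem:uniformOmegaHatBounds}(3) which gives a uniform comparison between $\hat\omega_k(\cdot,\bar J_k\cdot)$ and $\|\cdot\|^2_{g_k}$ on $\ker(da\wedge\lambda)$, hence $E_{\hat\omega}(\text{restriction}) \le \kappa_0 \cdot \text{Area}_{u_k^*g_k}(C_{k,\text{Between}})$ — but that would be circular, so I would instead bound $E_{\hat\omega}$ by splitting the area into the $da\wedge\lambda$ part and the $\hat\omega$ part and using that $\hat\omega_k$ is a closed form on slices whose integral over the curve is controlled. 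Cleanly: $\int u_k^*\hat\omega_k = \int u_k^*\omega + \int u_k^* d(\beta_k(a)\lambda)$, the first term is $\le$ the total $\omega$-energy which is $\le \rho$ (by positivity and the ambient $\Omega$-bound, since near these regions $(\Phi_k\circ u_k)^*\Omega$ dominates $u_k^*\omega$), and the second is an integral of an exact form whose primitive $\beta_k(a)\lambda$ has $C^0$-norm bounded by a fixed constant times the $g_k$-length of $u_k(\partial(\text{restriction}))$, which one controls by the co-area formula combined with the $\lambda$-energy near the ends. For the $E_0$ term one uses that near $a = \pm(k+3\epsilon_0)$, the adjacent region $[-k-2\epsilon_0,k+2\epsilon_0]\times M$ is a genuine $\eta$-adapted cylinder where $\hat\omega_k=\omega$, and a subsequent lemma (foreshadowed as Lemma \ref{lem:lambdaTrimBounds}/\ref{lem:lambdaTrimBounds}, the ``$\lambda$-energy near the boundary of the neck'' estimate) bounds $E_\lambda(\mathbf{u}_k,t)$ for $t$ within distance $1$ of the ends by a constant depending only on the homology class; alternatively one bounds $E_{\hat\lambda}(u_k, b_1)$ directly at the outer end $b_1 = -k-3\epsilon_0$ where the geometry is $\Omega$-compatible, so $E_{\hat\lambda}$ there is controlled by $\langle\Omega,A\rangle = \rho$. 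The main obstacle is this last point — getting a uniform (in $k$) bound on $E_{\hat\lambda}$ at one endpoint of each transition region — since the whole difficulty of the paper is precisely that $\lambda$-energy is not a priori controlled in the non-contact setting; the resolution is that we only need it at an endpoint adjacent to the $\Omega$-compatible core region, where the standard homological area identity applies. Once both energies are bounded by $\rho$, Theorem \ref{thm:fishHoferHomotopyAreaBound} with the fixed interval length $5\epsilon_0$ immediately yields $\text{Area}_{u_k^*g_k}(C_{k,\text{Between}}) \le \rho$, completing the proof.
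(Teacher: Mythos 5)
Your proposal routes this lemma through the exponential area bound for realized Hamiltonian homotopies, Theorem~\ref{thm:fishHoferHomotopyAreaBound}, but that theorem requires a uniform bound on $E_0 = \inf\{E_{\hat\lambda}(u_k,b_0),E_{\hat\lambda}(u_k,b_1)\}$ at an endpoint of each transition interval, and neither way you propose to obtain it works. Invoking Lemma~\ref{lem:lambdaTrimBounds} is anachronistic: in the paper that lemma is proved \emph{after} the present one, and its proof uses essentially the same key input (a uniform lower bound on $\beta_k'$ on a fixed sub-interval, fed through the co-area formula), so leaning on it here begs the question. More seriously, your ``alternatively'' claim is false: the $\lambda$-energy at a slice, $\int_{(a\circ u_k)^{-1}(b)}u_k^*\lambda$, is a line integral over a $1$-cycle, not a homological quantity, and it is not controlled by $\langle\Omega,A\rangle$ just because $\bar J_k$ is $\Omega$-compatible near that slice --- this is exactly the lack of a priori $\lambda$-energy control that makes the non-contact-type setting hard. (Also, at the endpoint $b_1 = -k-3\epsilon_0$ you cite, the structure is $J_{\text{Between},-}$, which is not $\Omega$-compatible.) The $\hat\omega$-energy bound you sketch via positivity and the ambient $\Omega$-bound does work; the $E_0$ endpoint bound is the genuine gap.

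The paper's proof avoids Theorem~\ref{thm:fishHoferHomotopyAreaBound} entirely via a short pointwise self-absorption argument. On $C_{k,\text{Between}}$ the $g_k$-area form equals $u_k^*(\omega + \beta_k(a)\,d\lambda + \theta_k(a)\,da\wedge\lambda)$, while $(\Phi_k\circ u_k)^*\Omega = u_k^*(\omega + \beta_k(a)\,d\lambda + \beta_k'(a)\,da\wedge\lambda)$ since $h_k = \beta_k$ on this region. The discrepancy is $u_k^*\big(\chi_k(a)(1-\beta_k'(a))\,da\wedge\lambda\big)$, and because $\beta_k' \geq \delta > 0$ uniformly on the transition intervals (and $\theta_k \leq 2$), one has $\chi_k(1-\beta_k') \leq (1-\tfrac{\delta}{2})\theta_k$ pointwise, so the discrepancy integral is at most $(1-\tfrac{\delta}{2})\cdot\text{Area}_{u_k^*g_k}(C_{k,\text{Between}})$. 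Absorbing it yields $\tfrac{\delta}{2}\,\text{Area}_{u_k^*g_k}(C_{k,\text{Between}}) \leq \int_{C_k}(\Phi_k\circ u_k)^*\Omega \leq \rho$. This needs no $\lambda$-energy control, no exponential estimate, and no realized-Hamiltonian-homotopy machinery --- just the observation that $\beta_k'$ is uniformly bounded below on a region of fixed width, an observation which, incidentally, is also the engine of the later Lemma~\ref{lem:lambdaTrimBounds}. You should try the direct comparison rather than the exponential bound for this lemma.
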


\begin{proof}
Recall the function $\chi_*$ taking values in $[0,1]$ from Section \ref{subsubsec:neckStretchingBetween}. Define for any $L > 0$ a smooth function
$$\chi_L(t) = \chi_*(t + L).$$

The area-form of the metric $u_k^*g_k$ on $C_{k,\text{Between}}$ is equal to the pullback 
$$u_k^*(\omega + \beta_k(a)d\lambda + \theta_k(a)(da \wedge \lambda)$$
where $\theta_k$ is equal to $\chi_k + (1 - \chi_k)\beta_k'$. 

On the other hand, the form $(\Phi_k \circ u_k)^*\Omega$ is equal to 
$$u_k^*(\omega + \beta_k(a)d\lambda + \beta_k'(a)(da \wedge \lambda).$$

We conclude that
\begin{equation} \label{eq:transitionAreaBounds1}
\begin{split}
    \text{Area}_{u_k^*g_k}(C_{k,\text{Between}}) &= \int_{C_{k,\text{Between}}} u_k^*(\omega + \beta_k(a)d\lambda + \theta_k(a)(da \wedge \lambda) \\
    &= \int_{C_{k,\text{Between}}} (\Phi_k \circ u_k)^*\Omega + \int_{C_{k,\text{Between}}} u_k^*(\chi_k(a)(1 - \beta_k'(a))(da \wedge \lambda)) \\
    &\leq \int_{C_k} (\Phi_k \circ u_k)^*\Omega + \int_{C_{k,\text{Between}}} u_k^*(\chi_k(a)(1 - \beta_k'(a))(da \wedge \lambda)).
\end{split}
\end{equation}

The last inequality follows from the fact $\bar J_k$ is tamed by $\Phi_k^*\Omega$, which implies that $(\Phi_k \circ u_k)^*\Omega$ is nonnegative on every tangent plane of $C_k$.  

The integral bound given in Proposition \ref{prop:sequenceOfCurves} gives a bound $\rho$ on the integral of $(\Phi_k \circ u_k)^*\Omega$ over $C_k$ which is independent of $k$. 

It remains to bound 
$$\int_{C_{k,\text{Between}}} u_k^*(\chi_k(a)(1 - \beta_k'(a))(da \wedge \lambda)).$$

Recall that $\beta_k'$ decreases monotonically from $1$ to $0$ on the interval $[-k-8\epsilon_0, -k-2\epsilon_0]$ and increases monotonically from $0$ to $1$ on the interval $[k+2\epsilon_0, k+8\epsilon_0]$. 

It follows that $\beta_k' \leq 1$ and that there is some constant $\delta \in (0, 1)$ such that
$$\beta_k'(t) \geq \delta$$
for $t$ in either $[-k-8\epsilon_0, -k-3\epsilon_0]$ or $[k+3\epsilon_0, k+8\epsilon_0]$. 

On this same interval, it then follows that
$$\theta_k = \chi(1 - \beta_k') + \beta_k' \leq 2$$
and
\begin{equation}\label{eq:transitionAreaBounds2}
\begin{split}
    \chi_k(1 - \beta_k') &= \theta_k - \beta_k' \\
    &\leq \theta_k - \delta \\
    &= \theta_k(1 - \delta\theta_k^{-1}) \\
    &\leq \theta_k(1 - \frac{\delta}{2}).
\end{split}
\end{equation}

We also observe that, by definition, the pullbacks of the two-forms $da \wedge \lambda$ and $\omega + \beta_k(a)$ are nonnegative on the tangent planes of $C_k$. 

It follows using (\ref{eq:transitionAreaBounds1}), (\ref{eq:transitionAreaBounds2}), the integral bound in Proposition \ref{prop:sequenceOfCurves} and rearranging that there is a $k$-independent constant $\rho > 0$ such that
\begin{align*}
    \frac{\delta}{2}\text{Area}_{u_k^*g_k}(C_{k,\text{Between}}) &\leq \rho.
\end{align*}

We conclude that
$$\text{Area}_{u_k^*g_k}(C_{k,\text{Between}}) \leq 2\delta^{-1}\rho$$
as desired.
\end{proof}

Recall that $W_k$ contains a cylindrical region of the form 
$$[-k - 3\epsilon_0, k+3\epsilon_0] \times M$$
where $\epsilon_0 > 0$ is some very small constant. Moreover, the almost-complex structure $\bar J_k$ is adapted with respect to the ``realized Hamiltonian homotopy'' structure (see Definitions \ref{defn:realizedHamiltonianHomotopy} and \ref{defn:homotopyadaptedJ}, as well as Lemma \ref{lem:transitionIsHomotopy})).

We define the $\lambda$-energy and $\omega$-energy for our curves $\mathbf{u}_k$ in the expected manner. Write $a: \mathbb{R} \times M \to \mathbb{R}$ for the coordinate projection. Then the composition $a \circ u_k$ is well-defined as a smooth map
$$u_k^{-1}([-k-8\epsilon_0, k+8\epsilon_0] \times M) \to [-k-8\epsilon_0, k+8\epsilon_0].$$

\begin{defn}
For any $k$, define
$$E_{\hat\omega}(u_k) = \int_{u_k^{-1}([-k-3\epsilon_0,k+3\epsilon_0] \times M)} u_k^*(\omega + d(\beta_k(a)\lambda))$$
and
$$E_\omega(u_k) = 
\int_{u_k^{-1}([-k-2\epsilon_0, k+2\epsilon_0] \times M)} u_k^*\omega.$$

For any $t$ that is a regular value of $a \circ u_k$, define
$$E_\lambda(u_k, t) = \int_{u_k^{-1}(\{t\} \times M)} u_k^*\lambda.$$
\end{defn}

The following lemma is essentially given in \cite[Lemma $3.7$]{FishHoferFeral}, but we write down a proof for the sake of convenience. 

\begin{lem} \label{lem:lambdaTrimBounds}
There is a constant $\rho > 0$ independent of $k$ such that for every $k$, there are real numbers
$$t_{k,-}, t_{k,+} \in (2\epsilon_0, 3\epsilon_0)$$
such that $-k - t_{k,-}$ and $k+t_{k,+}$ are regular values of $a \circ u_k$ and one has the energy bounds
$$E_\lambda(u_k, -k-t_{k,-})\leq \rho$$
and
$$E_\lambda(u_k, k+t_{k,+})\leq \rho$$
\end{lem}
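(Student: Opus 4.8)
The plan is to bound $E_\lambda(u_k, \cdot)$ at \emph{some} slice in each of the two transition windows $[-k-3\epsilon_0,-k-2\epsilon_0]$ and $[k+2\epsilon_0,k+3\epsilon_0]$, by combining the area bound of Lemma \ref{lem:transitionAreaBounds} with the co-area formula. The key point is that on these windows we have good control: by the construction in Section \ref{subsubsec:neckStretchingBetween} the metric $g_k$ on $C_{k,\text{Between}}$ has area form $u_k^*(\omega + \beta_k(a)d\lambda + \theta_k(a)(da\wedge\lambda))$ with $\theta_k \geq \delta$ on these windows (here $\delta$ is the constant from the proof of Lemma \ref{lem:transitionAreaBounds}, coming from $\beta_k' \geq \delta$ there). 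Since $u_k^*\lambda$ restricted to a slice $u_k^{-1}(\{t\}\times M)$ is controlled by the part of the area form transverse to the slice, and $u_k^*(da\wedge\lambda)$ is exactly that transverse part up to the factor $\theta_k \geq \delta$, the co-area formula gives
$$\delta \int_{2\epsilon_0}^{3\epsilon_0} E_\lambda(u_k, \pm(k+s))\,ds \leq \int_{C_{k,\text{Between}}} u_k^*(\theta_k(a)\,da\wedge\lambda) \leq \text{Area}_{u_k^*g_k}(C_{k,\text{Between}}) \leq \rho,$$
where the last step uses that $u_k^*(\omega + \beta_k(a)d\lambda)$ and $u_k^*(\theta_k\,da\wedge\lambda)$ are both pointwise nonnegative on tangent planes of $C_k$ (nonnegativity of the first is Lemma \ref{lem:omegaNonnegative} combined with Lemma \ref{lem:uniformOmegaHatBounds}; nonnegativity of the second holds since $J_k$ is adapted and $\theta_k > 0$).

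Concretely, the steps I would carry out are: (1) quote the co-area formula in the form of \cite[Lemma $4.13$]{FishHoferFeral}, applied to the restriction of $u_k$ to $u_k^{-1}([k+2\epsilon_0,k+3\epsilon_0]\times M)$ and symmetrically on the negative side — this requires knowing $a\circ u_k$ is proper on this region, which follows since $C_k$ is closed and $u_k$ maps into $W_k$; (2) identify the integrand $u_k^*(\theta_k(a)\,da\wedge\lambda)$ with $\theta_k(t)\,E_\lambda(u_k,t)\,dt$ integrated over $t$; (3) bound $\theta_k \geq \delta$ on the relevant interval and bound the total integral by $\text{Area}_{u_k^*g_k}(C_{k,\text{Between}})$, invoking the pointwise nonnegativity of the other two pieces of the area form so that discarding them only decreases the integral; (4) conclude $\int_{2\epsilon_0}^{3\epsilon_0} E_\lambda(u_k,\pm(k+s))\,ds \leq \delta^{-1}\rho$; (5) by the mean value theorem (or rather: the average over $[2\epsilon_0,3\epsilon_0]$ is $\leq \delta^{-1}\rho/\epsilon_0$) there exists $t_{k,+}\in(2\epsilon_0,3\epsilon_0)$ with $E_\lambda(u_k,k+t_{k,+}) \leq \delta^{-1}\epsilon_0^{-1}\rho$, and we may further shrink to a full-measure subset to ensure $k+t_{k,+}$ is a regular value of $a\circ u_k$ by Sard; symmetrically for $t_{k,-}$. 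Setting $\rho$ to be the final constant $\delta^{-1}\epsilon_0^{-1}\rho$ (renamed, per the convention that $\rho$ absorbs such constants) finishes it.

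The main obstacle I anticipate is purely bookkeeping: making sure the co-area formula is applied on a region where $a\circ u_k$ genuinely takes the interval $[k+2\epsilon_0,k+3\epsilon_0]$ as its image and where the boundary of the restricted sub-surface sits over the endpoints, so that the one-dimensional integral $\int E_\lambda(u_k,t)\,dt$ is literally the integral of $u_k^*(da\wedge\lambda)$ over that piece — this is the content of the cited \cite[Lemma $4.13$]{FishHoferFeral} but requires care about which sub-surface of $C_k$ one restricts to. A secondary subtlety is verifying $\theta_k \geq \delta$ on $[k+2\epsilon_0,k+3\epsilon_0]$: from the definition $\theta_k = \chi_k + (1-\chi_k)\beta_k'$ and $\chi_k \in [0,1]$, we get $\theta_k \geq \min(1, \beta_k')$, and $\beta_k'$ is bounded below by a positive constant precisely on $[k+3\epsilon_0, k+8\epsilon_0]$ rather than all the way down to $k+2\epsilon_0$ — so I would actually run the argument on the window $[3\epsilon_0, 8\epsilon_0]$ (or wherever $\beta_k'$ is bounded below) and then note $(2\epsilon_0,3\epsilon_0)$ must be replaced accordingly, OR observe that on $(2\epsilon_0,3\epsilon_0)$ we have $\chi_k \equiv 1$ so $\theta_k \equiv 1 \geq \delta$ trivially; checking the exact support conventions for $\chi$ from Section \ref{subsubsec:neckStretchingBetween} (where $\chi(t)=1$ for $t<3\epsilon_0$) confirms the latter, which is cleaner. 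With that observation the bound $\theta_k = 1$ on the whole window $(2\epsilon_0,3\epsilon_0)$ holds, and the argument goes through with $\delta = 1$.
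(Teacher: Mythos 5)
There is a genuine gap. Your argument feeds the co-area identity over the window $s\in(2\epsilon_0,3\epsilon_0)$ (i.e.\ slices $a=\pm(k+s)$) into the area bound of Lemma~\ref{lem:transitionAreaBounds}, but that lemma only bounds the area of
$$C_{k,\text{Between}} = u_k^{-1}\bigl(([-k-8\epsilon_0,-k-3\epsilon_0]\cup[k+3\epsilon_0,k+8\epsilon_0])\times M\bigr),$$
which is disjoint from the preimage of $(\pm\{k+s\}\times M)_{s\in(2\epsilon_0,3\epsilon_0)}$ except at the single endpoint $s=3\epsilon_0$. So the inequality
$$\delta\int_{2\epsilon_0}^{3\epsilon_0} E_\lambda(u_k,\pm(k+s))\,ds \leq \text{Area}_{u_k^*g_k}(C_{k,\text{Between}})$$
is not justified: the left side samples a region you have no area bound for yet. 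The area bound that would cover those slices is Corollary~\ref{cor:transitionAreaBounds}, but that corollary is proved \emph{after} this lemma and relies on it through the exponential estimate of Theorem~\ref{thm:fishHoferHomotopyAreaBound}, so quoting it here would be circular. Your two proposed patches don't close this: moving the window to $[3\epsilon_0,8\epsilon_0]$ lands in $C_{k,\text{Between}}$ but produces $t_{k,\pm}\in(3\epsilon_0,8\epsilon_0)$, outside the range $(2\epsilon_0,3\epsilon_0)$ demanded by the statement; and the (correct) observation that $\theta_k\equiv 1$ on $(2\epsilon_0,3\epsilon_0)$ is moot once one realizes $C_{k,\text{Between}}$ does not reach there.

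The paper bypasses Lemma~\ref{lem:transitionAreaBounds} entirely and reaches back to the raw bound $\int_{C_k}(\Phi_k\circ u_k)^*\Omega\leq\rho$ from Proposition~\ref{prop:sequenceOfCurves}. The trick is the choice of interval $I_k=[-k-3\epsilon_0,-k-11\epsilon_0/4]$ (and its mirror $[k+11\epsilon_0/4,k+3\epsilon_0]$): by Lemma~\ref{lem:tameJ1}, $h_k=\beta_k$ \emph{there}, so on $I_k\times M$ one has $(\Phi_k)^*\Omega = \omega + \beta_k(a)d\lambda + \beta_k'(a)\,da\wedge\lambda$ with the first two pieces pointwise nonnegative on $J_k$-complex lines (Lemma~\ref{lem:uniformOmegaHatBounds}); and $\beta_k'(t)=\beta_*'(t+k)$ is bounded below by a $k$-independent $\delta>0$ on $I_k$ since $[-3\epsilon_0,-11\epsilon_0/4]$ is compact inside the interval $[-4\epsilon_0,-2\epsilon_0)$ where $\beta_*'>0$. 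This yields $\int_{u_k^{-1}(I_k\times M)}u_k^*(da\wedge\lambda)\leq\delta^{-1}\rho$ directly from the primary $\Omega$-integral bound, and the co-area formula plus Sard finds the regular slice $t_{k,-}\in(11\epsilon_0/4,3\epsilon_0)\subset(2\epsilon_0,3\epsilon_0)$. In your framing, this is the correct repair of your ``$\delta$ from $\beta_k'$'' thought: the window that makes $\beta_k'\geq\delta$ available while staying within $(2\epsilon_0,3\epsilon_0)$ is $[11\epsilon_0/4,3\epsilon_0]$, and the bound you need to invoke is the one from Proposition~\ref{prop:sequenceOfCurves}, not Lemma~\ref{lem:transitionAreaBounds}.
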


\begin{proof}
Define the interval 
$$I_k = [-k - 3\epsilon_0, -k - 11\epsilon_0/4].$$

Recall that the function $h_k(t)$ used to define the diffeomorphism $\Phi_k$ from Proposition \ref{prop:tameJ} is equal to $\beta_k(t)$ on $I_k$. 

Set $$\delta = \inf_{t \in I_k} \beta_k(a) > 0.$$ 

By construction, the functions 
$$a \mapsto \beta_k(k + a)$$
are positive and agree on the interval $[-8\epsilon_0, -2\epsilon_0]$ for every $k$, so $\delta$ can be taken to be independent of $k$. 

Recall that on $I_k \times Y$, one can write
$$\Phi_k^*\Omega = \omega + d(\beta_k(a))\lambda = \omega + \beta_k(a)d\lambda + \beta_k'(a)da \wedge \lambda.$$

This follows from the fact that
$$\Phi_k^*\Omega = \omega + d(h_k(a))\lambda = \omega + h_k(a)d\lambda + h_k'(a)da \wedge \lambda$$
for a smooth function $h_k$ which is equal to $\beta_k$ on $I_k$.

By our construction, the two-forms
$$da \wedge \lambda$$
and
$$\omega + \beta_k(a)d\lambda$$
are non-negative on $\bar J_k$-complex lines, so the pullbacks
$$u_k^*(da \wedge \lambda)$$
and
$$u_k^*(\omega + \beta_k(a)d\lambda)$$
are non-negative on the tangent planes of $C_k$. 

It follows that
\begin{align*}
    \int_{u_k^{-1}(I_k \times M)} u_k^*(da \wedge \lambda) &\leq \delta^{-1}\int_{u_k^{-1}(I_k \times M)} u_k^*(\beta_k'(a)da \wedge \lambda) \\
    &\leq \delta^{-1}\int_{u_k^{-1}(I_k \times M)} u_k^*(\omega + \beta_k(a)d\lambda + \beta_k'(a)da \wedge \lambda + \omega) \\
    &= \delta^{-1}\int_{u_k^{-1}(I_k \times M)} (\Phi_k \circ u_k)^*\Omega \\
    &\leq \delta^{-1}\int_{C_k} (\Phi_k \circ u_k)^*\Omega \\
    &\leq \delta^{-1}\rho.
\end{align*}

The last inequality follows from the integral bound given by Proposition \ref{prop:sequenceOfCurves}. 

Now by the co-area formula (see \cite[Lemma $4.13$]{FishHoferFeral} for a precise version for this scenario), one has
$$\int_{u_k^{-1}(I_k \times Y)} da \wedge \lambda = \int_{I_k} E_\lambda(u_k, a) da.$$

It follows that there is \emph{some} $t_{k,-} \in (2\epsilon_0, 3\epsilon_0)$ such that
$$E_\lambda(u_k, k - t_{k,-}) \leq 100\delta^{-1}\epsilon_0^{-1}\rho.$$

The derivation of a number $t_{k,+}$ is identical, and so the lemma is proved.
\end{proof}

We can also bound the $\hat\omega$-energy of $u_k$ uniformly in $k$ using the bound on the integrals of $(\Phi_k \circ u_k)^*\Omega$ from Proposition \ref{prop:sequenceOfCurves}. 

\begin{lem} \label{lem:sameSmallOmega}
There is a constant $\rho > 0$ independent of $k$ such that for every $k$,
$$E_{\hat\omega}(u_k) \leq \rho.$$
\end{lem}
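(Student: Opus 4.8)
The plan is to compare the $\hat\omega$-energy $E_{\hat\omega}(u_k) = \int_{u_k^{-1}([-k-3\epsilon_0,k+3\epsilon_0]\times M)} u_k^*(\omega + d(\beta_k(a)\lambda))$ directly with the integral $\int_{C_k}(\Phi_k\circ u_k)^*\Omega$, which is bounded by $\rho$ uniformly in $k$ by Proposition \ref{prop:sequenceOfCurves}. The crucial observation is that on the cylindrical region $[-k-4\epsilon_0, k+4\epsilon_0]\times M \subset W_k$, by Lemma \ref{lem:transitionIsHomotopy}, the form $\hat\omega_k = \omega + \beta_k(a)d\lambda = \omega + d(\beta_k(a)\lambda) - \beta_k'(a)\,da\wedge\lambda$; hmm, more precisely $d(\beta_k(a)\lambda) = \beta_k'(a)\,da\wedge\lambda + \beta_k(a)\,d\lambda$, so $\omega + d(\beta_k(a)\lambda) = (\omega + \beta_k(a)d\lambda) + \beta_k'(a)\,da\wedge\lambda$. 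On the other hand, inside $[-k-6\epsilon_0,k+6\epsilon_0]\times M$ the pulled-back symplectic form is $(\Phi_k)^*\Omega = \omega + \beta_k(a)d\lambda + h_k'(a)\,da\wedge\lambda$ (using that $(\Phi_k)^*\Omega$ restricted to the neck is $\omega + h_k(a)d\lambda + h_k'(a)da\wedge\lambda$ from the proof of Proposition \ref{prop:tameJ}). Since $\beta_k' \geq 0$ everywhere and $\beta_k' = 1 = h_k'$ outside a small set while $h_k' > 0$ is tiny (comparable to $L^{-2}$) where $\beta_k' = 0$, we do \emph{not} have $\hat\omega_k \leq (\Phi_k)^*\Omega$ pointwise; instead we should integrate and bound the discrepancy term.

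First I would write, using that $u_k^*(\omega + \beta_k(a)d\lambda)$ and $u_k^*(da\wedge\lambda)$ are both nonnegative on the tangent planes of $C_k$ (by construction of $\bar J_k$, as noted in Lemma \ref{lem:lambdaTrimBounds}),
\[
E_{\hat\omega}(u_k) = \int_{u_k^{-1}([-k-3\epsilon_0,k+3\epsilon_0]\times M)} u_k^*(\omega+\beta_k(a)d\lambda) + \int_{u_k^{-1}([-k-3\epsilon_0,k+3\epsilon_0]\times M)} u_k^*(\beta_k'(a)\,da\wedge\lambda).
\]
For the first integral, since $\omega + \beta_k(a)d\lambda = (\Phi_k)^*\Omega - h_k'(a)\,da\wedge\lambda \leq (\Phi_k)^*\Omega$ on this region (as $h_k' > 0$ and $u_k^*(da\wedge\lambda)\geq 0$), it is bounded by $\int_{C_k}(\Phi_k\circ u_k)^*\Omega \leq \rho$. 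For the second integral, I split the region $[-k-3\epsilon_0, k+3\epsilon_0]$ into the part where $\beta_k' $ may vanish, namely $[-k-2\epsilon_0, k+2\epsilon_0]$, and the two transition intervals $[-k-3\epsilon_0,-k-2\epsilon_0]$ and $[k+2\epsilon_0, k+3\epsilon_0]$. On the middle interval $\beta_k' \equiv 0$ so that contribution vanishes. On the transition intervals, $\beta_k'(a) \leq 1 = h_k'(a)$ there (recall $h_k = \beta_k$ near the endpoints and $h_k' \geq \beta_k'$ is not automatic — but on $[-k-3\epsilon_0,-k-11\epsilon_0/4]$ we have $h_k = \beta_k$ exactly by Lemma \ref{lem:tameJ1}, and on the remaining small piece we can use $\beta_k' \leq 1$ together with $h_k' \geq \tfrac12\kappa_*^{-1}L^{-2}$... ); cleaner is to bound $\int u_k^*(\beta_k'(a)da\wedge\lambda)$ over the transition intervals by $\int u_k^*(da\wedge\lambda)$ over $u_k^{-1}(([-k-3\epsilon_0,-k-2\epsilon_0]\cup[k+2\epsilon_0,k+3\epsilon_0])\times M)$, which is contained in $C_{k,\text{Between}}$ enlarged slightly, and hence bounded by $\text{Area}_{u_k^*g_k}$ of that region, which is $\leq \rho$ by (a minor variant of) Lemma \ref{lem:transitionAreaBounds} — or more directly, bound $\beta_k'(a)\,da\wedge\lambda \leq \beta_k'(a)\,da\wedge\lambda + (\omega+\beta_k(a)d\lambda) = (\Phi_k)^*\Omega|_{h_k' = \beta_k'\text{ here}}$...

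The main obstacle, and the step I would spend the most care on, is handling the term $\int u_k^*(\beta_k'(a)\,da\wedge\lambda)$ on precisely the region where $h_k'$ differs from $\beta_k'$, i.e. where $\beta_k'$ is small or zero but $h_k'$ is the tiny $L^{-2}$-order quantity. The resolution is that on the sub-region $[-k-2\epsilon_0,k+2\epsilon_0]$ where $\beta_k'\equiv 0$ the extra term is simply absent, so no comparison with $h_k'$ is needed; and on the transition intervals $[-k-3\epsilon_0,-k-2\epsilon_0]$, $[k+2\epsilon_0,k+3\epsilon_0]$ one has $\beta_k'\leq 1$ and moreover these lie inside a region where Lemma \ref{lem:transitionAreaBounds}'s argument (or Lemma \ref{lem:lambdaTrimBounds}'s co-area estimate giving $\int u_k^*(da\wedge\lambda) \leq \delta^{-1}\rho$ over $u_k^{-1}(I_k\times M)$ with $I_k$ near the transition) supplies a uniform bound on $\int u_k^*(da\wedge\lambda)$. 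Assembling: $E_{\hat\omega}(u_k) \leq \rho + \int_{u_k^{-1}([-k-3\epsilon_0,-k-2\epsilon_0]\cup[k+2\epsilon_0,k+3\epsilon_0]\times M)} u_k^*(da\wedge\lambda) \leq \rho + \rho' =: \rho''$, uniformly in $k$, which is the claim. I would double-check that $[-k-3\epsilon_0,-k-2\epsilon_0] \subset I_k = [-k-3\epsilon_0,-k-11\epsilon_0/4]$ fails — it does not, so I should instead use the interval $[-k-3\epsilon_0,-k-11\epsilon_0/4]$ from Lemma \ref{lem:lambdaTrimBounds} together with a second co-area/area estimate on $[-k-11\epsilon_0/4,-k-2\epsilon_0]$, or simply cite Lemma \ref{lem:transitionAreaBounds} whose region $[-k-8\epsilon_0,-k-3\epsilon_0]\cup[k+3\epsilon_0,k+8\epsilon_0]$ would need to be widened to include $[-k-3\epsilon_0,-k-2\epsilon_0]$; the area bound argument there goes through verbatim on the wider interval since $\beta_k'$ is still bounded below by a $k$-independent $\delta>0$ on $[-k-4\epsilon_0,-k-9\epsilon_0/4]$ after shrinking, so in the final write-up I would state Lemma \ref{lem:transitionAreaBounds} for the slightly larger region from the outset and then this lemma follows immediately.
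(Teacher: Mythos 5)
Your decomposition founders on an incorrect identity at its key step. On the neck one has $(\Phi_k)^*\Omega = \omega + h_k(a)\,d\lambda + h_k'(a)\,da\wedge\lambda$ (exactly as you recall parenthetically from the proof of Proposition \ref{prop:tameJ}), \emph{not} $\omega+\beta_k(a)\,d\lambda+h_k'(a)\,da\wedge\lambda$. Hence $(\Phi_k)^*\Omega - h_k'(a)\,da\wedge\lambda=\omega+h_k(a)\,d\lambda$, which differs from $\omega+\beta_k(a)\,d\lambda$ by $(h_k(a)-\beta_k(a))\,d\lambda$, so your pointwise inequality for the first integral is unjustified: $d\lambda$ has no sign on $\bar J_k$-complex lines (only a norm bound, Lemma \ref{lem:uniformOmegaHatBounds}), and the error cannot be absorbed by a $C^0$-bound-times-area estimate, since $|h_k-\beta_k|\lesssim k^{-1}$ but the area of $u_k^{-1}([-k-3\epsilon_0,k+3\epsilon_0]\times M)$ is not a priori uniformly bounded in $k$ --- the only available bounds are for fixed-size collars, or the exponential bound of Theorem \ref{thm:fishHoferHomotopyAreaBound}, which takes the $\hat\omega$-energy as input and is therefore circular here. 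Your treatment of the second term $\int u_k^*(\beta_k'(a)\,da\wedge\lambda)$ also has a gap: Lemma \ref{lem:transitionAreaBounds} does \emph{not} extend ``verbatim'' to an interval reaching $-k-2\epsilon_0$, because $\beta_k'$ vanishes there (smoothly, as $\beta_*\equiv 0$ on $[-2\epsilon_0,2\epsilon_0]$), so the lower bound $\delta>0$ driving that argument degenerates on the closed interval; and citing Corollary \ref{cor:transitionAreaBounds} instead would be circular, since that corollary is deduced using the very lemma you are proving.

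The fix is to keep the exterior derivative intact and apply Stokes once, which is the paper's proof: on $u_k^{-1}([-k-3\epsilon_0,k+3\epsilon_0]\times M)$ write
$$u_k^*\bigl(\omega + d(\beta_k(a)\lambda)\bigr) = (\Phi_k\circ u_k)^*\Omega + u_k^*\,d\bigl((\beta_k(a)-h_k(a))\lambda\bigr).$$
By the second bullet of Lemma \ref{lem:tameJ1}, $\beta_k\equiv h_k$ near $a=\pm(k+3\epsilon_0)$, so the primitive $(\beta_k-h_k)\lambda$ vanishes identically near the boundary of this region and the exact correction integrates to zero; in particular no separate estimate of $\int u_k^*(\beta_k'\,da\wedge\lambda)$ is needed --- in your splitting that term simply cancels against the $d\lambda$ discrepancy. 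What remains is
$$\int_{u_k^{-1}([-k-3\epsilon_0,k+3\epsilon_0]\times M)}(\Phi_k\circ u_k)^*\Omega \;\leq\; \int_{C_k}(\Phi_k\circ u_k)^*\Omega \;\leq\; \rho,$$
using that $(\Phi_k\circ u_k)^*\Omega$ is pointwise nonnegative on the tangent planes of $C_k$ (tameness, Proposition \ref{prop:tameJ}) together with the integral bound of Proposition \ref{prop:sequenceOfCurves}.
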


\begin{proof}
On $u_k^{-1}([-k-3\epsilon_0,k+3\epsilon_0] \times M)$, one has
$$(\Phi_k \circ u_k)^*\Omega = u_k^*(\omega + d(h_k(a)\lambda)$$
where we recall that $h_k$ is the function from Lemma \ref{lem:tameJ1} used to construct the map $\Phi_k$. 

The integral of this two-form over $u_k^{-1}([-k-3\epsilon_0,k+3\epsilon_0] \times M)$ is bounded by a $k$-independent constant $\rho > 0$ by combining the fact that $(\Phi_k \circ u_k)^*\Omega$ is nonnegative on the tangent planes of $C_k$, and the integral bound provided by Proposition \ref{prop:sequenceOfCurves}. 

To bound $E_{\hat\omega}(u_k)$, which is the integral of $u_k^*(\omega + d(\beta_k(a)\lambda))$ over the same domain, it therefore suffices to bound
$$\int_{u_k^{-1}([-k-3\epsilon_0,k+3\epsilon_0] \times M)} d( (\beta_k(a) - h_k(a))\lambda).$$

Note that $\beta_k(t) = h_k(t)$ for $t$ in a small neighborhood of $\{-k-3\epsilon_0,k+3\epsilon_0\}$. Therefore, by Stokes' theorem, it is immediate that this integral is equal to zero. 
\end{proof}

Using Lemma \ref{lem:outsideAreaBounds}, Lemma \ref{lem:transitionAreaBounds}, Lemma \ref{lem:lambdaTrimBounds}, Lemma \ref{lem:sameSmallOmega}, and the exponential area bound for realized Hamiltonian homotopies in Theorem \ref{thm:fishHoferHomotopyAreaBound}, we obtain the following area bound for portions of the curves $C_k$ mapping into $W \setminus (-k+\ell, k-\ell) \times M$.

Notice that it is reminiscent of the type of area bound required for exhaustive Gromov compactness (Theorem \ref{thm:exhaustiveGromov}), which is precisely what we will use it for. 

\begin{cor} \label{cor:transitionAreaBounds}
There is a constant $\rho_\ell > 0$ independent of $k$ such that the following holds. For any constant $\ell \in (-8\epsilon_0, 2k+8\epsilon_0)$, write 
$$C_k^\ell = u_k^{-1}(W_k^\ell).$$

Then there is a constant $\rho_\ell > 0$ independent of $k$, but possibly depending on $\ell$ such that 
$$\text{Area}_{u_k^*g_k}(C_k^\ell) \leq \rho_\ell.$$
\end{cor}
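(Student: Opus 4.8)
\textbf{Proof plan for Corollary \ref{cor:transitionAreaBounds}.}

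The plan is to decompose $C_k^\ell = u_k^{-1}(W_k^\ell)$ into pieces on which we already have area control, together with a single remaining piece to which we apply the exponential area bound for realized Hamiltonian homotopies (Theorem \ref{thm:fishHoferHomotopyAreaBound}). Recall $W_k^\ell = W_{k,-}^\ell \cup W_{k,+}^\ell$, and each $W_{k,\pm}^\ell$ is $\text{Core}(W_\pm)$ together with a cylindrical collar of length $\ell + 8\epsilon_0$ near the end $\mp k$ of the neck. So $W_k^\ell$ is contained in the union of $\text{Core}(W)$, the two ``between'' regions $([-k-8\epsilon_0,-k-3\epsilon_0] \cup [k+3\epsilon_0,k+8\epsilon_0]) \times M$, and the two cylindrical pieces $[-k-3\epsilon_0, -k+\ell] \times M$ and $[k-\ell, k+3\epsilon_0] \times M$ (assuming, as we may, that $\ell$ is large enough that these pieces overlap the ``between'' regions; if $\ell$ is small the cylindrical pieces are even shorter and the argument only gets easier).

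First I would bound the area of $u_k^{-1}(\text{Core}(W))$ by $\rho$ using Lemma \ref{lem:outsideAreaBounds}, and the area of $C_{k,\text{Between}}$ by $\rho$ using Lemma \ref{lem:transitionAreaBounds}. It then remains to bound the area of the preimage of the two collar cylinders, say $N_k^\ell = u_k^{-1}\big(([-k-3\epsilon_0, -k+\ell] \cup [k-\ell, k+3\epsilon_0]) \times M\big)$. I would treat the ``$-$'' side; the ``$+$'' side is identical. By Lemma \ref{lem:lambdaTrimBounds} there is a regular value $-k - t_{k,-}$ with $t_{k,-} \in (2\epsilon_0, 3\epsilon_0)$ and $E_\lambda(u_k, -k - t_{k,-}) \leq \rho$. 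Trim the curve to the part whose $a$-coordinate lies in $[-k - t_{k,-}, -k + \ell]$; one has to be a little careful that this is a legitimate restriction (boundary mapping into the two end slices, no marked/nodal points on the boundary) — this is handled exactly as in the standard target-local trimming, choosing the slices to be regular values, and the boundary contribution of $u_k$ that was already inside is harmless since we only want an upper bound on area and $u_k^*(da\wedge\lambda)$, $u_k^*(\omega + \beta_k(a)d\lambda)$ are pointwise nonnegative. On this region the almost-Hermitian structure $(\bar J_k, g_k)$ restricts to an $\hat\eta$-adapted structure for the realized Hamiltonian homotopy with $\hat\lambda = \lambda$, $\hat\omega_k = \omega + \beta_k(t)d\lambda$, by Lemma \ref{lem:transitionIsHomotopy}. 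The height of this region is $t_{k,-} + \ell \leq 3\epsilon_0 + \ell$, which is bounded in terms of $\ell$ only. So Theorem \ref{thm:fishHoferHomotopyAreaBound} applies with $E_0 \leq E_\lambda(u_k, -k-t_{k,-}) \leq \rho$ and $E_{\hat\omega}(u_k) \leq \rho$ by Lemma \ref{lem:sameSmallOmega}, yielding an area bound of the form $\kappa(\rho + \rho)(e^{\kappa(\ell + 3\epsilon_0)} + 1) + \rho$, which is a constant $\rho_\ell$ depending only on $\ell$ and the fixed geometric data, and crucially independent of $k$ since the homotopy structure on the collar is translation-invariant near the relevant end and $\kappa$ may be chosen uniformly.

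Summing the three contributions — $\text{Core}(W)$, the ``between'' regions, and the two trimmed collars — and noting that $C_k^\ell$ is covered by these (up to the harmless overlaps, which only inflate the bound by a bounded factor), gives $\text{Area}_{u_k^*g_k}(C_k^\ell) \leq \rho_\ell$ as claimed, with $\rho_\ell$ increasing in $\ell$ as required. The main obstacle I anticipate is bookkeeping the trimming carefully so that Theorem \ref{thm:fishHoferHomotopyAreaBound} genuinely applies: one needs the trimmed subsurface to have its $a$-image equal to the full interval $[b_0, b_1]$ and its boundary mapping into $\{b_0, b_1\} \times M$, which requires choosing the cut slices to be regular values of $a \circ u_k$ (available by Sard and by Lemma \ref{lem:lambdaTrimBounds} for the inner slice), and discarding the components of the preimage that do not reach all the way across — those components are entirely governed by the ``between'' or ``core'' estimates already in hand, or are closed and contribute only via $E_\omega$, so they cause no trouble. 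The exponential factor $e^{\kappa \ell}$ is perfectly acceptable here since we only need $k$-independence, not $\ell$-independence.
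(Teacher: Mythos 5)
Your plan is exactly the paper's intended argument: the corollary is stated there as a direct consequence of Lemmas \ref{lem:outsideAreaBounds}, \ref{lem:transitionAreaBounds}, \ref{lem:lambdaTrimBounds}, \ref{lem:sameSmallOmega} and the exponential area bound of Theorem \ref{thm:fishHoferHomotopyAreaBound}, combined precisely as you combine them (core piece, between piece, and the collar cylinders handled via a controlled-$\lambda$-energy slice plus the $\hat\omega$-bound, with $\kappa$ uniform in $k$ by translation invariance). One small simplification for your bookkeeping worry: since each $\mathbf{u}_k$ is connected and its image crosses the entire neck, the trimmed preimage of $[-k-t_{k,-},-k+\ell]\times M$ already has $a\circ u_k$-image equal to the full interval and boundary mapping to the two end slices (after perturbing the cut levels to regular values avoiding marked and nodal points), so Theorem \ref{thm:fishHoferHomotopyAreaBound} applies to the whole compact, possibly disconnected, trimmed surface at once and no components need to be discarded or reassigned to the core/between estimates.
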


Next, we will prove a uniform bound in $k$ on the number of connected components of the domain $C_k$ of the pseudoholomorphic curve $\mathbf{u}_k$ on which $u_k$ is non-constant.

\begin{prop}
\label{prop:ncConnectedComponentsBound} There is a constant $\rho > 0$ independent of $k$ such that, for any $k$, $C_k$ has at most $\rho$ connected components on which $u_k$ is non-constant. 
\end{prop}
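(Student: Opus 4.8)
The plan is to bound the number of non-constant components of $C_k$ using the monotonicity theorem (Theorem \ref{thm:monotonicity}) together with the uniform area bounds already established. The key point is that each non-constant component must carry a definite amount of area, and the total area is uniformly bounded.

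First I would observe that the curves $\mathbf{u}_k$ cross the cylindrical neck: by Proposition \ref{prop:sequenceOfCurves}, the image $u_k(C_k)$ intersects both components of $\text{Core}(W) \subset W_k$, hence each $\mathbf{u}_k$ has at least one non-constant component. More importantly, I want to control \emph{all} of them. The strategy is to split the domain according to which part of $W_k$ the component maps into. A non-constant component $C_k'$ whose image meets $\text{Core}(W)$ contributes a definite amount of area in $\text{Core}(W)$: since $\text{Core}(W)$ is a fixed compact almost-Hermitian manifold independent of $k$ (with fixed structure $(J_{\text{Core}}, g_{\text{Core}})$ away from a neighborhood of the collar), and the restriction of $\mathbf{u}_k$ to $C_k'$ is a non-constant, hence generally-immersed once we discard any further constant sub-components, pseudoholomorphic curve, the monotonicity theorem gives a lower bound $\text{Area}_{u_k^*g_k}(C_k' \cap u_k^{-1}(\text{Core}(W))) \geq \hbar_0$ for some $\hbar_0 > 0$ depending only on $\text{Core}(W)$, $g_{\text{Core}}$, $J_{\text{Core}}$ --- apply it at a point of $C_k'$ mapping into the interior of $\text{Core}(W)$ with a fixed small radius $r$ (shrinking $\text{Core}(W)$ slightly to a compact region with the image point in its interior, so the distance-to-boundary hypothesis is met for all such components uniformly). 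Combined with Lemma \ref{lem:outsideAreaBounds}, which gives $\text{Area}_{u_k^*g_k}(C_{k,\text{Core}}) \leq \rho$, this bounds the number of non-constant components meeting $\text{Core}(W)$ by $\rho/\hbar_0$.

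It remains to bound the non-constant components whose image is disjoint from $\text{Core}(W)$, i.e. entirely contained in the cylindrical region $[-k-8\epsilon_0, k+8\epsilon_0] \times M \subset W_k$ (equipped with the realized Hamiltonian homotopy structure of Lemma \ref{lem:transitionIsHomotopy}). Here I would argue that the underlying domain $C_k$ is \emph{connected} (Proposition \ref{prop:sequenceOfCurves}): after performing connect sums at the nodal points, the resulting surface is connected, so any component of $C_k$ disjoint from $\text{Core}(W)$ must be attached, via a chain of nodal points, to a component meeting $\text{Core}(W)$. Since $\#D_k \leq \text{Genus}_{\text{arith}}(C_k, D_k) + (\text{number of components}) - 1$ is \emph{not} a priori bounded, this alone does not suffice; instead I would again use monotonicity together with Corollary \ref{cor:transitionAreaBounds}. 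A non-constant component $C_k'$ contained in the cylindrical region has image that is unbounded in the $a$-direction is impossible since the domain is compact; so $(a\circ u_k)(C_k')$ is a compact interval, and the image meets some slice $\{t_0\} \times M$ with $t_0$ staying in a compact range --- but $t_0$ could be deep in the neck where the area bound $\rho_\ell$ of Corollary \ref{cor:transitionAreaBounds} degrades. The cleaner route: apply monotonicity inside the cylindrical region directly. Each non-constant component contained in $[-k-8\epsilon_0,k+8\epsilon_0]\times M$ carries area at least $\hbar_1 > 0$, where $\hbar_1$ depends only on a uniform monotonicity constant for the fixed compact family of almost-Hermitian structures on slices of the neck (the structures $(\bar J_k, g_k)$ restricted to slices vary in a fixed compact family by the discussion in Lemma \ref{lem:uniformOmegaHatBounds}). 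The total area of the part of $C_k$ in the neck is bounded: combine Lemma \ref{lem:outsideAreaBounds}, Lemma \ref{lem:transitionAreaBounds}, Lemma \ref{lem:sameSmallOmega}, Lemma \ref{lem:lambdaTrimBounds}, and Theorem \ref{thm:fishHoferHomotopyAreaBound} --- but this total neck area grows exponentially in $k$, so this does \emph{not} directly give a uniform bound.

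The main obstacle, therefore, is controlling components buried deep in the neck where a priori area bounds fail. The resolution I would pursue is to use the $\hat\omega$-energy rather than total area: by Lemma \ref{lem:sameSmallOmega}, $E_{\hat\omega}(u_k) \leq \rho$ uniformly in $k$, and by Lemma \ref{lem:omegaNonnegative} (adapted to the realized Hamiltonian homotopy) $u_k^*\hat\omega$ is pointwise non-negative. A non-constant component $C_k'$ contained in the neck region either has $\hat\omega$-energy bounded below by a uniform geometric constant $\hbar_2 > 0$, or has $E_{\hat\omega}(C_k') = 0$. The former case contributes at most $\rho/\hbar_2$ components. For the latter, by the characterization in Lemma \ref{lem:omegaNonnegative}, a non-constant component with zero $\hat\omega$-energy has image contained in $\mathbb{R} \times \gamma$ for a trajectory $\gamma$ of the Hamiltonian vector field; such a component, being compact and non-constant with $u_k^*(da\wedge\lambda) \geq 0$ and $\int u_k^*(da \wedge \lambda) > 0$, would have to have boundary, and its boundary lies in $u_k(\partial C_k)$, which by construction is a bounded set --- this forces the $a$-coordinate range to be bounded, and then a monotonicity/area argument as above applies. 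I would write this final case carefully, likely absorbing it into the monotonicity argument by noting such "trivial cylinder" pieces still carry a definite amount of $da\wedge\lambda$-area per unit height and have uniformly bounded height because their boundary maps into a fixed compact set $K$ while they must also reach into $\text{Core}(W)$ or abut another component. Summing the three bounds $\rho/\hbar_0$, $\rho/\hbar_2$, and the trivial-cylinder count yields the desired uniform bound $\rho$ on the number of non-constant components.
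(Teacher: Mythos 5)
Your treatment of the components whose image meets $\text{Core}(W)$ is sound and is essentially the paper's argument for its set $\mathcal{C}_{k,\text{nc,out}}$: monotonicity (Theorem \ref{thm:monotonicity}) in a $k$-independent compact piece, plus a uniform area bound there (the paper uses Corollary \ref{cor:transitionAreaBounds} on $u_k^{-1}(W_k^{100})$ together with Lemma \ref{lem:obviousInclusions}; you use Lemma \ref{lem:outsideAreaBounds} on the core). The genuine gap is in the components buried in the neck. You assert that a non-constant component contained in the cylindrical region either has $\hat\omega$-energy at least a uniform $\hbar_2>0$ or has $\hat\omega$-energy exactly zero. Nothing you cite justifies this dichotomy: Lemma \ref{lem:omegaNonnegative} only gives non-negativity and characterizes the zero case, and monotonicity bounds the \emph{area} $\int u_k^*(da\wedge\lambda+\omega)$ from below, not the $\omega$-energy --- and the whole difficulty deep in the neck is precisely that area is uncontrolled (it may grow with $k$) while only the $\omega$-energy is uniformly bounded. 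A priori a component could have $\omega$-energy anywhere in $(0,\hbar_2)$, in which case your count fails. The missing ingredient is Fish--Hofer's $\omega$-energy quantization, Theorem \ref{thm:omegaEnergyQuantization}: applied (with, say, $h=25$ and $a_0=\inf(a\circ u_k)(\Sigma)$) to each closed non-constant component $\Sigma$ whose image lies in $(-k+50,k-50)\times M$, where $\bar J_k$ is genuinely $\eta$-adapted and the genus is at most $G$, it gives $\int_\Sigma u_k^*\omega\geq\hbar$ with $\hbar$ independent of $k$; combined with the uniform bound of Lemma \ref{lem:sameSmallOmega} this bounds the number of such components. This is how the paper argues, and note the quantization theorem only applies where the structure is $\eta$-adapted, which is why the paper splits the components at $W_k^{50}$ rather than at $\text{Core}(W)$; your split would need the same adjustment, since components grazing the transition region carry the realized-Hamiltonian-homotopy structure, not the adapted one.

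Your fallback for the putative zero-energy case is also not viable as written: the domains $C_k$ are closed, so a component contained in the neck has empty boundary, and the claim that such a ``trivial cylinder'' piece must reach $\text{Core}(W)$ or ``abut another component'' in a way that yields geometric control is unsupported --- connectedness of $|C_k|$ only attaches components through nodal points, which gives no metric control, and you cannot appeal to a bound on $\#D_k$ at this stage because Proposition \ref{prop:nodalPointBound} is proved later and its proof uses the present proposition.
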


\begin{proof}

We write $\mathcal{C}_{k,\text{nc,in}}$ for the set of connected components $\Sigma$ on which $u_k$ is non-constant and 
$$u_k(\Sigma) \subset (-k+50, k-50) \times M \subset W_k.$$

Write $\mathcal{C}_{k,\text{nc,out}}$ for the set of connected components of $\Sigma$ on which $u_k$ is non-constant and are not in $\mathcal{C}_{k,\text{nc,in}}$. In other words, any component $\Sigma \in \mathcal{C}_{k,\text{nc,out}}$ has $u_k(\Sigma)$ intersecting 
$$W_k^{50} \subset W_k.$$

The proof proceeds in two steps. The first step is to bound $\#\mathcal{C}_{k,\text{nc,in}}$, and the second step is to bound $\#\mathcal{C}_{k,\text{nc,out}}$. The proposition follows from these two bounds. 

We begin with the first step. Pick $\Sigma \in \mathcal{C}_{k,\text{nc,in}}$. 

Since $\Sigma$ has no boundary, we can appeal to Theorem \ref{thm:omegaEnergyQuantization} with the parameters $a_0 = \text{inf}(a \circ u_k)(\Sigma)$, which by assumption lies in $[-k+50, k-50]$, and $h = 25$ to conclude that there is a constant $\hbar$ independent of $k$ such that
$$\int_{\Sigma} u_k^*\omega \geq \hbar.$$

It follows that
$$\int_{u_k^{-1}( (-k+50, k-50) \times M)} u_k^*\omega \geq \hbar\#\mathcal{C}_{k,\text{nc,in}}.$$

On the other hand, Lemma \ref{lem:sameSmallOmega} implies that 
$$\int_{u_k^{-1}( (-k+50,k-50) \times M)} u_k^*\omega \leq \rho_1$$
for some constant $\rho_1$ independent of $k$. 

We combine these two inequalities to conclude that
$$\#\mathcal{C}_{k,\text{nc,in}} \leq \hbar^{-1}\rho_1.$$

We now proceed with the second step. 

Pick $\Sigma \in \mathcal{C}_{k,\text{nc,out}}$. Then $u_k(\Sigma)$ passes through
$$W_k^{50} \subset W_k.$$

Let $\widetilde{\Sigma} \subset \Sigma$ be equal to the intersection of $\Sigma$ with
$$\widetilde{C}_k = u_k^{-1}(W_k^{100}).$$

Observe that, by our assumptions, $u_k(\widetilde{\Sigma})$ contains a point with $g_k$-distance at least $50$ from the boundary of $W_k^{100}$. 

We now observe that, for any $k$, the almost-Hermitian manifold $$(W_k^{100}, \bar J_k, g_k)$$
is isomorphic to the disjoint union of
$$(\widetilde{W}_-^{100}, \bar J_-, g_-)$$
and
$$(\widetilde{W}_+^{100}, \bar J_+, g_+).$$

It follows by Theorem \ref{thm:monotonicity} that there is a constant $\rho_2 \geq 1$ independent of $k$ such that
$$\text{Area}_{u_k^*g_k}(\widetilde{\Sigma}) \geq \rho_2^{-1}.$$

This implies
$$\text{Area}_{u_k^*g_k}(\widetilde{C}_k) \geq \rho_2^{-1}\#\mathcal{C}_{k,\text{nc,out}}.$$

On the other hand, Corollary \ref{cor:transitionAreaBounds} implies that there is a constant $\rho_3 > 0$ independent of $k$ such that
$$\text{Area}_{u_k^*g_k}(\widetilde{C}_k) \leq \rho_3.$$

Combine the two inequalities above to deduce
$$\#\mathcal{C}_{k,\text{nc,out}} \leq \rho_2\rho_3$$
and as a result complete the proof of the proposition.
\end{proof}

Proposition \ref{prop:ncConnectedComponentsBound} allows us to bound the number of nodal points $\#D_k$ uniformly in $k$. This can be done using the following explicit formula for the arithmetic genus.

\begin{lem} \label{lem:arithGenusFormula}
\cite[Lemma $A.1$]{FishHoferExhaustive} Let $(\Sigma, j, D)$ be a compact nodal Riemann surface, possibly with boundary. Let $\#\pi_0(|\Sigma|)$ the number of connected components of the surface obtained from $\Sigma$ by performing connect sums at all of the pairs of nodal points. Then
$$\text{Genus}_{\text{arith}}(\Sigma, D) = \#\pi_0(|\Sigma|) - \#\pi_0(\Sigma) + \sum_{S \in \pi_0(\Sigma)} \text{Genus}(S) + \frac{1}{2}\#D.$$
\end{lem}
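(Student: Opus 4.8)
The plan is to reduce the statement to an accounting of Euler characteristics. The starting point is the elementary identity relating genus, Euler characteristic and boundary for a connected compact surface: if $T$ is a connected compact surface with $\beta(T)$ boundary circles, then capping off each boundary circle with a disk produces a closed surface of Euler characteristic $\chi(T) + \beta(T)$, so by the definition of genus for surfaces with boundary, $\text{Genus}(T) = 1 - \frac{1}{2}\big(\chi(T) + \beta(T)\big)$. Since both $\Sigma$ and $|\Sigma|$ are compact with finitely many components, the genus of such a (possibly disconnected) surface $X$ equals the sum of the genera of its components, giving
$$\text{Genus}(X) = \#\pi_0(X) - \frac{1}{2}\big(\chi(X) + \beta(X)\big),$$
where $\beta(X)$ denotes the total number of boundary circles of $X$. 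All surfaces here are orientable, as they carry almost-complex structures and the connect-sum gluings are orientation-reversing on the glued circles, so the genus is well-defined.

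Next I would apply this identity both to $X = \Sigma$ and to $X = |\Sigma|$ and subtract. Applied to $\Sigma$ it reads $\sum_{S \in \pi_0(\Sigma)} \text{Genus}(S) = \#\pi_0(\Sigma) - \frac{1}{2}\big(\chi(\Sigma) + \beta(\Sigma)\big)$, and applied to $|\Sigma|$ it reads $\text{Genus}_{\text{arith}}(\Sigma, D) = \text{Genus}(|\Sigma|) = \#\pi_0(|\Sigma|) - \frac{1}{2}\big(\chi(|\Sigma|) + \beta(|\Sigma|)\big)$. The lemma then follows once I establish the two bookkeeping facts $\beta(|\Sigma|) = \beta(\Sigma)$ and $\chi(|\Sigma|) = \chi(\Sigma) - \#D$. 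The first holds because the nodal points of $D$ all lie in the interior of $\Sigma$ (Definition \ref{defn:markedNodalSurface}), so every connect-sum operation is supported away from $\partial\Sigma$ and does not alter the boundary. For the second, $|\Sigma|$ is obtained from $\Sigma$ by performing one connect sum for each of the $\frac{1}{2}\#D$ pairs of nodal points; each such operation removes two open disks, lowering $\chi$ by $2$, and then identifies the two resulting boundary circles, which does not change $\chi$ since $\chi(S^1) = 0$. Hence each operation lowers $\chi$ by exactly $2$, and $\chi(|\Sigma|) = \chi(\Sigma) - \#D$.

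Substituting $\chi(|\Sigma|) = \chi(\Sigma) - \#D$ and $\beta(|\Sigma|) = \beta(\Sigma)$ into the expression for $\text{Genus}(|\Sigma|)$ and then using $-\frac{1}{2}\big(\chi(\Sigma) + \beta(\Sigma)\big) = \sum_{S \in \pi_0(\Sigma)} \text{Genus}(S) - \#\pi_0(\Sigma)$ gives the claimed formula. There is no genuine obstacle: the entire content is the additivity of Euler characteristic under cutting out disks and gluing along circles. The only points requiring care are conventions — that $\#D$ counts individual nodal points, hence twice the number of pairs, and that the genus of a bordered surface is measured after capping off — and the observation that the argument is insensitive to whether a given pair of nodal points lies in one component of $\Sigma$ or in two, since the identities for $\chi$ and for $\beta$ hold in either case and any merging of components is recorded automatically by the $\#\pi_0(|\Sigma|)$ term.
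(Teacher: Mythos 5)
Your argument is correct, and since the paper cites \cite[Lemma A.1]{FishHoferExhaustive} rather than supplying its own proof, there is nothing in the paper to compare against directly. The reduction to Euler characteristics is the natural (and standard) way to prove this: the identity $\text{Genus}(T) = 1 - \tfrac{1}{2}(\chi(T) + \beta(T))$ for a connected compact bordered orientable surface, together with the bookkeeping facts $\chi(|\Sigma|) = \chi(\Sigma) - \#D$ (two disks removed and one circle glued per pair, so $\chi$ drops by $2$ per pair, and $\#D$ here counts individual nodal points, i.e.\ twice the number of pairs) and $\beta(|\Sigma|) = \beta(\Sigma)$ (nodal points are interior), yields the formula on substitution. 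You are also right that the case split on whether a given pair of nodal points lies in one or two components of $\Sigma$ is handled automatically: the $\chi$ and $\beta$ computations are local and component-blind, and the only global quantity that changes is $\#\pi_0$, which the formula tracks explicitly via the $\#\pi_0(|\Sigma|) - \#\pi_0(\Sigma)$ term. One small point worth flagging in the write-up is the reading of $\#D$: the paper's Definition \ref{defn:markedNodalSurface} presents $D$ as a set of \emph{pairs}, while Remark 1.2 warns that the notation is abused to mean the set of underlying points; your computation uses the latter reading, which is the one that makes the factor $\tfrac{1}{2}$ come out right, and you correctly call this out as a convention to watch.
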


\begin{rem}
It will also be convenient later to fix the following notation. Let $(\Sigma, j, D)$ be a nodal Riemann surface. 

We will say a surface $\Sigma' \subset \Sigma$ is \textbf{$|\Sigma|$-connected} if the surface given by performing connect sums at the nodal points $D \cap \Sigma'$ is connected. We will always assume for the purposes of this definition that, if one point in a pair $\{\underline{z}, \overline{z}\}$ lies in $\Sigma'$, then the other lies in $\Sigma'$ as well. 

We will say a surface $\Sigma' \subset \Sigma$ is a \textbf{connected component of $|\Sigma|$} if it is a maximal element in the set of $|\Sigma|$-connected sub-surfaces of $\Sigma$, partially ordered by inclusion. 

We see given this that a pseudoholomorphic curve 
$$\mathbf{u} = (u, C, j, W, J, D, \mu)$$
is connected if and only if $C$ is the only connected component of $|C|$. 
\end{rem}

We now prove an a priori uniform bound on the number of nodal points. 

\begin{prop}
\label{prop:nodalPointBound} There is a constant $\rho > 0$ independent of $k$ such that, for any $k$,
$$\#D_k \leq 12G + 6m + \rho.$$
\end{prop}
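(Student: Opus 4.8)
The plan is to apply the arithmetic genus formula of Lemma \ref{lem:arithGenusFormula} to the compact nodal Riemann surface $(C_k, j_k, D_k)$ and solve for $\#D_k$. Rearranging that formula gives
\begin{equation*}
\tfrac{1}{2}\#D_k = \text{Genus}_{\text{arith}}(C_k, D_k) - \#\pi_0(|C_k|) + \#\pi_0(C_k) - \sum_{S \in \pi_0(C_k)} \text{Genus}(S).
\end{equation*}
Since $\mathbf{u}_k$ is connected, $\#\pi_0(|C_k|) = 1$; and by Proposition \ref{prop:sequenceOfCurves} we have $\text{Genus}_{\text{arith}}(C_k, D_k) \leq G$. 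The genus sum is nonnegative, so it only helps us in the direction we need; we can discard it. Thus $\tfrac{1}{2}\#D_k \leq G - 1 + \#\pi_0(C_k)$, and the whole problem reduces to bounding $\#\pi_0(C_k)$, the number of connected components of the \emph{domain} $C_k$, uniformly in $k$.

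Next I would split $\pi_0(C_k)$ into components on which $u_k$ is non-constant and components on which $u_k$ is constant. The non-constant ones are bounded by a $k$-independent constant $\rho$ by Proposition \ref{prop:ncConnectedComponentsBound}. For the constant components: on such a component $\Sigma$ the map $u_k$ is a point, so stability of $\mathbf{u}_k$ (Definition \ref{defn:stableMap}) forces $\Sigma \setminus (\mu_k \cup D_k)$ to have negative Euler characteristic. A constant component carrying no special points would be a disallowed sphere; more generally a stable constant component must contain at least three special points (marked points or endpoints of nodal pairs lying on it), i.e. the component is either a sphere with $\geq 3$ such points or a higher-genus surface, and in the genus-zero case $\chi < 0$ needs $\geq 3$ points while in positive genus $\chi<0$ is automatic but such a component contributes to $\text{Genus}_{\text{arith}}$. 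Counting incidences: each marked point lies on one component and each nodal point (there are $2\#D_k$ of them, counting both members of each pair) lies on one component, so the total supply of special points is $\#\mu_k + 2\#D_k \leq m + 2\#D_k$. If $N_{\text{const}}$ denotes the number of constant components, the sphere-with-$\geq 3$-points ones consume at least $3$ special points each, and the positive-genus ones are at most $\text{Genus}_{\text{arith}}(C_k, D_k) \leq G$ in number by the remark following Definition \ref{defn:curveArithmeticGenus}. Hence $3(N_{\text{const}} - G) \leq m + 2\#D_k$, giving $N_{\text{const}} \leq G + \tfrac{1}{3}(m + 2\#D_k)$.

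Now combine. Writing $N_{\text{nc}} \leq \rho$ for the number of non-constant components, we get $\#\pi_0(C_k) = N_{\text{nc}} + N_{\text{const}} \leq \rho + G + \tfrac{1}{3}m + \tfrac{2}{3}\#D_k$. Plugging this into $\tfrac{1}{2}\#D_k \leq G - 1 + \#\pi_0(C_k)$ yields
\begin{equation*}
\tfrac{1}{2}\#D_k \leq G - 1 + \rho + G + \tfrac{1}{3}m + \tfrac{2}{3}\#D_k,
\end{equation*}
and moving the $\#D_k$ terms to one side, $\big(\tfrac{1}{2} - \tfrac{2}{3}\big)\#D_k \leq 2G + \tfrac{1}{3}m + \rho - 1$, i.e. $-\tfrac{1}{6}\#D_k \leq 2G + \tfrac13 m + \rho - 1$ — which is the wrong sign and proves nothing. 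So the crude ``discard the genus sum'' step is too lossy; the fix is to \emph{not} throw away information about the sphere components. Instead I would argue directly: every constant component, and every non-constant component that contributes nothing to arithmetic genus and meets few special points, must be a sphere attached to the rest of $|C_k|$ by nodal points, and the connectedness of $|C_k|$ plus finiteness of $\text{Genus}_{\text{arith}}$ bounds how such spheres can be glued. Concretely, form the dual graph $\Gamma_k$ of $|C_k|$: one vertex per component of $C_k$, one edge per nodal pair. Since $|C_k|$ is connected, $\Gamma_k$ is connected, so $\#D_k = \#E(\Gamma_k) \geq \#V(\Gamma_k) - 1 = \#\pi_0(C_k) - 1$, while the number of independent cycles $\#E - \#V + 1 = \#D_k - \#\pi_0(C_k) + 1$ is exactly the first Betti number of $\Gamma_k$, which is $\leq \text{Genus}_{\text{arith}}(C_k, D_k) \leq G$ (each independent cycle in the dual graph raises arithmetic genus by one). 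Therefore $\#D_k \leq \#\pi_0(C_k) - 1 + G$. The main obstacle — and the step that needs the most care — is bounding $\#\pi_0(C_k)$: for the non-constant components use Proposition \ref{prop:ncConnectedComponentsBound} ($\leq \rho$), and for the constant components use stability to charge each one against at least three of the $\#\mu_k + 2\#D_k$ special points. This gives $\#\pi_0(C_k) \leq \rho + \tfrac13(\#\mu_k + 2\#D_k)$, so $\#D_k \leq \rho - 1 + G + \tfrac13 m + \tfrac23 \#D_k$, hence $\tfrac13 \#D_k \leq G + \tfrac13 m + \rho$, i.e. $\#D_k \leq 3G + m + 3\rho$. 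Renaming constants and being slightly wasteful to absorb edge cases (non-constant components meeting $W_k^{50}$, spheres of the two types, the ``$-1$'') comfortably yields the stated $\#D_k \leq 12G + 6m + \rho$. I expect the genuinely delicate bookkeeping to be the stability/charging argument for constant components together with correctly relating the dual-graph Betti number to the arithmetic genus, since one must be careful that positive-genus components and nodal self-loops are accounted for without double counting.
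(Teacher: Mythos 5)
Your final argument is substantively the same as the paper's: both rest on the arithmetic genus formula of Lemma \ref{lem:arithGenusFormula} (your ``dual-graph Betti number $\leq$ arithmetic genus'' is a restatement of that lemma, since $\text{Genus}_{\text{arith}} - (\tfrac12\#D_k - \#\pi_0(C_k)+1) = \sum_S \text{Genus}(S) \geq 0$), on Proposition \ref{prop:ncConnectedComponentsBound} for the non-constant components, and on charging stable constant components against their special points. The dual-graph packaging is a nice way to see what the formula is doing, but it does not buy a different argument.

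There is, however, a genuine gap in the version you settle on. You write that you ``charge each \emph{[constant component]} against at least three of the $\#\mu_k + 2\#D_k$ special points,'' but this is false for positive-genus constant components: a constant torus component needs only one special point for $\chi < 0$, and a constant component of genus $\geq 2$ needs none at all (it merely needs one nodal point to be attached to the rest of $|C_k|$, since the curve is connected). Your bound $\#\pi_0(C_k) \leq \rho + \tfrac13(\#\mu_k + 2\#D_k)$ therefore does not follow from stability. You had the fix in your discarded first attempt: there are at most $G$ positive-genus constant components (the remark after Definition \ref{defn:curveArithmeticGenus}, or equivalently $\sum_S \text{Genus}(S) \leq \text{Genus}_{\text{arith}} \leq G$), so the correct bound is $\#\pi_0(C_k) \leq \rho + G + \tfrac13(\#\mu_k + 2\#D_k)$. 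Plugging that into the dual-graph inequality gives $\tfrac13\#D_k \leq 2G + \tfrac13 m + \rho$, i.e.\ $\#D_k \leq 6G + m + 3\rho$, which is still stronger than the stated $12G + 6m + \rho$; you need to reinsert that $+G$ rather than hope ``being slightly wasteful'' absorbs it, since without it the inequality is simply not proved. (One further notational slip to watch: $\#D_k$ in the paper counts the total nodal points, so a pair contributes $2$ to $\#D_k$; your dual graph has $\tfrac12\#D_k$ edges, not $\#D_k$. This only changes constants, but be consistent.)
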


\begin{proof}
Use Lemma \ref{lem:arithGenusFormula} to deduce the identity
\begin{equation} \label{eq:nodalPointBound1} \text{Genus}_{\text{arith}}(C_k, D_k) = \#\pi_0(|C_k|) - \#\pi_0(C_k) + \sum_{\Sigma \in \pi_0(C_k)} \text{Genus}(\Sigma) + \frac{1}{2}\#D_k.\end{equation}

We now introduce the following notation. Let $\mathcal{C}_{k,\text{nc}}$ denote the set of all connected components of $C_k$ on which the map $u_k$ is non-constant. 

For any natural number $h \geq 0$, let 
$$\mathcal{C}_{k,\text{const,good}}$$
denote the set of connected components $\Sigma$ of $C_k$ such that $u_k$ is constant on $\Sigma$ and either
\begin{enumerate}
    \item $\Sigma$ has genus $\geq 1$.
    \item $\Sigma$ contains at least one point from the set $\mu_k$ of marked points. 
\end{enumerate}

Let
$$\mathcal{C}_{k,\text{const,bad}}$$
denote the set of connected components $\Sigma$ of $C_k$ that is not in either $\mathcal{C}_{k,\text{nc}}$ or $\mathcal{C}_{k,\text{nc,good}}$. In other words, $\Sigma$ lies in $\mathcal{C}_{k,\text{const,bad}}$ if and only if $u_k$ is constant on $\Sigma$, $\Sigma$ has genus zero, and $\Sigma$ contains no marked points. 

Expanding (\ref{eq:nodalPointBound1}) and removing manifestly non-negative terms, we conclude that
\begin{equation} \label{eq:nodalPointBound2} \text{Genus}_{\text{arith}}(C_k, D_k) + \#\mathcal{C}_{k,\text{nc}} + \#\mathcal{C}_{k,\text{const,good}} \geq \frac{1}{2}\#D_k - \#\mathcal{C}_{k,\text{const,bad}}. \end{equation}

Recall from Proposition \ref{prop:sequenceOfCurves} that
\begin{equation} \label{eq:nodalPointBound3} \text{Genus}_{\text{arith}}(C_k, D_k) \leq G.\end{equation}

Recall from Proposition \ref{prop:ncConnectedComponentsBound} that we have a bound of the form
\begin{equation}
    \label{eq:nodalPointBound4} \#\mathcal{C}_{k,\text{nc}} \leq \rho
\end{equation}
where $\rho$ is independent of $k$. 

By definition, 
\begin{equation} \label{eq:nodalPointBound5} \#\mathcal{C}_{k,\text{const,good}} \leq G + m. \end{equation}

This is because any member of $\mathcal{C}_{k,\text{const,good}}$ has genus greater than zero or at least one marked point. There can be at most $G$ members with genus greater than zero due to (\ref{eq:nodalPointBound3}) and at most $m$ members with at least one marked point because $\#\mu_k \leq m$ by Proposition \ref{prop:sequenceOfCurves}. 

Finally, since the curves $\mathbf{u}_k$ are stable, every member of $\mathcal{C}_{k,\text{const,bad}}$ has at least three nodal points. This shows
\begin{equation} \label{eq:nodalPointBound6} \#\mathcal{C}_{k,\text{const,bad}} \leq \frac{1}{3}\#D_k. \end{equation}

Plug (\ref{eq:nodalPointBound3}), (\ref{eq:nodalPointBound4}), (\ref{eq:nodalPointBound5}) and (\ref{eq:nodalPointBound6}) into (\ref{eq:nodalPointBound2}) to deduce that
$$2G + m + \rho \geq \frac{1}{6}\#D_k.$$

This implies the desired bound. 
\end{proof}

We conclude by bounding the number of connected components of $C_k$ on which $u_k$ is constant. This gives a uniform bound on the overall number of connected components of $C_k$ for every $k$.

\begin{prop}
\label{prop:constantConnectedComponentsBound} There is a constant $\rho > 0$ independent of $k$ such that, for any $k$, $C_k$ has at most $5G + 3m + \rho$ connected components on which $u_k$ is constant. 
\end{prop}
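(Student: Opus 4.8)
The plan is to count connected components of $C_k$ on which $u_k$ is constant by splitting them into the ``good'' ones (those with positive genus or a marked point) and the ``bad'' ones (genus-zero, no marked points), exactly as in the proof of Proposition \ref{prop:nodalPointBound}. The good components are already controlled: there are at most $G$ with positive genus by the arithmetic genus bound of Proposition \ref{prop:sequenceOfCurves}, and at most $m$ with a marked point since $\#\mu_k \le m$, so $\#\mathcal{C}_{k,\text{const,good}} \le G + m$. The entire difficulty is bounding $\#\mathcal{C}_{k,\text{const,bad}}$.

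\begin{proof}
Retain the notation $\mathcal{C}_{k,\text{nc}}$, $\mathcal{C}_{k,\text{const,good}}$, $\mathcal{C}_{k,\text{const,bad}}$ from the proof of Proposition \ref{prop:nodalPointBound}. We must bound $\#\mathcal{C}_{k,\text{const,bad}}$, the number of connected components of $C_k$ on which $u_k$ is constant, which have genus zero and no marked points. Since $\mathbf{u}_k$ is stable, every such component carries at least three nodal points; hence
$$3\#\mathcal{C}_{k,\text{const,bad}} \leq \#D_k.$$
By Proposition \ref{prop:nodalPointBound} there is a constant $\rho' > 0$ independent of $k$ with $\#D_k \leq 12G + 6m + \rho'$, so
$$\#\mathcal{C}_{k,\text{const,bad}} \leq 4G + 2m + \tfrac{1}{3}\rho'.$$
Combining this with $\#\mathcal{C}_{k,\text{const,good}} \leq G + m$, which holds because at most $G$ such components can have positive genus (by $\text{Genus}_{\text{arith}}(C_k, D_k) \leq G$ from Proposition \ref{prop:sequenceOfCurves}) and at most $m$ can contain a marked point (by $\#\mu_k \leq m$ from Proposition \ref{prop:sequenceOfCurves}), we conclude that the total number of connected components of $C_k$ on which $u_k$ is constant is at most
$$\#\mathcal{C}_{k,\text{const,good}} + \#\mathcal{C}_{k,\text{const,bad}} \leq 5G + 3m + \tfrac{1}{3}\rho'.$$
Taking $\rho = \tfrac{1}{3}\rho'$ gives the claim.
\end{proof}

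The only real input here is the nodal point bound of Proposition \ref{prop:nodalPointBound}, which in turn rests on the arithmetic genus formula of Lemma \ref{lem:arithGenusFormula} and the bound on the number of non-constant components from Proposition \ref{prop:ncConnectedComponentsBound}; the stability hypothesis is what converts a bound on nodal points into a bound on bad components. So the main obstacle is not in this proof at all — it lives upstream in Proposition \ref{prop:ncConnectedComponentsBound}, whose proof requires the $\omega$-energy quantization theorem (Theorem \ref{thm:omegaEnergyQuantization}) together with the monotonicity theorem (Theorem \ref{thm:monotonicity}) and the uniform area bounds. Given all those, the present statement is a short bookkeeping argument.
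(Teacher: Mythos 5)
Your proof is correct and follows essentially the same route as the paper: split the constant components into good ones (positive genus or a marked point, at most $G+m$) and bad ones (bounded by $\tfrac{1}{3}\#D_k$ via stability), then plug in the nodal-point bound from Proposition \ref{prop:nodalPointBound}. The paper simply observes that these are exactly inequalities (\ref{eq:nodalPointBound5}) and (\ref{eq:nodalPointBound6}) already recorded in that earlier proof, so the two arguments are identical.
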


\begin{proof}
For every $k$, write $\mathcal{C}_{k,\text{const}}$ for the set of components of $C_k$ on which $u_k$ is constant. 

Observe in the course of the proof of Proposition \ref{prop:nodalPointBound} that we proved the bound
$$\#\mathcal{C}_{k,\text{const}} \leq G + m + \frac{1}{3}\#D_k.$$

This follows from adding together the inequalities in (\ref{eq:nodalPointBound5}) and (\ref{eq:nodalPointBound6}). 

Plugging in the bound on $\#D_k$ from Proposition \ref{prop:nodalPointBound} gives a uniform bound on the size of $\mathcal{C}_{k,\text{const}}$. 
\end{proof}

\subsection{Trimming the curves}

We will now begin to define the trimmed pseudoholomorphic curves. 

\begin{prop} \label{prop:boundaryComponentBound}
There is a strictly monotonic sequence of natural numbers $\sigma_i \to \infty$ and a constant $\rho > 0$ independent of $i$ such that the following holds. 

For every $i$, there is a compact surface
$$\widetilde{C}_i \subset C_{\sigma_i}$$
with at most $\rho$ boundary components and $\rho$ connected components such that, for every $i$, the following two conditions hold:
\begin{itemize} 
\item There is a connected component $C'$ of $|\widetilde{C}_i|$ such that $u_{\sigma_i}(C')$ that intersects
$$\text{Core}(W_-) \subset W_{\sigma_i}$$
and
$$W_{\sigma_i,-}^{2\sigma_i-1/4} \setminus W_{\sigma_i,-}^{2\sigma_i-3/8}.$$
\item The boundary $\partial\widetilde{C}_i$ satisfies
$$u_{\sigma_i}(\partial\widetilde{C}_i) \subset W_{\sigma_i,-}^{2\sigma_i-1/4} \setminus W_{\sigma_i,-}^{2\sigma_i-3/8}.$$
\end{itemize}
\end{prop}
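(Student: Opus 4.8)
The plan is to trim each curve $\mathbf{u}_k$ at a regular level of $a\circ u_k$ lying in the ``shell'' $(\sigma_i-\tfrac38,\sigma_i-\tfrac14)\times M$, discarding everything that maps strictly above that level (in particular the part near $\mathrm{Core}(W_+)$), and then to show that the resulting surface has uniformly bounded topology.

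\textbf{Setup and choice of the cut.} I would first assemble the uniform bounds already proved: by Proposition \ref{prop:sequenceOfCurves}, $\mathrm{Genus}_{\mathrm{arith}}(C_k,D_k)\le G$, $\#\mu_k\le m$, and $u_k(C_k)$ meets both components of $\mathrm{Core}(W)$; by Propositions \ref{prop:ncConnectedComponentsBound}, \ref{prop:constantConnectedComponentsBound} and \ref{prop:nodalPointBound} the number of connected components and the number of nodal points of $C_k$ are bounded by a $k$-independent constant $\rho$; and by Lemma \ref{lem:outsideAreaBounds}, Corollary \ref{cor:transitionAreaBounds} and Lemma \ref{lem:transitionAreaBounds} the relevant area bounds hold. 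Next, combining Lemma \ref{lem:lambdaTrimBounds} (a $\lambda$-energy bound at a level near the top of the neck), Lemma \ref{lem:sameSmallOmega} (the uniform $\hat\omega$-energy bound), Lemma \ref{lem:transitionIsHomotopy}, and the exponential area bound for realized Hamiltonian homotopies (Theorem \ref{thm:fishHoferHomotopyAreaBound}) on the fixed-width region $[\sigma_i-1,\sigma_i+3\epsilon_0]\times M$ gives a $k$-independent bound $E_\lambda(u_{\sigma_i},t)\le\rho$ for all $t$ in that interval and a $k$-independent area bound on $u_{\sigma_i}^{-1}([\sigma_i-1,\sigma_i+3\epsilon_0]\times M)$. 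Here I take $\sigma_i=k_0+i$ (no genuine subsequence seems necessary, but I keep the subsequence notation to match the statement). For each $i$, using Sard's theorem together with finiteness of $\mu_{\sigma_i}$ and $D_{\sigma_i}$, I would choose $c_i\in(\tfrac14,\tfrac38)$ so that $\sigma_i-c_i$ is a regular value of $a\circ u_{\sigma_i}$ and no nodal or marked point has $\mathbb{R}$-coordinate $\sigma_i-c_i$, and set $\widetilde C_i=u_{\sigma_i}^{-1}(W_{\sigma_i,-}^{\,2\sigma_i-c_i})$ in the notation of Definition \ref{defn:someCompactRegions}. This is a compact surface with $\partial\widetilde C_i=u_{\sigma_i}^{-1}(\{\sigma_i-c_i\}\times M)$, and since $\sigma_i-c_i\in(\sigma_i-\tfrac38,\sigma_i-\tfrac14)$ this boundary maps into $W_{\sigma_i,-}^{\,2\sigma_i-1/4}\setminus W_{\sigma_i,-}^{\,2\sigma_i-3/8}$, which is the second bullet.

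\textbf{Bounding the number of boundary and connected components.} The boundary circles of $\widetilde C_i$ are the components of $u_{\sigma_i}^{-1}(\{\sigma_i-c_i\}\times M)$; cutting $C_{\sigma_i}$ along them yields finitely many pieces, each lying entirely on one side of the level $\sigma_i-c_i$. The $\mathbb R$-coordinate extends to a function $\tilde a$ on the preimage of the collar-plus-neck region which is harmonic up to an $L^1$-error controlled by $\kappa_0$ times the domain area (via Lemma \ref{lem:uniformOmegaHatBounds}); combined with the uniform near-shell area bound above, a maximum-principle argument shows each piece with nonempty boundary must reach $\mathrm{Core}(W_-)$ if it lies below the cut level and $\mathrm{Core}(W_+)$ if it lies above. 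Since distinct pieces contain distinct components of $u_{\sigma_i}^{-1}(\mathrm{Core}(W_\pm))$, and the number of components of $u_{\sigma_i}^{-1}(\mathrm{Core}(W_\pm))$ is $k$-independently bounded by the monotonicity theorem (Theorem \ref{thm:monotonicity}) together with Lemma \ref{lem:outsideAreaBounds} and Corollary \ref{cor:transitionAreaBounds}, the total number of pieces is at most $3\rho$. An Euler-characteristic bookkeeping (cutting along a circle either disconnects a component or drops its genus by one) using $\mathrm{Genus}_{\mathrm{arith}}(C_{\sigma_i},D_{\sigma_i})\le G$, the bound on $\#\pi_0(C_{\sigma_i})$, and Lemma \ref{lem:arithGenusFormula} then bounds the number of level circles by $3\rho+G$. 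Hence $\partial\widetilde C_i$ has at most $3\rho+G$ components, and $\#\pi_0(\widetilde C_i)\le \#\pi_0(C_{\sigma_i})+\#\pi_0(\partial\widetilde C_i)\le \rho+3\rho+G$, which also bounds $\#\pi_0(|\widetilde C_i|)$.

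\textbf{The distinguished component.} Since $|C_{\sigma_i}|$ is connected and $u_{\sigma_i}(C_{\sigma_i})$ meets both $\mathrm{Core}(W_-)$ and $\mathrm{Core}(W_+)$, there is a chain of components of $C_{\sigma_i}$ joined by nodal points whose image is a connected subset of $W_{\sigma_i}$ running from $\mathrm{Core}(W_-)$ to $\mathrm{Core}(W_+)$, hence meeting the slice $\{\sigma_i-c_i\}\times M$. I would let $C'$ be the component of $|\widetilde C_i|$ containing a point of this chain mapping into $\mathrm{Core}(W_-)$; tracing the chain inside $\widetilde C_i$ (each nodal point whose image lies below the cut level has both preimages in $\widetilde C_i$) and invoking once more the maximum principle to rule out a component of $|\widetilde C_i|$ hiding strictly below the cut level, one finds $C'$ must meet a boundary circle, whose image lies in the shell. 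Then $u_{\sigma_i}(C')$ meets both $\mathrm{Core}(W_-)$ and $W_{\sigma_i,-}^{\,2\sigma_i-1/4}\setminus W_{\sigma_i,-}^{\,2\sigma_i-3/8}$, giving the first bullet. The technical heart — and the step I expect to be hardest — is the bound on the number of boundary circles: making the maximum-principle estimate for $a\circ u_{\sigma_i}$ rigorous without exact harmonicity and dovetailing it with the monotonicity-based count of $\#\pi_0(u_{\sigma_i}^{-1}(\mathrm{Core}(W_\pm)))$ and the genus bookkeeping, following the trimming arguments in \cite{FishHoferFeral} (compare \cite[Lemma $3.7$]{FishHoferFeral}).
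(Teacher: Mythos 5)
Your overall strategy (cut the curve near the top of the neck, keep the part on the $W_-$ side, bound the boundary circles, then run a connectivity argument for the distinguished component) is the right shape, but the step you yourself flag as the technical heart has a genuine gap, and it is exactly the step where the paper takes a different route. With your definition $\widetilde C_i = u_{\sigma_i}^{-1}(W_{\sigma_i,-}^{\,2\sigma_i-c_i})$, the boundary circles of $\widetilde C_i$ are the components of a regular level set of $a\circ u_{\sigma_i}$, and none of the bounds available at this stage (area, genus, nodal points, $\lambda$- and $\hat\omega$-energy) controls how many such circles there are: the curve can cross the level $\sigma_i-c_i$ in arbitrarily many small ``dips'' of tiny area and tiny length, which neither the coarea bound nor monotonicity rules out. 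Your proposed fix is a maximum principle for $a\circ u$, but this is precisely what fails in the framed Hamiltonian setting: for an $\eta$-adapted $J$ one has $d\big((a\circ u)^*\,\mathrm{circ}\,j\big)$ governed by $u^*d\lambda$, and since $d\lambda\neq\omega$ here, $a\circ u$ is not subharmonic (this is the basic reason the Fish--Hofer feral machinery is needed at all). ``Harmonic up to an $L^1$-error controlled by the area'' gives no maximum principle — a function with small $L^1$ Laplacian can have many interior extrema — so you cannot conclude that every piece below the cut reaches $\mathrm{Core}(W_-)$, nor that every piece above reaches $\mathrm{Core}(W_+)$. The companion count is also unjustified: monotonicity (Theorem \ref{thm:monotonicity}) bounds the number of components of $u^{-1}(\mathrm{Core}(W_\pm))$ only for components whose image reaches a definite distance into the core; components hugging $\partial\,\mathrm{Core}(W_\pm)$ carry no a priori area and are not counted. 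So the Euler-characteristic bookkeeping has nothing to feed on.

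The paper avoids bounding level circles altogether. It fixes $t_{k,\mathrm{trim}}\in(1/4,3/8)$, restricts $\mathbf u_k$ to $\Sigma_k=u_k^{-1}(W_{k,+}^{t_{k,\mathrm{trim}}})$, and uses Lemma \ref{lem:obviousInclusions} to view these as curves in the fixed compact target $\widetilde W_+^{1/2}$; Corollary \ref{cor:transitionAreaBounds}, Proposition \ref{prop:sequenceOfCurves} and Proposition \ref{prop:nodalPointBound} supply the uniform area, genus, marked-point and nodal-point bounds needed to apply target-local Gromov compactness (Theorem \ref{thm:targetLocal}). That theorem, not a choice of regular level, produces the subsequence $\sigma_i$ and compact subsurfaces $\widetilde\Sigma_i\subset\Sigma_{\sigma_i}$ with boundary mapping into $\widetilde W_+^{1/2}\setminus\widetilde W_+^{3/8}$ and with the restricted curves Gromov-convergent; Gromov convergence forces the decorated normalizations of the $\widetilde\Sigma_i$ to be eventually diffeomorphic, which is what bounds $\#\pi_0(\partial\widetilde\Sigma_i)$ uniformly. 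The trimmed curve is then $\widetilde C_i=C_{\sigma_i}\setminus\widetilde\Sigma_i$, whose boundary count equals that of $\widetilde\Sigma_i$, and $\#\pi_0(\widetilde C_i)$ is bounded by Mayer--Vietoris together with Propositions \ref{prop:ncConnectedComponentsBound} and \ref{prop:constantConnectedComponentsBound}; the distinguished component comes, as in your sketch, from connectedness of $|C_{\sigma_i}|$ and the crossing property. Note also that the passage to a subsequence, which you deemed unnecessary, is genuinely forced by this compactness step. If you want to keep a level-cut formulation, you would have to replace your maximum-principle step by an argument of this compactness type (or by the quantization results of \cite{FishHoferFeral}); as written, the bound on the number of boundary circles does not follow.
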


\begin{proof}
Fix for any $k$ a real number $t_{k,\text{trim}} \in (1/4, 3/8)$ such that $k - t_{k,\text{trim}}$ is a regular value of $a \circ u_k$ and does not lie in $(a \circ u_k)(D_k \cup \mu_k)$. Define the compact surface
$$\Sigma_k = u_k^{-1}(W_{k,+}^{t_{k,\text{trim}}}.$$

Recall from Proposition \ref{prop:sequenceOfCurves} that, for every $k$, $u_k(C_k)$ is connected and intersects both connected components of $\text{Core}(W) \subset W_k$. It follows that $\Sigma_k$ is not empty, has nonempty boundary, and 
$$u_k(\partial\Sigma_k) \subset \{k - t_{k,\text{trim}}\} \times M.$$

Recall from Lemma \ref{lem:obviousInclusions} that the almost-Hermitian manifolds $$(W_{k,+}^{t_{k,\text{trim}}}, \bar J_k, g_k)$$
are isomorphic to the almost-Hermitian manifolds
$$(\widetilde{W}_+^{t_{k,\text{trim}}}, \bar J_+, g_+)$$ 
which themselves embed into 
$$(\widetilde{W}_+^{1/2}, \bar J_+, g_+)$$
for every $k$. 

Using Corollary \ref{cor:transitionAreaBounds} and the fact that $t_{k,\text{trim}} < 1/2$ for every $k$, we conclude that there is a constant $\rho > 0$ independent of $k$ such that
$$\text{Area}_{u_k^*g_k}(\Sigma_k) \leq \rho.$$

Define stable pseudoholomorphic curves
$$\mathbf{v}_k = (v_k, \Sigma_k, j_k, \widetilde{W}_+^{1/2}, \bar J_-, D_k \cap \Sigma_k, \mu_k \cap \Sigma_k)$$
by setting $v_k$ to be the composition of the restriction of $u_k$ to $\Sigma_k$ with the embedding
$$W_{k,+}^{t_{k,\text{trim}}} \hookrightarrow \widetilde{W}_+^{1/2}.$$

Here $D(\Sigma_k) = \Sigma_k \cap D_k$ and $\mu(\Sigma_k) = \mu_k \cap \Sigma_k$. 

It follows that
$$\text{Area}_{v_k^*g_+}(\Sigma_k) = \text{Area}_{u_k^*g_k}(\Sigma_k) \leq \rho.$$

By Proposition \ref{prop:sequenceOfCurves}, we have
$$\text{Genus}_{\text{arith}}(\Sigma_k, D(\Sigma_k)) \leq G$$
and
$$\#\mu(\Sigma_k) \leq m.$$

By Proposition \ref{prop:nodalPointBound}, we have a $k$-independent bound 
$$\#D(\Sigma_k) \leq \rho.$$

Using the target-local Gromov compactness theorem (Theorem \ref{thm:targetLocal}), there is a strictly monotonic sequence of natural numbers
$$\sigma_i \to \infty$$
and, for every $i$, a compact surface
$$\widetilde{\Sigma}_i \subset \Sigma_{\sigma_i}$$
such that
$$v_{\sigma_i}(\widetilde{\Sigma}_i) \subset \widetilde{W}_+^{1/2},$$
$$v_{\sigma_i}(\partial\widetilde{\Sigma}_i) \subset \widetilde{W}_+^{1/2} \setminus \widetilde{W}_+^{3/8},$$
and the pseudoholomorphic curves
$$\widetilde{\mathbf{v}}_i = (v_{\sigma_i}, \widetilde{\Sigma}_i, j_{\sigma_i}, \widetilde{W}_+^{1/2}, \bar J_+, D_{\sigma_i} \cap \widetilde{\Sigma}_i, \mu_{\sigma_i} \cap \widetilde{\Sigma}_i)$$
converge in the Gromov sense.

A consequence of the Gromov convergence is that there are decorations $r_i$ such that the compact, connected surfaces $$\widetilde{\Sigma}_i^{D_{\sigma_i} \cap \widetilde{\Sigma}_i, r_i}$$
are diffeomorphic for sufficiently large $i$, and therefore have the same number of boundary components for sufficiently large $i$. 
By definition, $\widetilde{\Sigma}_i$ and $\widetilde{\Sigma}_i^{D_{\sigma_i} \cap \widetilde{\Sigma}_i, r_k}$ have the same number of boundary components. 

We conclude that there is some $i$-independent constant $\rho > 0$ such that $\widetilde{\Sigma}_i$ has at most $\rho$ boundary components. 

Set
$$\widetilde{C}_i = C_{\sigma_i} \setminus \widetilde{\Sigma}_i.$$

Note that, since there is only one connected component of $|C_{\sigma_i}|$ and $u(C_{\sigma_i})$ intersects both components of $\text{Core}(W)$, we find that there must be a connected component $C'$ of $\widetilde{C}_i$ such that $u_{\sigma_i}(C')$ intersects $\text{Core}(W_-)$ and $W_{\sigma_i,-}^{2\sigma_i - 1/4}\setminus W_{\sigma_i,-}^{2\sigma_i-3/8}$.

Moreover,
$$u_{\sigma_i}(\partial\widetilde{C}_i) \subset W_{\sigma_i,-}^{2\sigma_i-1/4} \setminus W_{\sigma_i,-}^{2\sigma_i-3/8}.$$

Finally, we have 
$$\#\pi_0(\partial \widetilde{C}_i) = \#\pi_0(\partial\widetilde{\Sigma}_i)$$
and, by the Mayer-Vietoris sequence,
$$\#\pi_0(\widetilde{C}_i) \leq \#\pi_0(\partial\widetilde{C}_k) + \#\pi_0(C_{\sigma_i}).$$

Using the fact that $\#\pi_0(\partial\widetilde{\Sigma}_i)$ is uniformly bounded in $i$, along with the uniform bound on $\#\pi_0(C_k)$ coming from Propositions \ref{prop:ncConnectedComponentsBound} and \ref{prop:constantConnectedComponentsBound}, we conclude that the surfaces $\widetilde{C}_i$ satisfy the conditions of the proposition.
\end{proof}

Let $\sigma_i \to \infty$ be the strictly monotonic sequence of natural numbers and $\widetilde{C}_i$ be the sequence of surfaces in $C_{\sigma_i}$ guaranteed by Proposition \ref{prop:boundaryComponentBound}. 

Set
$$\widetilde{\mathbf{u}}_i = (\widetilde{u}_i, \widetilde{C}_i, W_{\sigma_i,-}^{2\sigma_i}, \bar J_{\sigma_i}, \widetilde{D}_i, \widetilde{\mu}_i)$$
to be the restriction of $\mathbf{u}_{\sigma_i}$ to $\widetilde{C}_i$. 

By restricting to $\widetilde{C}_k$, we are trimming away the part of $C_k$ that maps into a neighborhood of $W_+ \subset W_k$.

Then set $w_i$ to be the composition of $\widetilde{u}_i$ with the embedding
$$W_{\sigma_i,-}^{2\sigma_i} \subset \widetilde{W}_-$$
and define the sequence of pseudoholomorphic curves
$$\mathbf{w}_i = (w_i, \widetilde{C}_i, \widetilde{W}_-, \bar J_-, \widetilde{D}_i, \widetilde{\mu}_i).$$

We summarize the properties of the sequence $\mathbf{w}_k$ in the following proposition.

\begin{prop} 
\label{prop:sequenceOfCurves2} 
Assume that $(W, \Omega)$ and the energy level $M = H^{-1}(0)$ satisfy the assumptions of either Theorem \ref{thm:mainExample} or of Theorem \ref{thm:swExample}.

Then there is a sequence of natural numbers $\sigma_i \to \infty$, a sequence of stable pseudoholomorphic curves
$$\mathbf{w}_i = (w_i, \widetilde{C}_i, \widetilde{W}_-, \bar J_-, \widetilde{D}_i, \widetilde{\mu}_i)$$
such that the following conditions are satisfied. 

First, for every $i$ there is a connected component $C'$ of $|\widetilde{C}_i|$ such that $w_i(C')$ intersects both $\text{Core}(W_-)$ and $\widetilde{W}_-^{2\sigma_i - 1/4} \setminus \widetilde{W}_-^{2\sigma_i - 3/8}.$

Second,
$$w_i(\partial\widetilde{C}_i) \subset \widetilde{W}_-^{2\sigma_i - 1/4} \setminus \widetilde{W}_-^{2\sigma_i - 3/8}.$$

Third, there is a constant $\rho > 0$ independent of $i$ such that the following bounds are satisfied:
\begin{itemize}
    \item $\text{Genus}_{\text{arith}}(\widetilde{C}_i, \widetilde{D}_i) \leq \rho$.
    \item $\#\widetilde{\mu}_i \leq \rho$.
    \item $\#\widetilde{D}_i \leq \rho$.
    \item $\#\pi_0(\widetilde{C}_i) \leq \rho$.
    \item $\#\pi_0(\partial\widetilde{C}_i) \leq \rho$. 
    \item $E_\omega(\mathbf{w}_i) \leq \rho$.
\end{itemize}

Fourth, the integral of $\omega$ over $w_i^{-1}([0, \infty) \times M)$ is bounded above by $\rho$ for every $i$. 
\end{prop}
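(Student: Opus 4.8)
The plan is to take $\mathbf{w}_i$ to be exactly the sequence of pseudoholomorphic curves already constructed in the paragraph following Proposition \ref{prop:boundaryComponentBound} (with $\sigma_i$ and $\widetilde C_i\subset C_{\sigma_i}$ the data furnished by that proposition), and then to verify each listed property by transporting the estimates already assembled in this section through the isomorphism of Lemma \ref{lem:obviousInclusions}. First I would record that, by Lemma \ref{lem:obviousInclusions}, the embedding $W_{\sigma_i,-}^{2\sigma_i}\hookrightarrow\widetilde W_-$ (the identity on $\text{Core}(W_-)$, the shift $\text{Sh}_{\sigma_i}$ on the neck) is an isomorphism of almost-Hermitian manifolds onto $\widetilde W_-^{2\sigma_i}$, carrying $\bar J_{\sigma_i}$ to $\bar J_-$, $\text{Core}(W_-)$ to $\text{Core}(W_-)$, and $W_{\sigma_i,-}^{\ell}$ to $\widetilde W_-^{\ell}$ for each $\ell$; in particular $\mathbf{w}_i$ is a bona fide $\bar J_-$-holomorphic curve. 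I would then note that it is stable and boundary-immersed: $\widetilde C_i$ is the complement in the stable curve $\mathbf{u}_{\sigma_i}$ of a sub-surface produced by target-local Gromov compactness, so its boundary is immersed and no constant-image component of $C_{\sigma_i}$ is subdivided, whence every constant-image component of $\widetilde C_i$ is a stable component of $\mathbf{u}_{\sigma_i}$, and $\partial\widetilde C_i$ avoids $D_{\sigma_i}\cup\mu_{\sigma_i}$ by the regular-value choices made in the proof of Proposition \ref{prop:boundaryComponentBound}.

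Given this isomorphism, the first and second asserted properties become literal translations of the two bullet points of Proposition \ref{prop:boundaryComponentBound}. For the third property I would argue: $\text{Genus}_{\text{arith}}(\widetilde C_i,\widetilde D_i)\le\text{Genus}_{\text{arith}}(C_{\sigma_i},D_{\sigma_i})\le G$, using that passing to a sub-surface with the induced nodal points cannot increase the arithmetic genus (via the formula of Lemma \ref{lem:arithGenusFormula} together with the remark after Definition \ref{defn:curveArithmeticGenus}); $\#\widetilde\mu_i\le\#\mu_{\sigma_i}\le m$; $\#\widetilde D_i\le\#D_{\sigma_i}\le\rho$ by Proposition \ref{prop:nodalPointBound}; and $\#\pi_0(\widetilde C_i)\le\rho$, $\#\pi_0(\partial\widetilde C_i)\le\rho$ are exactly the component bounds recorded in Proposition \ref{prop:boundaryComponentBound} (which in turn rest on Propositions \ref{prop:ncConnectedComponentsBound} and \ref{prop:constantConnectedComponentsBound}).

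The remaining and slightly more delicate point is the $\omega$-energy control, which I would deduce from Lemma \ref{lem:sameSmallOmega}. Under the shift identification above, the part of the cylindrical end $[0,\infty)\times M\subset\widetilde W_-$ meeting the image of $\mathbf{w}_i$ corresponds to a region $[-\sigma_i,\sigma_i]\times M\subset W_{\sigma_i}$ on which $\beta_{\sigma_i}\equiv 0$, so $\hat\omega_{\sigma_i}=\omega$ there, and which lies inside the $\eta$-adapted region of $W_{\sigma_i}$ (where $\bar J_{\sigma_i}=J_{\text{Neck}}$), so $u_{\sigma_i}^*\omega\ge 0$ pointwise by Lemma \ref{lem:omegaNonnegative}. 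I would then chain the estimates
$$\int_{w_i^{-1}([0,\infty)\times M)} w_i^*\omega \;=\; \int_{\widetilde u_i^{-1}([-\sigma_i,\sigma_i]\times M)} u_{\sigma_i}^*\omega \;\le\; \int_{u_{\sigma_i}^{-1}([-\sigma_i-3\epsilon_0,\,\sigma_i+3\epsilon_0]\times M)} u_{\sigma_i}^*\big(\omega+d(\beta_{\sigma_i}(a)\lambda)\big) \;=\; E_{\hat\omega}(u_{\sigma_i}) \;\le\; \rho,$$
where the middle inequality uses $\widetilde C_i\subseteq C_{\sigma_i}$ together with the pointwise nonnegativity of the integrands (Lemma \ref{lem:omegaNonnegative} on $[-\sigma_i,\sigma_i]\times M$ and the realized-Hamiltonian-homotopy analogue of it on the transition regions, via Lemma \ref{lem:transitionIsHomotopy}) and the last inequality is Lemma \ref{lem:sameSmallOmega}. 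The bound $E_\omega(\mathbf{w}_i)\le\rho$ — reading $E_\omega(\mathbf{w}_i)$ as the $\omega$-energy of the portion of $\mathbf{w}_i$ mapping into the model cylindrical end, where $\bar J_-=J_{\text{Neck}}$ — comes out of the same estimate.

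I do not expect any serious obstacle here: the proposition is essentially a bookkeeping assembly of Propositions \ref{prop:boundaryComponentBound}, \ref{prop:sequenceOfCurves}, \ref{prop:ncConnectedComponentsBound}, \ref{prop:constantConnectedComponentsBound}, \ref{prop:nodalPointBound} and Lemmas \ref{lem:obviousInclusions}, \ref{lem:sameSmallOmega}, \ref{lem:omegaNonnegative}, \ref{lem:transitionIsHomotopy}. The only steps demanding genuine care are (i) verifying that restriction to the trimmed sub-surface preserves stability and boundary-immersedness, which relies on how the trimming sub-surfaces were produced (target-local Gromov compactness and the regular-value choices in Proposition \ref{prop:boundaryComponentBound}), and (ii) making the identification $\hat\omega_{\sigma_i}=\omega$ over the neck precise and keeping track of the nonnegativity of the pulled-back forms, so that Lemma \ref{lem:sameSmallOmega} can be legitimately invoked to bound both $E_\omega(\mathbf{w}_i)$ and the $\omega$-integral over $w_i^{-1}([0,\infty)\times M)$.
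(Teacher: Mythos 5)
Your proposal is correct and is essentially the paper's own argument: the paper defines $\mathbf{w}_i$ exactly as the restriction of $\mathbf{u}_{\sigma_i}$ to the trimmed surfaces of Proposition \ref{prop:boundaryComponentBound}, transported into $\widetilde{W}_-$ via Lemma \ref{lem:obviousInclusions}, and deduces the listed bounds from Propositions \ref{prop:boundaryComponentBound}, \ref{prop:nodalPointBound}, \ref{prop:sequenceOfCurves} and Lemma \ref{lem:sameSmallOmega}, just as you do. Your elaboration of the $\omega$-energy chain (using $\hat\omega=\omega$ on the untwisted part of the neck and pointwise nonnegativity of the pulled-back forms) is exactly the intended justification.
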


The first and second assertions in Proposition \ref{prop:sequenceOfCurves2} are from Proposition \ref{prop:boundaryComponentBound}. The bounds in the third assertion follow from Propositions \ref{prop:boundaryComponentBound}, \ref{prop:nodalPointBound}, and \ref{prop:sequenceOfCurves}, along with Lemma \ref{lem:sameSmallOmega}.

The almost-Hermitian manifolds
$$(\widetilde{W}_-^\ell, \bar J_-, g_-)$$
form an exhausting sequence of 
$$(\widetilde{W}_-, \bar J_-, g_-).$$

The area bound from Corollary \ref{cor:transitionAreaBounds} implies the following area bound for the sequence $\mathbf{w}_i$. 

\begin{prop}
\label{prop:exhaustiveAreaBounds} For any $\ell$ and any $i$ such that $2\sigma_i - 1 \geq \ell$, there is a constant $\rho_\ell > 0$ independent of $i$ such that if we set
$$\widetilde{C}_i^\ell = w_i^{-1}(\widetilde{W}_-^\ell)$$
we have
$$\text{Area}_{w_i^*g_-}(\widetilde{C}_i^\ell) \leq \rho_\ell.$$
\end{prop}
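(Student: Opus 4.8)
The plan is to reduce the claimed bound, via the isometric identifications built into the neck-stretching construction, to the area bound of Corollary \ref{cor:transitionAreaBounds}, which is exactly the kind of statement we need but phrased on the finite-length manifolds $W_k$ rather than on $\widetilde{W}_-$. First I would fix $\ell > -8\epsilon_0$ and $i$ with $2\sigma_i - 1 \geq \ell$, so that in particular $\ell \in (-8\epsilon_0, 2\sigma_i + 8\epsilon_0)$ and all the compact regions $W_{\sigma_i,-}^\ell \subset W_{\sigma_i}$ and $\widetilde{W}_-^\ell \subset \widetilde{W}_-$ appearing below are defined.

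Next I would unwind the definition of $\mathbf{w}_i$. By construction $w_i$ is the composition of the restriction $\widetilde{u}_{\sigma_i}$ of $u_{\sigma_i}$ to $\widetilde{C}_i \subset C_{\sigma_i}$ with the embedding $W_{\sigma_i,-}^{2\sigma_i} \hookrightarrow \widetilde{W}_-$, which by Lemma \ref{lem:obviousInclusions} is an isomorphism of almost-Hermitian manifolds onto $\widetilde{W}_-^{2\sigma_i}$; under this isomorphism the sub-region $\widetilde{W}_-^\ell$ corresponds precisely to $W_{\sigma_i,-}^\ell$. Since an isomorphism of almost-Hermitian manifolds pulls back $g_-$ to $g_{\sigma_i}$ and $\bar J_-$ to $\bar J_{\sigma_i}$, it pulls back the area two-form $\Theta_- = g_-(\bar J_- -,-)$ to $\Theta_{\sigma_i} = g_{\sigma_i}(\bar J_{\sigma_i}-,-)$, hence preserves the area of pseudoholomorphic subsurfaces. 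Therefore $\widetilde{C}_i^\ell = w_i^{-1}(\widetilde{W}_-^\ell)$ is carried to the subsurface $\widetilde{u}_{\sigma_i}^{-1}(W_{\sigma_i,-}^\ell) \subset \widetilde{C}_i$ and
$$\text{Area}_{w_i^*g_-}(\widetilde{C}_i^\ell) = \text{Area}_{u_{\sigma_i}^*g_{\sigma_i}}\big(\widetilde{u}_{\sigma_i}^{-1}(W_{\sigma_i,-}^\ell)\big).$$

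Finally I would invoke monotonicity of area under passing to subsurfaces, which holds because on the non-constant components the area is the integral of the pointwise non-negative form $u^*\Theta$ (Lemma \ref{lem:omegaNonnegative} gives the analogous nonnegativity in the relevant settings, and more directly the defining two-form $\Theta$ is nonnegative on $\bar J$-complex tangent planes). Since $\widetilde{C}_i \subseteq C_{\sigma_i}$ and $W_{\sigma_i,-}^\ell \subseteq W_{\sigma_i}^\ell = W_{\sigma_i,-}^\ell \cup W_{\sigma_i,+}^\ell$, we have $\widetilde{u}_{\sigma_i}^{-1}(W_{\sigma_i,-}^\ell) \subseteq u_{\sigma_i}^{-1}(W_{\sigma_i}^\ell) = C_{\sigma_i}^\ell$, whence
$$\text{Area}_{u_{\sigma_i}^*g_{\sigma_i}}\big(\widetilde{u}_{\sigma_i}^{-1}(W_{\sigma_i,-}^\ell)\big) \leq \text{Area}_{u_{\sigma_i}^*g_{\sigma_i}}(C_{\sigma_i}^\ell) \leq \rho_\ell,$$
where the last inequality is Corollary \ref{cor:transitionAreaBounds} and $\rho_\ell$ is independent of $i$ (it depends only on $\ell$ and the fixed geometric data, not on the index $\sigma_i$). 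Combining the two displays gives the proposition.

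This argument is essentially bookkeeping; there is no serious analytic obstacle. The only point requiring care is matching the regions $\widetilde{W}_-^\ell$ and $W_{\sigma_i,-}^\ell$ under the isomorphism of Lemma \ref{lem:obviousInclusions} and checking that the hypothesis $2\sigma_i-1 \geq \ell$ puts $\ell$ in the range where all of these objects are defined and the isomorphism restricts correctly; and recording that $\text{Area}$ is monotone under restriction to subsurfaces, so that the bound for $C_{\sigma_i}^\ell$ from Corollary \ref{cor:transitionAreaBounds} passes to the smaller set $\widetilde{u}_{\sigma_i}^{-1}(W_{\sigma_i,-}^\ell)$.
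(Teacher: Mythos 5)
Your argument is correct and is exactly the route the paper intends: the paper states the proposition as an immediate consequence of Corollary \ref{cor:transitionAreaBounds}, and your write-up just makes the bookkeeping explicit (the isometric identification of $W_{\sigma_i,-}^{\ell}$ with $\widetilde{W}_-^{\ell}$ via Lemma \ref{lem:obviousInclusions}, plus monotonicity of area under passing to a subsurface because $u^*\Theta$ is pointwise nonnegative on the tangent planes of a pseudoholomorphic curve). The only cosmetic slip is the appeal to Lemma \ref{lem:omegaNonnegative}, which concerns $\omega$-energy and is not needed; your direct observation about $\Theta$ on $\bar J$-complex planes already suffices.
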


Proposition \ref{prop:exhaustiveAreaBounds} and the bounds from Proposition \ref{prop:sequenceOfCurves2} imply that the pseudoholomorphic curves $\mathbf{w}_i$ satisfy the conditions of the exhaustive Gromov compactness theorem (Theorem \ref{thm:exhaustiveGromov}) for the exhausting sequence $\widetilde{W}_-^\ell$. 

The theorem itself applies to connected pseudoholomorphic curves, but since we have a uniform bound on $\#\pi_0(\widetilde{C}_i)$, it follows that we can apply it in this setting as well, by applying it successively to restrictions of $\mathbf{w}_i$ to connected components of $|\widetilde{C}_i|$.  

We conclude the following lemma.

\begin{lem} \label{lem:exhaustiveGromovLimit}
There is a stable pseudoholomorphic curve
$$\bar{\mathbf{w}} = (w, \bar C, \bar j, \widetilde{W}_-, \bar J_-, \bar D, \bar \mu)$$
such that the following two conditions hold:
\begin{itemize}
\item There are finitely many connected components of $|\bar C|$.
\item After passing to a subsequence, the sequence of pseudoholomorphic curves $\mathbf{w}_i$ from Proposition \ref{prop:sequenceOfCurves2} converges to $\bar{\mathbf{w}}$
in the exhaustive Gromov sense.
\end{itemize}
\end{lem}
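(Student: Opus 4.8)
The plan is to assemble Lemma~\ref{lem:exhaustiveGromovLimit} directly from the exhaustive Gromov compactness theorem (Theorem~\ref{thm:exhaustiveGromov}) applied to the sequence $\mathbf{w}_i$ from Proposition~\ref{prop:sequenceOfCurves2}, after reducing to the connected case. First I would recall that the almost-Hermitian manifolds $(\widetilde{W}_-^\ell, \bar J_-, g_-)$ form an exhausting sequence of $(\widetilde{W}_-, \bar J_-, g_-)$ in the sense required by the theorem: each $\widetilde{W}_-^\ell$ embeds as a proper open subset of $\widetilde{W}_-^{\ell'}$ for $\ell' > \ell$ with compact closure, their union is $\widetilde{W}_-$, and the almost-Hermitian structures are literally the restrictions of $(\bar J_-, g_-)$, so they converge (trivially) in $C^\infty_{loc}$. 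Then I would check the three hypotheses of Theorem~\ref{thm:exhaustiveGromov} for the restrictions $\mathbf{w}_i^\ell$ of $\mathbf{w}_i$ to $\hat C_i^\ell = w_i^{-1}(\widetilde{W}_-^\ell)$: the uniform area bound $\kappa_\ell$ is Proposition~\ref{prop:exhaustiveAreaBounds}; the arithmetic genus bound $\text{Genus}_{\text{arith}}(\hat C_i^\ell, \hat C_i^\ell \cap \widetilde{D}_i) \leq \text{Genus}_{\text{arith}}(\widetilde{C}_i, \widetilde{D}_i) \leq \rho$ and the marked/nodal point bound $\#(\hat C_i^\ell \cap (\widetilde{\mu}_i \cup \widetilde{D}_i)) \leq \#\widetilde{\mu}_i + \#\widetilde{D}_i \leq 2\rho$ both follow immediately from Proposition~\ref{prop:sequenceOfCurves2} and are in fact uniform in $\ell$. (Here I am using that the arithmetic genus of a sub-surface of a compact nodal surface is at most that of the whole surface, and similarly for counts of special points.)

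The one genuine subtlety is that Theorem~\ref{thm:exhaustiveGromov} is stated for \emph{connected} pseudoholomorphic curves, whereas the $\widetilde{C}_i$ need not be connected. To handle this I would use the uniform bound $\#\pi_0(\widetilde{C}_i) \leq \rho$ from Proposition~\ref{prop:sequenceOfCurves2}: pass to a subsequence so that $\#\pi_0(|\widetilde{C}_i|)$ equals a fixed integer $N \leq \rho$ for all $i$, write $\widetilde{C}_i = \bigsqcup_{p=1}^{N} \widetilde{C}_i^{(p)}$ as a disjoint union of its connected components in the sense of $|\cdot|$-connectedness (the notion defined in the Remark after Lemma~\ref{lem:arithGenusFormula}), and apply Theorem~\ref{thm:exhaustiveGromov} to each sequence $p \mapsto (\mathbf{w}_i$ restricted to $\widetilde{C}_i^{(p)})$ separately. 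Each such restriction is a sequence of stable, connected, compact, boundary-immersed $\bar J_-$-holomorphic curves satisfying the same uniform bounds, so after a finite iteration of passing to subsequences we obtain, for each $p$, a limit stable connected curve $\bar{\mathbf{w}}^{(p)}$. I would then define $\bar{\mathbf{w}}$ to be the disjoint union $\bigsqcup_p \bar{\mathbf{w}}^{(p)}$, which is a stable $\bar J_-$-holomorphic curve with exactly $N$ connected components of $|\bar C|$ --- giving the first bullet --- and which receives exhaustive Gromov convergence from the subsequence of $\mathbf{w}_i$, since exhaustive Gromov convergence of each component implies it for the disjoint union (one simply takes the union of the exhausting sub-surfaces $C^{(p),\ell}$). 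This gives the second bullet.

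A small technical point worth addressing: the curves $\mathbf{w}_i$ should be verified to be boundary-immersed before invoking the compactness theorem. This holds because $\mathbf{w}_i$ is a restriction of $\mathbf{u}_{\sigma_i}$ to a sub-surface $\widetilde{C}_i$ whose boundary was chosen (in Proposition~\ref{prop:boundaryComponentBound}) to sit over a regular level $\{\sigma_i - t_{\sigma_i,\text{trim}}\} \times M$ of $a \circ u_{\sigma_i}$ avoiding $D_{\sigma_i} \cup \mu_{\sigma_i}$; near such a level set the restriction of a non-constant pseudoholomorphic map to the boundary is an immersion by the standard structure of $J$-holomorphic maps near regular values of a coordinate function. (If $w_i$ is constant on some component meeting the boundary, that component must be disjoint from the boundary for $\widetilde{C}_i$ to be non-trivial there, or else it can be discarded; in any case the stability hypothesis and the construction rule this out.) With all hypotheses verified, the lemma follows directly.

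I expect the main obstacle to be purely bookkeeping rather than conceptual: carefully iterating the subsequence extraction over the (uniformly bounded number of) connected components and checking that exhaustive Gromov convergence is preserved under finite disjoint unions, together with confirming the boundary-immersed hypothesis for the trimmed curves. No new analytic input beyond Theorem~\ref{thm:exhaustiveGromov} and the a priori bounds already assembled in Propositions~\ref{prop:sequenceOfCurves2} and~\ref{prop:exhaustiveAreaBounds} should be needed.
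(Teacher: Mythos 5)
Your proposal is correct and follows essentially the same route as the paper: verify the hypotheses of the exhaustive Gromov compactness theorem via Proposition \ref{prop:exhaustiveAreaBounds} and the bounds of Proposition \ref{prop:sequenceOfCurves2}, and circumvent the connectedness hypothesis by using the uniform bound on $\#\pi_0(\widetilde{C}_i)$ to apply the theorem successively to the connected components of $|\widetilde{C}_i|$ and take the (finite) disjoint union of the limits. The extra details you supply (fixing the number of components along a subsequence, boundary-immersedness of the trimmed curves) are consistent with, and slightly more explicit than, the paper's brief argument.
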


\subsection{The limit curve is $\omega$-finite}

Choose some number $\bar{t}_{\text{trim}} \in (0, 1)$ such that
$\bar{t}_{\text{trim}}$ is a regular value of $a \circ \bar{\mathbf{w}}$ and 
$$\bar{t}_{\text{trim}} \not\in (a \circ \bar{\mathbf{w}})(\bar D \cap \bar \mu).$$

Let
$$\hat{\mathbf{w}} = (\hat w, \hat C, \hat j, [0, \infty) \times M, \hat J_-, \hat D, \hat \mu)$$
be the restriction of $\bar{\mathbf{w}}$ to 
$$\hat C = \bar{\mathbf{w}}^{-1}([\bar{t}_{\text{trim}}, \infty) \times M).$$

We will now prove the following. 

\begin{prop} \label{prop:limitCurveOmegaFinite}
The pseudoholomorphic curve $\hat{\mathbf{w}}$ is $\omega$-finite.
\end{prop}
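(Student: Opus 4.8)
The plan is to verify, in turn, the five conditions of Definition \ref{defn:omegaFiniteCurve} for $\hat{\mathbf{w}}$, drawing all the needed a priori control from Proposition \ref{prop:sequenceOfCurves2} and Lemma \ref{lem:exhaustiveGromovLimit}. Three of them are bookkeeping. Since $\bar{\mathbf{w}}$ is the exhaustive Gromov limit of Lemma \ref{lem:exhaustiveGromovLimit}, $w$ is a proper $\bar J_-$-holomorphic map into $\widetilde W_-$; as $[\bar t_{\text{trim}},\infty)\times M$ is closed and $\hat C=\bar w^{-1}([\bar t_{\text{trim}},\infty)\times M)$, any compact $K\subset[\bar t_{\text{trim}},\infty)\times M$ has $\hat w^{-1}(K)=w^{-1}(K)$, which is compact, so $\hat w$ is proper. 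Next, the curves $\mathbf{u}_k$ of Proposition \ref{prop:sequenceOfCurves} have \emph{closed} domains, so the only boundary of the trimmed surfaces $\widetilde C_i$ is the cut from Proposition \ref{prop:boundaryComponentBound}, whose image lies in $\widetilde W_-^{2\sigma_i-1/4}\setminus\widetilde W_-^{2\sigma_i-3/8}$ and hence exits every compact subset of $\widetilde W_-$ as $i\to\infty$; this boundary therefore contributes nothing over any fixed exhaustion level, and the limit has $\partial\bar C=\emptyset$. Since $\bar t_{\text{trim}}$ is a regular value of $a\circ\bar{\mathbf w}$ avoiding the images of $\bar D\cup\bar\mu$, we get $\partial\hat C=\bar w^{-1}(\{\bar t_{\text{trim}}\}\times M)$, a compact $1$-manifold mapping into the compact set $\{\bar t_{\text{trim}}\}\times M$.

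For finiteness of $E_\omega(\hat w)$, recall from Lemma \ref{lem:omegaNonnegative} that $\hat w^*\omega$ is pointwise nonnegative, so $E_\omega(\hat w)=\sup_\ell\int_{\hat C\cap C^\ell}\hat w^*\omega$ for the exhausting family $C^\ell$ in the definition of exhaustive Gromov convergence. On each level this integral is at most the liminf of the corresponding integrals for the approximants $\mathbf w_i$ (lower semicontinuity of $\int u^*\omega$ under Gromov convergence), which are uniformly bounded by $\rho$ by Proposition \ref{prop:sequenceOfCurves2} (equivalently Lemma \ref{lem:sameSmallOmega}); hence $E_\omega(\hat w)\le\rho<\infty$. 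The same liminf argument, applied to the uniform arithmetic genus bound in Proposition \ref{prop:sequenceOfCurves2} together with monotonicity of arithmetic genus along the exhaustion, gives $\mathrm{Genus}_{\mathrm{arith}}(\bar C,\bar D)\le\rho$, so $\mathrm{Genus}(\hat C)\le\rho$ and only finitely many connected components of $|\hat C|$ have positive genus.

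The remaining, and hardest, point is that $\hat C$ has a non-compact connected component — and the argument for this also delivers the one still-unchecked condition, that $\hat w(\hat C)$ has finitely many connected components. First I would show $\hat w(\hat C)$ is unbounded: by the first assertion of Proposition \ref{prop:sequenceOfCurves2}, for each $i$ some $|\widetilde C_i|$-component of $\mathbf w_i$ has image meeting both $\mathrm{Core}(W_-)$ and $\widetilde W_-^{2\sigma_i-1/4}\setminus\widetilde W_-^{2\sigma_i-3/8}$; being connected, for fixed $\ell$ and all large $i$ its image meets $\widetilde W_-^{\ell+1}\setminus\widetilde W_-^{\ell}$, so by the $C^0$-convergence of maps built into exhaustive Gromov convergence, $\bar w(\bar C)$ meets $\widetilde W_-^{\ell+2}\setminus\widetilde W_-^{\ell-1}$ for every $\ell$, whence $\hat w(\hat C)$ is unbounded in $[\bar t_{\text{trim}},\infty)\times M$. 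Now suppose every component of $|\hat C|$ is compact. By properness of $\bar w$ only finitely many components of $|\hat C|$ meet each $\widetilde W_-^\ell$, so unboundedness forces infinitely many compact components of $|\hat C|$ to escape to infinity into $[0,\infty)\times M$, where the almost-Hermitian structure is the model $\eta$-adapted cylinder $(\mathbb R\times M,J_{\mathrm{Neck}},g_{\mathrm{Neck}})$. Of these, only finitely many meet $\partial\hat C$ and, by the genus bound above, only finitely many have positive genus, so infinitely many are closed genus-zero $\bar J_-$-holomorphic curves; since a connected genus-zero stable \emph{constant} nodal curve carries at least three marked points and $\#\bar\mu\le\rho$, all but finitely many of these are non-constant. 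Applying the $\omega$-energy quantization theorem (Theorem \ref{thm:omegaEnergyQuantization}, with $G=0$ and any fixed $h>0$, the hypothesis on $(a\circ u)(\partial C)$ being vacuous for closed curves) to a non-constant irreducible spherical piece of each such component produces a uniform positive lower bound $\hbar$ on its $\omega$-energy. Thus infinitely many disjoint pieces each contribute $\ge\hbar$ to $E_\omega(\hat w)$, contradicting finiteness. Hence $|\hat C|$ has finitely many components and at least one is non-compact; $\hat w(\hat C)$, a finite union of connected images, then has finitely many components.

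I expect the main obstacle to be the last paragraph: carefully setting up the ``unbounded image forces an infinite string of closed components'' dichotomy, in particular tracking which components of $\hat C$ survive the exhaustive Gromov limit and handling constant (ghost) components via the stability count, and then legitimately invoking Theorem \ref{thm:omegaEnergyQuantization} on spherical pieces lying far out in the cylindrical end where the structure agrees with the model adapted cylinder.
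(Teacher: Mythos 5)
Your proposal is correct in substance and follows the same overall strategy as the paper, which distributes the verification over Lemmas \ref{lem:limitCurveProper}, \ref{lem:limitCurveNoBoundary}, \ref{lem:limitCurveOmegaEnergy}, \ref{lem:limitCurveGenusBound}, \ref{lem:limitCurveBoundedNcComponents} and \ref{lem:limitCurveNonCompactComponent}. The one genuinely different arrangement is the non-compactness step: the paper first proves a uniform bound on the number of \emph{all} non-constant components of $\bar C$ (Lemma \ref{lem:limitCurveBoundedNcComponents}, using Theorem \ref{thm:omegaEnergyQuantization} for components deep in the cylindrical end and the monotonicity theorem \ref{thm:monotonicity} for components meeting $\widetilde{W}_-^{50}$), and then deduces a non-compact component from unboundedness of the image together with the finiteness of $\#\pi_0(|\bar C|)$ from Lemma \ref{lem:exhaustiveGromovLimit}; you instead run a direct contradiction with infinitely many closed components escaping into the end, which needs only the quantization theorem and is closer to the sketch in Section \ref{sec:outline}. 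Two points need repair. First, in excluding constant escaping components you write that a constant stable genus-zero component ``carries at least three marked points''; stability only forces three \emph{special} points (marked or nodal), and at this stage $\#\bar D$ has not been bounded, so you must add the observation that an entirely constant, compact, arithmetic-genus-zero component of $|\hat C|$ is a tree of constant spheres whose leaf spheres carry at most one node and hence at least two marked points each, so the bound $\#\bar\mu\le\rho$ still rules out infinitely many of them. Second, your closing inference ``hence $|\hat C|$ has finitely many components'' does not follow from the contradiction, which only shows that not every component is compact; the condition that the \emph{image} has finitely many connected components should instead be drawn, as the paper does, from Lemma \ref{lem:exhaustiveGromovLimit} (finitely many connected components of $|\bar C|$, each with connected image), rather than from your escape argument. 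With those two adjustments the proof goes through.
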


The proof of the proposition will be proved over the course of several lemmas.

\begin{lem} \label{lem:limitCurveProper} The pseudoholomorphic map $\hat w$ is proper. \end{lem}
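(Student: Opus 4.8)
The plan is to reduce the properness of $\hat w$ to the properness of the full exhaustive Gromov limit $w \colon \bar C \to \widetilde W_-$. Since $[\bar t_{\text{trim}}, \infty) \times M$ is a closed subset of $\widetilde W_-$, every compact subset of it is a compact subset of $\widetilde W_-$, and $\hat C = w^{-1}([\bar t_{\text{trim}}, \infty) \times M)$ is precisely the $w$-preimage of this set; hence for any compact $K \subseteq [\bar t_{\text{trim}}, \infty) \times M$ one has $\hat w^{-1}(K) = w^{-1}(K)$, which is compact the moment $w$ is proper. So it suffices to show $w$ is proper, i.e. that $w^{-1}(\widetilde W_-^\ell)$ is compact for every $\ell$, where $(\widetilde W_-^\ell)_\ell$ is the exhausting sequence from Definition \ref{defn:someCompactRegions}.

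First I would record the structure furnished by the exhaustive Gromov convergence of Lemma \ref{lem:exhaustiveGromovLimit}: there are compact subsurfaces $\bar C^\ell \subseteq \bar C$ with $\bar C^\ell \subseteq \bar C^{\ell+1} \setminus \partial \bar C^{\ell+1}$ and $\bar C \setminus \partial \bar C = \bigcup_\ell \bar C^\ell$, together with (after passing to the subsequence) compact subsurfaces $\widetilde C_i^\ell \subseteq \widetilde C_i$ containing $w_i^{-1}(\widetilde W_-^\ell)$ whose restrictions converge in the Gromov sense to the restriction of $\bar{\mathbf w}$ to $\bar C^\ell$; in particular $w(\bar C^\ell) \subseteq \widetilde W_-^\ell$. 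The heart of the matter is the reverse-type inclusion $w^{-1}(\widetilde W_-^\ell) \subseteq \bar C^{\ell+1}$: granting it, $w^{-1}(\widetilde W_-^\ell)$ is a closed subset (as $\widetilde W_-^\ell$ is compact, hence closed, in $\widetilde W_-$) of the compact set $\bar C^{\ell+1}$, hence compact, which gives properness of $w$.

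To establish $w^{-1}(\widetilde W_-^\ell) \subseteq \bar C^{\ell+1}$ I would apply target-local Gromov compactness (Theorem \ref{thm:targetLocal}) to the sequence $\mathbf w_i$ with $\mathcal K_1 = \widetilde W_-^\ell$ and $\mathcal K_2 = \widetilde W_-^{\ell+1}$, noting that $w_i(\partial\widetilde C_i) \subseteq \widetilde W_-^{2\sigma_i - 1/4} \setminus \widetilde W_-^{2\sigma_i - 3/8}$ is disjoint from $\mathcal K_2$ for large $i$ by Proposition \ref{prop:sequenceOfCurves2}, and using the uniform area, genus, and nodal-point bounds from Propositions \ref{prop:sequenceOfCurves2} and \ref{prop:exhaustiveAreaBounds}. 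This produces subsurfaces $\widehat C_i \subseteq \widetilde C_i$ with $w_i(\widetilde C_i \setminus \widehat C_i) \subseteq \widetilde W_- \setminus \widetilde W_-^\ell$ — equivalently $w_i^{-1}(\widetilde W_-^\ell) \subseteq \widehat C_i$ — whose restrictions converge in the Gromov sense; by uniqueness of Gromov limits, applied component-by-component using the uniform bound on $\#\pi_0(\widetilde C_i)$, this limit is the corresponding restriction of $\bar{\mathbf w}$, and the inclusion is then read off from the $C^0$-convergence $\phi_i^* w_i \to w$ on the normalizations. The main obstacle I anticipate is exactly this last step: matching the trimmed domains coming out of target-local compactness with the exhausting subsurfaces $\bar C^\ell$ of the exhaustive limit, and checking that a point of $\bar C$ mapping into $\widetilde W_-^\ell$ really is a limit of points lying in the $\widehat C_i$ rather than escaping through a collapsing neck. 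This is routine in spirit but delicate to write down carefully, and it is the point at which one leans on the detailed compactness machinery of \cite{FishHoferExhaustive}.
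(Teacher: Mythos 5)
Your reduction of the properness of $\hat w$ to that of $\bar w$ is exactly the paper's first step, and your overall strategy (compactness of preimages via the Gromov-convergence structure) is the paper's as well; the difference is in the mechanism for the second step. The paper reads compactness of $\bar w^{-1}(K)$ directly off the definition of exhaustive Gromov convergence: for $K \subseteq \widetilde W_-^{\ell-10}$, the closed set $\bar w^{-1}(K)$ embeds continuously, via the convergence data, into the compact normalization of a trimmed surface $\widetilde C_i^{\ell} \subseteq \widetilde C_i$ containing $w_i^{-1}(\widetilde W_-^{\ell-5})$, and is therefore compact. You instead re-run target-local Gromov compactness (Theorem \ref{thm:targetLocal}) with $\mathcal{K}_1 = \widetilde W_-^{\ell}$, $\mathcal{K}_2 = \widetilde W_-^{\ell+1}$ and invoke uniqueness of Gromov limits to force $w^{-1}(\widetilde W_-^{\ell})$ into a compact exhausting piece $\bar C^{\ell+1}$. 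Your route can be made to work, but the extra application of target-local compactness is not needed: the exhausting data $\{\bar C^{\ell}\}$, $\{\widetilde C_i^{\ell}\}$ with $w_i^{-1}(\widetilde W_-^{\ell}) \subseteq \widetilde C_i^{\ell}$, together with the level-by-level Gromov convergence of restrictions, already contains what you want, and the limit-matching step you flag as delicate (identifying the target-local limit of the trimmed $\widehat C_i$ with a restriction of $\bar{\mathbf{w}}$, and chasing a point of $\bar C$ through the various normalization diffeomorphisms) is exactly the bookkeeping that the paper's one-line appeal to the definition suppresses. Note also that the specific inclusion $w^{-1}(\widetilde W_-^{\ell}) \subseteq \bar C^{\ell+1}$ with that particular index is not forced by the definition; all you need, and all your argument actually yields, is containment in \emph{some} compact subsurface, which suffices. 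In short, your approach buys an explicit, self-contained derivation, while the paper's buys brevity at the price of leaving the same bookkeeping implicit.
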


\begin{proof}
This follows from showing that the pseudoholomorphic map 
$$\bar w: \bar C \to \widetilde{W}_-$$
is proper. 

This in turn follows from the fact that
$$\mathbf{w}_i \to \bar{\mathbf{w}}$$
in the exhaustive Gromov sense. 

The definition of exhaustive Gromov compactness implies that the following holds.

Fix any compact subset $K \subset \widetilde{W}_i$ and sufficiently large $\ell$ such that $K \subset \widetilde{W}_i^{\ell - 10}$. Then for sufficiently large $i$, there is a decoration $r_i^\ell$ on a non-empty, compact surface
$$\widetilde{C}_i^\ell \subset \widetilde{C}_i$$
containing
$$w_i^{-1}(\widetilde{W}_-^{\ell - 5})$$
so that the closed set $\bar{w}^{-1}(K)$ embeds continuously into 
$$(\widetilde{C}_i^\ell)^{\widetilde{D}_i \cap \widetilde{C}_i^\ell, r_i^\ell}.$$

It follows that $\bar{w}^{-1}(K)$ is compact. 
\end{proof}

Next, we show that the arithmetic genus of $\bar C$ is finite.

\begin{lem}
\label{lem:limitCurveGenusBound} Let $\rho$ be the constant in Proposition \ref{prop:sequenceOfCurves2}. Then the arithmetic genus of $\bar C$ is at most $\rho$.
\end{lem}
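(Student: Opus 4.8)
The plan is to bound the arithmetic genus of $\bar C$ by the uniform bound $\rho$ on the arithmetic genera $\text{Genus}_{\text{arith}}(\widetilde{C}_i, \widetilde{D}_i)$ from Proposition \ref{prop:sequenceOfCurves2}, using the exhaustive Gromov convergence $\mathbf{w}_i \to \bar{\mathbf{w}}$ together with the compact-exhaustion definition of the genus of a non-compact surface (Definition \ref{defn:curveGenus}) and of arithmetic genus (Definition \ref{defn:curveArithmeticGenus}). The key point is that, by the definition of exhaustive Gromov convergence, for each fixed $\ell$ the restrictions $\mathbf{u}_k^\ell$ converge in the ordinary Gromov sense to the restriction $\mathbf{u}^\ell$ of $\bar{\mathbf{w}}$ to a compact sub-surface $C^\ell \subseteq \bar C$, and these $C^\ell$ exhaust $\bar C$ in the sense that $\bar C \setminus \partial \bar C = \cup_\ell C^\ell$ with $C^\ell \subset C^{\ell+1}\setminus\partial C^{\ell+1}$.

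First I would recall that ordinary Gromov convergence preserves arithmetic genus: if the restricted curves $\mathbf{w}_i^\ell = (w_i, \widetilde{C}_i^\ell, \ldots)$ converge in the Gromov sense to $\mathbf{w}^\ell = (w, C^\ell, \ldots)$, then for large $i$ the normalizations $(\widetilde{C}_i^\ell)^{\widetilde{D}_i \cap \widetilde{C}_i^\ell, r_i}$ and $(C^\ell)^{D^\ell, r}$ are diffeomorphic via the maps $\phi_i$ in Definition \ref{defn:gromovConvergence}, and in particular the surfaces obtained by performing connect sums at all nodal pairs are diffeomorphic. Hence $\text{Genus}_{\text{arith}}(C^\ell, D^\ell) = \text{Genus}_{\text{arith}}(\widetilde{C}_i^\ell, \widetilde{D}_i \cap \widetilde{C}_i^\ell)$ for $i$ sufficiently large. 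Second, since $\widetilde{C}_i^\ell$ is a compact sub-surface of $\widetilde{C}_i$, monotonicity of arithmetic genus under passing to compact sub-surfaces (which follows from Lemma \ref{lem:arithGenusFormula}, as capping off the extra boundary circles of $\widetilde{C}_i$ and adding back the missing nodal pairs only increases the count) gives
$$\text{Genus}_{\text{arith}}(\widetilde{C}_i^\ell, \widetilde{D}_i \cap \widetilde{C}_i^\ell) \leq \text{Genus}_{\text{arith}}(\widetilde{C}_i, \widetilde{D}_i) \leq \rho.$$
Combining, $\text{Genus}_{\text{arith}}(C^\ell, D^\ell) \leq \rho$ for every $\ell$.

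Finally, I would pass to the limit in $\ell$: since $\{C^\ell\}$ is a compact exhaustion of $\bar C \setminus \partial \bar C$ with nested interiors, Definition \ref{defn:curveGenus} (applied component by component, using that $\bar C$ has finitely many connected components of $|\bar C|$ by Lemma \ref{lem:exhaustiveGromovLimit}) and Definition \ref{defn:curveArithmeticGenus} give
$$\text{Genus}_{\text{arith}}(\bar C, \bar D) = \lim_{\ell \to \infty} \text{Genus}_{\text{arith}}(C^\ell, D^\ell) \leq \rho.$$
One must be slightly careful that the nodal points $\bar D$ are eventually all contained in some $C^\ell$ and that the exhaustion of $\bar C$ by the $C^\ell$ induces an exhaustion of the connect-summed surface $\bar C'$ of Definition \ref{defn:curveArithmeticGenus}; this is where the finiteness of the number of components of $|\bar C|$ and the fact that $C^\ell \subset \text{int}(C^{\ell+1})$ are used. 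The main obstacle, and the only genuinely non-formal step, is verifying that arithmetic genus is monotone under restriction to compact sub-surfaces and is preserved under (ordinary) Gromov convergence — both are standard consequences of the combinatorial formula in Lemma \ref{lem:arithGenusFormula} and the structure of Gromov limits, but they need to be stated carefully because $\widetilde{C}_i^\ell$ may split off or absorb nodal pairs relative to $\widetilde{C}_i$.
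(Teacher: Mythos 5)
Your proposal is correct and follows essentially the same route as the paper: use the exhaustive Gromov convergence to identify, for each fixed $\ell$ and large $i$, the normalization of the restricted curve over $\widetilde{W}_-^\ell$ with that of the limit restricted to a compact exhausting surface, deduce $\text{Genus}_{\text{arith}}(\bar C^\ell, \bar D \cap \bar C^\ell) \leq \text{Genus}_{\text{arith}}(\widetilde{C}_i, \widetilde{D}_i) \leq \rho$, and pass to the limit in $\ell$ via the compact-exhaustion definition of (arithmetic) genus. The only difference is cosmetic: you spell out the monotonicity under restriction and the bookkeeping of nodal pairs via Lemma \ref{lem:arithGenusFormula}, which the paper simply asserts ``by definition.''
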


\begin{proof}
Recall from Proposition \ref{prop:sequenceOfCurves2} that
$$\text{Genus}_{\text{arith}}(\widetilde{C}_i, \widetilde{D}_i) \leq \rho$$
for some constant $\rho > 0$ independent of $i$. 

We now use the fact that
$$\mathbf{w}_i \to \bar{\mathbf{w}}$$
in the exhaustive Gromov sense. 

We write out the implication of this exhaustive Gromov convergence. 

For any $i$ and $\ell \leq 2\sigma_i - 1$ there is a compact surfaces $\hat{C}_i^\ell$ in $\widetilde{C}_i$. 

There is also a sequence of compact surfaces
$\bar{C}^\ell$ exhausting $\bar C$.

Now for any fixed $\ell$ and any $i$ sufficiently large, there are decorations $r_i^\ell$ on the nodal surfaces $(\hat{C}_i^\ell, \widetilde{j}_i, \widetilde{D}_i \cap \hat{C}_i^\ell)$ and a decoration $r^\ell$ on the nodal surface $(\bar{C}^\ell, \bar j, \bar D \cap \bar{C}^\ell)$ such that the normalizations 
$$(\hat{C}_i^\ell)^{\widetilde{D}_i \cap \hat{C}_i^\ell, r_i^\ell}$$
are all diffeomorphic to the normalization
$$(\bar{C}^\ell)^{\bar{D} \cap \bar{C}^\ell, r^\ell}.$$

It follows that, for any fixed $\ell$ and any $i$ sufficiently large that
\begin{equation}
    \label{eq:limitCurveGenusBound1}
    \begin{split} \text{Genus}_{\text{arith}}(\hat{C}_i^\ell, \widetilde{D}_i \cap \hat{C}_i^\ell) &= \text{Genus}((\hat{C}_i^\ell)^{\widetilde{D}_i \cap \hat{C}_i^\ell, r_i^\ell}) \\
    &= \text{Genus}((\bar{C}^\ell)^{\bar{D} \cap \bar{C}^\ell, r^\ell}) \\
    &= \text{Genus}_{\text{arith}}(\bar{C}^\ell, \bar{D} \cap \bar{C}^\ell).\end{split}
\end{equation}

By definition, we find 
\begin{equation} 
\label{eq:limitCurveGenusBound2}
\begin{split} \text{Genus}_{\text{arith}}(\hat{C}_i^\ell, \widetilde{D}_i \cap \hat{C}_i^\ell) &\leq \text{Genus}_{\text{arith}}(\widetilde{C}_i, \widetilde{D}_i) \\
&\leq \rho.
\end{split}
\end{equation}

Combining (\ref{eq:limitCurveGenusBound1}) and (\ref{eq:limitCurveGenusBound2}) we deduce for every $\ell$ the bound
$$\text{Genus}_{\text{arith}}(\bar{C}^\ell, \bar{D} \cap \bar{C}^\ell) \leq \rho$$
where $\rho$ is independent of $\ell$. 

On the other hand, since the surfaces $\bar{C}^\ell$ exhaust $\bar{C}$, by definition
$$\text{Genus}_{\text{arith}}(\bar{C}, \bar{D}) \leq  \limsup_{\ell \to \infty}\text{Genus}_{\text{arith}}(\bar{C}^\ell, \bar{D} \cap \bar{C}^\ell) \leq \rho.$$

This proves the lemma. 
\end{proof}

It is also useful to note the following.

\begin{lem}
\label{lem:limitCurveNoBoundary} The surface $\bar C$ has empty boundary. The surface $\hat C$ from Proposition \ref{prop:limitCurveOmegaFinite} satisfies
$$\hat{w}(\hat{C}) \subset \{\bar{t}_{\text{trim}}\} \times M.$$
\end{lem}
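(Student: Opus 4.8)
The plan is to establish the two assertions in turn: first that $\bar C$ has empty boundary, and then, using this, that $\hat w$ maps the boundary of $\hat C$ into the slice $\{\bar t_{\text{trim}}\}\times M$ — I read the displayed inclusion as $\hat w(\partial\hat C)\subseteq\{\bar t_{\text{trim}}\}\times M$, since by definition $\hat C=\bar w^{-1}([\bar t_{\text{trim}},\infty)\times M)$ is an unbounded piece of $\bar C$ and cannot have image inside a single slice.

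For the first assertion I would argue by contradiction, unwinding the definition of exhaustive Gromov convergence of $\mathbf w_i\to\bar{\mathbf w}$ from Lemma \ref{lem:exhaustiveGromovLimit} together with properness of $\bar w$ from Lemma \ref{lem:limitCurveProper}. Suppose $p\in\partial\bar C$; by properness, $\bar w(p)\in\widetilde W_-^{\ell_0}$ for some $\ell_0$. Exhaustive Gromov convergence supplies compact surfaces $\bar C^\ell\subset\bar C$ with $\bar C\setminus\partial\bar C=\bigcup_\ell\bar C^\ell$ and $\bar C^\ell\subset\text{int}(\bar C^{\ell+1})$, and compact surfaces $\widetilde C_i^\ell\subset\widetilde C_i$ containing $w_i^{-1}(\widetilde W_-^\ell)$, such that $\mathbf w_i|_{\widetilde C_i^\ell}$ converges in the Gromov sense to $\bar{\mathbf w}|_{\bar C^\ell}$. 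Now Proposition \ref{prop:sequenceOfCurves2} gives $w_i(\partial\widetilde C_i)\subseteq\widetilde W_-^{2\sigma_i-1/4}\setminus\widetilde W_-^{2\sigma_i-3/8}$ with $2\sigma_i\to\infty$, so for each fixed $\ell$ and all large $i$ the boundary $\partial\widetilde C_i$ is disjoint from $w_i^{-1}(\widetilde W_-^{\ell+1})$; hence the surfaces $\widetilde C_i^\ell$ may be chosen so that every boundary circle maps into a collar of $\{\ell'\}\times M=\partial\widetilde W_-^{\ell'}$ for some $\ell\le\ell'\le\ell+1$, and none is inherited from $\partial\widetilde C_i$. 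Passing to the Gromov limit, every boundary circle of $\bar C^\ell$ maps into $\partial\widetilde W_-^{\ell'}$, which sits at height at least $\ell$. Combined with properness of $\bar w$ and the $C^0$-convergence of the domains, a careful bookkeeping then forces $\bar C=\bigcup_\ell\bar C^\ell$ (equivalently, $\bar w^{-1}(\widetilde W_-^{\ell_0+1})\subseteq\bar C^\ell$ for $\ell$ large), so $p\in\bar C\setminus\partial\bar C$, a contradiction. Therefore $\partial\bar C=\emptyset$.

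Granting $\partial\bar C=\emptyset$, the second assertion is a short point-set argument. By construction $\bar t_{\text{trim}}$ is a regular value of $a\circ\bar{\mathbf w}$ with $\bar t_{\text{trim}}\notin(a\circ\bar{\mathbf w})(\bar D\cup\bar\mu)$, so $\bar w^{-1}(\{\bar t_{\text{trim}}\}\times M)$ is a properly embedded one-dimensional submanifold of $\bar C$ avoiding the nodal and marked points. Since $\bar C$ has no boundary, the codimension-zero submanifold $\hat C=\bar w^{-1}([\bar t_{\text{trim}},\infty)\times M)$ has boundary exactly equal to $\bar w^{-1}(\{\bar t_{\text{trim}}\}\times M)$: no part of $\partial\hat C$ can be inherited from $\partial\bar C$. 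Hence $\hat w(\partial\hat C)\subseteq\{\bar t_{\text{trim}}\}\times M$, and $\hat{\mathbf w}$ is a well-defined pseudoholomorphic curve in the sense of the restriction convention, since $\partial\hat C$ meets neither $\hat D$ nor $\hat\mu$.

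I expect the first assertion to be the main obstacle: the delicate point is matching up the domains under exhaustive Gromov convergence precisely enough to certify that no boundary of the limit is ``left behind'' at a finite height and that $\bar C$ is genuinely exhausted by the finite-height pieces $\bar C^\ell$. This is exactly the place where properness of $\bar w$ and the divergence $2\sigma_i\to\infty$ of the heights of $w_i(\partial\widetilde C_i)$ both enter; the second assertion is purely formal once $\partial\bar C=\emptyset$ is in hand.
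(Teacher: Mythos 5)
Your proof is correct and takes essentially the same approach as the paper: a contradiction argument from the exhaustive Gromov convergence of $\mathbf{w}_i\to\bar{\mathbf w}$, using that $w_i(\partial\widetilde C_i)$ escapes every compact subset of $\widetilde W_-$ (second assertion of Proposition \ref{prop:sequenceOfCurves2}), so no boundary circle can survive in the limit at finite height. You also correctly read the displayed inclusion as $\hat w(\partial\hat C)\subseteq\{\bar t_{\text{trim}}\}\times M$ (the paper's statement has a typo omitting the $\partial$), and the paper likewise dispatches this as an immediate consequence of $\partial\bar C=\emptyset$ together with $\bar t_{\text{trim}}$ being a regular value away from $\bar D\cup\bar\mu$.
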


\begin{proof}
Suppose for the sake of contradiction that $\bar C$ had nonempty boundary. Let $K \subset \widetilde{\mathbf{W}}_-$ be some compact subset with nonempty interior intersecting $\bar{w}(\partial \bar C)$. 

It follows by the fact that 
$$\mathbf{w}_i \to \bar{\mathbf{w}}$$
in the exhaustive Gromov sense that, for sufficiently large $i$, $\widetilde{C}_i$ must have boundary intersecting $K$ as well. 

However, by the second assertion of Proposition \ref{prop:sequenceOfCurves2}, the sequence of sets
$$w_i(\partial\widetilde{C}_i)$$
escape any compact subset of $\widetilde{W}_-$. Therefore, we arrive at a contradiction and $\bar C$ has empty boundary. The second assertion is an immediate consequence of this. 
\end{proof}

We continue by showing that the $\omega$-energy of the pseudoholomorphic curve $\bar{\mathbf{w}}$ is finite.

\begin{lem} \label{lem:limitCurveOmegaEnergy}
Let $\rho$ be the constant from Proposition \ref{prop:sequenceOfCurves2}. The $\omega$-energy
$$E_\omega(\bar{\mathbf{w}}) = \int_{\bar{\mathbf{w}}^{-1}([-2\epsilon_0, \infty) \times M)} \bar{\mathbf{w}}^*\omega$$
is bounded above by $\rho$. 
\end{lem}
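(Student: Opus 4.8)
The plan is to pass the uniform $\omega$-energy bound from Proposition \ref{prop:sequenceOfCurves2} to the exhaustive Gromov limit $\bar{\mathbf{w}}$. Recall from Proposition \ref{prop:sequenceOfCurves2} that $\int_{w_i^{-1}([0,\infty)\times M)} w_i^*\omega \leq \rho$ for all $i$, and that moreover $E_\omega(\mathbf{w}_i) \leq \rho$. I would work with the exhausting sequence $(\widetilde{W}_-^\ell, \bar J_-, g_-)$ of $\widetilde{W}_-$ and exploit the fact that $\mathbf{w}_i \to \bar{\mathbf{w}}$ in the exhaustive Gromov sense: for each fixed $\ell$, the restrictions $\mathbf{u}_i^\ell = (w_i, \widetilde{C}_i^\ell, \ldots)$ converge in the ordinary Gromov sense to the restriction $\bar{\mathbf{w}}^\ell = (w, \bar C^\ell, \ldots)$ of $\bar{\mathbf{w}}$ to $\bar C^\ell$, where $\{\bar C^\ell\}$ exhausts $\bar C$.

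The key point is lower semicontinuity of $\omega$-energy under Gromov convergence. First I would fix $\ell$ and restrict attention to the compact region $[-2\epsilon_0, \ell']\times M \cap \widetilde{W}_-^\ell$ for suitable $\ell'$; on such a compact region, $\omega$ extends to a fixed smooth closed two-form, and the $C^0 \cap C^\infty_{\text{loc}}$-convergence of the reparametrized maps $\phi_i^* w_i \to w$ away from the nodal circles $\Gamma$, together with Fatou's lemma applied to the pointwise-nonnegative integrands $(\phi_i^* w_i)^*\omega$ and $w^*\omega$ (nonnegativity by Lemma \ref{lem:omegaNonnegative}, since $\bar J_-$ restricts to $J_{\text{Neck}}$ on the cylindrical end), gives
$$\int_{\bar C^\ell \cap \bar w^{-1}([-2\epsilon_0,\infty)\times M)} w^*\omega \leq \liminf_{i\to\infty} \int_{\widetilde{C}_i^\ell \cap w_i^{-1}([-2\epsilon_0,\infty)\times M)} w_i^*\omega \leq \liminf_{i\to\infty} E_\omega(\mathbf{w}_i) \leq \rho.$$
Here one must be a little careful that the Gromov-convergence reparametrizations $\phi_i$ do not lose area near the nodal circles; but $\omega$-area is nonnegative (Lemma \ref{lem:omegaNonnegative}) and the nodal circles form a measure-zero set whose area contribution vanishes in the limit, and any area that "bubbles off" at a node is still accounted for in $\bar{\mathbf{w}}$ since $\bar{\mathbf{w}}$ is the full Gromov limit — so no $\omega$-energy escapes. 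Then I would let $\ell \to \infty$: since $\{\bar C^\ell\}$ exhausts $\bar C$ and the integrand $w^*\omega$ is nonnegative, the monotone convergence theorem yields
$$E_\omega(\bar{\mathbf{w}}) = \int_{\bar w^{-1}([-2\epsilon_0,\infty)\times M)} w^*\omega = \lim_{\ell\to\infty} \int_{\bar C^\ell \cap \bar w^{-1}([-2\epsilon_0,\infty)\times M)} w^*\omega \leq \rho.$$

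The main obstacle I anticipate is the bookkeeping around nodal degenerations and the reparametrizations in the definition of Gromov convergence — making sure that the $\omega$-energy of $\bar{\mathbf{w}}$ is genuinely bounded by the $\liminf$ of the $\omega$-energies of $\mathbf{w}_i$ and that nothing is double-counted or lost. The cleanest route is probably to invoke the standard fact (implicit in Fish-Hofer's compactness framework, and provable directly from Definition \ref{defn:gromovConvergence} via the $C^0$-convergence on the normalization together with $C^\infty_{\text{loc}}$-convergence off $\Gamma$) that $\omega$-area, being the integral of a closed form pulled back by maps converging in $C^0$, is continuous — not merely semicontinuous — on any compact subset of the target when no boundary enters that subset; combined with Lemma \ref{lem:limitCurveNoBoundary}, which guarantees $\bar C$ has no boundary, the argument then closes without subtlety. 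I would cite the relevant convergence statement from \cite{FishHoferExhaustive} or \cite{FishHoferFeral} rather than reprove it.
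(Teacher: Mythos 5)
Your proposal is correct and follows the same route as the paper, which simply declares the bound immediate from the uniform $\omega$-energy bound in Proposition \ref{prop:sequenceOfCurves2} together with the exhaustive Gromov convergence $\mathbf{w}_i \to \bar{\mathbf{w}}$. Your elaboration (restriction to the compact pieces $\bar C^\ell$, nonnegativity of the integrand via Lemma \ref{lem:omegaNonnegative}, then exhaustion) is exactly the detail the paper leaves implicit.
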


\begin{proof}
The lemma is immediate from the bound in the final bullet of the third assertion of Proposition \ref{prop:sequenceOfCurves2} and the fact that 
$$\mathbf{w}_i \to \bar{\mathbf{w}}$$
in the exhaustive Gromov sense.
\end{proof}

Another immediate consequence of the exhaustive Gromov convergence
$$\mathbf{w}_i \to \bar{\mathbf{w}}$$
is a bound on the number of marked points $\#\bar{\mu}$. 

\begin{lem}
\label{lem:limitCurveMarkedPoints} Let $\rho$ be the constant from Proposition \ref{prop:sequenceOfCurves2}. Then
$$\#\mu \leq \rho.$$
\end{lem}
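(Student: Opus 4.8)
The plan is to read off the bound $\#\bar\mu \le \rho$ directly from the uniform bound $\#\widetilde{\mu}_i \le \rho$ of Proposition \ref{prop:sequenceOfCurves2}, using the fact that Gromov convergence matches marked-point sets exactly. Throughout, $\bar\mu$ denotes the set of marked points of the limit curve $\bar{\mathbf{w}}$ (written $\mu$ in the statement).

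First I would unpack the exhaustive Gromov convergence $\mathbf{w}_i \to \bar{\mathbf{w}}$ from Lemma \ref{lem:exhaustiveGromovLimit}. This provides an exhausting sequence $\{\bar C^\ell\}$ of compact sub-surfaces of $\bar C$ with $\bar C^\ell \subset \bar C^{\ell+1}\setminus\partial\bar C^{\ell+1}$ and $\bar C \setminus \partial\bar C = \bigcup_\ell \bar C^\ell$, compact sub-surfaces $\widetilde C_i^\ell \subset \widetilde C_i$, and, for each fixed $\ell$, Gromov convergence in the sense of Definition \ref{defn:gromovConvergence} of the restrictions $\mathbf{w}_i^\ell$ to the restriction $\bar{\mathbf{w}}^\ell$ of $\bar{\mathbf{w}}$ to $\bar C^\ell$ (applied component by component of $|\bar C^\ell|$, of which there are finitely many, exactly as in the proof of Lemma \ref{lem:exhaustiveGromovLimit}). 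The clause of Definition \ref{defn:gromovConvergence} stating that the connecting diffeomorphisms of normalizations $\phi_i$ satisfy $\phi_i(\bar\mu\cap\bar C^\ell) = \widetilde\mu_i \cap \widetilde C_i^\ell$ then gives, for each fixed $\ell$ and all sufficiently large $i$,
\[
\#(\bar\mu \cap \bar C^\ell) = \#(\widetilde\mu_i \cap \widetilde C_i^\ell) \le \#\widetilde\mu_i \le \rho .
\]

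Next I would pass to the exhaustion. Since marked points never lie on $\partial\bar C$ and $\bar\mu\subseteq \bar C \setminus(\bar D\cup\partial\bar C) = \bigcup_\ell \bar C^\ell$, the nesting $\bar C^\ell\subset\bar C^{\ell+1}$ makes $\ell \mapsto \#(\bar\mu\cap\bar C^\ell)$ a nondecreasing sequence with supremum $\#\bar\mu$; by the display above this supremum is at most $\rho$, so $\#\bar\mu \le \rho$, proving the lemma. I do not expect any genuine obstacle here: the only point requiring care is to invoke the marked-point clause of Definition \ref{defn:gromovConvergence} correctly, i.e. that $\phi_i$ carries the \emph{original} marked points of $\bar{\mathbf{w}}^\ell$ onto those of $\mathbf{w}_i^\ell$ (rather than only the auxiliary stabilizing points $\mu'$), so that the counts are matched exactly and not merely bounded — this is what upgrades the a priori bound on the $\widetilde\mu_i$ to a bound on $\bar\mu$.
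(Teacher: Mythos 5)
Your argument is correct and matches the paper, which simply records this bound as an immediate consequence of the exhaustive Gromov convergence $\mathbf{w}_i \to \bar{\mathbf{w}}$ together with the uniform bound $\#\widetilde{\mu}_i \leq \rho$; you have merely spelled out the marked-point clause $\phi_i(\mu) = \mu_i$ of Definition \ref{defn:gromovConvergence} on each compact piece $\bar C^\ell$ and passed to the exhaustion, exactly as intended.
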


Lemma \ref{lem:limitCurveOmegaEnergy} implies that $\bar C$ has finitely many connected components on which $\bar{\mathbf{w}}$ is not constant. This is proved in the same way as Proposition \ref{prop:ncConnectedComponentsBound}. 

\begin{lem}
\label{lem:limitCurveBoundedNcComponents} There is some constant $\rho > 0$ such that $\bar C$ has at most $\rho$ connected components on which $\bar{\mathbf{w}}$ is not constant. 
\end{lem}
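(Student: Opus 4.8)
The plan is to mimic the two-step area-and-monotonicity argument used in the proof of Proposition \ref{prop:ncConnectedComponentsBound}, now applied to the limit curve $\bar{\mathbf{w}}$ instead of the sequence $\mathbf{u}_k$. I first separate the non-constant connected components of $\bar C$ into two classes: those whose image is contained in the cylindrical end, say in $[1,\infty)\times M$ after a suitable shift, and those whose image meets a fixed compact region $\widetilde{W}_-^{\,2}\subset\widetilde{W}_-$ (for concreteness I would use the core together with a bounded piece of the neck). Every non-constant component falls into at least one of these two classes, so it suffices to bound the number of each.

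For the components lying in the cylindrical end, I would invoke Theorem \ref{thm:omegaEnergyQuantization}: each such component is a closed surface (by Lemma \ref{lem:limitCurveNoBoundary}, $\bar C$ has empty boundary, so each of its connected components is closed), with genus bounded by $\rho$ via Lemma \ref{lem:limitCurveGenusBound}. Taking $a_0$ to be the infimum of $a\circ\bar w$ on such a component and $h$ a fixed small constant, the hypotheses of Theorem \ref{thm:omegaEnergyQuantization} are met since the component is closed (no boundary to worry about) and its image lies in $[1,\infty)\times M\subseteq[a_0,\infty)$. Hence each such component has $\omega$-energy at least a fixed constant $\hbar=\hbar(M,\eta,g,J,\rho,h)>0$. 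Summing and comparing with the uniform $\omega$-energy bound $E_\omega(\bar{\mathbf{w}})\leq\rho$ from Lemma \ref{lem:limitCurveOmegaEnergy} bounds the number of these components by $\hbar^{-1}\rho$.

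For the components meeting the fixed compact region, I would intersect each with a slightly larger compact region $\widetilde{W}_-^{\,3}$, apply the monotonicity theorem (Theorem \ref{thm:monotonicity}) to the restriction — which is legitimate because these restricted pieces have boundary mapping into the boundary of the compact region, and because each such component contains a point of $\widetilde{W}_-^{\,2}$ which has uniform distance to $\partial\widetilde{W}_-^{\,3}$ — to get a uniform lower area bound on each piece, and then compare with the uniform area bound $\mathrm{Area}_{w^*g_-}(\bar w^{-1}(\widetilde{W}_-^{\,3}))\leq\rho_3$. The latter area bound is obtained by passing the bounds of Proposition \ref{prop:exhaustiveAreaBounds} to the exhaustive Gromov limit, exactly as in the proof of Lemma \ref{lem:limitCurveGenusBound}; lower semicontinuity of area under Gromov convergence gives $\mathrm{Area}_{w^*g_-}(\bar{C}^\ell)\leq\limsup_i\mathrm{Area}_{w_i^*g_-}(\widetilde C_i^\ell)\leq\rho_\ell$ for each $\ell$. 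This bounds the number of the second class of components by a $\rho$-independent constant as well, and adding the two bounds completes the proof.

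The main obstacle I anticipate is the standard bookkeeping needed to make the monotonicity application airtight: one must be careful that the restricted pieces genuinely have their boundary on the correct level set (so $\bar w(\partial(\cdot))\subset\partial\widetilde{W}_-^{\,3}$), which requires choosing the level $\ell=3$ to avoid the finitely many critical values of $a\circ\bar w$ and the finite images of $\bar D\cup\bar\mu$, and that the chosen compact region $\widetilde{W}_-^{\,2}$ sits strictly inside $\widetilde{W}_-^{\,3}$ with a definite collar, so that the radius $r$ in Theorem \ref{thm:monotonicity} can be taken uniform. None of this is conceptually hard — it is precisely the argument of Proposition \ref{prop:ncConnectedComponentsBound} transplanted to the limit — but it does require care to state cleanly, and I would phrase the proof as ``this is proved in the same way as Proposition \ref{prop:ncConnectedComponentsBound}, replacing Lemma \ref{lem:sameSmallOmega} by Lemma \ref{lem:limitCurveOmegaEnergy} and Corollary \ref{cor:transitionAreaBounds} by the area bound for $\bar{\mathbf{w}}$ obtained from Proposition \ref{prop:exhaustiveAreaBounds} via exhaustive Gromov convergence.''
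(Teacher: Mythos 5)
Your proposal is correct and follows essentially the same argument as the paper's proof: the same two-class decomposition of the non-constant components, $\omega$-energy quantization (Theorem \ref{thm:omegaEnergyQuantization}) against the bound of Lemma \ref{lem:limitCurveOmegaEnergy} for components deep in the cylindrical end, and monotonicity (Theorem \ref{thm:monotonicity}) against the area bound inherited from Proposition \ref{prop:exhaustiveAreaBounds} for components meeting a fixed compact region. The only differences are cosmetic choices of thresholds (the paper uses the regions $\widetilde{W}_-^{50}$ and $\widetilde{W}_-^{100}$ rather than your levels $2$ and $3$).
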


\begin{proof}
Write $\mathcal{C}_{\text{nc}}$ for the set of connected components of $\bar C$ on which $\bar{\mathbf{w}}$ is not constant. 

Write $\mathcal{C}_{\text{nc,in}}$ for the set of components in $\mathcal{C}_{\text{nc}}$ whose images are contained inside 
$$(50, \infty) \times M \subset \widetilde{W}_-.$$

Write $\mathcal{C}_{\text{nc,out}}$ for the set of components in $\mathcal{C}_{\text{nc}}$ whose images intersect 
$$\widetilde{W}_-^{50} \subset \widetilde{W}_-.$$

We begin by bounding $\#\mathcal{C}_{\text{nc,in}}$. Pick any component $\Sigma \in \mathcal{C}_{\text{nc,in}}$. By Lemma \ref{lem:limitCurveNoBoundary}, $\Sigma$ has no boundary. We then appeal to Theorem \ref{thm:omegaEnergyQuantization} with the parameters $a_0 = \inf(a \circ u_k)(\Sigma)$ and $h = 25$ to conclude that there is a constant $\hbar > 0$ such that
$$\int_\Sigma \bar{w}^*\omega \geq \hbar.$$

It follows that
$$\int_{\bar{w}^{-1}([50, \infty) \times M)} \bar{w}^*\omega \geq \hbar\#\mathcal{C}_{\text{nc,in}}.$$

On the other hand, Lemma \ref{lem:limitCurveOmegaEnergy} implies that there is a constant $\rho_1 > 0$ such that
$$\int_{\bar{w}^{-1}([50, \infty) \times M)} \bar{w}^*\omega \leq \rho_1.$$

We combine these two inequalities to conclude that
$$\#\mathcal{C}_{\text{nc,in}} \leq \hbar^{-1}\rho_2.$$

Next, we bound $\#\mathcal{C}_{\text{nc,out}}$. Pick any component $\Sigma \in \mathcal{C}_{\text{nc,out}}$. Let $\widetilde{\Sigma} \subset \Sigma$ be equal to the intersection of $\Sigma$ with $\bar{w}^{-1}(\widetilde{W}_-^{100}).$

Observe that, by our assumptions, $\bar{w}(\widetilde{\Sigma})$ contains a point with $g_-$-distance at least $50$ from the boundary of $\widetilde{W}_-^{100}$. 

It follows by Theorem \ref{thm:monotonicity} that there is a constant $\rho_2 \geq 1$ such that
$$\text{Area}_{\bar{w}^*g_-}(\widetilde{\Sigma}) \geq \rho_2^{-1}.$$

This implies
$$\text{Area}_{\bar{w}^*g_-}(\bar{w}^{-1}(\widetilde{W}_-^{100})) \geq \rho_2^{-1}\#\mathcal{C}_{k,\text{nc,out}}.$$

On the other hand, it is a consequence of Proposition \ref{prop:exhaustiveAreaBounds} and the fact that $\mathbf{w}_i$ converges to $\mathbf{w}$ in the exhaustive Gromov sense that
$$\text{Area}_{\bar{w}^*\bar{g}_-}(\bar{w}^{-1}(\widetilde{W}_-^{100}) \leq \rho_3$$
for some constant $\rho_3 > 0$. 

Combine the two inequalities above to deduce
$$\#\mathcal{C}_{\text{nc,out}} \leq \rho_2\rho_3$$
and as a result complete the proof of the proposition.
\end{proof}

We can now show that $\bar C$ has at least one non-compact connected component. 

\begin{lem} \label{lem:limitCurveNonCompactComponent}
The domain $\bar C$ of $\bar{\mathbf{w}}$ has at least one non-compact connected component.
\end{lem}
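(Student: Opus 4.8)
`\textbf{Proof proposal.}` The plan is to argue by contradiction: suppose every connected component of $\bar C$ is compact. Recall from Proposition \ref{prop:sequenceOfCurves2} that for each $i$ there is a connected component $C_i'$ of $|\widetilde C_i|$ with $w_i(C_i')$ intersecting both $\mathrm{Core}(W_-)$ and the slice-region $\widetilde W_-^{2\sigma_i-1/4}\setminus \widetilde W_-^{2\sigma_i-3/8}$. In particular the images $w_i(\widetilde C_i)$ contain points whose $a$-coordinate tends to $+\infty$. Combined with the exhaustive Gromov convergence $\mathbf w_i\to\bar{\mathbf w}$, this should force the image $w(\bar C)$ to be unbounded in $\widetilde W_-$: a monotonicity/area argument (Theorem \ref{thm:monotonicity}) plus the per-$\ell$ area bounds of Proposition \ref{prop:exhaustiveAreaBounds} shows that, were $w(\bar C)$ bounded, the portion of the $w_i$ mapping into a fixed large $\widetilde W_-^\ell$ would already have to account for a connected sub-chain reaching height $\sim 2\sigma_i$, contradicting the area bound $\rho_\ell$ for $\ell$ fixed. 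So $w(\bar C)$ is unbounded.

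Next I would use the hypothesis (for contradiction) that all components of $\bar C$ are compact. Since $w(\bar C)$ is unbounded but each component $\Sigma$ of $\bar C$ has compact (hence bounded) image, there must be infinitely many components $\Sigma$ whose images are non-constant and which ``march off to infinity'' — i.e. for every $T$ there are infinitely many components with $(a\circ w)(\Sigma)\subset (T,\infty)$. By Lemma \ref{lem:limitCurveNoBoundary} these components are closed surfaces. By Lemma \ref{lem:limitCurveGenusBound} the arithmetic genus of $\bar C$ is at most $\rho$, so only finitely many components can have positive genus; therefore infinitely many of these high components are \emph{spheres}, and on each such sphere $w$ is non-constant (a constant component contributes nothing to unboundedness, and in any case by Lemma \ref{lem:limitCurveBoundedNcComponents} there are only finitely many non-constant components — wait, that already gives the contradiction directly).

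Let me restructure: the cleanest route is to combine Lemma \ref{lem:limitCurveBoundedNcComponents} with the unboundedness of $w(\bar C)$. Lemma \ref{lem:limitCurveBoundedNcComponents} says $\bar C$ has at most $\rho$ connected components on which $w$ is non-constant. If every component of $\bar C$ were compact, then $w(\bar C)$ would be the union of the (bounded) images of these finitely many non-constant components together with finitely many points (images of the constant components need not be finite in number, but each is a single point; however an infinite set of constant spheres could still have images escaping to infinity). To rule out the latter, I invoke the $\omega$-energy quantization theorem (Theorem \ref{thm:omegaEnergyQuantization}): any closed non-constant $\bar J_-$-holomorphic sphere in the cylindrical end whose image lies in a half-infinite sub-cylinder has $\omega$-energy at least some $\hbar>0$; but constant spheres are disallowed by stability unless they carry $\geq 3$ special points, and the number of special points $\#(\bar D\cup\bar\mu)$ is finite by Lemma \ref{lem:limitCurveMarkedPoints} and Proposition \ref{prop:limitCurveNodalPoints} (or, more elementarily, an infinite chain of stable constant spheres linked by nodes would force $\#\bar D=\infty$, contradicting the finiteness obtained from the exhaustive Gromov limit having finitely many components of $|\bar C|$, Lemma \ref{lem:exhaustiveGromovLimit}, together with Lemma \ref{lem:arithGenusFormula} and Lemma \ref{lem:limitCurveGenusBound}). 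Hence no infinite string of components exists, so the image $w(\bar C)$ would be bounded — contradicting unboundedness. Therefore $\bar C$ has a non-compact component.

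\textbf{Main obstacle.} The delicate point is establishing that $w(\bar C)$ is genuinely unbounded from the exhaustive Gromov convergence: the convergence is only $C^\infty_{loc}$ after restricting to compact pieces $\widetilde W_-^\ell$, so a priori the ``tall'' parts of the $w_i$ (which reach height $2\sigma_i\to\infty$) could be lost in the limit. I would handle this by a diagonal/neck-tracking argument: for each fixed height $T$, pick $\ell>T+10$; exhaustive Gromov convergence on $\widetilde W_-^\ell$ gives, for large $i$, a sub-surface of $\widetilde C_i$ over $\widetilde W_-^{\ell-5}$ that is (normalization-)diffeomorphic to the corresponding piece of $\bar C$, and the connectedness of $C_i'$ together with the fact that $w_i(C_i')$ reaches $\mathrm{Core}(W_-)$ forces this piece to be non-empty and to contain points with $a$-coordinate arbitrarily close to $\ell-5>T$; passing to the limit, $w(\bar C)$ meets $\{a > T\}$. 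Since $T$ is arbitrary, $w(\bar C)$ is unbounded. This argument is essentially the one already used inside Lemma \ref{lem:limitCurveProper} and Lemma \ref{lem:limitCurveNoBoundary}, so it should go through with the same ingredients; the rest is bookkeeping with the finiteness bounds already in hand.
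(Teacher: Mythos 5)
Your outline correctly isolates the two key facts (unboundedness of $\bar w(\bar C)$, and the finite count of non-constant components from Lemma~\ref{lem:limitCurveBoundedNcComponents}), and you correctly spot the loose end: infinitely many \emph{constant} components could in principle carry the image off to infinity. But your two routes for closing that loose end are both problematic. The ``more elementary'' route (finiteness of $\#\bar D$ from Lemma~\ref{lem:exhaustiveGromovLimit}, Lemma~\ref{lem:arithGenusFormula}, and Lemma~\ref{lem:limitCurveGenusBound}) has a genuine gap: Lemma~\ref{lem:arithGenusFormula} is stated for \emph{compact} nodal surfaces, and $\bar C$ is not yet known to be compact; and even applying it to a compact exhaustion, the formula reads $\mathrm{Genus}_{\mathrm{arith}} = \#\pi_0(|\bar C|)-\#\pi_0(\bar C)+\sum\mathrm{Genus}+\tfrac12\#\bar D$, so solving for $\#\bar D$ requires knowing $\#\pi_0(\bar C)<\infty$ --- but bounding $\#\pi_0(\bar C)$ is exactly what you are trying to do. The other route (invoke Proposition~\ref{prop:limitCurveNodalPoints}) is a forward reference that happens to be non-circular --- its proof uses only Lemmas~\ref{lem:limitCurveProper}, \ref{lem:limitCurveNoBoundary}, \ref{lem:limitCurveGenusBound}, \ref{lem:limitCurveOmegaEnergy}, \ref{lem:limitCurveMarkedPoints}, \ref{lem:arithGenusFormula}, Proposition~\ref{prop:exhaustiveAreaBounds}, and Theorems~\ref{thm:monotonicity}, \ref{thm:omegaEnergyQuantization}, none of which invoke the present lemma --- but you would need to establish this independence explicitly, and it effectively drags in all the machinery of the compact-exhaustion argument just to handle the constant components.

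The paper avoids the constant-component issue entirely, and this is the observation you are missing. Using Lemma~\ref{lem:exhaustiveGromovLimit} that $|\bar C|$ has finitely many connected components, one passes to a subsequence of the escaping points $\{p_k\}$ lying in a single connected component $\Sigma$ of $|\bar C|$, and then notes that $\bar w(\Sigma)=\bigcup_{S\in\mathcal S_{\mathrm{nc}}}\bar w(S)$, i.e.\ the image of $\Sigma$ is \emph{already} the union of the images of its non-constant components: any constant component has a nodal point (else it would be its own component of $|\bar C|$), the nodal identification forces its single image point onto the neighboring component, and chaining through constant components one must eventually hit a non-constant one (else $\bar w|_\Sigma$ would be constant). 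Since $\mathcal S_{\mathrm{nc}}$ is finite (Lemma~\ref{lem:limitCurveBoundedNcComponents}) and $\bar w(\Sigma)$ escapes every compact set, one of these finitely many non-constant components must have non-compact image, hence be non-compact. No bound on $\#\bar D$, no stability/quantization argument for the constant pieces, and no forward references. Your unboundedness discussion (the ``main obstacle'' paragraph) is fine and essentially the paper's argument, so the remaining issue is just substituting this cleaner reduction for your constant-sphere bookkeeping.
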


\begin{proof}
First, it is a consequence of Proposition \ref{prop:sequenceOfCurves} that, for any compact subset $K$ of $\widetilde{W}_-$, the image $w_i(\widetilde{C}_i)$ is not contained in $K$ for sufficiently large $i$. 

it is immediate from the exhaustive Gromov convergence
$$\mathbf{w}_i \to \bar{\mathbf{w}}$$
that the image $\bar{w}(\bar C)$ is not contained in any compact subset of $\widetilde{W}_-$. 

Let $\mathcal{C}_{\text{nc,compact}}$ denote the set of all compact components of $\bar C$ on which $\bar{w}$ is not constant. By Lemma \ref{lem:limitCurveBoundedNcComponents}, $\#\mathcal{C}_{\text{nc,compact}}$ is finite. Therefore, there is a compact subset $K$ of $\widetilde{W}_-$ which contains the images of all connected components in $\#\mathcal{C}_{\text{nc,compact}}$. 

The image $\bar{w}(\bar C)$ contains a countable set $\{p_k\}_{k \in \mathbb{N}}$ of distinct points that do not lie in $K$.

Recall from Lemma \ref{lem:exhaustiveGromovLimit} that there are finitely many connected components of $|\bar C|$. 

It follows that, after passing to a subsequence, we can ensure that there is a single connected component $\Sigma$ of $|\bar C|$ such that $p_k \in \bar{w}(\Sigma)$ for every $k$. 

It follows that the restriction of $\bar{w}$ to $\Sigma$ is not constant. Write $\mathcal{S}_{\text{nc}}$ for the set of connected components of $\Sigma$ on which $\bar{w}$ is not constant. 

Then
$$\bar{w}(\Sigma) = \cup_{S \in \mathcal{S}_{\text{nc}}} \bar{w}(S).$$

In particular, since $\bar{w}(\Sigma)$ is not contained in $K$, there is $S \in \mathcal{S}_{\text{nc}}$ that is not compact. This is the desired non-compact connected component of $\bar C$. 
\end{proof}

We put together the above results to prove Proposition \ref{prop:limitCurveOmegaFinite}. 

\begin{proof}[Proof of Proposition \ref{prop:limitCurveOmegaFinite}]

The proposition follows from the combination of Lemmas \ref{lem:exhaustiveGromovLimit}, \ref{lem:limitCurveProper}, \ref{lem:limitCurveNoBoundary}, \ref{lem:limitCurveOmegaEnergy}, and \ref{lem:limitCurveNonCompactComponent}. 

Note that the last lemma produces a non-compact connected component $S$ of $\bar C$ on which $\bar{w}$ is not constant. 

To get a non-compact connected component $\hat S$ of $\hat C$ on which $\hat{w}$ is not constant, it suffices to take any non-compact connected component of $S \cap \hat C$. 
\end{proof}

\subsection{The limit curve is feral}

Let $\hat{\mathbf{w}}$ be the pseudoholomorphic curve mapping into $[0, \infty) \times M$ from Proposition \ref{prop:limitCurveOmegaFinite}. 

We will now prove that the pseudoholomorphic curve $\hat{\mathbf{w}}$ is feral. Given Propositions \ref{prop:sequenceOfCurves} and \ref{prop:sequenceOfCurves2}, and Lemma \ref{lem:exhaustiveGromovLimit}, this proves both Theorems \ref{thm:mainExample} and \ref{thm:swExample}. 

The proof follows from essentially the same approach as the proof of \cite[Proposition $4.49$]{FishHoferFeral}. 

Observe from Lemmas \ref{lem:limitCurveGenusBound} and \ref{lem:limitCurveMarkedPoints} that we have already shown that $\hat{\mathbf{w}}$ has finite arithmetic genus and a finite number of marked points. 

We will build upon this by showing that the number of nodal points $\#\bar D$ and the number of connected components of $\bar C$ are finite.

The approach to bounding $\#\bar D$ is similar to the proof of Proposition \ref{prop:nodalPointBound}. We want to use the fact that $(\bar C, \bar D)$ has finite arithmetic genus along with the fact that $\#\pi_0(|\bar C|)$ is finite, as well as the stability condition, to bound $\#\bar D$. However, the formula from Lemma \ref{lem:arithGenusFormula} for the arithmetic genus only applies to \emph{compact} surfaces. 

The correct approach is to find some compact exhaustion
$$\{\bar C_k\}_{k \in \mathbb{N}}$$
of $\bar C$ and use the same idea as in Proposition \ref{prop:nodalPointBound} to obtain a $k$-independent bound on $\#(\bar D \cap \bar C_k)$. This would imply that $\#\bar D$ must be finite.

However, this approach requires a $k$-independent bound on the number of connected components of $\#\bar C_k$ on which $\bar{w}$ is non-constant. Such a bound should be possible using the approach of Proposition \ref{prop:ncConnectedComponentsBound}.

However, we cannot expect such a bound for any choice of compact exhaustion using the same approach as Proposition \ref{prop:ncConnectedComponentsBound}. The surface $\bar C_k$ may have components which are too small, that is they do not satisfy the conditions of Theorem \ref{thm:omegaEnergyQuantization}.

With some care, however, we can pick a compact exhaustion for which the methods of Propositions \ref{prop:ncConnectedComponentsBound} and \ref{prop:nodalPointBound} apply. The exhaustion we use is simpler than the one introduced in the proof of Lemma $4.52$ in \cite{FishHoferFeral}. One can attribute the additional complexity in \cite{FishHoferFeral} to the fact that they attempt to bound the number of connected components of $\bar{C}$ first rather than the number of nodal points $\#\bar{D}$. As a result, they select their exhaustion $\bar C_k$ so that $\#\pi_0(\bar C_k) \leq \#\pi_0(\bar{C})$ for every $k$, which requires more effort than we expend in the proof of Proposition \ref{prop:limitCurveNodalPoints} below. 

\begin{prop} \label{prop:limitCurveNodalPoints}
There is some constant $\rho > 0$ such that 
$$\#\bar{D} \leq \rho.$$
\end{prop}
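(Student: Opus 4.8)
The plan is to adapt the proof of Proposition \ref{prop:nodalPointBound} to the non-compact limit curve $\bar{\mathbf{w}}$ by working with a carefully chosen compact exhaustion. The key difficulty, as flagged in the discussion preceding the statement, is that the arithmetic genus formula of Lemma \ref{lem:arithGenusFormula} only applies to compact surfaces, so I cannot directly bound $\#\bar D$; instead I need a compact exhaustion $\{\bar C_k\}$ of $\bar C$ for which I can run the counting argument of Proposition \ref{prop:nodalPointBound} with constants uniform in $k$. The main obstacle will be controlling the number of connected components of $\bar C_k$ on which $\bar w$ is non-constant: an arbitrary exhaustion could slice off tiny pieces that violate the hypotheses of the $\omega$-energy quantization theorem (Theorem \ref{thm:omegaEnergyQuantization}) or the monotonicity theorem (Theorem \ref{thm:monotonicity}), so the count of such components could blow up. The fix is to choose the $\bar C_k$ adapted to the cylindrical structure, namely as preimages $\bar w^{-1}(\widetilde W_-^{\ell_k})$ for a suitable increasing sequence $\ell_k \to \infty$ of regular values of $a \circ \bar w$ avoiding $(a\circ\bar w)(\bar D \cup \bar\mu)$, so that each non-constant component of $\bar C_k$ either maps deep into the cylindrical end (where Theorem \ref{thm:omegaEnergyQuantization} gives a uniform $\hbar$ lower bound on $\omega$-energy) or meets a fixed compact core region (where Theorem \ref{thm:monotonicity} gives a uniform area lower bound), exactly as in the two-step argument of Proposition \ref{prop:ncConnectedComponentsBound}.

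Concretely, I would proceed as follows. First, fix an increasing sequence of regular values $\ell_k \to \infty$ of $a \circ \bar w$ with $\ell_k \notin (a \circ \bar w)(\bar D \cup \bar \mu)$, and set $\bar C_k = \bar w^{-1}(\widetilde W_-^{\ell_k})$; by Lemma \ref{lem:limitCurveProper} these are compact, they exhaust $\bar C$ (using Lemma \ref{lem:limitCurveNoBoundary} that $\bar C$ has no boundary), and $\bar D \cap \bar C_k$ is well-defined with no nodal or marked points on $\partial \bar C_k$. Second, I run the argument of Proposition \ref{prop:ncConnectedComponentsBound} verbatim on $\bar C_k$: split its non-constant components into those mapping into $(50,\infty)\times M$ — bounded in number by $\hbar^{-1}\rho$ via Theorem \ref{thm:omegaEnergyQuantization} applied with $a_0 = \inf(a\circ\bar w)(\Sigma)$ and $h = 25$, together with the $\omega$-energy bound $E_\omega(\bar{\mathbf{w}}) \leq \rho$ from Lemma \ref{lem:limitCurveOmegaEnergy} — and those meeting $\widetilde W_-^{50}$ — bounded in number via Theorem \ref{thm:monotonicity} and the area bound from Proposition \ref{prop:exhaustiveAreaBounds} (with $\ell = 100$, say, giving $\text{Area}_{\bar w^* g_-}(\bar w^{-1}(\widetilde W_-^{100})) \leq \rho_{100}$). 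This yields a constant $\rho'$, independent of $k$, bounding the number of non-constant components of $\bar C_k$.

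Third, I apply Lemma \ref{lem:arithGenusFormula} to the compact nodal surface $(\bar C_k, \bar j, \bar D \cap \bar C_k)$ and repeat the bookkeeping of Proposition \ref{prop:nodalPointBound}: writing $\mathcal{C}_{k,\text{nc}}$, $\mathcal{C}_{k,\text{const,good}}$, $\mathcal{C}_{k,\text{const,bad}}$ for the non-constant, the constant-with-genus-or-marked-point, and the constant-genus-zero-no-marked-point components respectively, dropping the manifestly nonnegative $\#\pi_0(|\bar C_k|)$ and genus terms gives
$$\text{Genus}_{\text{arith}}(\bar C_k, \bar D \cap \bar C_k) + \#\mathcal{C}_{k,\text{nc}} + \#\mathcal{C}_{k,\text{const,good}} \geq \tfrac{1}{2}\#(\bar D \cap \bar C_k) - \#\mathcal{C}_{k,\text{const,bad}}.$$
Now $\text{Genus}_{\text{arith}}(\bar C_k, \bar D \cap \bar C_k) \leq \text{Genus}_{\text{arith}}(\bar C, \bar D) \leq \rho$ by Lemma \ref{lem:limitCurveGenusBound}; $\#\mathcal{C}_{k,\text{nc}} \leq \rho'$ by the previous step; $\#\mathcal{C}_{k,\text{const,good}} \leq \text{Genus}_{\text{arith}}(\bar C,\bar D) + \#\bar\mu \leq 2\rho$ by Lemma \ref{lem:limitCurveMarkedPoints}; and stability of $\bar{\mathbf{w}}$ forces each component in $\mathcal{C}_{k,\text{const,bad}}$ to carry at least three nodal points, so $\#\mathcal{C}_{k,\text{const,bad}} \leq \tfrac{1}{3}\#(\bar D \cap \bar C_k)$. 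Rearranging gives $\#(\bar D \cap \bar C_k) \leq 6(\text{Genus}_{\text{arith}}(\bar C,\bar D) + \#\mathcal{C}_{k,\text{nc}} + \#\mathcal{C}_{k,\text{const,good}}) \leq \rho''$ for a constant $\rho''$ independent of $k$. Letting $k \to \infty$ and using that the $\bar C_k$ exhaust $\bar C$ while $\bar D$ is discrete, we conclude $\#\bar D \leq \rho''$, which is the claim.
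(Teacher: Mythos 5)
Your overall plan --- pass to a compact exhaustion $\bar C_k = \bar w^{-1}(\widetilde W_-^{\ell_k})$, apply Lemma \ref{lem:arithGenusFormula} to each $\bar C_k$, and let $k \to \infty$ --- is the right one, and your third step reproduces the bookkeeping of Proposition \ref{prop:nodalPointBound} correctly. The gap is in your second step, and it is exactly the obstruction that the paragraph preceding the proposition warns about: you cannot run Proposition \ref{prop:ncConnectedComponentsBound} ``verbatim'' on $\bar C_k$, because unlike $C_k$ in that proposition, the surface $\bar C_k$ has boundary. A non-constant connected component $\Sigma \subset \bar C_k$ may have $\partial \Sigma \subset (a\circ\bar w)^{-1}(\ell_k)$ with $\inf (a\circ\bar w)(\Sigma) \in (\ell_k - 25, \ell_k)$. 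For such a $\Sigma$ the hypotheses of Theorem \ref{thm:omegaEnergyQuantization} with $a_0 = \inf(a\circ\bar w)(\Sigma)$ and $h = 25$ are violated --- the interval $[a_0 - h, a_0 + h]$ contains $\ell_k$, which lies in $(a\circ\bar w)(\partial\Sigma)$ --- so you get no $\hbar$ lower bound on $\int_\Sigma \bar w^*\omega$. These ``boundary-slice'' components also live too far into the cylindrical end for the monotonicity argument on $\widetilde W_-^{100}$ to see them, and there is no uniform area bound on the slab $[\ell_k - 25, \ell_k] \times M$ (the exponential area bounds blow up with $\ell_k$ in the non-contact setting), so there is no way to cap their number with the tools at hand. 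Thus $\#\mathcal{C}_{k,\mathrm{nc}}$ is not controlled uniformly in $k$, and your inequality does not close.

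The paper's proof handles this by a good/bad decomposition of the components of $\bar C_x$: a component is \emph{good} if it has positive genus, a marked point, or $\inf(a\circ\bar w) < x - 1$ (so it has a guaranteed height-1 gap from the boundary, and hence satisfies the quantization hypothesis), and \emph{bad} otherwise. The counting argument is then carried out only over the good components and only counts the nodal pairs $\bar D_{x,\mathrm{good}}$ whose both members lie in good components. The point you would still need to supply is that this restricted count is enough: the paper shows (its equation (\ref{eq:limitCurveNodalPoints3})) that every nodal pair is eventually contained in $\bar D_{x,\mathrm{good}}$ for $x$ large, because once $x > (a\circ\bar w)(\underline z) + 2$ both members lie in components with $\inf < x - 1$. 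Without that exhaustion statement the restricted bound would not prove $\#\bar D < \infty$. I would recommend you adopt the good/bad filtration rather than trying to bound $\#\mathcal{C}_{k,\mathrm{nc}}$ directly.
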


\begin{proof}
Let $\mathcal{R}$ be the complement in the set of regular values of $a \circ \bar{w}$ of the discrete set $(a \circ \bar{w})(\bar{D} \cup \bar{\mu})$. 

For any $x \in \mathcal{R}$, write
$$\bar C_x = \bar{w}^{-1}(\widetilde{W}_-^x),$$
$$\bar D_x = \bar D \cap \bar C_x,$$
and
$$\bar \mu_x = \bar\mu \cap \bar C_x.$$

Write $\mathcal{C}_{x}$ for the set of connected components of $\bar C_x$. 

Now write $\mathcal{C}_{x,\text{good}}$ for the following set of connected components of $\bar C_x$. A component $\Sigma \in \mathcal{C}_x$ lies in $\mathcal{C}_{x,\text{good}}$ if and only if it satisfies one of the following conditions:
\begin{enumerate}
    \item $\Sigma$ has genus at least one.
    \item $\Sigma$ contains at least one marked point. 
    \item The infimum of $(a \circ \bar{w})(\Sigma)$ is less than $x - 1$.
\end{enumerate}

We will call elements of $\mathcal{C}_{x,\text{good}}$ \emph{good}. 

Write $\mathcal{C}_{x,\text{bad}}$ for the complement of $\mathcal{C}_{x,\text{good}}$, the set of \emph{bad} components. A component $\Sigma \in \mathcal{C}_x$ lies in $\mathcal{C}_{x,\text{bad}}$ if and only if it satisfies exactly one of the following conditions:
\begin{enumerate}
    \item $\Sigma$ is a sphere containing no marked points on which $\bar{w}$ is constant and
    $$(a \circ \bar{w})(\Sigma) \in (x - 1, x).$$
    \item $\Sigma$ is a compact, connected surface of genus zero which contains no marked points and is such that $\bar{w}$ is not constant on $\Sigma$ and 
    $$\inf (a \circ \bar{w})(\Sigma) \in (x-1, x).$$
\end{enumerate}

Write $\bar{C}_{x,\text{good}}$ for the surface constructed by taking the union of all the elements of $\mathcal{C}$

Write $\bar{D}_{x,\text{good}}$ for the set of pairs of nodal points in $\bar{D}_x$ which both lie in $\bar{C}_{x,\text{good}}$. 

Fix $x < y$ in $\mathcal{R}$. Then observe that
\begin{equation} \label{eq:limitCurveNodalPoints1} \bar{C}_{x,\text{good}} \subset \bar{C}_{y,\text{good}}. \end{equation}

If $\Sigma \in \mathcal{C}_{x,\text{good}}$, then write $\Sigma'$ for the component of $\bar{C}_y$ containing $\Sigma$. 

If $\Sigma$ has genus at least one or at least one marked point, then $\Sigma'$ does as well, so it is good. 

If $\inf(a \circ \bar{w})(\Sigma) < x - 1$, then observe by definition that
\begin{align*}
    \inf(a \circ \bar{w})(\Sigma') &\leq \inf(a \circ \bar{w})(\Sigma) \\
    &< x - 1 \\
    &< y - 1.
\end{align*}

It follows that $\Sigma'$ is good in this case as well, so we deduce the inclusion (\ref{eq:limitCurveNodalPoints1}). 

Next, we show
\begin{equation} \label{eq:limitCurveNodalPoints2} \bar{C} = \cup_{x \in \mathcal{R}} \bar{C}_{x,\text{good}}.\end{equation}

Fix any point $p \in \bar{C}$. Fix $x > (a \circ \bar{w})(p) + 2$. Then we can show $p \in \bar{C}_{x,\text{good}}$. Let $\Sigma_p$ be the connected component of $\bar{C}_x$ containing $p$. Then 
\begin{align*}
    \inf(a \circ \bar{w})(\Sigma_p) &\leq (a \circ \bar{w})(p) \\
    &< x - 1
\end{align*}
from which we conclude that $\Sigma_p$ is good, and so $p \in \bar{C}_{x,\text{good}}$. 

Another consequence of this argument is the identity
\begin{equation} \label{eq:limitCurveNodalPoints3} \bar{D} = \cup_{x \in \mathcal{R}} \bar{D}_{x,\text{good}}. \end{equation}

Let $(\underline{z}, \overline{z})$ be a pair of nodal points in $\bar{D}$. If we fix $x$ such that $x - 2$ is greater than $(a \circ \bar{w})(\underline{z}) = (a \circ \bar{w})(\overline{z})$, then the same argument is in the proof of (\ref{eq:limitCurveNodalPoints2}) shows that both $\underline{z}$ and $\overline{z}$ are contained in good components of $\bar{C}_x$. It follows that they lie in $\bar{D}_{x,\text{good}}$ by definition.

By definition we find
$$\text{Genus}_{\text{arith}}(\bar{C}_{x,\text{good}}, \bar{D}_{x,\text{good}}) \leq \text{Genus}_{\text{arith}}(\bar{C}_{y,\text{good}}, \bar{D}_{y,\text{good}})$$
and
$$\lim_{x \to \infty} \text{Genus}_{\text{arith}}(\bar{C}_{x,\text{good}}, \bar{D}_{x,\text{good}}) = \text{Genus}_{\text{arith}}(\bar{C}, \bar{D}).$$

Combining these two inequalities yields a constant $\rho > 0$ independent of $x$ so that
\begin{equation} \label{eq:limitCurveNodalPoints4} \text{Genus}_{\text{arith}}(\bar{C}_{x,\text{good}}, \bar{D}_{x,\text{good}}) \leq \rho. \end{equation}

We are now in a position to estimate $\#\bar{D}_{x,\text{good}}$ using the exact same method as Proposition \ref{prop:nodalPointBound}. 

The next step is to bound the number of connected components of $\bar{C}_{x,\text{good}}$ on which $\bar{w}$ is not constant. By Lemma \ref{lem:limitCurveMarkedPoints}, there is also an $x$-independent bound $\rho > 0$ for the number of marked points in $\bar{C}_{x,\text{good}}$. It follows that there is an $x$-independent bound $\rho > 0$ for the number of good components that have genus at least one or contain at least one marked point.

The remaining components are those components $\Sigma$ of $\bar C_x$ for which $\inf (a \circ \bar{w})(\Sigma)$ is less than $x - 1$. It is important to note that these components $\Sigma$ have the property that $(a \circ \bar{w})(\partial\Sigma) \subset \{x\}$. 

We can use the area bound from Proposition \ref{prop:exhaustiveAreaBounds} along with the monotonicity bound stated in Theorem \ref{thm:monotonicity} to deduce an $x$-independent bound on those components for which $\inf(a \circ \bar{w})$ is less than $1$. This, of course relies on the fact that these components have boundary in $\{x\} \times M$. 

For those components $\Sigma$ for which $\inf(a \circ \bar{w})$ lies in $[1, x - 1)$, the fact that $(a \circ \bar{w})(\partial\Sigma) \subset \{x\}$ allows us to use the $\omega$-energy quantization from Theorem \ref{thm:omegaEnergyQuantization} along with the $\omega$-energy bound from Lemma \ref{lem:limitCurveOmegaEnergy} to deduce an $x$-independent bound on the number of thse components as well. 

We conclude, if we set $\mathcal{C}_{x,\text{good,nc}}$ to be the components of $\bar{C}_{x,\text{good}}$ on which $\bar{w}$ is not constant, that there is a constant $\rho > 0$ independent of $x$ such that
\begin{equation} \label{eq:limitCurveNodalPoints5}
\#\mathcal{C}_{x,\text{good,nc}} \leq \rho.
\end{equation}

On the other hand, let $\mathcal{C}_{x, \text{good,const}}$ be the components of $\bar{C}_{x,\text{good}}$ on which $\bar{w}$ is constant. From (\ref{eq:limitCurveNodalPoints4}) and the marked points bound from Lemma \ref{lem:limitCurveMarkedPoints}, there is an $x$-independent bound $\rho > 0$ for the number of elements of $\mathcal{C}_{x,\text{good,const}}$ with genus at least one or at least one marked point. 

Any other element $\Sigma$ of $\mathcal{C}_{x,\text{good,const}}$ is a sphere with no marked points on which $\bar{w}$ is constant. Therefore, since $\bar{\mathbf{w}}$ is stable, it must contain at least three nodal points from $\bar{D}$. We caution that it is not immediate that these nodal points lie in $\bar{D}_{x,\text{good}}$, but it does follow from the following simple argument.

Let $\underline{z}$ be any nodal point in $\bar{D}$ lying on $\Sigma$. It is paired with another nodal point $\overline{z} \in \bar{D}$. Note that since $\Sigma$ is good, 
$$(a \circ \bar{w})(\overline{z}) = (a \circ \bar{w})(\underline{z}) < x - 1.$$

Write $\Sigma_{\overline{z}}$ for the unique component of $\bar{C}_x$ containing $\overline{z}$. By definition, 
\begin{align*}
    \inf (a \circ \bar{w})(\Sigma_{\overline{z}}) &\leq \inf(a \circ \bar{w})(\overline{z}) \\
    &< x - 1.
\end{align*}

It follows that $\Sigma_{\overline{z}}$ is good, so $\underline{z}$ and $\overline{z}$ both lie in $\bar{D}_{x,\text{good}}$. Anyways, we are able to conclude from this that there are at most $\frac{1}{3}\bar{D}_{x,\text{good}}$ components of $\mathcal{C}_{x,\text{good,const}}$ that are spheres with no marked points. 

This yields the bound
\begin{equation} \label{eq:limitCurveNodalPoints6}
    \#\mathcal{C}_{x,\text{good,const}} \leq \rho + \frac{1}{3}\bar{D}_{x,\text{good}}
\end{equation}
where $\rho > 0$ is independent of $x$. 

We now put this all together. Using the formula for the arithmetic genus from Lemma \ref{lem:arithGenusFormula}, we find for any $x \in \mathcal{R}$ that
$$\text{Genus}_{\text{arith}}(\bar{C}_{x,\text{good}}, \bar{D}_{x,\text{good}}) + \#\mathcal{C}_{x,\text{good,nc}} \geq \frac{1}{2}\#\bar{D}_{x,\text{good}} - \#\mathcal{C}_{x,\text{good,const}}.$$

Substituting in (\ref{eq:limitCurveNodalPoints4}), (\ref{eq:limitCurveNodalPoints5}) and (\ref{eq:limitCurveNodalPoints6}) shows that there is an $x$-independent constant $\rho > 0$ such that
$$\rho \geq \frac{1}{6}\#\bar{D}_{x,\text{good}}.$$

Using (\ref{eq:limitCurveNodalPoints3}) allows us to conclude
$$\#\bar{D} \leq \rho.$$
\end{proof}

We remark that Proposition \ref{prop:limitCurveNodalPoints} works for \emph{any} $\omega$-finite pseudoholomorphic curve with finite arithmetic genus and a finite number of marked points. 

\begin{cor}
\label{cor:omegaFiniteCurveNodalPoints} Let $(M, \eta = (\lambda, \omega))$ be a closed framed Hamiltonian manifold and let $(\mathbb{R} \times M, J, g)$ be an $\eta$-adapted cylinder over $M$. Let
$$\mathbf{u} = (u, C, j, \mathbb{R} \times M, J, D, \mu)$$
be an $\omega$-finite pseudoholomorphic curve such that
$$\text{Genus}_{\text{arith}}(C, D) < \infty$$
and
$$\#\mu < \infty.$$

Then $\#D < \infty$. 
\end{cor}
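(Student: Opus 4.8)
The plan is to adapt the argument of Proposition \ref{prop:limitCurveNodalPoints} to an abstract $\omega$-finite pseudoholomorphic curve $\mathbf{u} = (u, C, j, \mathbb{R}\times M, J, D, \mu)$. The crucial structural inputs are: the arithmetic genus formula of Lemma \ref{lem:arithGenusFormula} (which applies only to compact surfaces); the stability condition, which forces every genus-zero component with no marked points on which $u$ is constant to carry at least three nodal points; the $\omega$-energy quantization theorem (Theorem \ref{thm:omegaEnergyQuantization}); and the monotonicity theorem (Theorem \ref{thm:monotonicity}). Since $\mathbf{u}$ is $\omega$-finite, $E_\omega(u) < \infty$, the map $u$ is proper, the image $u(\partial C)$ lies in a compact set $K$, and $u(C)$ has finitely many connected components. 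After an initial translation in the $\mathbb{R}$-factor I may assume $u(\partial C) \subset [-1,\infty)\times M$ and, since $u$ is proper, $(a\circ u)$ is a proper map, so $a\circ u$ has a set $\mathcal R$ of regular values with full measure avoiding the discrete set $(a\circ u)(D\cup\mu)$ and with $x - 1$ bounded below along the exhaustion.

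The first step is to set up the exhaustion: for $x\in\mathcal R$ large, let $C_x = (a\circ u)^{-1}((-\infty,x])$, which is compact because $u$ is proper and $u(\partial C)\subset K$; put $D_x = D\cap C_x$, $\mu_x = \mu\cap C_x$. Among the connected components of $C_x$, call one \emph{good} if it has positive genus, or contains a marked point, or has $\inf(a\circ u)$ less than $x-1$; call the rest \emph{bad}. Let $C_{x,\mathrm{good}}$ be the union of good components and $D_{x,\mathrm{good}}$ the pairs of nodal points both lying in $C_{x,\mathrm{good}}$. Exactly as in the proof of Proposition \ref{prop:limitCurveNodalPoints}, one checks $C_{x,\mathrm{good}} \subset C_{y,\mathrm{good}}$ for $x < y$, that $C = \bigcup_{x\in\mathcal R} C_{x,\mathrm{good}}$, and that $D = \bigcup_{x\in\mathcal R} D_{x,\mathrm{good}}$; hence $\text{Genus}_{\text{arith}}(C_{x,\mathrm{good}}, D_{x,\mathrm{good}})$ is nondecreasing in $x$ and bounded by $\text{Genus}_{\text{arith}}(C,D) < \infty$, so it is bounded by an $x$-independent constant $\rho$.

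The second step is the $x$-uniform bound on the number of good components carrying nonconstant $u$: those with $\inf(a\circ u) < 1$ are bounded using the monotonicity theorem together with an area bound on $u^{-1}((-\infty,x]\times M)$ — here I would use Theorem \ref{thm:fishHoferAreaBound} applied on a fixed interval, since the boundary of such a component lies in $\{x\}\times M$, plus $E_\omega(u)<\infty$, to bound the local area independently of $x$; those with $\inf(a\circ u)\in[1,x-1)$ have their boundary contained in $\{x\}\times M$ and hence satisfy the hypotheses of Theorem \ref{thm:omegaEnergyQuantization} with $h$ fixed, so each contributes at least a fixed $\hbar > 0$ of $\omega$-energy, and $E_\omega(u)<\infty$ caps their number. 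Good components with positive genus or a marked point are bounded by $\rho$ and $\#\mu$ respectively. The third step then repeats verbatim the combinatorial argument of Proposition \ref{prop:limitCurveNodalPoints}: bad components are spheres or genus-zero surfaces with $\inf(a\circ u)\in(x-1,x)$ and no marked points, so when $u$ is constant on one it has at least three nodal points (stability), and one checks those nodal points actually lie in $D_{x,\mathrm{good}}$ by the ``pairing'' argument; plugging the bounds into Lemma \ref{lem:arithGenusFormula} gives $\rho' \geq \tfrac16 \#D_{x,\mathrm{good}}$ with $\rho'$ independent of $x$, and letting $x\to\infty$ yields $\#D \leq 6\rho' < \infty$.

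I expect the main obstacle to be the $x$-uniform area bound used in the monotonicity step for good components with small $\inf(a\circ u)$ that reach all the way up to $\{x\}\times M$: in Proposition \ref{prop:limitCurveNodalPoints} this came for free from Proposition \ref{prop:exhaustiveAreaBounds}, which is specific to the constructed sequence, whereas here I must derive it intrinsically from the $\omega$-finiteness hypothesis via the exponential area bound of Theorem \ref{thm:fishHoferAreaBound}. The point is that such a component has boundary on exactly two slices (or one slice together with part of $u(\partial C)\subset K$, which is compact), and the needed $\lambda$-energy control at the lower slice, say near height $1$, is uniform in $x$ since it depends only on a fixed compact region; so Theorem \ref{thm:fishHoferAreaBound} gives an area bound depending only on that fixed data and on $E_\omega(u)$, not on $x$. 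Once this is in hand the rest is bookkeeping identical to Proposition \ref{prop:limitCurveNodalPoints}.
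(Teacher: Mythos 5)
Your overall strategy matches the paper's intent (the paper's ``proof'' of this corollary is just the sentence preceding it, pointing out that the argument of Proposition \ref{prop:limitCurveNodalPoints} applies to any $\omega$-finite curve with the stated finiteness hypotheses), and you correctly identify that the one ingredient that needs genuine replacement is the sequence-specific area bound of Proposition \ref{prop:exhaustiveAreaBounds}.

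There is, however, a concrete gap in your set-up. You define $C_x = (a\circ u)^{-1}((-\infty,x])$ and assert it is compact ``because $u$ is proper and $u(\partial C)\subset K$.'' Neither fact implies this: $(-\infty,x]\times M$ is not a compact subset of $\mathbb{R}\times M$, so properness gives nothing, and $u(\partial C)\subset K$ only constrains the boundary, not the interior. An $\omega$-finite curve in $\mathbb{R}\times M$ may perfectly well have image extending to $-\infty$ --- nothing in Definition \ref{defn:omegaFiniteCurve} rules this out --- and then $C_x$ fails to be compact, which breaks the application of Lemma \ref{lem:arithGenusFormula} and everything downstream. The fix is a two-sided exhaustion $C_x = u^{-1}([-x,x]\times M)$, compact by properness, with the third ``good'' criterion replaced by ``$\Sigma$ contains a point $p$ with $|a(u(p))| < x-1$''; the monotonicity $C_{x,\mathrm{good}}\subset C_{y,\mathrm{good}}$ and the exhaustion identities $C = \bigcup_x C_{x,\mathrm{good}}$ and $D=\bigcup_x D_{x,\mathrm{good}}$ then go through verbatim. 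In counting nonconstant good components you must also treat components of $C_x$ with boundary on both $\{-x\}\times M$ and $\{x\}\times M$ --- Theorem \ref{thm:omegaEnergyQuantization} does not apply to these, since their image is not contained in a one-sided interval with boundary bounded away from $a_0$ --- but any such component passes through $[-1,1]\times M$ by connectivity of $\Sigma$ and of $(a\circ u)(\Sigma)$, so it is absorbed into the monotonicity count on a fixed compact region rather than the quantization count. Finally, your detour through Theorem \ref{thm:fishHoferAreaBound} for the area bound is unnecessary: the monotonicity step only needs an $x$-independent bound on $\text{Area}_{u^*g}\bigl(u^{-1}([-100,100]\times M)\bigr)$, and this is automatically finite because $u$ is proper and $[-100,100]\times M$ is compact, so $u^{-1}([-100,100]\times M)$ is a compact surface; no $\lambda$-energy control is needed at this step.
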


Proposition \ref{prop:limitCurveNodalPoints} allows us to immediately bound the number of components of $\bar C$ on which $\bar w$ is constant. 

\begin{prop}
\label{prop:limitCurveBoundedConstantComponents} There is a constant $\rho > 0$ such that $\bar C$ has at most $\rho$ connected components on which $\bar w$ is constant.
\end{prop}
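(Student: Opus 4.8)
The plan is to mimic the argument that bounded the number of constant components in the closed case (Proposition \ref{prop:constantConnectedComponentsBound}), but now for the exhaustive limit curve $\bar{\mathbf{w}}$. The key observation is that a connected component $\Sigma$ of $\bar C$ on which $\bar w$ is constant falls into exactly one of three types: those of genus at least one, those containing at least one marked point, or those that are genus-zero spheres with no marked points. The first type is controlled by the finite arithmetic genus bound from Lemma \ref{lem:limitCurveGenusBound}, the second by the finite marked-point bound from Lemma \ref{lem:limitCurveMarkedPoints}, and the third by stability together with the now-established finite bound on $\#\bar D$ from Proposition \ref{prop:limitCurveNodalPoints}.

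Concretely, I would argue as follows. Write $\mathcal{C}_{\text{const}}$ for the set of connected components of $\bar C$ on which $\bar w$ is constant. Since the arithmetic genus of $(\bar C, \bar D)$ is at most $\rho$ (Lemma \ref{lem:limitCurveGenusBound}), at most $\rho$ of these components can have positive genus — this uses the remark after Definition \ref{defn:curveArithmeticGenus} that the arithmetic genus dominates the sum of the genera of the connected components. Similarly, by Lemma \ref{lem:limitCurveMarkedPoints} at most $\rho$ of the components contain a marked point. Every remaining constant component $\Sigma$ is a sphere with no marked points, hence by stability of $\bar{\mathbf{w}}$ (each such component $\Sigma$ has $\chi(\Sigma \setminus (\mu \cup D)) < 0$, forcing at least three nodal points on $\Sigma$) contributes at least three nodal points to $\bar D$. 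Since no nodal point is shared between two such components, there are at most $\frac{1}{3}\#\bar D$ of them, and Proposition \ref{prop:limitCurveNodalPoints} bounds $\#\bar D$ by a constant. Adding the three bounds gives the desired uniform bound on $\#\mathcal{C}_{\text{const}}$.

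I do not anticipate a serious obstacle here: all the ingredients — finite arithmetic genus, finite marked points, finite nodal points, and the stability condition — are already in hand by the point this proposition is stated, and the combinatorial bookkeeping is essentially identical to that already carried out in the proof of Proposition \ref{prop:nodalPointBound} (indeed Proposition \ref{prop:constantConnectedComponentsBound} extracted exactly this kind of bound from inequalities \eqref{eq:nodalPointBound5} and \eqref{eq:nodalPointBound6}). The one point to state carefully is that the inequality relating $\#\bar D$ to the number of genus-zero spheres with no marked points holds without worrying about whether the spheres are compact, which is automatic because each connected component of $\bar C$ is either compact or non-compact and a non-compact component cannot be a sphere; more simply, the stability condition applies componentwise regardless. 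So the proof will be short: cite Lemma \ref{lem:limitCurveGenusBound}, Lemma \ref{lem:limitCurveMarkedPoints}, stability, and Proposition \ref{prop:limitCurveNodalPoints}, and add up.
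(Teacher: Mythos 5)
Your proposal is correct and follows essentially the same route as the paper: decompose the constant components by stability into those with positive genus, those with a marked point, and those forced to carry at least three nodal points, then bound each class by Lemma \ref{lem:limitCurveGenusBound}, Lemma \ref{lem:limitCurveMarkedPoints}, and Proposition \ref{prop:limitCurveNodalPoints} respectively. The only cosmetic difference is that the paper does not insist the remaining components are spheres (so compactness, which in any case follows from properness of $\bar w$, never needs to be invoked); the nodal-point count works for any constant component via stability, exactly as your final remark notes.
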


\begin{proof}
A connected component $\Sigma$ of $\bar C$ on which $\bar w$ is constant satisfies at least one of the following conditions:
\begin{itemize}
    \item $\Sigma$ has positive genus.
    \item $\Sigma$ contains at least one marked point.
    \item $\Sigma$ contains at least three nodal points.
\end{itemize}

This is an immediate consequence of the fact that $\bar{\mathbf{w}}$ is stable. 

From Lemmas \ref{lem:limitCurveGenusBound} and \ref{lem:limitCurveMarkedPoints}, it follows that there is some constant $\rho_1 > 0$ such that at most $\rho_1$ components $\Sigma$ on which $\bar w$ is constant that satisfy the first or second items above. 

From Proposition \ref{prop:limitCurveNodalPoints}, it follows that there is some constant $\rho_2 > 0$ such that at most $\rho_2$ components $\Sigma$ on which $\bar w$ is constant that satisfy the third item above. 

We conclude that $\bar C$ has at most $\rho_1 + \rho_2$ components on which $\bar w$ is constant. 
\end{proof}

Given the bound from Lemma \ref{lem:limitCurveBoundedNcComponents}, we conclude that $\bar C$ has finitely many components. 

The final task is to prove that $\text{Punct}(\bar C)$ is finite. This relies on the fact that the domains $\widetilde{C}_i$ of the pseudoholomorphic curves $\mathbf{w}_i$ have a uniformly bounded number of boundary components, which we have not used so far. It seems implausible that one could prove $\text{Punct}(\bar C)$ is finite without such a bound. 

\begin{prop} \label{prop:limitCurveFinitePunctures}
$\text{Punct}(\bar C)$ is finite. 
\end{prop}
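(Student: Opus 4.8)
The plan is to adapt the argument of \cite[Proposition $4.49$]{FishHoferFeral} to our setting, whose essential input is the uniform bound on the number of boundary components of the domains $\widetilde{C}_i$ guaranteed by Proposition \ref{prop:boundaryComponentBound} (and carried through Proposition \ref{prop:sequenceOfCurves2}). The quantity $\text{Punct}(\bar C)$, by Definition \ref{defn:punct}, is the eventual number of non-compact connected components of $\bar C \setminus (\bar C_k \setminus \partial \bar C_k)$ for an exhaustion $\{\bar C_k\}$ of $\bar C$, and by Lemma \ref{lem:limitCurveNoBoundary} the surface $\bar C$ has empty boundary, so this is a genuine count of the ``ends'' of $\bar C$. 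The idea is to choose the compact exhaustion $\bar C_k$ in a way compatible with the exhaustive Gromov convergence $\mathbf{w}_i \to \bar{\mathbf{w}}$: for $x$ in the set $\mathcal{R}$ of regular values of $a \circ \bar{w}$ avoiding $(a\circ\bar w)(\bar D \cup \bar\mu)$, use the sub-surfaces $\bar C_x = \bar w^{-1}(\widetilde W_-^x)$ as in the proof of Proposition \ref{prop:limitCurveNodalPoints}. Then one must show that the number of non-compact components of $\bar C \setminus \bar C_x$ stabilizes to a finite value as $x \to \infty$.

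First I would fix $x < y$ in $\mathcal{R}$ and observe that $\bar C \setminus \bar C_x$ deformation retracts onto its intersection with $\{a \circ \bar w > x\}$; each non-compact component of $\bar C \setminus \bar C_x$ must have non-empty intersection with the ``boundary slice'' $\bar w^{-1}(\{x\} \times M)$, which has finitely many components (it is a finite union of circles, since $x$ is a regular value and the curve has locally finite topology away from $\bar D \cup \bar\mu$). So $\text{Punct}(\bar C)$ is bounded by the number of components of $\bar w^{-1}(\{x\} \times M)$ for $x$ large, provided this number is bounded uniformly in $x$. Here is where the exhaustive Gromov convergence enters: for $x$ inside the range $(0, 2\sigma_i - 1)$ and $i$ large, the slice $\bar w^{-1}(\{x\} \times M)$ is, up to the diffeomorphism of normalizations furnished by the Gromov convergence, identified with $w_i^{-1}(\{x\} \times M)$ inside $\widetilde C_i$. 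The number of components of the latter is controlled by the topology of $\widetilde C_i$: since $\widetilde C_i$ has at most $\rho$ boundary components, at most $\rho$ connected components, at most $\rho$ nodal points, and arithmetic genus at most $\rho$ (all by Proposition \ref{prop:sequenceOfCurves2}), a level set $w_i^{-1}(\{x\}\times M)$ of a proper map on such a surface can have at most some $C(\rho)$ components — this is the standard fact that a regular level set of a Morse-type function on a compact surface of bounded genus and bounded number of boundary components has a bounded number of components. (More precisely, the number of circles in a regular level set of the harmonic or proper function $a\circ w_i$ is controlled by the first Betti number of $\widetilde C_i$ plus the number of boundary and nodal components, all of which are uniformly bounded.)

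Assembling these: there is a constant $\rho' > 0$, independent of $i$ and of $x \in \mathcal{R} \cap (0, 2\sigma_i - 1)$, such that $w_i^{-1}(\{x\}\times M)$ has at most $\rho'$ components; passing to the exhaustive Gromov limit, $\bar w^{-1}(\{x\} \times M)$ has at most $\rho'$ components for every $x \in \mathcal{R}$; and therefore $\bar C \setminus \bar C_x$ has at most $\rho'$ non-compact components for every $x \in \mathcal{R}$. Since $\{\bar C_x\}_{x \in \mathcal{R}}$ is a compact exhaustion of $\bar C$ (by (\ref{eq:limitCurveNodalPoints2}) and the fact that $\bar C$ has no boundary), we conclude $\text{Punct}(\bar C) \leq \rho' < \infty$. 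The main obstacle I anticipate is making the identification between the slice $\bar w^{-1}(\{x\}\times M)$ of the limit and the slice $w_i^{-1}(\{x\}\times M)$ of the approximating curves precise enough to transfer the component count — one needs to be careful that the $C^0_{\mathrm{loc}}$ convergence of $\phi_i^* w_i$ to $\bar w$ on the normalization, combined with $x$ being a regular value avoiding the images of nodal and marked points, genuinely forces the preimages of $\{x\}\times M$ to have the same combinatorial type for large $i$. This is exactly the kind of argument carried out in \cite[Proposition $4.49$]{FishHoferFeral}, and I would follow it closely, substituting our uniform topological bounds on $\widetilde C_i$ for theirs.
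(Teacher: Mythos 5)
There is a genuine gap, and it sits at the heart of your argument: the claim that the number of circles in a regular level set $w_i^{-1}(\{x\}\times M)$ (or $\bar w^{-1}(\{x\}\times M)$) is controlled by the genus, the number of boundary components, nodal points and connected components of the domain is not a standard fact --- it is false. Already on a disk (genus $0$, one boundary circle) a smooth function with $N$ bumps has a regular level whose preimage contains $N$ disjoint circles, so no bound in terms of the topology of $\widetilde C_i$ exists for an arbitrary smooth map. The usual way such a bound is obtained is via a maximum principle: if $a\circ w_i$ were harmonic (or at least subharmonic with the right sign), level circles could not bound disks or cobound annuli and their number would indeed be constrained by $b_1$ plus boundary data. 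But in the framed Hamiltonian setting one only has $d(a\circ u)=u^*\lambda\circ j$, so $d\bigl(d(a\circ u)\circ j\bigr)=-u^*d\lambda$, and $d\lambda$ has no sign here (this is exactly what distinguishes the feral setting from the contact/stable Hamiltonian one); there is also no uniform bound on $E_\lambda(u,t)$, which is what a level-set bound would essentially amount to. So the quantity you want to bound uniformly in $x$ is precisely the kind of quantity that can a priori blow up, and reducing $\text{Punct}(\bar C)$ to it does not close the argument.

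The mechanism that actually makes the count finite is quantitative, not topological: the paper (following \cite[Proposition $4.49$]{FishHoferFeral}) argues by contradiction, using Lemma \ref{lem:impossibleSubmanifold} to extract, if $\text{Punct}(\bar C)=\infty$, a compact subsurface $\bar S\subset\bar C$ with arbitrarily many boundary circles, all of whose complementary components are non-compact; transplanting $\bar S$ into $\widetilde C_i$ via the exhaustive Gromov convergence, each non-constant complementary piece whose boundary lies on $\partial\bar S$ must travel height at least $1$ in the cylinder and therefore carries $\omega$-energy at least $\hbar$ by Theorem \ref{thm:omegaEnergyQuantization}, so the uniform $\omega$-energy bound caps their number; the pieces touching $\partial\widetilde C_i$ are capped by the uniform bound on $\#\pi_0(\partial\widetilde C_i)$ from Proposition \ref{prop:boundaryComponentBound}, constant pieces are capped by stability together with the genus/marked/nodal bounds, and a Mayer--Vietoris/Euler characteristic count then bounds $\#\pi_0(\partial\bar S)$, contradicting its arbitrariness. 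Your proposal never invokes energy quantization for this purpose, and without it (or some substitute playing the same quantitative role) the level-set or end count cannot be bounded; the secondary worry you raise about transferring the slice count through the $C^0_{\mathrm{loc}}$ convergence is real but moot until the main claim is repaired.
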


The method used in the proof of \cite[Proposition $4.49$]{FishHoferFeral} can readily be adapted to prove this proposition. 

The following lemma is an immediate consequence of \cite[Lemma $4.53$]{FishHoferFeral}.

\begin{lem} \cite[Lemma $4.53$]{FishHoferFeral}
\label{lem:impossibleSubmanifold} 
Let $(C, j, D, \mu)$ be a marked nodal Riemann surface with $$\text{Punct}(C) = \infty$$
and let
$$u: C \to \widetilde{W}_-$$
be a proper smooth map which maps pairs of nodal points to the same point in $\widetilde{W}_-$. 

Suppose further that $\text{Genus}(C)$, $\#D$, $\#\mu$, and $\#\pi_0(C)$ are finite. Then for any constant $\kappa_* \geq 1$ there is a compact surface $S \subset C$ satisfying the following properties:
\begin{itemize}
    \item $\#\pi_0(S) = \#\pi_0(C)$.
    \item $\#\pi_0(\partial S) \geq \kappa_*$.
    \item Every connected component of $C \setminus (S \setminus \partial S)$ is non-compact.
\end{itemize}
\end{lem}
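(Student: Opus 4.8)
The plan is to observe that the conclusion is purely a statement about the topology of the nodal Riemann surface $(C, j, D, \mu)$: the compact sub-surface $S$ is produced from $C$, $D$, $\mu$, and the combinatorics of the ends of $C$ alone, and neither the map $u$ nor the target $\widetilde{W}_-$ enters into its construction or into any of the three asserted properties. Properness of $u$ and the condition that $u$ identifies paired nodal points are carried along only because they hold automatically in the situation where we invoke this lemma (for $\bar{\mathbf{w}}$ in the proof of Proposition \ref{prop:limitCurveFinitePunctures}); they play no role in the conclusion. Consequently it suffices to verify that $(C, j, D, \mu)$ meets the hypotheses of the abstract topological statement \cite[Lemma $4.53$]{FishHoferFeral}, namely $\text{Genus}(C) < \infty$, $\#D < \infty$, $\#\mu < \infty$, $\#\pi_0(C) < \infty$, and $\text{Punct}(C) = \infty$. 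All five are assumed, so \cite[Lemma $4.53$]{FishHoferFeral} applies verbatim and produces the desired $S$.

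For completeness I would also sketch the mechanism underlying \cite[Lemma $4.53$]{FishHoferFeral}, in case a self-contained argument is wanted. Fix a compact exhaustion $\{C_k\}_{k \geq 0}$ of $C$ as in Definition \ref{defn:curveGenus}; using $\#\pi_0(C) < \infty$ (and $\#D, \#\mu < \infty$), one may arrange that $D \cup \mu \subset \text{int}(C_0)$, that $C_0$ meets every connected component of $C$, and that $C_k$ is connected inside each component of $C$ for every $k$. For each $k$, let $\mathcal{K}_k$ be the collection of compact connected components of $C \setminus (C_k \setminus \partial C_k)$; since $\partial C_k$ has finitely many circles and each such component is attached to $C_k$ along $\partial C_k$, $\mathcal{K}_k$ is finite, so $S_k := C_k \cup \bigcup_{K \in \mathcal{K}_k} K$ is again a compact sub-surface, and by construction no connected component of $C \setminus (S_k \setminus \partial S_k)$ is compact. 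Because $S_k \supseteq C_0$ and $S_k$ is connected in each component of $C$, one has $\#\pi_0(S_k) = \#\pi_0(C)$. Finally, each non-compact component of $C \setminus (C_k \setminus \partial C_k)$ contributes at least one distinct boundary circle of $S_k$, so $\#\pi_0(\partial S_k)$ is at least the number of non-compact components of $C \setminus (C_k \setminus \partial C_k)$, which tends to $\text{Punct}(C) = \infty$ as $k \to \infty$ by the exhaustion-independence of $\text{Punct}$ recorded after Definition \ref{defn:punct}. Choosing $k$ with this count $\geq \kappa_*$ and setting $S = S_k$ gives all three properties.

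The only genuinely delicate point in the self-contained route is the bookkeeping in the absorption step — keeping $S_k$ compact with a controlled, strictly growing number of boundary circles and guaranteeing $\#\pi_0(S_k) = \#\pi_0(C)$ even when $C$ may have infinitely many boundary components or infinitely many ends — and this is exactly what is handled carefully in \cite{FishHoferFeral}. Since that work already establishes the topological lemma in precisely the generality required here, I expect the cleanest and safest proof to consist simply of matching hypotheses and citing \cite[Lemma $4.53$]{FishHoferFeral}, so that the present lemma carries no content beyond that identification.
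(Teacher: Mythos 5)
Your proposal is correct in substance, but it takes a slightly different route from the paper, and it is worth recording the difference. The paper does not treat the lemma as a purely abstract topological fact: it uses the properness of $u$ in an essential way to \emph{manufacture} the compact exhaustion, setting $C^x = (a \circ u)^{-1}(\widetilde{W}_-^x)$ for regular values $x$ of $a \circ u$ (compactness of $C^x$ is exactly where properness enters), choosing $x_0$ so that for $x > x_0$ the set $C^x$ contains $D$ together with one chosen point per component, captures the full genus and all compact components of $C$, and has at least $\kappa_*$ non-compact complementary components; it then runs the proof of \cite[Lemma $4.53$]{FishHoferFeral} on this particular exhaustion, remarking only that pseudoholomorphicity of $u$ is never used. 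Your observation that the conclusion depends only on the topology of $C$ — so that \emph{any} compact exhaustion as in Definition \ref{defn:curveGenus} works and $u$ can be discarded entirely — is accurate (properness of $\bar w$ is indeed only used downstream, in the proof of Proposition \ref{prop:limitCurveFinitePunctures}, to convert non-compactness of complementary components into unboundedness of $a \circ w_i$), and your absorption argument (adjoin the finitely many compact components of $C \setminus (C_k \setminus \partial C_k)$ to $C_k$, count the surviving boundary circles against the non-compact components, and let $k$ grow until that count exceeds $\kappa_*$ using Definition \ref{defn:punct}) is the same mechanism that underlies the Fish--Hofer proof, just run on an abstract exhaustion; this buys a cleaner, more self-contained statement at the cost of redoing bookkeeping the paper simply imports. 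The one caveat is your suggestion that the ``safest'' proof is to cite \cite[Lemma $4.53$]{FishHoferFeral} verbatim after matching hypotheses: that lemma is stated in Fish--Hofer's pseudoholomorphic setting rather than for arbitrary proper smooth maps, which is precisely why the paper supplies the small adaptation (choice of exhaustion via $a \circ u$ plus the remark that the argument never uses the Cauchy--Riemann equation); so a bare citation would not quite be legitimate, but your fallback self-contained sketch covers exactly the gap, modulo the usual care needed if $\partial C \neq \emptyset$ (immaterial in the application, where $\bar C$ has empty boundary).
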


\begin{proof}
Write $\mathcal{R}$ for the regular values of the map $a \circ u$. 

Choose for each connected component of $C$ a special point lying in that connected component. We can write this set of special points as a finite set
$$Z = \{z_1, \ldots, z_k\}$$
by our assumptions. 

For any $x \in \mathcal{R}$, write
$$C^x = (a \circ u)^{-1}(\widetilde{W}_-^x).$$

Our assumptions show that there exists $x_0 \in \mathcal{R}$ such that the following holds for any $x > x_0$ in $\mathcal{R}$:
\begin{itemize}
    \item $Z \cup D \subset C^x$.
    \item The genus of $C^x$ is equal to the genus of $C$.
    \item Each compact connected component of $C$ is contained in $C^x$. 
    \item The number of non-compact connected components of $C \setminus (C^x \setminus \partial C^x)$ is at least $\kappa_*$. 
\end{itemize}

Given this choice, the proof of Lemma $4.53$ in \cite{FishHoferFeral} follows essentially without modification. It does not require the smooth map $u$ to be pseudoholomorphic. 
\end{proof}

Given Lemma \ref{lem:impossibleSubmanifold}, Proposition \ref{prop:limitCurveFinitePunctures} follows by copying the proof of Proposition $4.49$ following the statement of Lemma $4.53$ in \cite{FishHoferFeral}. We include the argument below. 

This concludes the proof of Theorems \ref{thm:mainExample} and \ref{thm:swExample}. 

\begin{proof}[Proof of Proposition \ref{prop:limitCurveFinitePunctures}]

We recall that the pseudoholomorphic curve
$$\bar{\mathbf{w}} = (\bar w, \bar{C}, \bar{j}, \widetilde{W}_-, J_-, \bar{D}, \bar{\mu})$$
is an exhaustive Gromov limit of the curves
$$\mathbf{w}_i = (w_i, \widetilde{C}_i, \widetilde{j}_i, \widetilde{W}_-, J_-, \widetilde{D}_i, \widetilde{\mu}_i).$$

By Proposition \ref{prop:sequenceOfCurves2}, there is a constant $\rho \geq 1$ independent of $i$ such that 
\begin{itemize}
    \item $\text{Genus}_{\text{arith}}(\widetilde{C}_i, \widetilde{D}_i) \leq \rho$.
    \item $\#\widetilde{\mu}_i \leq \rho$.
    \item $\#\widetilde{D}_i \leq \rho$.
    \item $\#\pi_0(\widetilde{C}_i) \leq \rho$.
    \item $\#\pi_0(\partial\widetilde{C}_i) \leq \rho$. 
    \item $E_\omega(\mathbf{w}_i) \leq \rho.$
\end{itemize}

Suppose for the sake of contradiction that 
$$\text{Punct}(\bar C) = \infty.$$

Let $\hbar > 0$ be the constant from Theorem \ref{thm:omegaEnergyQuantization} with parameter $h = 1$.

Let $\kappa_*$ be a constant equal to $10^{100}(1 + \hbar^{-1})\rho$. 

By Lemmas \ref{lem:limitCurveGenusBound}, \ref{lem:limitCurveMarkedPoints}, \ref{lem:limitCurveBoundedNcComponents}, \ref{lem:impossibleSubmanifold} and Propositions \ref{prop:limitCurveOmegaFinite}, \ref{prop:limitCurveNodalPoints}, and \ref{prop:limitCurveBoundedConstantComponents}, there is a compact submanifold $\bar{S} \subset \bar{C}$ such that:
\begin{itemize}
    \item $\#\pi_0(\bar S) = \#\pi_0(\bar C)$.
    \item $\#\pi_0(\partial \bar S) \geq \kappa_*$.
    \item Every connected component of $\bar C \setminus (\bar S \setminus \partial \bar S)$ is non-compact.
\end{itemize}

In particular, the exhaustive Gromov compactness provides us with decorations $r$, $r_i$, and $\bar{r}$ on $\bar{S}$, $\widetilde{C}_i$, and $\bar{C}$, respectively along with embeddings
$$\phi_i: \bar{S}^{\bar{D} \cap \bar{S}, r} \hookrightarrow \bar{C}^{\bar{D}, \bar{r}} \hookrightarrow \widetilde{C}_i^{\widetilde{D}_i, r_i}$$
for sufficiently large $i$.

For any sufficiently large $i$, write
$$\Sigma_0 = \widetilde{C}_i^{\widetilde{D}_i, r_i},$$
$$\Sigma_1 = \phi_i(\bar{S}^{\bar{D} \cap \bar{S}, r}),$$
and
$$\Sigma_2 = \text{cl}(\Sigma_0 \setminus \Sigma_1) \subset \Sigma_0.$$

Recall that every connected component of $\bar{C} \setminus (\bar{S} \setminus \partial\bar{S})$ is non-compact. It follows by the definition of exhaustive Gromov convergence that, for sufficiently large $i$, any connected component $\Sigma'$ of $\Sigma_2$ with $\partial\Sigma' \subset \partial\Sigma_1$ satisfies
$$\sup (a \circ w_i)(\Sigma') - \sup(a \circ w_i)(\partial\Sigma') \geq 1.$$

Let $\hbar > 0$ be the constant from Theorem \ref{thm:omegaEnergyQuantization} with parameter $h = 1$. 

It then follows from Theorem \ref{thm:omegaEnergyQuantization} that, for any connected component $\Sigma'$ of $\Sigma_2$ with $\partial\Sigma' \subset \partial\Sigma_1$ on which $w_i$ is nonconstant,
\begin{equation} \label{eq:limitCurveFinitePunctures1}  \int_{\Sigma'} w_i^*\omega \geq \hbar.
\end{equation}

Partition $\Sigma_2$ into three disjoint sub-surfaces. Let $\Sigma_{2,\text{const}}$ denote the union of all connected components of $\Sigma_2$ on which $w_i$ is constant. Let $\Sigma_{2,\text{int}}$ denote the union of all connected components of $\Sigma_2$ with boundary lying inside $\partial\Sigma_1$. Let $\Sigma_{2, \text{bdry}}$ denote the union of all connected components of $\Sigma_2$ which have nonempty intersection with $\partial\Sigma_0$. 

Using (\ref{eq:limitCurveFinitePunctures1}) and the fact that
$$E_\omega(\mathbf{w}_i) \leq \rho$$
for every $i$, we find
\begin{equation}
    \label{eq:limitCurveFinitePunctures2} \#\pi_0(\Sigma_{2,\text{int}}) \leq \hbar^{-1}\rho.
\end{equation}

Any component in $\Sigma_{2, \text{bdry}}$ must contain at least one component of $\partial\Sigma_0$. Recall that $$\#\pi_0(\partial\Sigma_0) = \#\pi_0(\widetilde{C}_i) \leq \rho.$$

We conclude the bound
\begin{equation}
    \label{eq:limitCurveFinitePunctures3} \#\pi_0(\Sigma_{2,\text{bdry}}) \leq \rho.
\end{equation}

It remains to estimate $\#\pi_0(\Sigma_{2,\text{const}})$. Observe that there are at most $2\rho$ marked and nodal points total in $\Sigma_2$. 

There are at most $\rho$ components of $\Sigma_{2,\text{const}}$ which have genus greater than $1$. Any other component contains a marked or nodal point since the pseudoholomorphic curves $\mathbf{w}_i$ are stable. 

It follows from this that
\begin{equation}
    \label{eq:limitCurveFinitePunctures4}
    \#\pi_0(\Sigma_{2,\text{const}}) \leq 3\rho
\end{equation}
and therefore
\begin{equation}
    \label{eq:limitCurveFinitePunctures5}
    \#\pi_0(\Sigma_2) \leq (4 + \hbar^{-1})\rho.
\end{equation}

The Mayer-Vietoris sequence shows that
$$\chi(\Sigma_1) + \chi(\Sigma_2) = \chi(\Sigma).$$

We expand, multiply by $-1$, rearrange, and discard nonpositive terms to deduce the inequality
\begin{equation}
    \label{eq:limitCurveFinitePunctures6}
    \begin{split}
    \#\pi_0(\partial\Sigma_1) &\leq 2\pi_0(\Sigma_1) + 2\pi_0(\Sigma_2) + 2\text{Genus}(\Sigma) + \pi_0(\partial\Sigma) \\
    &\leq 2\pi_0(\bar{C}) + 2\pi_0(\Sigma_2) + 2\text{Genus}(\Sigma) + \pi_0(\partial\Sigma) \\
    &\leq 3\rho + 2\pi_0(\bar{C}) + 2\pi_0(\Sigma_2) \\
    &\leq 5\rho + 2\pi_0(\Sigma_2) \\
    &\leq (13 + \hbar^{-1})\rho.
    \end{split}
\end{equation}

The second inequality uses the fact that $\bar S$ has the same number of connected components as $\bar C$ and $\#\pi_0(\Sigma_1) \leq \#\pi_0(\bar S)$. The third inequality uses the fact that the genus of $\Sigma$ and the number of boundary components of $\Sigma$ are each at most $\rho$. The fourth inequality uses the bound on $\#\pi_0(\bar C)$ from Lemma \ref{lem:limitCurveBoundedNcComponents} and Proposition \ref{prop:limitCurveBoundedConstantComponents}. The final inequality uses (\ref{eq:limitCurveFinitePunctures5}).

With this we have shown
$$10^{100}(1 + \hbar^{-1})\rho \leq \#\pi_0(\partial\Sigma_1) \leq (13 + \hbar^{-1})\rho.$$

This cannot hold, so our original assumption was incorrect and $\text{Punct}(\bar{C})$ must be finite. 
\end{proof}

\bibliographystyle{halpha}
\bibliography{main}

\newcommand{\etalchar}[1]{$^{#1}$}
\begin{thebibliography}{BEH{\etalchar{+}}03}

\bibitem[BEH{\etalchar{+}}03]{BEHWZ03}
F.~Bourgeois, Y.~Eliashberg, H.~Hofer, K.~Wysocki, and E.~Zehnder.
\newblock Compactness results in symplectic field theory.
\newblock {\em Geom. Topol.}, 7:799--888, 2003.

\bibitem[Che00]{ChenWeinstein}
Weimin Chen.
\newblock Pseudo-holomorphic curves and the {W}einstein conjecture.
\newblock {\em Comm. Anal. Geom.}, 8(1):115--131, 2000.

\bibitem[FH18]{FishHoferFeral}
Joel~W. Fish and Helmut Hofer.
\newblock Feral curves and minimal sets, 2018, arXiv:1812.06554v1.

\bibitem[FH20]{FishHoferExhaustive}
Joel~W. Fish and Helmut Hofer.
\newblock Exhaustive {G}romov compactness for pseudoholomorphic curves.
\newblock {\em Ast\'{e}risque}, (415, Quelques aspects de la th\'{e}orie des
  syst\`emes dynamiques: un hommage \`a Jean-Christophe Yoccoz. I):87--112,
  2020.

\bibitem[Fis11a]{FishCurves}
Joel~W. Fish.
\newblock Estimates for {$J$}-curves as submanifolds.
\newblock {\em Internat. J. Math.}, 22(10):1375--1431, 2011.

\bibitem[Fis11b]{FishTarget}
Joel~W. Fish.
\newblock Target-local {G}romov compactness.
\newblock {\em Geom. Topol.}, 15(2):765--826, 2011.

\bibitem[FO99]{FukayaOno99}
Kenji Fukaya and Kaoru Ono.
\newblock Arnold conjecture and {G}romov-{W}itten invariant.
\newblock {\em Topology}, 38(5):933--1048, 1999.

\bibitem[Gro85]{Gromov85}
M.~Gromov.
\newblock Pseudo holomorphic curves in symplectic manifolds.
\newblock {\em Invent. Math.}, 82(2):307--347, 1985.

\bibitem[Hof93]{HoferWeinstein}
H.~Hofer.
\newblock Pseudoholomorphic curves in symplectizations with applications to the
  {W}einstein conjecture in dimension three.
\newblock {\em Invent. Math.}, 114(3):515--563, 1993.

\bibitem[HV92]{HoferViterbo92}
H.~Hofer and C.~Viterbo.
\newblock The {W}einstein conjecture in the presence of holomorphic spheres.
\newblock {\em Comm. Pure Appl. Math.}, 45(5):583--622, 1992.

\bibitem[HWZ17]{HWZ17}
H.~Hofer, K.~Wysocki, and E.~Zehnder.
\newblock Applications of polyfold theory {I}: {T}he polyfolds of
  {G}romov-{W}itten theory.
\newblock {\em Mem. Amer. Math. Soc.}, 248(1179):v+218, 2017.

\bibitem[HWZ21]{BigPolyfoldBook}
Helmut Hofer, K.~Wysocki, and Eduard Zehnder.
\newblock {\em Polyfold and Fredholm theory}.
\newblock Springer, 2021.

\bibitem[IP97]{IonelParker97}
Eleny-Nicoleta Ionel and Thomas~H. Parker.
\newblock The {G}romov invariants of {R}uan-{T}ian and {T}aubes.
\newblock {\em Math. Res. Lett.}, 4(4):521--532, 1997.

\bibitem[LT00]{LiuTian00}
Gang Liu and Gang Tian.
\newblock Weinstein conjecture and {GW}-invariants.
\newblock {\em Commun. Contemp. Math.}, 2(4):405--459, 2000.

\bibitem[McD91]{McDuff91}
Dusa McDuff.
\newblock The local behaviour of holomorphic curves in almost complex
  {$4$}-manifolds.
\newblock {\em J. Differential Geom.}, 34(1):143--164, 1991.

\bibitem[MS12]{McduffSalamon12}
Dusa McDuff and Dietmar Salamon.
\newblock {\em {$J$}-holomorphic curves and symplectic topology}, volume~52 of
  {\em American Mathematical Society Colloquium Publications}.
\newblock American Mathematical Society, Providence, RI, second edition, 2012.

\bibitem[Par16]{Pardon16}
John Pardon.
\newblock An algebraic approach to virtual fundamental cycles on moduli spaces
  of pseudo-holomorphic curves.
\newblock {\em Geom. Topol.}, 20(2):779--1034, 2016.

\bibitem[Pra21]{prequel}
Rohil Prasad.
\newblock Invariant probability measures from pseudoholomorphic curves {I},
  2021, arXiv:2109.00102.

\bibitem[Sch19]{Schmaltz19}
Wolfgang Schmaltz.
\newblock The {S}teenrod problem for orbifolds and polyfold invariants as
  intersection numbers, 2019, arXiv:1904.02186v3.

\bibitem[Tau94]{Taubes94}
Clifford~Henry Taubes.
\newblock The {S}eiberg-{W}itten invariants and symplectic forms.
\newblock {\em Math. Res. Lett.}, 1(6):809--822, 1994.

\bibitem[Tau96]{Taubes96}
Clifford~Henry Taubes.
\newblock Counting pseudo-holomorphic submanifolds in dimension {$4$}.
\newblock {\em J. Differential Geom.}, 44(4):818--893, 1996.

\bibitem[Tau00]{TaubesSWGR}
Clifford~Henry Taubes.
\newblock {\em Seiberg {W}itten and {G}romov invariants for symplectic
  {$4$}-manifolds}, volume~2 of {\em First International Press Lecture Series}.
\newblock International Press, Somerville, MA, 2000.
\newblock Edited by Richard Wentworth.

\bibitem[Wen16]{WendlSuperrigidity}
Chris Wendl.
\newblock Transversality and superrigidity for multiply covered holomorphic
  curves, 2016, arXiv:1609.09867v6.

\end{thebibliography}

\end{document}